\newtheorem{theorem}{Theorem}[section]
\newtheorem{lemma}[theorem]{Lemma}
\newtheorem{corollary}[theorem]{Corollary}
\newtheorem{proposition}[theorem]{Proposition}
\numberwithin{equation}{section}
\newtheorem{definition}[theorem]{Definition}
\theoremstyle{definition}
\newtheorem{remark}[theorem]{Remark}
\newtheorem{example}[theorem]{Example}
\newcommand{\runum}[1]{\romannumeral #1}
\def\R{{\mathbb{R}}}
\def\E{{\mathbb{E}}}
\def\P{{\mathbb{P}}}
\def\im{{\mathrm{i}}}
\def\Z{{\mathbb{Z}}}
\def\N{{\mathbb{N}}}
\def\C{{\mathbb{C}}}
\def\T{{\mathbb{T}}}
\def\d{{\mathrm{d}}}
\def\e{{\mathrm{e}}}
\def\bftheta{\mbox{\boldmath$\theta$\unboldmath}}
\def\bfomega{\mbox{\boldmath$\omega$\unboldmath}}
\newcommand{\supp}{\operatorname{supp}}
\title{ \textbf{Ergodicity for  eventually continuous Markov--Feller semigroups on Polish spaces}}
\author{ \bf{Fuzhou Gong$^a$,\ Yong Liu$^b$,\ Yuan Liu$^a$,\   Ziyu Liu$^{b},$} \\ 
 \small{\em  {\bf$^a$} Academy of Mathematics and Systems Science, Chinese Academy of Sciences, Beijing 100190, China}\\
\small{\em  {\bf$^b$}   LMAM, School of Mathematical Science, Peking University, 100871, Beijing, China}\\
\small{\em Email:\;fzgong@amt.ac.cn(F. Gong);\ liuyong@math.pku.edu.cn (Yong Liu);}\\
\small{\em liuyuan@amss.ac.cn(Yuan Liu);\ liuziyu@math.pku.edu.cn(Z. Liu)}
}}
\date{\today}
\begin{document}
    \maketitle

    \makeatother
	
    \begin{abstract} 
    This paper investigates the ergodicity of Markov--Feller semigroups on Polish spaces, focusing on  very weak regularity conditions, particularly the Ces\`aro eventual continuity. First, it is showed that the Ces\`aro average of such semigroups weakly converges to an ergodic measure when starting from its support. This leads to a characterization of the relationship between Ces\`aro eventual continuity, Ces\`aro e-property, and weak-* mean ergodicity. Next, serval criteria are provided for the existence and uniqueness of invariant measures via Ces\`aro eventual continuity and lower bound conditions, establishing an equivalence relation between weak-* mean ergodicity and a lower bound condition. Additionally, some refined properties of ergodic decomposition are derived. Finally, the results are applied to several non-trivial examples, including iterated function systems, Hopf's turbulence model with random forces, and Lorenz system with noisy perturbations,  either with or without Ces\`aro eventual continuity.
 
    \vspace{5mm}

    \noindent{\bf Keywords:} Markov--Feller semigruop; eventual continuity; Ces\`aro  eventual continuity; lower bound condition; e-property; ergodicity

    \vspace{5mm}
    \noindent{\bf 2020 MR Subject Classifiction:} 60J25; 37A30
 
    \end{abstract}

    \tableofcontents
   
    \section{Introduction}\label{Sec 1}

     In this paper, we focus on the ergodicity of Markov--Feller semigroups on Polish spaces. Ergodicity is a fundamental property of Markov semigroups, providing a perspective for analyzing the long-term behavior of Markov processes. Its applications span various fields, including statistical physics, fluid mechanics, financial time series, and Markov chain Monte Carlo methods, among others. There is a large literature dedicated to both the theoretical and applied aspects of ergodicity. For example, the monographs \cite{ChenMF2005, MT2009, DMPS2018, Kulik2018}  can serve as some basic introductions into this area. Key themes in this research include the existence and uniqueness of invariant probability measures, ergodic decomposition, and the rate of convergence.

    The regularity of Markov semigroups,  with different presentations such as the strong Feller property, the asymptotic strong Feller property, and (Ces\`aro) equicontinuity (also referred to as the (Ces\`aro) e-property, see Definition \ref{e-property}), plays a crucial role in establishing criteria for ergodicity. These  regular properties have been applied in numerous models with various degree of success. In the present paper, we investigate  very weak regularity conditions for Markov semigroups, specifically eventual continuity and Ces\`aro eventual continuity (see Definition \ref{EvC}), along with their novel contributions to the associated ergodic behaviors.

     \subsection{Comments on early literature} \label{sec 1.2}

    In this subsection, we briefly review some well known regularity conditions of Markov--Feller semigroups and their roles in ergodicity. We begin by looking back at two classic theorems. The Krylov-Bogoliubov theorem (see \cite[Theorem 3.1.1]{DPZ1996}) is a fundamental result that guarantees the existence of an invariant measure for Markov--Feller semigroups, with the Feller property playing a crucial role. Doob's theorem (see \cite[Theorem 4.2.1]{DPZ1996}) asserts that if a Markov semigroup is $t_0$-regular, then there exists at most one invariant measure. Indeed, Doob's theorem leads to further conclusions: the Markov semigroup converges to the unique invariant measure in total variation distance. Further analysis shows that verifying $t_0$-regularity can be decomposed into two components: the {\bf strong Feller property} and {\bf irreducibility} (see \cite[Proposition 4.1.1]{DPZ1996}).

    From a modern perspective, the following points are worth noting:
    \begin{itemize}
        \item [$(\runum{1})$]  According to \cite[Section 4]{Kup2011} and \cite[Proposition 1.1]{DongYL2016}, the strong Feller property can be used to establish the EMDS property,  i.e. a property that {\em ergodic measures are disjointly supported} for a Markov semigroup, see \cite[p80]{Z2005} and \cite[p47]{Z2014}.
        \item[$(\runum{2})$] Irreducibility ensures that for any invariant measure $\mu$, there exists a state $x\in \supp\mu$. Then, by the EMDS property, there can be at most one invariant probability measure.    
    \end{itemize} 

    However, the strong Feller property is often less useful in infinite-dimensional settings, such as SPDEs with degenerate noise, which may fail to satisfy this property. In 2006, Hairer and Mattingly introduced a new regularity condition for Markov semigroups: the asymptotic strong Feller property (see \cite[Definition 3.8]{HM2006}). They showed that this property guarantees the EMDS property (see \cite[Theorem 3.16]{HM2006}; see also \cite[Theorem 1.2]{HM2011}), and further proved that the support of any invariant measure contains the common element zero, which implies unique ergodicity (see \cite[Corollary 4.2]{HM2006}) for the 2D Navier--Stokes equation with highly degenerate stochastic noise.  They also introduced the concept of weak topological irreducibility (see \cite[Definition 1.3]{HM2011}) and  then demonstrated the unique ergodicity of asymptotic strong Feller semigroups (see \cite[Corollary 1.4]{HM2011}).

    \vspace{3mm}

    An alternative approach to verifying the existence and ergodicity of invariant measures for Markov processes  of infinite dimensions is through the e-property. In 2006, Szarek \cite{S2006} and Lasota--Szarek \cite{LS2006} introduced the e-property and lower bound technique in nonlocally compact spaces, showing that Feller processes satisfying the e-property and a lower bound condition admit an invariant probability measure (see \cite[Proposition 2.1]{S2006} and \cite[Theorem 3.1]{LS2006}). In \cite{KPS2010}, the authors provided a criterion, under a stronger lower bound condition, for the unique ergodicity of Feller processes with the e-property. Furthermore, in 2012, Kapica, Szarek and Sleczka explicitly stated that the e-property implies the EMDS property (see \cite[Theorem 1]{KSS2012}) and that any weakly irreducible Feller process with the e-property admits at most one invariant probability measure (see \cite[Theorem 2]{KSS2012}).

    The concept of equicontinuous Markov operators on compact state spaces can be traced back to the work of Jamison \cite{J1964, J1965} and Rosenblatt \cite{R1964}. The ergodic theory of equicontinuous Markov--Feller operators on locally compact state spaces is systematically described in \cite{Z2005, Z2014}. The e-property can also be viewed as a generalization of the non-expansiveness of Markov operators, particularly in relation to the Fortet--Mourier distance (FM distance) between finite measures. Lasota and Yorke \cite{LY1994} introduced the concept of non-expansiveness of Markov operators with respect to the FM distance and employed a lower bound technique to establish the existence of invariant probability measures and the asymptotic stability of Markov semigroups on locally compact and $\sigma$-compact state spaces. Szarek extended these results to Polish spaces in \cite{S2000, S2003, S2003+}.

    \vspace{3mm}

    In general, the e-property serves a two-fold purpose: verifying both the existence and ergodicity of an invariant probability measure. Moreover, it provides simpler criteria compared to the strong Feller or asymptotic strong Feller properties, such as the gradient estimate of a Markov semigroup (see \cite[Section 1]{GL2015}). As the lower bound conditions introduced by Szarek \cite{S2006, LS2006, KPS2010} are framed as probabilistic estimates on open sets, which are not precompact in nonlocally compact spaces, they are easier to verify for certain stochastic systems of infinite dimensions. This makes the e-property a valuable tool in ergodic theory. There are rather complete theoretical results on the ergodicity of equicontinuous Markov processes (see \cite{S2006, LS2006, KPS2010, W2010, SW2012, WH2010, WH2011, WH2011-0, CH2014, C2017, KW2021, KW2024} and references therein), as well as numerous applications in practical models (e.g., \cite{LS2006, SSU2010, BKS2014, CH2014, Zhai2024, Zhai2024+, PZZ2024}).

    \subsection{Obstructions and motivations} 

    Though these early works made a great deal of progress, it should be noted that both the asymptotic strong Feller property and the e-property require some form of  {\bf uniform estimation} in a neighborhood of  initial states. This can   be delicate and stressful in specific cases, and even if these properties  are essentially true, the verification is usually a heavy job. For instance, in \cite{GHM2017, KS2018}, the authors  provided examples of highly nonlinear SPDEs for which the  asymptotic strong Feller property and e-property are difficult to confirm. Additionally, some non-equicontinuous Markov semigroups  were presented in \cite[Example 2.1]{HSZ2017}, \cite[Section 4]{GLLL2024}, and in Examples \ref{Ex 1}, \ref{Ex 2}, and \ref{Ex 3} of the present paper, where  validation of these properties  posed significant challenges.

    To overcome these difficulties  or streamline nowadays arguments, a strictly weaker regularity condition, namely the concept of  {\bf asymptotic equicontinuity} (introduced by Jaroszewska \cite{J2013}) or  {\bf eventual continuity} (introduced by Gong and Liu \cite{GL2015}), was proposed to prove ergodicity. These two notions were formulated almost simultaneously and are mathematically equivalent. In this paper, we adopt the notion of eventual continuity, which  involves some possible sensitivity of non-equicontinuous Markov semigroups to initial  datum.

    In \cite{J2013}, Jaroszewska provided strict sufficient conditions for asymptotic stability for eventually continuous Markov semigroups.  The first and third authors  of the present paper \cite{GL2015}  derived sufficient and almost necessary criteria for the existence of invariant probability measures based on eventual continuity and  the same lower bound condition as \cite{S2006} without any uniform restriction on spatial variable and time parameter, and they characterized asymptotic stability under weaker assumptions. Both studies focused on Markov semigroups with a discrete time parameter. In \cite{LL2024}, the second and fourth authors of the present paper explored the relationship between the e-property and eventual continuity. Furthermore, in \cite{GLLL2024}, we demonstrated that asymptotic stability is equivalent to eventual continuity and a uniformly lower bound condition (see \cite[Theorem 1]{GLLL2024}). These results apply to both discrete and continuous-time Markov semigroups.

    We also mention that the asymptotic strong Feller property and the e-property do not imply each other \cite{J2013+}. Moreover, the asymptotic strong Feller property does not imply eventual continuity and stochastic continuity\cite{LL2024}.
    
     Now, the main motivations behind our analysis are summarized as the following:
     
    \begin{itemize} 
    \item[$(\runum{1})$] There exist Markov--Feller processes that are (Ces\`aro) eventually continuous, but do not satisfy the strong Feller property,  asymptotic strong Feller property, or e-property. For instance, examples of non-equicontinuous Markov semigroups are provided in \cite[Example 2.1]{HSZ2017}, \cite[Section 4]{GLLL2024}, as well as in Examples \ref{Ex 1}, \ref{Ex 2}, and \ref{Ex 3} of this manuscript.
    
    \item[$(\runum{2})$] The (Ces\`aro)  e-property, when combined with other  assumptions such as lower bound condition and  weak irreducibility, serves as a powerful sufficient condition for verifying the ergodicity of Markov--Feller semigroups. In contrast, (Ces\`aro) eventual continuity is not only strictly weaker but also necessary for ergodic process starting from its ergodic component. Moreover respectively, eventual continuity is necessary for global asymptotic stability (see \cite{J2013,LL2024}), while Ces\`aro eventual continuity is  necessary for weak-* mean ergodicity (see Definition~\ref{Def 3} and Proposition \ref{Prop 1}). From a theoretical standpoint,  it deserves a careful consideration to push forward the ergodic theory of Markov--Feller semigroups under the framework of (Ces\`aro) eventual continuity.
     
    \item[$(\runum{3})$] From a practical perspective, several established methods, e.g. gradient estimates, coupling techniques, and functional inequalities (see \cite{Chenmf1989, DPZ1996, HM2006, Wangfy2014, KS2018} and references therein), have been developed to verify the asymptotic strong Feller property or equicontinuity. Given that (Ces\`aro) eventual continuity is a relatively weaker regularity condition, we aim to provide simpler or more easily verifiable criteria for the existence and ergodicity of invariant probability measures via (Ces\`aro) eventual continuity, which can be applied to more complex  systems.
    \end{itemize}

    \vspace{3mm}
    
    \subsection{Our main results}  

    To describe our results, let $\{P_t\}_{t\ge 0}$ be a Markov--Feller semigroup on Polish space $\mathcal{X}$,  $Q_tf(x):=\frac{1}{t}\int_{0}^{t}P_sf(x)\d s$, $Q_t\mu:=\frac{1}{t}\int_{0}^{t}P_s\mu\, \d s$ for $t>0$ and $\mu \in \mathcal{P}(\mathcal{X})$, the collection of probability measure on $\mathcal{X}$.  Denote $Q_t(x, \cdot)=Q_t\delta_x$.  For convenience, when we refer to invariant measures and ergodic measure in this paper, we mean invariant probability measures and  ergodic probability measure, respectively.

    In the context of ergodic theory, the ergodicity of $\{P_t\}_{t\ge 0}$ can be determined by the asymptotic behaviours of  $\{Q_t\mu\}_{t\ge 0}$, therefore it is natural to analyze the set 
    \begin{equation}\label{Def T}
	\mathcal{T}:=\big\{x\in \mathcal{X}:\{Q_t(x,\cdot)\}_{t\geq 0}\text{ is tight}\big\},
    \end{equation}
    through Ces\`aro eventual continuity.   

    \begin{theorem} \label{Main Thm1}\textsl{(Proposition \ref{Prop T2*}, Theorem \ref{Prop T3*} and Theorem \ref{Thm 1} below)} \ Let $\{P_t\}_{t\geq 0}$ be Ces\`aro eventually continuous on $\mathcal{X}$. 
 
    \begin{itemize}
    \item[(\runum{1})] If $\{P_t\}_{t\geq 0}$ admits an invariant measure $\mu$, then  $\supp\mu\subset\mathcal{T}.$
    \item[(\runum{2})]  If $\{P_t\}_{t\geq 0}$ admits an ergodic invariant measure $\mu$, then for any $x\in\supp\mu$, $\{Q_t(x,\cdot)\}_{t\geq 0}$ weakly converges to $\mu$  as $t\rightarrow\infty$.
 
    \item[(\runum{3})] If $\{P_t\}_{t\geq 0}$ admits an ergodic invariant measure $\mu$ and  is stochastically continuous $\mathcal{X}$, then $\{P_t\}_{t\geq 0}$ satisfies the Ces\`aro e-property on  the interior of $\supp\mu$ in $\mathcal{X}$.
    \end{itemize} 
 
    \end{theorem}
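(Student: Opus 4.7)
The plan is to prove the three parts in sequence, with (ii) and (iii) building on (i). The common engine is that Ces\`aro eventual continuity at $x\in\supp\mu$ lets us couple the Ces\`aro averages started at $x$ with those started at $\mu$-typical points in a small neighborhood of $x$, for which Birkhoff's ergodic theorem provides precise asymptotic information. For (i), I would first invoke Birkhoff together with the ergodic decomposition $\mu=\int\mu_y\,\mu(dy)$: for $\mu$-a.e.\ $y$ and every $f\in C_b(\mathcal{X})$, $Q_tf(y)\to\mu_y(f)$, so $Q_t(y,\cdot)\to\mu_y$ weakly and in particular $\mu(\mathcal{T})=1$. To push this to all of $\supp\mu$, fix $x\in\supp\mu$ and apply Ces\`aro e.c.\ at $x$: for $f$ and $\varepsilon>0$ there is a neighborhood $U=U(x,f,\varepsilon)$ with $|Q_tf(y)-Q_tf(x)|<\varepsilon$ for $y\in U$ and large $t$. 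Since $\mu(U)>0$, pick $y\in U$ with $Q_tf(y)\to\mu_y(f)$; then $\limsup_t|Q_tf(x)-\mu_y(f)|\le\varepsilon$, and the arbitrariness of $\varepsilon$ forces $L(f):=\lim_tQ_tf(x)$ to exist for every $f\in C_b$. Tightness of $\{Q_t(x,\cdot)\}_{t\ge 0}$ is then obtained by verifying Daniell's condition for the positive linear functional $L$: using an increasing sequence of compacts $K_n$ with $\mu(K_n^c)\to 0$ and Lipschitz cutoffs supported near them, the fact that $\mu_y(K_n)\to 1$ for $\mu$-a.e.\ $y$ feeds back through Ces\`aro e.c.\ to yield $L(\mathbf{1}_{K_n})\uparrow 1$, so $L$ is represented by a Borel probability $\nu_x$ and $Q_t(x,\cdot)\to\nu_x$ weakly, hence is tight.

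Part (ii) follows from the same coupling: ergodicity of $\mu$ collapses all ergodic components to $\mu$ itself, so $\mu_y=\mu$ for $\mu$-a.e.\ $y$, and the argument above identifies $L(f)=\mu(f)$ for every $f\in C_b$, i.e.\ $Q_t(x,\cdot)\to\mu$ weakly. For (iii), the Ces\`aro e-property demands $\sup_t$ control in a neighborhood, while Ces\`aro e.c.\ only gives $\limsup_t$; I split the argument at a threshold $T$. For $t\ge T$, the weak convergence from (ii) combined with Ces\`aro e.c.\ at $x\in\mathrm{int}(\supp\mu)$ yields a neighborhood $U_1\subset\mathrm{int}(\supp\mu)$ on which $|Q_tf(y)-Q_tf(x)|$ is uniformly small (both sides being close to the constant $\mu(f)$). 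For $t\in(0,T]$, stochastic continuity together with the Feller property gives joint continuity of $(t,y)\mapsto Q_tf(y)$ on $[0,T]\times\mathcal{X}$, hence equicontinuity in $y$ uniformly in $t\in(0,T]$; this produces a neighborhood $U_2$ of $x$ with the uniform bound. Intersecting $U_1\cap U_2$ (inside $\mathrm{int}(\supp\mu)$) gives the required neighborhood for the Ces\`aro e-property at $x$.

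The main obstacle is the tightness verification in (i): in a general (non-locally-compact) Polish space, the $\eta$-thickening of a compact set $K$ need not be precompact, so one cannot conclude directly from $Q_t(x,K^\eta)\approx 1$ that there is a compact set carrying almost all the mass. The Daniell-condition route circumvents this by transferring the automatic Polish-space tightness of the ergodic components $\mu_y$ to $Q_t(x,\cdot)$ through Ces\`aro e.c., but it requires coordinating the $\mu$-measure of the Ces\`aro-e.c.\ neighborhoods $U_n$ (which depend on the test function cut off near $K_n$) with the $\mu$-full sets $B_n=\{y:\mu_y(K_n)\ge 1-1/n\}$. Since $\mu(B_n^c)\to 0$, this is typically achieved by an inductive construction of $(K_n,h_n,U_n,\varepsilon_n)$ in which each $\varepsilon_n$ is chosen small enough that $\mu(B_n^c)<\mu(U_n)$, ensuring $U_n\cap B_n$ is non-empty and providing the witness $y_n$ needed to pass the Daniell limit.
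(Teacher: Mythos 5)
Your part (\runum{2}) is essentially correct and is essentially the paper's proof of Theorem \ref{Prop T3*}: a $\mu$-full set of Birkhoff-generic points, a sequence of such points converging to $x\in\supp\mu$, and Ces\`aro eventual continuity at $x$ identify $\lim_{t}Q_tf(x)=\langle f,\mu\rangle$ for $f\in L_b(\mathcal{X})$. The other two parts have genuine gaps. For (\runum{1}), the tightness step is circular at exactly the point you flag, and your proposed repair goes the wrong way: the eventual-continuity neighborhood $U_n$ is produced for the cutoff $h_n$, which is built from the compact $K_n$, which in turn determines $B_n$; there is no lower bound whatsoever on $\mu(U_n)$, shrinking the tolerance $\varepsilon_n$ only shrinks $U_n$ further, and $\mu(B_n^c)$ is fixed once $K_n$ is fixed, so you cannot guarantee $U_n\cap B_n\neq\emptyset$. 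Moreover, even granting $L(h_n)\uparrow 1$, you only control the $L$-mass of the thickenings $K_n^{\eta_n}$, which, as you yourself note, are not precompact, so the Daniell/Riesz step still needs an extra argument. The paper's proof of Proposition \ref{Prop T2*} breaks the circularity by a different mechanism: assuming non-tightness, it extracts mutually $\epsilon$-separated compacts $K_i$ with $Q_{t_i}(x,K_i)\geq\epsilon$, and plays them against the conditional measures $\nu_n=\mu(\,\cdot\,|\,B(x,\delta_n))$. Since $\nu_n\leq\mu(B(x,\delta_n))^{-1}\mu$ and $\mu$ is invariant, the family $\{Q_t\nu_n\}_{t\geq 0}$ is tight, which controls the tail $\sum_{i\geq n}\bar f_i$ with no appeal to eventual continuity at all; eventual continuity is applied only to the finite partial sums $f_n$ (whose neighborhoods are chosen \emph{before} the later compacts $K_{m_i}$, $i\geq n$, are selected) and finally to the single fixed function $f=\sum_i\bar f_i$. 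This quantitative use of the invariance of $\mu$ is the ingredient your sketch is missing, and I do not see how to supply it within the Daniell framework.

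For (\runum{3}) the claimed neighborhood $U_1$ does not come for free. Part (\runum{2}) gives $Q_tf(y)\to\langle f,\mu\rangle$ for each fixed $y$, with a rate depending on $y$, and Ces\`aro eventual continuity only controls $\limsup_{t\to\infty}$; neither yields a single $T$ with $|Q_tf(y)-Q_tf(x)|<\epsilon$ for \emph{all} $y\in U_1$ and \emph{all} $t\geq T$. That uniformity is precisely the gap between Ces\`aro eventual continuity and the Ces\`aro e-property, i.e., it is the entire content of part (\runum{3}), so asserting it amounts to assuming the conclusion. The paper's proof of Theorem \ref{Thm 1} earns this uniformity in two nontrivial steps: a Baire category argument on $Y_n=\{y:|Q_tf(y)-Q_tf(x_0)|\leq\epsilon/2\ \forall t\geq n\}$ produces \emph{some} ball $B(z,2r)$ and a time $T_*$ with uniform control there (not a ball around the given point), and then the lower-bound/measure-decomposition technique, fed by $\liminf_{t}Q_t\nu(B(z,r))>\alpha$ for all $\nu$ supported in $\supp\mu$ (a consequence of part (\runum{2})), transports that uniform control to the sequence $x_n\to x_0$. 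Your treatment of the bounded time range $t\in(0,T]$ via stochastic continuity and the Feller property does match the paper's Step 1.
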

 
    The part {\it (\runum{3}) } of Theorem \ref{Main Thm1} implies an interesting corollary that if $\{P_t\}_{t\ge 0}$ is restricted on ${\rm supp}\, \mu$, then the following three statements are equivalent: $(1)$ $\{P_t\}_{t\ge 0}$ is weakly-* mean ergodic on ${\rm supp}\, \mu$; $(2)$ $\{P_t\}_{t\ge 0}$ satisfies Ces\`aro e-property on ${\rm supp}\, \mu$;  $(3)$ $\{P_t\}_{t\ge 0}$ is Ces\`aro eventually continuous on ${\rm supp}\, \mu$ (see Proposition \ref{Prop QEvC=Q-e}).

     \vspace{3mm}
 
     Next, we provide serval criteria of the existence and uniqueness of invariant measures of Ces\`aro eventually continuous Markov--Feller semigroups via lower bound conditions. 
     \begin{theorem} \label{Main Thm2} \textsl{(Proposition \ref{Prop Exist*}, Proposition \ref{Thm 2} below)}  \ Let $\{P_t\}_{t\geq 0}$ be Ces\`aro eventually continuous on $\mathcal{X}$.
     \begin{itemize}
     \item[(\runum{1})]   If there exists some $z\in \mathcal{X}$ such that for any $\epsilon>0$, 
            \begin{equation}\label{eq 3.4}\tag{$\mathcal{C}_1$}
		\limsup\limits_{t\rightarrow\infty}Q_t(z,B(z,\epsilon))>0,
		\end{equation}
    then $\mathcal{T}\neq\emptyset$, i.e.,  $\{P_t\}_{t\geq 0}$ admits an invariant measure.
	
    \item[(\runum{2})]   If there exists $z\in \mathcal{X}$ such that for any $x\in \mathcal{X}$ and $\epsilon>0$,
    \begin{equation}\label{eq 3.5}\tag{$\mathcal{C}_2$}
    \limsup\limits_{t\rightarrow\infty}Q_t(x,B(z,\epsilon))>0,
    \end{equation}
    then $\{P_t\}_{t\geq 0}$ admits a unique invariant measure $\mu_*$.  Moreover, $z\in\supp\mu_*$ and $\{Q_t\nu\}_{t\geq 0}$ weakly converges to $\mu_*$  as $t\rightarrow\infty$ for any $\nu\in\mathcal{P}(\mathcal{X})$ with $\supp\nu\subset\mathcal{T}.$ 
		
    \end{itemize}
    \end{theorem}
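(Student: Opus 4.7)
The strategy is to prove (i) by extracting an invariant measure as a subsequential weak-$*$ limit of $\{Q_{t_n}(z,\cdot)\}$, and then to derive (ii) from (i) together with Theorem~\ref{Main Thm1}.

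For part (i): From the hypothesis, pick $\epsilon_0>0$, $\alpha>0$, and a sequence $t_n\to\infty$ with $Q_{t_n}(z,B(z,\epsilon_0))\ge\alpha$. Embedding $\mathcal{X}$ into a compact metric compactification $\widehat{\mathcal{X}}$ and applying Banach--Alaoglu, pass to a subsequence (still denoted $t_n$) so that $Q_{t_n}(z,\cdot)\to\widehat\mu$ in the weak-$*$ topology of $\mathcal{P}(\widehat{\mathcal{X}})$; set $\mu_*:=\widehat\mu|_{\mathcal{X}}$, which is a sub-probability measure on $\mathcal{X}$ with $\mu_*(\overline{B(z,\epsilon_0)})\ge\alpha$ by the portmanteau theorem for closed sets, hence non-trivial. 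The Krylov--Bogoliubov identity
\[
\|P_sQ_{t_n}\delta_z-Q_{t_n}\delta_z\|_{\mathrm{TV}}\le \tfrac{2s}{t_n}\xrightarrow[n\to\infty]{} 0,
\]
combined with the Feller property, will give $P_s\mu_*=\mu_*$ on $\mathcal{X}$ once we know that $\widehat\mu$ charges no mass outside $\mathcal{X}$. This last point is the key step, and it is where Cesàro eventual continuity at $z$, together with the fact that $\limsup_t Q_t(z,B(z,\epsilon))>0$ for \emph{every} $\epsilon>0$, enters: the combination of asymptotic regularity at $z$ and recurrence to every neighborhood of $z$ should confine the asymptotic mass of $Q_{t_n}(z,\cdot)$ to $\mathcal{X}$, preventing leakage to $\widehat{\mathcal{X}}\setminus\mathcal{X}$. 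Once $\mu_*$ is identified as a non-zero $P_s$-invariant finite measure on $\mathcal{X}$, normalization yields an invariant probability measure $\mu$, and Theorem~\ref{Main Thm1}(i) gives $\emptyset\ne\supp\mu\subset\mathcal{T}$.

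For part (ii): By (i) an invariant measure exists. By the ergodic decomposition it suffices to show that every ergodic invariant measure $\mu$ has $z\in\supp\mu$. For any $x\in\supp\mu$, Theorem~\ref{Main Thm1}(ii) gives $Q_t(x,\cdot)\to\mu$ weakly, and the portmanteau theorem on the closed ball yields
\[
\mu(\overline{B(z,\epsilon)})\ge\limsup_t Q_t(x,\overline{B(z,\epsilon)})\ge\limsup_t Q_t(x,B(z,\epsilon))>0
\]
for every $\epsilon>0$, so $z\in\supp\mu$. Two ergodic measures that share $z$ in their supports must coincide, again by Theorem~\ref{Main Thm1}(ii) applied at $z$, so the invariant measure is unique; denote it $\mu_*$. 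Finally, for $\nu\in\mathcal{P}(\mathcal{X})$ with $\supp\nu\subset\mathcal{T}$: for each $x\in\mathcal{T}$, tightness of $\{Q_t(x,\cdot)\}$ forces every weak accumulation point to be $P_s$-invariant by Krylov--Bogoliubov, hence equal to $\mu_*$, so $Q_t(x,\cdot)\to\mu_*$ weakly; a bounded convergence argument applied to $x\mapsto Q_tf(x)$ under the $\nu$-integral then delivers $Q_t\nu\to\mu_*$ weakly for $f\in C_b(\mathcal{X})$.

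The main obstacle is the no-mass-loss step in part (i): rigorously verifying that $\widehat\mu(\widehat{\mathcal{X}}\setminus\mathcal{X})=0$. This is where Cesàro eventual continuity plays its essential role, turning local recurrence at $z$ into uniform asymptotic tightness of the Cesàro averages, and so ensuring that the constructed limit is a genuine invariant measure on $\mathcal{X}$ rather than a defective measure with mass at infinity.
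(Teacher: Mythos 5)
Your part (ii) is essentially the paper's argument (existence from part (i), then Theorem \ref{Main Thm1}(ii) to place $z$ in the support of every ergodic measure, the EMDS-type uniqueness at $z$, and tightness plus Krylov--Bogoliubov plus uniqueness to get $Q_t\nu\to\mu_*$), and it is correct as far as it goes. The problem is part (i), where there is a genuine gap: the ``no-mass-loss'' step that you flag as the main obstacle is not a technical footnote --- it \emph{is} the entire mathematical content of Proposition \ref{Prop Exist*} --- and your proposal does not actually prove it. Saying that Ces\`aro eventual continuity plus recurrence to neighborhoods of $z$ ``should confine the asymptotic mass'' is a restatement of the claim, not an argument. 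Worse, even your preliminary claim that $\mu_*:=\widehat\mu|_{\mathcal{X}}$ is non-trivial is unjustified in this setting: $\mathcal{X}$ is a general Polish space, so $\overline{B(z,\epsilon_0)}$ need not be compact, its closure in the compactification $\widehat{\mathcal{X}}$ may meet $\widehat{\mathcal{X}}\setminus\mathcal{X}$, and the portmanteau bound only gives $\widehat\mu\bigl(\mathrm{cl}_{\widehat{\mathcal{X}}}(B(z,\epsilon_0))\bigr)\ge\alpha$; all of that mass could sit outside $\mathcal{X}$. The compactification route you sketch works when closed balls are precompact (locally compact $\mathcal{X}$), which is exactly the case the lower-bound technique of Lasota--Szarek and of this paper is designed to go beyond.

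What the paper actually does is prove directly that $(\mathcal{C}_1)$ together with Ces\`aro eventual continuity at $z$ forces $\{Q_t(z,\cdot)\}_{t\ge0}$ to be tight, i.e.\ $z\in\mathcal{T}$, after which Krylov--Bogoliubov is routine. This is done by contradiction: if tightness fails, one extracts $\epsilon$-separated compact sets $K_i$ with $Q_{t_i}(z,K_i)\ge\epsilon$, and then, through a lengthy nested construction (shrinking balls around $z$ forced by $(\mathcal{C}_1)$ and the Feller property, repeated diagonal extraction of subsequences, and assembly of a bounded Lipschitz function $g=\sum_k f_{j_k^*}$ supported on the separated neighborhoods $K_{j_k^*}^{\epsilon/4}$), produces points $y_u^*\to z$ for which $\limsup_t|Q_tg(z)-Q_tg(y_u^*)|$ cannot be small, contradicting Ces\`aro eventual continuity at $z$. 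To repair your proof you would need to supply this tightness argument (or an equivalent one); nothing in your proposal substitutes for it.
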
      

    Let us mention that although a Markov--Feller semigroup with the Ces\`aro e-property satisfies properties similar to those in Theorem \ref{Main Thm1} $(\runum{1}), (\runum{2})$ and Theorem \ref{Main Thm2}, our results extend to non-equicontinuous models and provide a theoretical framework for deriving more easily verifiable ergodicity criteria. Moreover, Theorem \ref{Main Thm1} $(\runum{3}),$ and its corollary characterize the relationship between Ces\`aro eventual continuity and the Ces\`aro e-property for an ergodic measure.

    We also adopt a proof by contradiction to establish these results. However, the main challenge we encounter is the absence of uniform convergence in the Ces\`aro e-property, which would typically allow an application of the Arzel\`a--Ascoli theorem to construct a compact set. This compactness would enable the construction of a bounded Lipschitz function on the compact set, which could then be extended to $\mathcal{X}$ to separate two distinct invariant measures. This step is crucial in the proof by contradiction, as it leads to a violation of the Ces\`aro e-property. To overcome this difficulty, we directly construct a bounded Lipschitz function to separate invariant measures on $\mathcal{X}$ using the completeness and separability of $\mathcal{X}$ and the pointwise convergence guaranteed by Ces\`aro eventual continuity.

    \vspace{3mm}

   The third result implies that the weak-* mean ergodicity is equivalent to Ces\`aro eventual continuity and a uniform lower bound condition.      
   
   \begin{theorem}\label{Main Thm3}  \textsl{(Theorem  \ref{Thm 3} below)} The following two statements are equivalent: 
		\begin{itemize}
		    \item [(\runum{1})] $\{P_t\}_{t\geq 0}$ is weakly-* mean ergodic with unique invariant measure $\mu$.
		     \item[(\runum{2})]  $\{P_t\}_{t\geq 0}$ is Ces\`aro eventually continuous on $\mathcal{X}$ and there exists some $z\in\mathcal{X}$ such that for any $\epsilon>0$, 
	 \begin{equation}\label{eq 3.7}\tag{$\mathcal{C}_3$}
		\inf\limits_{x\in\mathcal{X}}\limsup\limits_{t\rightarrow\infty}Q_t(x,B(z,\epsilon))>0.
		\end{equation}
		
			\end{itemize}
   
   \end{theorem}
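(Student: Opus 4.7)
My plan is to prove the two implications separately.

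For $(\runum{1}) \Rightarrow (\runum{2})$: Assume weak-* mean ergodicity with unique invariant measure $\mu$, so $Q_t(x,\cdot)$ converges weakly to $\mu$ for every $x$. For any $f\in C_b(\mathcal{X})$ and $x,y\in\mathcal{X}$, both $Q_tf(x)$ and $Q_tf(y)$ tend to $\int f\,d\mu$, which forces $\limsup_{y\to x}\limsup_{t}|Q_tf(y)-Q_tf(x)|=0$, i.e., Ces\`aro eventual continuity. For $(\mathcal{C}_3)$, fix any $z\in\supp\mu$ (nonempty as $\mu$ is a probability measure). For every $\epsilon>0$ one has $\mu(B(z,\epsilon))>0$, and Portmanteau for the open set $B(z,\epsilon)$ gives $\liminf_t Q_t(x,B(z,\epsilon))\ge \mu(B(z,\epsilon))$ uniformly in $x$, so $\inf_x\limsup_t Q_t(x,B(z,\epsilon))\ge \mu(B(z,\epsilon))>0$.

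For $(\runum{2})\Rightarrow(\runum{1})$: Since $(\mathcal{C}_3)$ trivially implies $(\mathcal{C}_2)$, Theorem~\ref{Main Thm2}$(\runum{2})$ produces a unique invariant measure $\mu_*$ with $z\in\supp\mu_*$, and Theorem~\ref{Main Thm1}$(\runum{2})$ then gives $Q_t(z,\cdot)\to\mu_*$ weakly. The remaining task is to upgrade this to every initial state. Since $\operatorname{Lip}_b(\mathcal{X})$ is weak-convergence-determining, it suffices to prove that for every $f\in\operatorname{Lip}_b(\mathcal{X})$ with $c:=\int f\,d\mu_*$, the function
\[
g(x) \;:=\; \limsup_{t\to\infty}\bigl|Q_tf(x)-c\bigr|
\]
vanishes identically; note that $g$ is Borel (as a $\limsup$ of continuous functions, by the Feller property) and bounded by $2\|f\|_\infty$.

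The heart of the argument is the self-improving inequality $g(x)\le Q_rg(x)$ valid for every $x\in\mathcal{X}$ and every $r>0$. To see this, one checks directly that $\|Q_rQ_tf-Q_tf\|_\infty\le r\|f\|_\infty/t\to 0$, so $\limsup_t|Q_tf(x)-c|=\limsup_t|Q_rQ_tf(x)-c|$; expanding $Q_rQ_tf(x)=\int Q_tf(y)\,Q_r(x,dy)$ and applying reverse Fatou (the integrands being uniformly bounded) yields
\[
g(x)\;\le\;\limsup_{t}\int|Q_tf(y)-c|\,Q_r(x,dy)\;\le\;\int g(y)\,Q_r(x,dy)\;=\;Q_rg(x).
\]
Now apply Ces\`aro eventual continuity at $z$: since $g(z)=0$, for every $\eta>0$ there exists $\delta>0$ with $g(y)<\eta$ on $B(z,\delta)$. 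Set $M:=\sup_x g(x)$ and $\alpha:=\inf_x\limsup_r Q_r(x,B(z,\delta))>0$ by $(\mathcal{C}_3)$. For each $x$, selecting $r=r_n$ along a subsequence realizing the $\limsup$ yields
\[
g(x)\;\le\;Q_rg(x)\;\le\;\eta\,Q_r(x,B(z,\delta))+M\bigl(1-Q_r(x,B(z,\delta))\bigr)\;\longrightarrow\;M-(M-\eta)\alpha.
\]
Taking the supremum over $x$ gives $M\le M-(M-\eta)\alpha$; if $M>\eta$ this contradicts $\alpha>0$, forcing $M\le\eta$. Letting $\eta\downarrow 0$ yields $g\equiv 0$, completing the proof.

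The principal obstacle is the lack of any uniform equicontinuity: Ces\`aro eventual continuity supplies only a $\limsup$-type bound $\limsup_t|Q_tf(y)-Q_tf(z)|<\eta$ near $z$, so one cannot control $Q_tf$ pointwise in $t$ in any neighborhood. The workaround is to transfer the analysis to the auxiliary function $g$, where the self-improving bound $g\le Q_rg$ combined with the uniform lower bound $(\mathcal{C}_3)$ lets the local pointwise control at $z$ propagate through a single application of the dual semigroup to a global bound on $\sup_x g(x)$.
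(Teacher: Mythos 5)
Your proposal is correct, but the hard direction $(\runum{2})\Rightarrow(\runum{1})$ takes a genuinely different route from the paper's. The paper first upgrades \eqref{eq 3.7} to a $\liminf$ lower bound (Step 1), then runs an intricate compact-set and measure-decomposition argument (Steps 2--4, with the auxiliary supremum $\gamma$) to prove the stronger intermediate fact $\mathcal{T}=\mathcal{X}$, i.e.\ tightness of $\{Q_t(x,\cdot)\}_{t\geq 0}$ for every $x$, and only then concludes via Proposition \ref{Thm 2} and Lemma \ref{Prop T1*}. You bypass tightness entirely: the sub-invariance inequality $g\leq Q_rg$ for $g(x)=\limsup_{t}|Q_tf(x)-\langle f,\mu_*\rangle|$, combined with $g(z)=0$ (which follows from Theorem \ref{Prop T3*} because Proposition \ref{Thm 2} places $z$ in $\supp\mu_*$), the local bound $g<\eta$ on $B(z,\delta)$ coming from Ces\`aro eventual continuity at $z$, and the uniform lower bound \eqref{eq 3.7}, force $\sup_x g\leq\eta$ for every $\eta>0$. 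I checked the key steps: $\|Q_rQ_tf-Q_tf\|_\infty\leq r\|f\|_\infty/t$ follows from $P_sQ_tf=\tfrac1t\int_s^{t+s}P_uf\,\d u$; $g$ is Borel because $Q_tf$ is continuous in $x$ and in $t>0$, so the $\limsup$ may be taken over rational $t$; reverse Fatou applies along a realizing sequence since the integrands are uniformly bounded; and bounded Lipschitz functions are convergence-determining for probability measures, with the passage from $Q_t\delta_x\to\mu_*$ to $Q_t\mu\to\mu_*$ for general $\mu$ (as Definition \ref{Def 3} requires) being a one-line dominated-convergence step you should state. Your argument is shorter and more elementary --- closer in spirit to the paper's proof of Theorem \ref{Thm 4} than to its own proof of Theorem \ref{Thm 3} --- and it shows that the propagation step needs eventual continuity only at $z$; what it does not deliver is $\mathcal{T}=\mathcal{X}$, which the paper's argument establishes along the way and which has independent interest (e.g.\ for the sweeping and decomposition results). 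For the forward direction the paper cites an external result, whereas your direct Portmanteau argument (together with what is essentially Proposition \ref{Prop 1}) is self-contained; both are fine.
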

   
    Theorem \ref{Main Thm3} provides a necessary and sufficient condition for weak-* mean ergodicity through Ces\`aro eventual continuity and a lower bound condition \eqref{eq 3.7}, both of which are relatively easy to verify in practical examples. The key observation in the proof is simple yet effective: the long-term behavior of a Markov semigroup is primarily determined by its behavior over sufficiently large times. In fact, Ces\`aro eventual continuity is sufficient to guarantee this observation. In addition, eventual continuity itself plays a crucial role in connecting asymptotic stability, and it has been extensively studied in \cite{GL2015, GLLL2024}. The relationship between eventual continuity and equicontinuity is explored in \cite{LL2024}.

    \vspace{3mm}

    We next focus on the ergodicity of iterated function systems (IFS), a classic yet still active research topic. Various IFS are closely related to fractals, learning models, recursive estimations, gene expression, and population dynamics \cite{Lasota1995,S2000+,CHW-S2020}.   They can even be viewed as solutions to certain stochastic differential equations perturbed by Poisson random measures \cite{Lasota1995,  H2002, LT2003}. For more on IFS, see \cite{HMS2005, HMS2006, BKS2014, C2017} and the references therein.

    Specifically, we present an IFS with jumps that is Ces\`aro eventually continuous but not Ces\`aro equicontinuous, and has internal randomness (see Example \ref{Ex 3}). The asymptotic stability and the Ces\`aro weak-* mean ergodicity of one of its variants are established (see Example \ref{Ex 5}). Notably, to the best of our knowledge, all known examples of (Ces\`aro) eventually continuous Markov--Feller semigroups are deterministic dynamical systems (see \cite{GL2015, HSZ2017}). This example is inspired by a class of IFS considered in \cite{BKS2014}.

    In addition, we explore a place-dependent IFS and its variant, for which proving their (Ces\`aro) e-property seems difficult (see Examples \ref{Ex 6} and \ref{Ex 7}). Nevertheless, we demonstrate their ergodicity by verifying their (Ces\`aro) eventual continuity (see Section \ref{Sec 4.3}).

    \vspace{3mm}

    Finally, it should be noted that while we have systematically discussed the ergodic properties of Ces\`aro eventually continuous semigroups on Polish spaces and obtained relatively complete results, the ergodicity of Markov semigroups remains far more complex than is currently understood. We present two Markov processes without Ces\`aro eventual continuity (see Examples~\ref{hopf model} and \ref{lorenz system}), which illustrate different ergodic behaviors of non-Ces\`aro eventually continuous semigroups.
             
    \subsection{Outline and notation}
 
    The paper is organized as follows. In Section \ref{Sec 2}, we introduce serval definitions and basic properties. In Section \ref{Sec 3}, we present the theoretical results on the ergodic properties of Ces\`aro eventually continuous Markov--Feller semigroups, containing complete statements of Theorems~\ref{Main Thm1}-\ref{Main Thm3}. In Section \ref{Sec 4}, we apply these results to investigate the ergodicity of certain iterated function systems with jumps. Section \ref{Sec 5} examines two Markov processes that lack Ces\`aro eventual continuity: the stochastic Hopf model and the stochastic Lorenz system. Finally, the complete proofs of the results discussed in the previous sections are collected in Section \ref{Sec 6}.

    \vspace{3mm}
    Let $\mathcal{X}$ be a Polish (i.e., a complete separable metric) space endowed with metric $\rho$ and Borel $\sigma$-algebra $\mathcal{B}(\mathcal{X})$.  We use the following notations:
    
    \vspace{1mm}
    $\begin{aligned}
        \mathcal{M}(\mathcal{X})&=\text{ the family of finite Borel measures on } \mathcal{X},\\
	\mathcal{P}(\mathcal{X})&=\text{ the family of probability measures on } \mathcal{X},\\
	B_b(\mathcal{X})&=\text{ the space of bounded, Borel real-valued functions defined on }\mathcal{X},\\
	&\quad\;\text { endowed with the supremum norm: } \|f\|_{\infty}=\text{sup}_{x\in \mathcal{X}}\,|f(x)|,f\in B_b(\mathcal{X}),\\
	C_b(\mathcal{X})&=\text{ the subspace of }  B_b(\mathcal{X}) \text{ consisting of bounded continuous functions},\\
	L_b(\mathcal{X})&=\text{ the subspace of } C_b(\mathcal{X}) \text{ consisting of bounded Lipschitz functions},\\
	B(x,r)&=\{y\in \mathcal{X}:\rho(x,y)<r\} \text{ for } x\in \mathcal{X} \text{ and }r>0,\\
	\partial A,\overline{A},\text{Int}_{\mathcal{X}} (A)&=  \text{ the boundary, closure, interior of } A \text{ in } \mathcal{X}, \text{ respectively}, \\
        A^{r}&=\{y\in \mathcal{X}:\rho(x,y)<r,\;x\in A\} \text{ for } A\in \mathcal{B}(\mathcal{X}) \text{ and }r>0,\\
	\supp\mu&=\{x\in \mathcal{X}:\mu(B(x,\epsilon))>0 \text{ for any } \epsilon>0\}, \text{ for }\mu\in\mathcal{M}(\mathcal{X}), \\               &\quad\text{ i.e. the topological  support of the measure } \mu,\\
        \N,\Z,\R,\R_+&= \text{natural numbers, integers, real numbers, nonnegative numbers, respectively}. 
	\end{aligned}$	
    
    \vspace{1mm}
    For brevity, we use the notation $\langle f,\mu\rangle=\int_{\mathcal{X}}f(x)\mu(\d x)$  for $f\in B_b(\mathcal{X})$ and $\mu\in\mathcal{M}(\mathcal{X})$.	For a random variable $\xi$, we denote its law by $\mathcal{D}(\xi)$.

    \section{Preliminaries}\label{Sec 2}

    In this section, we recall some necessary notions of the Markov semigroups, and further introduce serval useful properties for (Ces\`aro) eventually continous semigroups.

    \begin{definition} \label{Ma-oprt} 
    An operator $P:\mathcal{M}(\mathcal{X})\rightarrow\mathcal{M}(\mathcal{X})$ is a Markov operator on $\mathcal{X}$ if it satisfies the following conditions
    \begin{itemize}
        \item[$(\runum{1})$] {\rm(}Positive linearity{\rm)} $P(\lambda_1\mu_1+\lambda_2\mu_2)=\lambda_1P\mu_1+\lambda_2P\mu_2$ for $\lambda_1,\lambda_2\geq 0$, $\mu_1,\mu_2\in\mathcal{M}(\mathcal{X})$;
        \item[$(\runum{2})$]  {\rm(}Preservation of the norm{\rm)} $P\mu(\mathcal{X})=\mu(\mathcal{X})$ for $\mu\in\mathcal{M}(\mathcal{X})$.
       \end{itemize}
 \end{definition}       
 
\begin{definition}\label{Ma-regul}  
    A Markov operator $P$ is regular if there exists a linear operator $P^*:B_b(\mathcal{X})\rightarrow B_b(\mathcal{X})$ such that
    \begin{equation*}
        \langle f, P\mu\rangle = \langle P^*f, \mu\rangle\quad\forall\,f\in B_b(\mathcal{X}),\;\mu\in\mathcal{M}(\mathcal{X}).
    \end{equation*} 
\end{definition}    
    For the ease of notation, we simply rewrite $P^*$ as $P$.   A {\it Markov semigroup}  $\{P_t\}_{t\geq 0}$ on  $\mathcal{X}$ is a semigroup of Markov operators on $\mathcal{M}(\mathcal{X})$.    
\begin{definition}    \label{Ma-Feller}
 \begin{itemize}     
   \item[$(\runum{1})$]   A Markov operator $P$ is a Markov--Feller operator if it is regular and $P$ leaves $C_b(\mathcal{X})$ invariant, i.e., $P(C_b(\mathcal{X}))\subset C_b(\mathcal{X})$; 
   
   \item[$(\runum{2})$]   A Markov semigroup $\{P_t\}_{t\geq 0}$ is a Markov--Feller semigroup if $P_t$ is a Markov--Feller operator for any $t\geq 0$.  
   
   \item[$(\runum{3})$] A Markov semigroup $\{P_t\}_{t\geq 0}$ is  stochastically continuous if 
			\begin{equation*}
				\lim\limits_{t\rightarrow 0}P_tf(x)=f(x)\quad\forall\,f\in C_b(\mathcal{X}),\;x\in\mathcal{X}.
			\end{equation*}
 \end{itemize}
 
\end{definition}

   Recall that $\mu\in\mathcal{P}(\mathcal{X})$ is \emph{invariant} for the semigroup $\{P_t\}_{t\geq 0}$ if $P_t\mu=\mu$ for any $t\geq 0$.  For $\mu\in\mathcal{P}(\mathcal{X}),$ define 
     \begin{equation*}
        Q_t\mu:=\frac{1}{t}\int_{0}^{t}P_s\mu\, \d s\quad\text{for }t>0,
     \end{equation*}
     and denote $Q_t(x,\cdot)=Q_t\delta_x$.  

    \vspace{3mm}
  
    Before defining (Ces\`aro) eventually continuity, recall the notion of e-property as follows.

    \begin{definition}\label{e-property} A Markov semigroup $\{P_t\}_{t\geq 0}$ satisfies the e-property at $z\in \mathcal{X}$, if $\forall\,f\in L_b(\mathcal{X})$,
		\begin{equation*}
		\limsup\limits_{x\rightarrow z}\sup\limits_{t\geq 0}|P_tf(x)-P_tf(z)|=0.
		\end{equation*}
		That is, $\forall\,\epsilon>0$, $\exists\,\delta>0$ such that $\forall\,x\in B(z,\delta)$, 
		\begin{equation*}
			|P_tf(x)-P_tf(z)|\leq \epsilon,\quad\forall\, t\geq0.
		\end{equation*}		
	Similarly, 	$\{P_t\}_{t\geq 0}$ satisfies the Ces\`aro e-property at $z\in \mathcal{X}$, if $\forall\,f\in L_b(\mathcal{X})$,
		\begin{equation*}
		\limsup\limits_{x\rightarrow z}\sup\limits_{t\geq 0}|Q_tf(x)-Q_tf(z)|=0.
		\end{equation*}    
    \end{definition}

    \begin{definition}\label{EvC}
		A Markov semigroup 
		$\{P_t\}_{t\geq 0}$ is eventually continuous at $z\in \mathcal{X},$ if for any Lipschitz bounded function $f,$
		\begin{equation}\label{def1}
		\limsup\limits_{x\rightarrow z}\limsup\limits_{t\rightarrow\infty}|P_tf(x)-P_tf(z)|=0,
		\end{equation}
		that is, $\forall\,\epsilon>0$, $\exists\,\delta>0$ such that $\forall\,x\in B(z,\delta)$, there exists some $T_x\geq 0$ satisfying
		\begin{equation*}
			|P_tf(x)-P_tf(z)|\leq \epsilon\quad\forall\, t\geq T_x.
		\end{equation*}		
	$\{P_t\}_{t\geq 0}$ is Ces\`aro eventually continuous at $z\in \mathcal{X},$ if for any Lipschitz bounded function $f,$
		\begin{equation}\label{def2}
		\limsup\limits_{x\rightarrow z}\limsup\limits_{t\rightarrow\infty}|Q_tf(x)-Q_tf(z)|=0,
		\end{equation} 
	where $Q_tf(x)=\frac{1}{t}\int_{0}^{t}P_sf(x)\d s$ for $t>0$.
    \end{definition}

    By definition,  eventual continuity implies Ces\`aro eventual continuity.

    \begin{proposition}\label{Prop ECQEC}
    Let $\{P_t\}_{t\geq 0}$ be  eventually continuous at $z\in\mathcal{X}$. Then $\{P_t\}_{t\geq 0}$  is Ces\`aro eventually continuous at $z$.
    \end{proposition}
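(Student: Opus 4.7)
The plan is to reduce the Cesàro difference to the pointwise difference via a simple splitting of the time integral, exploiting the fact that eventual continuity gives a small pointwise bound after a (possibly $x$-dependent) time $T_x$, and that the pre-$T_x$ portion of the Cesàro average becomes negligible as $t \to \infty$.

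First I would fix an arbitrary Lipschitz bounded $f$ and an arbitrary $\epsilon > 0$. By the eventual continuity hypothesis at $z$, there is some $\delta > 0$ such that for every $x \in B(z, \delta)$ there exists $T_x \geq 0$ with
\[
|P_s f(x) - P_s f(z)| \leq \epsilon \qquad \forall\, s \geq T_x.
\]
Next, for any $t > T_x$, I would write
\[
|Q_t f(x) - Q_t f(z)| \leq \frac{1}{t} \int_0^{T_x} |P_s f(x) - P_s f(z)|\, \d s + \frac{1}{t} \int_{T_x}^t |P_s f(x) - P_s f(z)|\, \d s.
\]
The first integrand is bounded by $2\|f\|_\infty$, so the first term is at most $2 \|f\|_\infty T_x / t$, which tends to $0$ as $t \to \infty$ with $x$ (hence $T_x$) fixed. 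The second term is bounded by $\epsilon (t - T_x)/t \leq \epsilon$. Therefore
\[
\limsup_{t \to \infty} |Q_t f(x) - Q_t f(z)| \leq \epsilon \qquad \forall\, x \in B(z, \delta),
\]
which in turn gives $\limsup_{x \to z}\limsup_{t \to \infty} |Q_t f(x) - Q_t f(z)| \leq \epsilon$. Since $\epsilon > 0$ was arbitrary, the Cesàro eventual continuity at $z$ follows.

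There is no real obstacle here; the only point that requires a brief justification is that $T_x$ is finite for each $x \in B(z, \delta)$ (so that $T_x / t \to 0$), which is immediate from the quantitative reformulation of eventual continuity stated in Definition~\ref{EvC}. The boundedness of $f$ is what makes the initial segment of the Cesàro integral disappear in the limit, so no additional assumption on $\{P_t\}_{t \geq 0}$ beyond what is already given is needed.
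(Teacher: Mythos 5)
Your proof is correct and follows essentially the same argument as the paper: both split the Ces\`aro integral at the time $T_x$ furnished by eventual continuity, bound the initial segment by $2\|f\|_\infty T_x/t$ (which vanishes as $t\to\infty$), and bound the tail by $\epsilon$. The only cosmetic difference is that the paper works with $\epsilon/2$ and an explicit threshold $T_x'$ rather than passing directly to the $\limsup$ in $t$.
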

    \begin{proof}
        By \eqref{def1}, $\forall\,\epsilon>0$ and $f\in L_b(\mathcal{X})$, $\exists\,\delta>0$ such that $\forall\,x\in B(z,\delta)$, there exists some $T_x\geq 0$ satisfying
		\begin{equation*}
			|P_tf(x)-P_tf(z)|\leq \epsilon/2\quad\forall\, t\geq T_x.
		\end{equation*}	
        Therefore, choosing $T_x':=\max\{T_x,4T_x\|f\|_{\infty}/\epsilon\}$, then for any $t\geq T_x'$, one gets
        \begin{align*}
            |Q_tf(x)-Q_tf(z)|&\leq \frac{1}{t}\int_{0}^{T_x}|P_sf(x)-P_sf(z)|\d s+\frac{1}{t}\int_{T_x}^{t}|P_sf(x)-P_sf(z)|\d s\\
            &\leq 2T_x\|f\|_{\infty}t^{-1}+\epsilon/2\leq \epsilon,
        \end{align*}
        which completes the proof of Ces\`aro eventual continuity at $z$.
    \end{proof}

    Throughout this paper, we assume that $\{P_t\}_{t\geq 0}$ is a Markov--Feller semigroup. For the set $\mathcal{T}$ defined in \eqref{Def T}, we have  the following results. 

    \begin{lemma}\label{Prop T4}
    Let $\{P_t\}_{t\geq 0}$ be Ces\`aro eventually continuous on $\mathcal{X}$. Then $\mathcal{T}$ is a closed set in $\mathcal{X}$.
    \end{lemma}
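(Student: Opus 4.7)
The plan is to take $z\in\overline{\mathcal{T}}$ with a sequence $\{x_n\}\subset\mathcal{T}$, $x_n\to z$, and verify tightness of $\{Q_t(z,\cdot)\}_{t>0}$ in $\mathcal{P}(\mathcal{X})$ through Prokhorov's theorem on the Polish space $\mathcal{X}$: every sequence $\{t_k\}\subset(0,\infty)$ should admit a subsequence along which $Q_{t_k}(z,\cdot)$ converges weakly to an element of $\mathcal{P}(\mathcal{X})$. Passing to a subsequence I may assume $t_k\to t_\infty\in[0,\infty]$. For $t_\infty\in(0,\infty)$, the continuity of $t\mapsto Q_t\phi(z)$ on $(0,\infty)$ for each $\phi\in C_b(\mathcal{X})$ (from the Markov--Feller property together with dominated convergence on $[0,t_\infty]$) gives the weak convergence $Q_{t_k}(z,\cdot)\to Q_{t_\infty}(z,\cdot)$ directly; the substantive case is $t_k\to\infty$.

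For $t_k\to\infty$, since $\{Q_{t_k}(x_n,\cdot)\}_k$ is tight for each $n$ by $x_n\in\mathcal{T}$, a Cantor diagonal extraction produces a common subsequence $\{t_{k_j}\}$ along which $Q_{t_{k_j}}(x_n,\cdot)\to\mu_n\in\mathcal{P}(\mathcal{X})$ weakly for every $n$. The Ces\`aro eventual continuity at $z$ then furnishes the crucial comparison: for any $f\in L_b(\mathcal{X})$,
\[
|\langle f,\mu_n\rangle-\langle f,\mu_m\rangle|\le\limsup_j|Q_{t_{k_j}}f(x_n)-Q_{t_{k_j}}f(z)|+\limsup_j|Q_{t_{k_j}}f(x_m)-Q_{t_{k_j}}f(z)|,
\]
and the right-hand side tends to $0$ as $n,m\to\infty$ because $x_n,x_m\to z$ and $t_{k_j}\to\infty$. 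Hence $\{\langle f,\mu_n\rangle\}_n$ is Cauchy in $\mathbb{R}$, and the analogous estimate yields $\lim_j Q_{t_{k_j}}f(z)=\lim_n\langle f,\mu_n\rangle=:I(f)$ for every $f\in L_b(\mathcal{X})$.

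The main obstacle I anticipate is establishing tightness of $\{\mu_n\}$ in $\mathcal{P}(\mathcal{X})$, so that $\mu_n\to\mu\in\mathcal{P}(\mathcal{X})$ weakly along a further subsequence. Each $\mu_n$ is tight individually through a compact set inherited from $\{Q_t(x_n,\cdot)\}_{t>0}$, but that compact set depends on $n$. To produce a uniform tightness modulus I would use the totally-bounded characterization on Polish spaces: given $\epsilon,\eta>0$, choose an index $N$ with $x_N$ sufficiently close to $z$ relative to the Ces\`aro eventual continuity modulus of the Lipschitz cutoff constructed at stage $N$, extract from the tightness at $x_N$ a compact $K_N$ with $Q_t(x_N,K_N)\ge 1-\epsilon/4$ for all $t>0$, a finite $F_N\subset K_N$ with $K_N\subset F_N^{\eta/2}$, and a Lipschitz function $\phi_N\in L_b(\mathcal{X})$ supported in $F_N^\eta$ equal to $1$ on $F_N^{\eta/2}$. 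Then $\langle\phi_N,\mu_N\rangle\ge 1-\epsilon/4$ by Portmanteau, and the displayed Cauchy estimate applied to $f=\phi_N$ transfers this to $\mu_n(F_N^\eta)\ge\langle\phi_N,\mu_n\rangle\ge 1-\epsilon$ for every sufficiently large $n$; the individual tightness of the finitely many remaining $\mu_n$'s completes a uniform finite $\eta$-net for $\{\mu_n\}$. Once tightness is secured, the identity $\langle f,\mu\rangle=\lim_j Q_{t_{k_j}}f(z)$ for every $f\in L_b(\mathcal{X})$ together with the fact that $L_b(\mathcal{X})$ is convergence-determining for probability measures on Polish spaces upgrades to $Q_{t_{k_j}}(z,\cdot)\to\mu$ weakly, completing the verification of $z\in\mathcal{T}$.
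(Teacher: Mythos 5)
Your overall strategy---verifying $z\in\mathcal{T}$ for $z\in\overline{\mathcal{T}}$ directly, by showing relative compactness of $\{Q_t(z,\cdot)\}_{t>0}$ and invoking the converse of Prokhorov's theorem---is legitimate in principle and differs from the paper, which argues by contradiction. The diagonal extraction, the Cauchy estimate for $\{\langle f,\mu_n\rangle\}_n$, and the identification $\lim_j Q_{t_{k_j}}f(z)=\lim_n\langle f,\mu_n\rangle$ are all correct. The gap sits exactly where you anticipate it: the uniform tightness of $\{\mu_n\}_n$, and your proposed resolution is circular. You must ``choose an index $N$ with $x_N$ sufficiently close to $z$ relative to the Ces\`aro eventual continuity modulus of the Lipschitz cutoff constructed at stage $N$,'' but that cutoff $\phi_N$ is built from the compact set $K_N$ furnished by the tightness of $\{Q_t(x_N,\cdot)\}_t$, so $\phi_N$ is not available until $N$ has already been fixed. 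Ces\`aro eventual continuity, unlike the Ces\`aro e-property, yields a modulus $\delta_f$ depending on the individual function $f$ and in no way uniform over the class $\{\|f\|_\infty\le 1,\ \|f\|_{\rm Lip}\le 2/\eta\}$ to which all your cutoffs belong; so there is no single $\delta$ valid for every $\phi_N$ simultaneously. Nor does reordering help: if you fix $n_0$, build $\phi_{n_0}$ from $K_{n_0}$, and let only $m\to\infty$, the transfer estimate still contains the term $\limsup_t|Q_t\phi_{n_0}(x_{n_0})-Q_t\phi_{n_0}(z)|$, with $x_{n_0}$ at a fixed positive distance from $z$ and no control whatsoever. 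This is precisely the obstruction the authors flag in the introduction (``the absence of uniform convergence in the Ces\`aro e-property'').

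The paper's proof avoids the circle by reversing the order of quantifiers inside a contradiction argument: the non-tightness at $z$ produces $\epsilon$-separated compacta $K_i$ with $Q_{t_i}(z,K_i)\ge\epsilon$; at stage $n+1$ the test function $f_{n+1}=\sum_{i\le n}\bar f_i$ is \emph{already completely determined} before the point $x_{n+1}$ is selected (by relabelling the given sequence) close enough to $z$ for that fixed function, and only afterwards are the index $m_{n+1}$ and the next summand $\bar f_{n+1}$ chosen, using the tightness at $x_{n+1}$ together with the mutual separation of the $K_i$. Eventual continuity is thus only ever invoked for functions that do not depend on the point currently being chosen, and the single limit function $f=\sum_i\bar f_i$ then violates Ces\`aro eventual continuity at $z$. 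Unless you can restructure your tightness step in the same ``function first, point second'' order---which appears to force you back into essentially this inductive construction---the proposal as written does not close. A secondary, minor point: your Prokhorov reduction should also dispose of sequences $t_k\to 0^{+}$, where the continuity of $t\mapsto Q_t\phi(z)$ on $(0,\infty)$ no longer applies.
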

		
    \begin{lemma}\label{Prop T1*}
    Let $\{P_t\}_{t\geq 0}$ be Ces\`aro eventually continuous on $\mathcal{X}$. Then for any $x\in\mathcal{T}$, the set $\{Q_t(x,\cdot)\}_{t\geq 0}$ weakly converges to an invariant measure $\varepsilon_x\in\mathcal{P}(\mathcal{X})$ as $t\rightarrow\infty$.  Moreover, for any  $\mu\in\mathcal{P}(\mathcal{X})$ with $\supp\mu\subset\mathcal{T}$, $\{Q_t\mu\}_{t\geq 0}$ weakly converges to $\int_{x\in \mathcal{T}}\varepsilon_x\mu(\d x)$, where the integral is in the Bochner sense as follows,
    \begin{equation*}
		    \int_{x\in \mathcal{T}}\varepsilon_x\mu(\d x)(A)=\int_{x\in \mathcal{T}}\varepsilon_x(A)\mu(\d x)\quad\text{for }A\in\mathcal{B}(\mathcal{X}).
    \end{equation*}
    \end{lemma}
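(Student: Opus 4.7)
The plan is to prove the weak convergence $Q_t(x,\cdot)\Rightarrow\varepsilon_x$ for $x\in\mathcal{T}$ in three conceptual stages, extraction of subsequential limits, invariance of any such limit, and uniqueness of the limit, and then to deduce the extension to $\mu$ with $\supp\mu\subset\mathcal{T}$ by bounded convergence.

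Since $x\in\mathcal{T}$, tightness of $\{Q_t(x,\cdot)\}_{t\ge 0}$ together with Prokhorov's theorem yields, for every sequence $t_n\to\infty$, a subsequence along which $Q_{t_n}(x,\cdot)$ converges weakly to some $\nu\in\mathcal{P}(\mathcal{X})$. I would then show that every such $\nu$ is invariant via the standard telescoping identity
\[
Q_{t_n}(P_s f)(x)-Q_{t_n}f(x)=\frac{1}{t_n}\int_{t_n}^{t_n+s}P_uf(x)\,du-\frac{1}{t_n}\int_0^{s}P_uf(x)\,du,
\]
whose right-hand side is $O(s\|f\|_\infty/t_n)$. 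For $f\in C_b(\mathcal{X})$ the Feller property ensures $P_sf\in C_b(\mathcal{X})$, so passing to the weak limit gives $\langle P_sf,\nu\rangle=\langle f,\nu\rangle$ for every $s\ge 0$, i.e.\ $\nu$ is invariant.

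The crux is uniqueness, which is where Ces\`aro eventual continuity on all of $\mathcal{X}$ is decisive. Suppose for contradiction that two sequences $t_n,s_n\to\infty$ produce weak limits $\nu_1\ne\nu_2$; then some $f\in L_b(\mathcal{X})$ satisfies $\langle f,\nu_1\rangle\ne\langle f,\nu_2\rangle$, so with
\[
\phi_f(y):=\liminf_{t\to\infty}Q_tf(y),\qquad \psi_f(y):=\limsup_{t\to\infty}Q_tf(y),
\]
we have $\phi_f(x)<\psi_f(x)$. The Ces\`aro eventual continuity hypothesis, applied at every point of $\mathcal{X}$, translates directly into the global continuity of both $\phi_f$ and $\psi_f$. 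I would combine this with Birkhoff's ergodic theorem: for each invariant $\nu_i$, the averages $Q_tf$ converge $\nu_i$-almost everywhere, forcing $\phi_f=\psi_f$ on the closed set $\supp\nu_i$ and $\int\phi_f\,d\nu_i=\int\psi_f\,d\nu_i=\langle f,\nu_i\rangle$. One then uses the completeness and separability of $\mathcal{X}$ together with the two global continuous functions $\phi_f,\psi_f$ to construct a bounded Lipschitz function on $\mathcal{X}$ that separates $\nu_1$ from $\nu_2$ and simultaneously contradicts $\phi_f(x)<\psi_f(x)$.

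Finally, for $\mu\in\mathcal{P}(\mathcal{X})$ with $\supp\mu\subset\mathcal{T}$, the map $y\mapsto\varepsilon_y(f)$ is Borel measurable as the pointwise limit of the continuous functions $Q_tf$, and $|Q_tf(y)|\le\|f\|_\infty$. Bounded convergence then gives
\[
\langle f,Q_t\mu\rangle=\int Q_tf(y)\,\mu(dy)\longrightarrow\int\varepsilon_y(f)\,\mu(dy)=\Big\langle f,\int\varepsilon_y\,\mu(dy)\Big\rangle
\]
for every $f\in C_b(\mathcal{X})$, which is the asserted weak convergence, after noting that $\int\varepsilon_y\,\mu(dy)$ makes sense in the Bochner sense because $y\mapsto\varepsilon_y(A)$ is measurable for each Borel $A$. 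The main obstacle is the uniqueness step: without equicontinuity one cannot invoke Arzel\`a--Ascoli to extract a uniformly convergent subsequence of $\{Q_tf\}$ on a compact set and thereby manufacture a separating function. One must instead build such separation directly from the pointwise continuity of $\phi_f$ and $\psi_f$ produced by Ces\`aro eventual continuity, combined with the invariance identities above.
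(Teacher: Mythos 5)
Your overall skeleton (tightness plus Prokhorov for subsequential limits, invariance of each limit by the telescoping identity, then uniqueness, then bounded convergence for general $\mu$ supported in $\mathcal{T}$) matches the paper, and the invariance step and the final Bochner-integral step are correct as written. The gap is in the uniqueness step, which is the heart of the lemma. From your ingredients --- continuity of $\phi_f=\liminf_t Q_tf$ and $\psi_f=\limsup_t Q_tf$ (which does follow from Ces\`aro eventual continuity), the $\nu_i$-a.e.\ convergence of $Q_tf$ from the ergodic theorem, hence $\phi_f=\psi_f$ on $\supp\nu_1\cup\supp\nu_2$ and $\int\phi_f\,\d\nu_i=\langle f,\nu_i\rangle$ --- no contradiction with $\phi_f(x)<\psi_f(x)$ follows. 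The point $x$ need not belong to $\supp\nu_1\cup\supp\nu_2$, so nothing forces $\phi_f(x)=\psi_f(x)$; and the mere existence of a bounded Lipschitz function separating $\nu_1$ from $\nu_2$ is automatic for any two distinct measures and contradicts nothing. What is missing is a mechanism that \emph{links the two subsequential limits to each other through the point $x$}; the facts you list are all consistent with, say, $Q_t(x,\cdot)$ oscillating between $\nu_1$ and $\nu_2$.

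The paper closes this gap as follows. Using Ces\`aro eventual continuity together with a diagonal extraction over a countable dense subset of $\mathcal{X}$, it shows that $Q_{s_n}f$ converges \emph{pointwise on all of $\mathcal{X}$} to a single function $g\in C_b(\mathcal{X})$ (the $\limsup$ and $\liminf$ along the subsequence are shown to coincide everywhere, not just $\nu_i$-a.e.). Then, by invariance of $\mu_2$ and bounded convergence, $\langle f,\mu_2\rangle=\lim_n\langle Q_{s_n}f,\mu_2\rangle=\langle g,\mu_2\rangle$; approximating $\mu_2$ by $Q_{t_N}(x,\cdot)$ and $g$ by $Q_{s_n}f$ gives $\langle g,\mu_2\rangle\approx\langle f,Q_{s_n}Q_{t_N}(x,\cdot)\rangle$; and the decisive input is the total-variation estimate $\|Q_{s_n}Q_{t_N}(x,\cdot)-Q_{s_n}(x,\cdot)\|_{\rm TV}\rightarrow 0$ as $s_n\rightarrow\infty$ with $t_N$ fixed (\cite[Lemma 2]{KPS2010}), which forces $\langle g,\mu_2\rangle\approx\langle f,\mu_1\rangle$ and hence $|\langle f,\mu_1\rangle-\langle f,\mu_2\rangle|\leq\tfrac{4}{5}\epsilon$, the desired contradiction. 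If you want to complete your argument, you should replace the vague ``construct a separating Lipschitz function'' step with this approximate commutation of the two Ces\`aro averages; the separating-function construction you allude to is the device the paper uses in other proofs (Proposition \ref{Prop T2*}, Theorem \ref{Thm 1}), not here.
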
 
    
    The proofs of Lemma \ref{Prop T4} and Lemma \ref{Prop T1*} are collected in Section \ref{prf sec2}. Recall the following two types of stability of Markov semigroups. \par 
    
    \begin{definition}\label{Def 2}

    $\{P_t\}_{t\geq 0}$ is asymptotically stable if there exists a unique invariant measure $\mu_*\in\mathcal{P}(\mathcal{X})$ such that $\{P_t\mu\}_{t\geq 0} $ weakly converges to $\mu_*$  as $t\rightarrow\infty$ for any $\mu\in\mathcal{P}(\mathcal{X})$, i.e., $\langle f,P_t\mu\rangle\rightarrow\langle f,\mu_*\rangle$ as $t\rightarrow\infty$ for any $f\in C_b(\mathcal{X})$.
    \end{definition}

    \begin{definition}\label{Def 3}

    $\{P_t\}_{t\geq 0}$ is weakly-* mean ergodic if there exists a unique invariant measure $\mu_*\in\mathcal{P}(\mathcal{X})$ such that $\{Q_t\mu\}_{t\geq 0} $ weakly converges to $\mu_*$  as $t\rightarrow\infty$ for any $\mu\in\mathcal{P}(\mathcal{X})$. 
    \end{definition}

    \vspace{3mm}
    
    The following proposition indicates that the eventual continuity is a necessary condition for asymptotic stability, while the Ces\`ro eventual continuity is a necessary for weak-* mean ergodicity.

    \begin{proposition}\label{Prop 1}
    If $\{P_t\}_{t\geq 0}$ is asymptotically stable, then it is eventually continuous on $\mathcal{X}$. Similarly, if $\{P_t\}_{t\geq 0}$ is weakly-* mean ergodic, then it is Ces\`aro eventually continuous on $\mathcal{X}.$
    \end{proposition}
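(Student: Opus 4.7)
The plan is to reduce both statements to their defining weak-convergence hypotheses evaluated at Dirac masses. Asymptotic stability applied to $\mu=\delta_x$ gives, for every $x\in\mathcal{X}$ and every $f\in C_b(\mathcal{X})$, the pointwise limit $P_tf(x)=\langle f,P_t\delta_x\rangle\to\langle f,\mu_*\rangle$ as $t\to\infty$. That is the key observation; everything else is a triangle inequality.

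Fix an arbitrary $z\in\mathcal{X}$ and $f\in L_b(\mathcal{X})\subset C_b(\mathcal{X})$. For every $x\in\mathcal{X}$,
\[
|P_tf(x)-P_tf(z)|\leq |P_tf(x)-\langle f,\mu_*\rangle|+|\langle f,\mu_*\rangle-P_tf(z)|,
\]
and both terms on the right converge to $0$ as $t\to\infty$ by the pointwise limit just noted. Hence $\limsup_{t\to\infty}|P_tf(x)-P_tf(z)|=0$ for \emph{every} $x\in\mathcal{X}$, so in particular
\[
\limsup_{x\to z}\limsup_{t\to\infty}|P_tf(x)-P_tf(z)|=0,
\]
which is eventual continuity at $z$ (any $\delta>0$ works in the $\epsilon$--$\delta$ formulation of Definition~\ref{EvC}). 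Since $z$ was arbitrary, eventual continuity holds on all of $\mathcal{X}$.

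The second assertion is proved by repeating the same argument verbatim with $P_t$ replaced by the Ces\`aro average $Q_t$: weak-* mean ergodicity gives $Q_tf(x)\to\langle f,\mu_*\rangle$ pointwise for every $x\in\mathcal{X}$ and every $f\in C_b(\mathcal{X})$, and the triangle inequality then yields $\limsup_{t\to\infty}|Q_tf(x)-Q_tf(z)|=0$ for every pair $x,z$, which is Ces\`aro eventual continuity at $z$. I do not anticipate a genuine obstacle; the statement is essentially a tautological consequence of unpacking the weak convergence of $P_t\delta_x$ (resp.\ $Q_t\delta_x$) into pointwise convergence of the corresponding integrals, and this is precisely why these regularity conditions are necessary for the respective notions of ergodicity.
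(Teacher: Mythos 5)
Your proof is correct and follows essentially the same route as the paper's: evaluate the weak convergence at Dirac masses to get $P_tf(x)\to\langle f,\mu_*\rangle$ (resp.\ $Q_tf(x)\to\langle f,\mu_*\rangle$), then use the triangle inequality through $\langle f,\mu_*\rangle$ to show $\limsup_{t\to\infty}|P_tf(x)-P_tf(z)|=0$ for every pair of points, which trivially implies (Ces\`aro) eventual continuity. No differences worth noting.
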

    
    \begin{proof}
    If $\{P_t\}_{t\geq 0}$ is asymptotically stable, then for any $f\in L_b(\mathcal{X})$ and $x,y\in\mathcal{X}$,
    \begin{equation*}
    \limsup_{t\rightarrow\infty}|P_tf(x)-P_tf(y)|\leq \limsup_{t\rightarrow\infty}|P_tf(x)-\langle f,\mu_*\rangle|+\limsup_{t\rightarrow\infty}|P_tf(y)-\langle f,\mu_*\rangle|=0.    
    \end{equation*}
    Thus, the eventual continuity at any $x\in\mathcal{X}$ follows from
    \begin{equation*}
    \lim_{y\rightarrow x}\limsup_{t\rightarrow\infty}|P_tf(x)-P_tf(y)|=0.    
    \end{equation*}
    
    Similarly, if $\{P_t\}_{t\geq 0}$ is weakly-* mean ergodic, then for any $f\in L_b(\mathcal{X})$ and $x,y\in\mathcal{X}$,
    \begin{equation*}
    \limsup_{t\rightarrow\infty}|Q_tf(x)-Q_tf(y)|\leq \limsup_{t\rightarrow\infty}|Q_tf(x)-\langle f,\mu_*\rangle|+\limsup_{t\rightarrow\infty}|Q_tf(y)-\langle f,\mu_*\rangle|=0,  
    \end{equation*}
    which ensures the Ces\`aro eventual continuity at $x\in\mathcal{X}$ as
    \begin{equation*}
    \lim_{y\rightarrow x}\limsup_{t\rightarrow\infty}|Q_tf(x)-Q_tf(y)|=0.    
    \end{equation*}      
    \end{proof}

    We end this section with an example of a continuous-time Markov process, for which the Ces\`aro e-property fails, but (Ces\`aro) eventual continuity holds.

    \begin{example}\label{Ex 1}
    Let $\mathcal{X}=\{0\}\cup\{\tfrac{1}{n}:n\geq 2\}\cup\{n:n\geq 2\}.$  Define a continuous-time Markov process $\{\Phi_t\}_{t\geq 0}$ with a transition rate matrix $\{q_{i,j}\}_{i,j\in \mathcal{X}}$, where $q_{n,n}=q_{n^{-1},n^{-1}}=-n,\;q_{n,0}=q_{n^{-1},n}=n$, for $n\geq 2,$ and $q_{i,j}=0$ otherwise. The transition probabilities $\{p_{i,j}(t)\}_{i,j\in \mathcal{X}}$ are then given by: $p_{0,0}(t)=1$ and for $n\geq 2$, 
    
    \begin{align*}
	&p_{n^{-1},n^{-1}}(t)=p_{n,n}(t)=\e^{-t/n},  &p_{n^{-1},n}(t)=\tfrac{t}{n}\e^{-t/n}, \\ &p_{n^{-1},0}(t)=1-p_{n^{-1},n^{-1}}(t)-p_{n^{-1},n}(t), &p_{n,0}(t)=1-\e^{-t/n}.
    \end{align*}
    
    It can be  checked that $\{P_t\}_{t\geq 0}$ generated by $\{\Phi_t\}_{t\geq 0}$ is a Markov--Feller semigroup. Furthermore, since $\Phi_t$ converges to $0$ almost surely as $t\rightarrow\infty$, $\{P_t\}_{t\geq 0}$ is asymptotically stable, hence $\{P_t\}_{t\geq 0}$ is eventually continuous on $\mathcal{X}$ by Proposition \ref{Prop 1}.  However, the Ces\`aro e-property fails at $0$. Indeed, let us take $f(x)=\min\{x,1\} \in C_b(\mathcal{X})$, then $P_tf(0)=0$,  $\forall\,t\geq 0$. On the other hand, 
    \begin{align*}
        Q_tf(\tfrac{1}{n})&=\frac{1}{t}\int_{0}^{t}P_sf(\tfrac{1}{n})\d s\\
        &=\frac{1}{t}\left[f(0)\int_{0}^{t}p_{\frac{1}{n},0}(s)\d s+f(\tfrac{1}{n})\int_{0}^{t}p_{\frac{1}{n},\frac{1}{n}}(s)\d s+f(n)\int_{0}^{t}p_{\frac{1}{n},n}(s)\d s\right]\\
        &\geq \frac{1}{t}\int_{0}^{t}p_{\frac{1}{n},n}(s)\d s=\frac{n}{t}-\e^{-t/n}\left(1+\frac{n}{t}\right).
    \end{align*}
    
    In particular, one has 
    \begin{equation*}
        Q_nf(\tfrac{1}{n})\geq 1-2\e^{-1}>0=Q_nf(0),
    \end{equation*}
    which conflicts with the Ces\`aro e-property.  
    \end{example}

    \section{Ergodicity for eventually continuous Markov--Feller semigroup}\label{Sec 3}

    This section is devoted to providing a comprehensive analysis of the ergodic properties of Ces\`aro eventually continuous semigroups on Polish spaces, deriving some comparatively complete results. Specifically, we in Section \ref{Sec 3.1new} explore  the ergodic properties related to the support of $\mathcal{T}$. Section \ref{Sec 3.2} establishes the existence and uniqueness of invariant measures via Ces\`aro eventual continuity and lower bounded conditions, and also the asymptotic behavior of $\{P_t\}_{t>0}$ starting outside $\mathcal{T}$. Section \ref{Sec 3.3} provides refined results on the ergodic decomposition of Ces\`aro eventually continuous Markov--Feller semigroups.

    \subsection{Support properties of invariant measure inside $\mathcal{T}$} \label{Sec 3.1new} 
    
    The starting point of our investigation of the Cea\`aro averages of Markov--Feller semigroups is form the following observation:  $\mathcal{T}\neq \emptyset$ ensures the existence of an invariant measure, but it does not imply the convergence of Ces\`aro averages. Indeed, we shall justify such convergence for the Ces\`aro eventually continuous semigroups.  Let us begin with the following property of the support of invariant measures.  
	
    \begin{proposition}\label{Prop T2*}
    Let $\{P_t\}_{t\geq 0}$ be Ces\`aro eventually continuous on $\mathcal{X}$ and admit an invariant measure $\mu.$ Then $\supp\mu\subset\mathcal{T}.$
    \end{proposition}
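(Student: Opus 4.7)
My plan is to reduce the claim to the measure-theoretic statement $\mu(\mathcal{T})=1$. Indeed, since $\mathcal{T}$ is closed by Lemma~\ref{Prop T4} (which is where Ces\`aro eventual continuity enters) and $\supp\mu$ is, by definition, the smallest closed subset of $\mathcal{X}$ of full $\mu$-measure, any closed $\mu$-conull set automatically contains $\supp\mu$. So the whole problem will be shifted to: for $\mu$-almost every $x$, the Ces\`aro family $\{Q_t(x,\cdot)\}_{t>0}$ is tight.

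To prove this $\mu$-almost-everywhere tightness, I would invoke Ulam's theorem to pick compacts $K_n\subset\mathcal{X}$ with $\mu(K_n^c)\le 4^{-n}$, and then apply Hopf's maximal ergodic inequality (available because $\mu$ is $P_t$-invariant) to the non-negative functions $\mathbf{1}_{K_n^c}\in L^1(\mu)$, giving
\[
\mu\Bigl(\bigl\{x\in\mathcal{X}:\sup\nolimits_{t>0}Q_t(x,K_n^c)>2^{-n}\bigr\}\Bigr)\le 2^{n}\mu(K_n^c)\le 2^{-n}.
\]
Since $\sum_n 2^{-n}<\infty$, the Borel--Cantelli lemma produces a $\mu$-conull set on which $\sup_{t>0}Q_t(x,K_n^c)\le 2^{-n}$ for all but finitely many $n$; for any such $x$ and any $\epsilon>0$, picking $n$ with $2^{-n}<\epsilon$ exhibits a compact $K_n$ witnessing $\inf_{t>0}Q_t(x,K_n)\ge 1-\epsilon$, so $x\in\mathcal{T}$.

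The main obstacle I anticipate is the measurability and the correct continuous-time form of Hopf's inequality, since $\sup_{t>0}$ is over an uncountable set and the classical statement is discrete. I would resolve this by noting that for each fixed $x$ the map $t\mapsto Q_t(x,K_n^c)=t^{-1}\int_0^t P_s(x,K_n^c)\,\d s$ is continuous on $(0,\infty)$ as the Ces\`aro average of a bounded measurable function, hence the sup over $t>0$ agrees with the sup over positive rationals; this makes the sup measurable and legitimizes the Dunford--Schwartz continuous-time maximal inequality. Combining $\mu(\mathcal{T})=1$ with the closedness of $\mathcal{T}$ from Lemma~\ref{Prop T4} then yields $\supp\mu\subset\mathcal{T}$. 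Notably, Ces\`aro eventual continuity is used here only through Lemma~\ref{Prop T4}: its role is precisely to prevent the $\mu$-null set of non-tight points from ``leaking'' into $\supp\mu$ after taking closures.
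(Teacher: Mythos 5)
Your proposal is correct, but it takes a genuinely different route from the paper's. The paper argues by contradiction directly at a point $x\in\supp\mu$: assuming $\{Q_t(x,\cdot)\}_{t\geq 0}$ is not tight, it extracts $\epsilon$-separated compacts $K_i$ carrying mass $\epsilon$ at times $t_i$, builds a single function $f=\sum_i\bar f_i\in L_b(\mathcal{X})$ together with the conditional measures $\nu_n=\mu(\cdot\mid B(x,1/n))$ --- whose Ces\`aro orbits are tight precisely because $\nu_n\leq\mu(B(x,1/n))^{-1}\mu$ and $\mu$ is invariant --- and deduces $\limsup_{t\to\infty}|Q_tf(x)-Q_tf(y_n)|\geq\epsilon/2$ for some $y_n\to x$, contradicting Ces\`aro eventual continuity at $x$. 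You instead prove the measure-theoretic statement $\mu(\mathcal{T})=1$ using only invariance (Ulam's theorem, the Hopf--Dunford--Schwartz maximal inequality applied to $\mathbf{1}_{K_n^c}$, and Borel--Cantelli), and then funnel the entire regularity input through the closedness of $\mathcal{T}$ from Lemma~\ref{Prop T4}; this is non-circular, since that lemma is proved independently of Proposition~\ref{Prop T2*}. Your factorization buys a clean separation of roles (invariance gives almost-everywhere tightness; Ces\`aro eventual continuity only prevents the null set of non-tight points from reaching $\supp\mu$ after closure) and avoids repeating the separated-compacts construction, at the price of importing the continuous-time maximal ergodic inequality --- whose use rests on the joint measurability of $(s,x)\mapsto P_sf(x)$ (implicitly assumed throughout, since $Q_t$ is defined as a time integral) and on $P_t$ being a positive $L^1(\mu)$--$L^\infty$ contraction, which invariance supplies. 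The paper's conditional-measure trick is the elementary stand-in for your maximal inequality: both proofs need the same two ingredients, packaged differently, and yours is the shorter and more modular of the two.
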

	
    Furthermore, if $\mu$ is an ergodic measure, then the following convergence property holds.
	
    \begin{theorem}\label{Prop T3*} 
    Let  $\{P_t\}_{t\geq 0}$ be  Ces\`aro eventually continuous on $\mathcal{X}$ and  $\{P_t\}_{t\geq 0}$ admit an ergodic invariant measure $\mu$. Then for any $x\in\supp\mu$, $\{Q_t(x,\cdot)\}_{t\geq 0}$ weakly converges to $\mu$  as $t\rightarrow\infty$.
    \end{theorem}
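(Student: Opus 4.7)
The overall plan is to combine an ergodic-theoretic argument on a full $\mu$-measure subset of $\supp\mu$ with the Ces\`aro eventual continuity, which enables transfer of convergence to every point of $\supp\mu$. By Proposition~\ref{Prop T2*} we already know $\supp\mu\subset\mathcal{T}$, and Lemma~\ref{Prop T1*} guarantees that for each $x\in\supp\mu$ there exists an invariant probability measure $\varepsilon_x$ with $Q_t(x,\cdot)\Rightarrow\varepsilon_x$ as $t\to\infty$. The theorem therefore reduces to identifying $\varepsilon_x=\mu$ for every $x\in\supp\mu$.

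First I would establish this identification on a set of full $\mu$-measure. Since $\mathcal{X}$ is Polish, pick a countable convergence-determining family $\{f_n\}_{n\geq 1}\subset L_b(\mathcal{X})$. Applying the (continuous-time) ergodic theorem to the invariant ergodic measure $\mu$ and each $f_n$, one obtains a Borel set $A\subset\supp\mu$ with $\mu(A)=1$ such that $Q_tf_n(x)\to\langle f_n,\mu\rangle$ for every $x\in A$ and every $n\geq 1$. Since $A\subset\supp\mu\subset\mathcal{T}$, comparing this limit with the one provided by Lemma~\ref{Prop T1*} along the family $\{f_n\}$ forces $\langle f_n,\varepsilon_x\rangle=\langle f_n,\mu\rangle$ for all $n$, hence $\varepsilon_x=\mu$ for every $x\in A$.

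Second, I would extend the identification to every point of $\supp\mu$ using Ces\`aro eventual continuity. Fix $x\in\supp\mu$ and $f\in L_b(\mathcal{X})$. Given $\epsilon>0$, Ces\`aro eventual continuity at $x$ supplies $\delta>0$ such that $\limsup_{t\to\infty}|Q_tf(x)-Q_tf(y)|\leq\epsilon$ for every $y\in B(x,\delta)$. Because $x\in\supp\mu$, one has $\mu(B(x,\delta))>0$, so $A\cap B(x,\delta)\neq\emptyset$; picking any $y$ in this intersection and writing
\[
|Q_tf(x)-\langle f,\mu\rangle|\leq |Q_tf(x)-Q_tf(y)|+|Q_tf(y)-\langle f,\mu\rangle|,
\]
letting $t\to\infty$ and then $\epsilon\downarrow 0$ yields $Q_tf(x)\to\langle f,\mu\rangle$. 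Since $L_b(\mathcal{X})$ is convergence-determining on $\mathcal{P}(\mathcal{X})$, this gives $Q_t(x,\cdot)\Rightarrow\mu$.

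The step I expect to be the most delicate is the first one: producing a common full-$\mu$-measure set $A$ on which the scalar convergence $Q_tf_n(x)\to\langle f_n,\mu\rangle$ holds simultaneously for all $n$. One cannot simply invoke a pathwise Birkhoff theorem because stochastic continuity of $\{P_t\}_{t\geq 0}$ is not assumed, so one must go through the operator-theoretic ergodic theorem (mean ergodic convergence in $L^2(\mu)$, upgraded to $\mu$-a.e.\ convergence of the Ces\`aro averages by a discretization/maximal argument) and then use the separability of $\mathcal{X}$ to pick the countable convergence-determining family $\{f_n\}$ inside $L_b(\mathcal{X})$. Once this full-measure core is secured, the rest is essentially a triangle-inequality consequence of Ces\`aro eventual continuity.
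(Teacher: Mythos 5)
Your proposal is correct and follows essentially the same route as the paper's proof: both rest on Lemma~\ref{Prop T1*} to produce the limit $\varepsilon_x$, the ergodic theorem (the paper cites \cite[Theorem 4.4]{WH2011}) to obtain a full-$\mu$-measure set $A$ on which $Q_tf\to\langle f,\mu\rangle$, and Ces\`aro eventual continuity together with $\mu(B(x,\delta))>0$ to transfer the identification $\varepsilon_x=\mu$ from $A$ to every point of $\supp\mu$. The only difference is presentational — the paper fixes a single separating $f$ and argues by contradiction, while you run a direct argument over a countable convergence-determining family — and your step identifying $\varepsilon_y=\mu$ for $y\in A$ before invoking the triangle inequality correctly closes the loop for arbitrary $f\in L_b(\mathcal{X})$.
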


    We postpone the proof of Propositions \ref{Prop T2*} in Section \ref{prf sec3.1}, and present the proof of Theorem~\ref{Prop T3*} by Propositions \ref{Prop T2*}  and Lemma  \ref{Prop T1*}  as follows. 
	
    \begin{proof} 
    Given an ergodic measure $\mu$, for any $x\in\supp\mu$, by Lemma \ref{Prop T1*} and Proposition \ref{Prop T2*},  $\{Q_t(x,\cdot)\}_{t\geq 0}$ weakly converges to an invariant measure $\varepsilon_x$ as $t\rightarrow\infty$. Assume that, contrary to our theorem, there exists some $x\in\supp\mu$ such that $\varepsilon_x\neq\mu$. Then there exist $f\in L_b(\mathcal{X})$ and $\epsilon>0$ such that
    \begin{equation}\label{eq 3.0}
        |\langle f,\mu\rangle-\langle f,\varepsilon_x\rangle|>\epsilon.
    \end{equation}
    By \cite[Theorem 4.4]{WH2011}, there exists $A\subset \mathcal{X}$ with $\mu(A)=1$, such that 
    \begin{equation}\label{eq 3.1}
    \lim\limits_{t\rightarrow\infty}Q_tf(y)=\langle f,\mu\rangle,\quad\,\forall\,y\in A.		
    \end{equation}
        
    As $x\in\supp\mu,$ there exists a sequence $y_n\in A$ such that $y_n\rightarrow x$ as $n\rightarrow\infty$. Then by the Ces\`aro eventual continuity of $\{P_t\}_{t\geq 0}$, there exists some $N\in\mathbb{N}$ sufficiently large such that for $n\geq N$,
    
    \begin{equation}\label{eq 3.2}
    \limsup\limits_{t\rightarrow\infty}|Q_tf(x)-Q_tf(y_n)|\leq\epsilon/2.
    \end{equation}\par 
    On the other hand,  $\{Q_t(x,\cdot)\}_{t\geq 0}$ weakly converges to $\varepsilon_x$ as $t\rightarrow\infty$. Consequently,
    
    \begin{equation}\label{eq 3.3}
	\lim\limits_{t\rightarrow\infty}Q_tf(x)=\lim\limits_{t\rightarrow\infty}\langle f,Q_t(x,\cdot)\rangle=\langle f,\varepsilon_x\rangle.
    \end{equation}
    Collecting (\ref{eq 3.1})-(\ref{eq 3.3}), one has $|\langle f,\mu\rangle-\langle f,\varepsilon_x\rangle|\leq\epsilon/2$. This contradicts \eqref{eq 3.0}.  
    \end{proof}
    
    \begin{remark} 
    These properties are also obtained for continuous-time e-processes. In \cite[Lemma 1]{KSS2012}, it is shown that $\forall\,x\in\mathcal{T},$ $\{Q_t(x,\cdot)\}_{t\geq 0}$ weakly converges to some invariant measures as $t\rightarrow\infty$. While \cite[Proposition 1]{KPS2010} proves that each invariant measure is supported inside $\mathcal{T}$. Furthermore,  \cite[Theorem 5.13]{WH2010} implies that  $\{Q_t(x,\cdot)\}_{t\geq 0}$ starting inside the support of some ergodic measure will converge to this measure. 
    \end{remark}

    Now, we give  two  interesting applications of Theorem \ref{Prop T3*} as follows.

    \begin{theorem}\label{Thm 1}
    Let  $\{P_t\}_{t\geq 0}$ be  stochastically continuous and Ces\`aro eventually continuous on $\mathcal{X}$. Assume that $\{P_t\}_{t\geq 0}$ admits an  ergodic measure $\mu$ such that ${\rm Int}_{\mathcal{X}}(\supp\mu)\neq\emptyset$. Then $\{P_t\}_{t\geq 0}$ satisfies the Ces\`aro e-property on ${\rm Int}_{\mathcal{X}}(\supp\mu)$.
    \end{theorem}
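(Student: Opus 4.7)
My plan is to argue by contradiction. Suppose the Ces\`aro e-property fails at some $z\in\text{Int}_{\mathcal X}(\supp\mu)$: there exist $f\in L_b(\mathcal X)$ with Lipschitz constant $L$, $\epsilon>0$, $x_n\to z$, and $t_n>0$ such that $|Q_{t_n}f(x_n)-Q_{t_n}f(z)|>\epsilon$ for all $n$. Since $z\in\text{Int}(\supp\mu)$, I may assume $x_n\in\supp\mu$ for $n$ large, so Theorem \ref{Prop T3*} gives the pointwise convergences $Q_tf(z)\to\alpha:=\langle f,\mu\rangle$ and, for each such $n$, $Q_tf(x_n)\to\alpha$ as $t\to\infty$. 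Passing to a subsequence, $t_n\to t_*\in[0,\infty]$, and the argument splits into three cases.

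For $t_*\in(0,\infty)$ I would first establish joint continuity of $(t,x)\mapsto Q_tf(x)$ on $(0,\infty)\times\mathcal X$. This follows from Feller continuity of each $Q_tf$ in $x$ (via bounded convergence) together with the elementary uniform-in-$x$ estimate
\[
\|Q_{t_1}f-Q_{t_2}f\|_\infty\le \tfrac{2|t_1-t_2|}{\max(t_1,t_2)}\|f\|_\infty.
\]
Joint continuity then forces $Q_{t_n}f(x_n)$ and $Q_{t_n}f(z)$ both to converge to $Q_{t_*}f(z)$, contradicting the choice of sequences. For $t_*=0$, stochastic continuity gives $Q_{t_n}f(z)\to f(z)$; the triangle
\[
|P_s f(x_n)-f(x_n)|\le L\rho(x_n,z)+|P_s f(z)-f(z)|+|P_s f(x_n)-P_s f(z)|,
\]
combined with the Feller continuity of $P_sf$ and stochastic continuity at $z$, is used to force $Q_{t_n}f(x_n)\to f(z)$ as well, yielding the contradiction.

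The principal case is $t_*=\infty$. Here $Q_{t_n}f(z)\to\alpha$, so $|Q_{t_n}f(x_n)-\alpha|>\epsilon/2$ eventually. Set $\nu_n:=Q_{t_n}(x_n,\cdot)$. The strategy is to (i) show $\{\nu_n\}$ is tight, (ii) extract a weakly convergent subsequence $\nu_{n_k}\rightharpoonup\nu$ and verify $P_t$-invariance of $\nu$ by a Krylov--Bogoliubov computation based on $t_n\to\infty$ and the Feller property, and (iii) contradict $\langle f,\nu\rangle\ne\alpha$ by replicating the separating Lipschitz-function scheme of the proof of Theorem \ref{Prop T3*}: the interior hypothesis at $z$, together with Ces\`aro eventual continuity and $Q_tg(z)\to\langle g,\mu\rangle$, should force $\langle g,\nu\rangle=\langle g,\mu\rangle$ for every $g\in L_b(\mathcal X)$.

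The main obstacle I expect is step (i), the tightness of $\{\nu_n\}$ in a Polish space that is not assumed locally compact. The plan is: given $\eta>0$, use Ulam's theorem to choose a compact $K\subset\mathcal X$ with $\mu(K)>1-\eta$, form a Lipschitz cutoff $\chi$ with $\mathbf 1_K\le\chi\le\mathbf 1_{K^r}$ so that $\mu(\chi)>1-\eta$, combine $Q_t\chi(z)\to\mu(\chi)$ with Ces\`aro eventual continuity at $z$ to control $Q_t\chi$ uniformly on a neighborhood of $z$, and exploit the pointwise convergence $Q_t\chi(x_n)\to\mu(\chi)$ from Theorem \ref{Prop T3*} together with a careful diagonal extraction of times from the iterated $\limsup$ to secure $\nu_n(K^r)>1-O(\eta)$ uniformly in $n$; a second application of Ulam at a finer budget replaces the non-precompact thickening $K^r$ by a genuinely compact set. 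Coordinating the two-parameter $\limsup$ inherent in Ces\`aro eventual continuity with the pointwise convergences of Theorem \ref{Prop T3*}, in the absence of local compactness, is the technical heart of the argument.
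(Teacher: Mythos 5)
Your handling of the bounded-time cases ($t_*\in[0,\infty)$) via stochastic continuity and the Feller property is sound and matches Step 1 of the paper's proof (which invokes \cite[Lemma 4]{KW2024} for precisely this reduction). The gap is in the principal case $t_*=\infty$. Both your step (i) (tightness of $\nu_n=Q_{t_n}(x_n,\cdot)$) and your step (iii) (identifying the weak limit with $\mu$) require applying Ces\`aro eventual continuity \emph{along the diagonal sequence} $(x_n,t_n)$: the tightness sketch needs $Q_{t_n}\chi(x_n)\geq 1-O(\eta)$, and the identification step needs $|Q_{t_n}g(x_n)-Q_{t_n}g(z)|$ small. But Ces\`aro eventual continuity only controls $|Q_tg(x)-Q_tg(z)|$ for $t\geq T_x$ with a threshold $T_x$ depending on $x$, and the times $t_n$ are \emph{prescribed} by the assumed failure of the e-property --- they may well satisfy $t_n<T_{x_n}$ for every $n$. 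There is nothing to ``diagonally extract'': the entire content of the theorem is that this non-uniformity cannot occur on ${\rm Int}_{\mathcal X}(\supp\mu)$, so any argument presupposing control at the times $t_n$ is circular. (Indeed, were such diagonal control available, you could take $g=f$ and conclude immediately, with no need for tightness at all.)

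The paper closes this gap with two ingredients absent from your proposal. First, a Baire category argument: writing $\overline{B(x_0,\delta)}=\bigcup_n Y_n$ with $Y_n=\{x:|Q_tf(x)-Q_tf(x_0)|\le\epsilon/2\ \forall\, t\ge n\}$ (closed by the Feller property), it produces a smaller ball $B(z',2r)\subset\supp\mu$ and a \emph{single} time $T_*$ beyond which the bound holds uniformly on that ball. Second, since $B(z',r)\subset\supp\mu$, Theorem \ref{Prop T3*} yields a uniform lower bound $\liminf_{t}Q_t\nu(B(z',r))>\alpha$ for every $\nu$ supported in $\supp\mu$, and a Lasota--Szarek-type iterative decomposition $P_{s_1+\cdots+s_k}\delta_{x_n}=\alpha P_{\cdot}\nu_1^{x_n}+\alpha(1-\alpha)P_{\cdot}\nu_2^{x_n}+\cdots+(1-\alpha)^k\mu_k^{x_n}$, with $\supp\nu_i^{x_n}\subset B(z',r)$, funnels all but $(1-\alpha)^k$ of the mass through the region where the uniform-in-time bound applies. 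This is what upgrades pointwise eventual control to the uniform control the e-property demands. Incidentally, granting tightness and invariance of a weak limit $\nu$, your step (iii) would be cleaner via $\nu(\supp\mu)=1$ (portmanteau plus invariance of $\supp\mu$), whence $\nu=\int\varepsilon_x\,\nu(\d x)=\mu$ by Lemma \ref{Prop T1*} and Theorem \ref{Prop T3*}; the ``separating Lipschitz function'' route as you describe it runs into the same diagonal non-uniformity.
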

    The proof of Theorem \ref{Thm 1} is placed in Section \ref{prf sec3.1}.

    \vspace{3mm}
    Given an ergodic measure $\mu$, denote $\mathcal{X}_{\mu}:=\supp\mu$. As $\mathcal{X}_{\mu}$ is a closed set in $\mathcal{X}$, the subspace $(\mathcal{X}_{\mu},\rho)$ is still a  Polish space. Moreover, by \cite[Lemma 4.1]{GL2015}, $\mathcal{X}_{\mu}$ is an invariant set for $\{P_t\}_{t\geq 0}$ in the sense that $P_t(x,\mathcal{X}_\mu)=1$ for any $x\in\mathcal{X}_\mu$ and $t\geq 0$. In particular, it allows one to consider the semigroup  $\{P_t\}_{t\geq 0}$ to be restricted on the subspace $(\mathcal{X}_{\mu},\rho)$. Then, we obtain an immediate corollary of Theorem \ref{Thm 1} as follows.

    \begin{proposition}\label{Prop QEvC=Q-e}  
     Let $\{P_t\}_{t\geq 0}$  be stochastically continuous and admit an ergodic measure $\mu$. Then the following there statements are equivalent:
    \begin{itemize}
        \item [$(\runum{1})$] $\{P_t\}_{t\geq 0}$ is weakly-* mean ergodic on $\mathcal{X}_\mu$, i.e., $\{Q_t(x,\cdot)\}_{t\geq 0}$ weakly converges  to $\mu$ as $t\rightarrow\infty$ for any $x\in\mathcal{X}_\mu$.
        \item [$(\runum{2})$] $\{P_t\}_{t\geq 0}$ satisfies the Ces\`aro e-property on $\mathcal{X}_\mu$.
        \item [$(\runum{3})$] $\{P_t\}_{t\geq 0}$ is Ces\`aro eventually continuous on $\mathcal{X}_\mu$.
    \end{itemize}
    \end{proposition}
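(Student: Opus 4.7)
The plan is to establish the cycle (i) $\Rightarrow$ (iii) $\Rightarrow$ (ii) $\Rightarrow$ (i), leveraging the earlier results by restricting $\{P_t\}_{t\ge 0}$ to the invariant Polish subspace $(\mathcal{X}_\mu,\rho)$. The crucial reduction is that, as noted just before the proposition, \cite[Lemma 4.1]{GL2015} makes $\mathcal{X}_\mu$ invariant, so the restriction $\{\tilde{P}_t\}_{t\ge 0}$ is again a Markov--Feller semigroup on a Polish space; $\mu$ remains an ergodic measure for it, and — this is the key point that unlocks Theorem \ref{Thm 1} — the support of $\mu$ viewed in $\mathcal{X}_\mu$ is all of $\mathcal{X}_\mu$, whose interior in $\mathcal{X}_\mu$ is $\mathcal{X}_\mu$ itself. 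Stochastic continuity of $\{P_t\}$ transfers to $\{\tilde P_t\}$ trivially, as does Ces\`aro eventual continuity (checked against Lipschitz bounded functions on $\mathcal{X}_\mu$, which are restrictions of Lipschitz bounded functions on $\mathcal{X}$, or can be extended via the McShane formula).

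For (i) $\Rightarrow$ (iii), I simply invoke Proposition \ref{Prop 1} applied to $\{\tilde P_t\}_{t\ge 0}$: weak-* mean ergodicity on $\mathcal{X}_\mu$ implies Ces\`aro eventual continuity on $\mathcal{X}_\mu$. For (iii) $\Rightarrow$ (ii), I apply Theorem \ref{Thm 1} to the restricted semigroup $\{\tilde P_t\}_{t\ge 0}$ on $\mathcal{X}_\mu$: stochastic continuity holds by hypothesis, Ces\`aro eventual continuity holds by (iii), and the ergodic measure $\mu$ satisfies $\mathrm{Int}_{\mathcal{X}_\mu}(\supp\mu)=\mathcal{X}_\mu\neq\emptyset$, so the conclusion of Theorem \ref{Thm 1} yields the Ces\`aro e-property on all of $\mathcal{X}_\mu$. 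For (ii) $\Rightarrow$ (i), the Ces\`aro e-property is strictly stronger than Ces\`aro eventual continuity (by Definitions \ref{e-property} and \ref{EvC}), so (ii) implies (iii); combining (iii) with the fact that $\mu$ is ergodic, Theorem \ref{Prop T3*} yields that $\{Q_t(x,\cdot)\}_{t\ge 0}$ weakly converges to $\mu$ for every $x\in\supp\mu=\mathcal{X}_\mu$, which is exactly (i).

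The argument is essentially a bookkeeping exercise built on the substantive theorems already established, so there is no single hard step. The only point requiring a little care is the verification that restriction to $\mathcal{X}_\mu$ preserves all the hypotheses — Markov--Feller property, stochastic continuity, Ces\`aro eventual continuity — which is routine once one observes that test functions in $L_b(\mathcal{X}_\mu)$ extend to $L_b(\mathcal{X})$ (e.g.\ by a Kirszbraun/McShane extension) with the same Lipschitz constant and sup norm, so that convergence statements on $\mathcal{X}$ restrict cleanly to $\mathcal{X}_\mu$ and vice versa.
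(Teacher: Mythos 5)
Your proof is correct and follows essentially the same route as the paper's: $(\runum{1})\Rightarrow(\runum{3})$ via Proposition \ref{Prop 1}, $(\runum{3})\Rightarrow(\runum{2})$ via Theorem \ref{Thm 1} applied to the restriction to the invariant subspace $\mathcal{X}_\mu$ (where ${\rm Int}_{\mathcal{X}_\mu}(\supp\mu)=\mathcal{X}_\mu$), and $(\runum{2})\Rightarrow(\runum{1})$ via the trivial implication to Ces\`aro eventual continuity followed by Theorem \ref{Prop T3*}. Your write-up is in fact slightly more careful than the paper's one-line proof (which appears to mislabel the Theorem~\ref{Thm 1} step as $(\runum{2})\Rightarrow(\runum{1})$), and your attention to the transfer of hypotheses under restriction to $\mathcal{X}_\mu$ is exactly the bookkeeping the paper leaves implicit.
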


    \begin{proof}
    The direction $(\runum{2})\Rightarrow(\runum{3})$ is trivial,  $(\runum{3})\Rightarrow(\runum{1})$ follows straightforwardly from Theorem~\ref{Prop T3*}, and $(\runum{2})\Rightarrow(\runum{1})$ is ensured by Theorem \ref{Thm 1}. 
    \end{proof}
    
    \begin{remark} 
    \cite[Theorem 3.3]{LL2024} shows that if a stochastic continuous and eventually continuous Markov--Feller semigroup $\{P_t\}_{t\ge 0}$ admits an ergodic measure $\mu$, and  $\text{Int}_{\mathcal{X}}({\supp}\;\mu)\neq \emptyset$, then $\{P_t\}_{t\ge 0}$ satisfies e-property on $\text{Int}_{\mathcal{X}}({\supp}\;\mu)$, and a direct corollary  is  that the e-property and eventual continuity are equivalent on the subspace $\supp\mu$  (see \cite[Corollary 3.4]{LL2024}).   
    \end{remark}
	
    The second application is that we get a direct and simple proof of the EMDS-property by Theorem \ref{Prop T3*}. In \cite[Proposition 1.10]{GL2015}, the EMDS-property of eventually continuous semigroups is proved  by means of  Birkhoff's ergodic theorem.  \par

    \begin{corollary}\label{Coro EMDS}
    Let  $\{P_t\}_{t\geq 0}$ be  Ces\`aro eventually continuous on $\mathcal{X}$. Then $\{P_t\}_{t\geq 0}$ satisfies the EMDS-property, i.e., for any two distinct ergodic measures $\mu$ and $\nu$  for $\{P_t\}_{t\geq 0}$, 
    \begin{equation*}
    \supp\mu\cap\supp\nu=\emptyset.
    \end{equation*}
    \end{corollary}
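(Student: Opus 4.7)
The plan is a short proof by contradiction that piggybacks directly on Theorem~\ref{Prop T3*}. Suppose, for contradiction, that $\mu$ and $\nu$ are two distinct ergodic measures of $\{P_t\}_{t\geq 0}$ with $\supp\mu\cap\supp\nu\neq\emptyset$, and pick any $x$ in this intersection.

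Since $\{P_t\}_{t\geq 0}$ is Ces\`aro eventually continuous on $\mathcal{X}$ and $\mu$ is an ergodic measure with $x\in\supp\mu$, Theorem~\ref{Prop T3*} gives that $\{Q_t(x,\cdot)\}_{t\geq 0}$ weakly converges to $\mu$ as $t\to\infty$. Applying the very same theorem to the ergodic measure $\nu$, together with $x\in\supp\nu$, yields that $\{Q_t(x,\cdot)\}_{t\geq 0}$ weakly converges to $\nu$ as $t\to\infty$.

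Weak limits of probability measures on a Polish space are unique (the space $\mathcal{P}(\mathcal{X})$ is Hausdorff in the weak topology, which is most easily seen by testing against $L_b(\mathcal{X})$, a separating family). Hence $\mu=\nu$, contradicting the assumption that the two ergodic measures are distinct. This forces $\supp\mu\cap\supp\nu=\emptyset$, which is the EMDS-property.

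There is no real obstacle here: all of the analytic work is already concentrated in Theorem~\ref{Prop T3*}, whose proof exploits ergodicity via \cite[Theorem 4.4]{WH2011} to find a full $\mu$-measure set on which $Q_tf$ converges to $\langle f,\mu\rangle$, and then uses Ces\`aro eventual continuity to transfer this convergence to every point of $\supp\mu$. The present corollary only needs to observe that if $x$ lies in two such supports simultaneously, the two prescribed limits of $Q_t(x,\cdot)$ must coincide.
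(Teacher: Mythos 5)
Your proof is correct and is exactly the argument the paper intends: the text introduces Corollary~\ref{Coro EMDS} as "a direct and simple proof of the EMDS-property by Theorem~\ref{Prop T3*}," namely that a common support point $x$ would force $Q_t(x,\cdot)$ to converge weakly to both ergodic measures, whence they coincide. No difference in approach.
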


    As aforementioned in Section \ref{sec 1.2}, the uniqueness of invariant measures can be derived from some regularity property of the Markov--Feller semigroup combined with some kind of irreducibility. In fact, the various regularity properties are essential to establish the EMDS-property, i.e., the EMDS-property and irreducibility ensure the uniqueness of the invariant measure.

    \subsection{Lower bound condition, unique ergodicity and weak-* mean ergodicity}\label{Sec 3.2}

    In this subsection, we investigate the relation between the Ces\`aro eventually continuous condition and the existence-uniqueness of invariant measures. As illustrated in Section \ref{sec 1.2}, the Krylov-Bogoliubov Theorem shows that the existence of an invariant measure of the Markov--Feller semigroups is implied by the fact that $\mathcal{T}\neq\emptyset$. However, tightness is difficult to verify for infinite-dimensional systems. In 2006, Lasota and Szarek developed the lower bound technique to provide a tightness criterion for the Markov--Feller semigroup with e-property \cite{LS2006,S2006}. For eventually continuous semigroups with discrete time parameters, Gong and Liu also provide tightness criteria by means of some lower bound conditions \cite[Theorem 1.6, Theorem 1.8]{GL2015}.  We first extend the criterion of the existence of invariant measures for eventually continuous semigroups in \cite[Theorem 1.6]{GL2015} to the continuous-time setting.

    \begin{proposition}\label{Prop Exist*}
    Let $\{P_t\}_{t\geq 0}$ be Ces\`aro eventually continuous on $\mathcal{X}$. Assume that there exists some $z\in \mathcal{X}$ such that for any $\epsilon>0$, 
    \begin{equation}\label{eq 3.4*}\tag{$\mathcal{C}_1$}
	\limsup\limits_{t\rightarrow\infty}Q_t(z,B(z,\epsilon))>0.
    \end{equation}	
    Then $\mathcal{T}\neq\emptyset$, which implies the existence of invariant measures of  $\{P_t\}_{t\geq 0}$.
    \end{proposition}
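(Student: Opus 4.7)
The plan is to show $z\in\mathcal{T}$; once this is established, Lemma~\ref{Prop T1*} immediately yields an invariant probability measure $\varepsilon_z\in\mathcal{P}(\mathcal{X})$ as the weak limit of $\{Q_t(z,\cdot)\}_{t\ge 0}$, proving the proposition. The approach adapts the discrete-time argument of \cite[Theorem 1.6]{GL2015} to the continuous-time setting, and makes essential use of the approximate invariance $\|P_rQ_t(z,\cdot)-Q_t(z,\cdot)\|_{\mathrm{TV}}\le 2r/t\to 0$ as $t\to\infty$ for each fixed $r>0$.

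First, from the lower bound $(\mathcal{C}_1)$ I would extract, by diagonalization, a sequence $\tau_n\to\infty$ along which $Q_{\tau_n}(z,B(z,1/n))\ge\beta_n$ for some $\beta_n>0$, so that mass persistently concentrates on arbitrarily small neighborhoods of $z$. Next I would argue by contradiction: if $\{Q_t(z,\cdot)\}_{t\ge 0}$ fails to be tight, there exists $\eta>0$ such that for every compact $K\subset\mathcal{X}$ one has $Q_t(z,K^c)>\eta$ for arbitrarily large $t$. Combining these two facts along a common subsequence $s_n\to\infty$ yields an increasing family of compact sets $K_n$ and radii $r_n\downarrow 0$ for which the mass of $Q_{s_n}(z,\cdot)$ is simultaneously bounded below on $B(z,r_n)$ and on $K_n^c$.

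The key step is then to leverage this bifurcation of mass to contradict the Ces\`aro eventual continuity at $z$. Using the Polish structure of $\mathcal{X}$, I would construct bounded Lipschitz test functions $f_n\in L_b(\mathcal{X})$ that separate the concentration region $B(z,r_n)$ from the escape region $K_n^c$, and then produce comparison points $x_n\in B(z,\delta)$ (with $\delta$ furnished by Ces\`aro eventual continuity at a prescribed precision $\epsilon<\eta/4$) for which $Q_{s_n}f_n(x_n)$ and $Q_{s_n}f_n(z)$ fail to asymptotically agree, violating the defining inequality of Ces\`aro eventual continuity. The main obstacle is that Ces\`aro eventual continuity is purely asymptotic and offers no uniform-in-time control, so the oscillation of $Q_{s_n}f_n(z)$ and its comparison with $Q_{s_n}f_n(x_n)$ must be synchronized at the \emph{same} large times $s_n$; this synchronization is the main technical content. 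An alternative route is to first pass to a vague cluster point $\mu$ of $\{Q_{\tau_n}(z,\cdot)\}$, use the approximate invariance to conclude that $\mu$ is invariant under every $P_r$, and then apply Ces\`aro eventual continuity jointly with the lower bound to rule out any mass escape, forcing $\mu(\mathcal{X})=1$ and delivering the invariant measure directly.
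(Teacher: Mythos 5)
Your high-level skeleton does match the paper's proof: show $z\in\mathcal{T}$ by contradiction, turn non-tightness into a sequence of times $t_i\to\infty$ and compact sets $K_i$ with $Q_{t_i}(z,K_i)\ge\epsilon$ and mutually disjoint $\epsilon/2$-neighborhoods, build Lipschitz functions $\mathbf{1}_{K_i}\le f_i\le\mathbf{1}_{K_i^{\epsilon/4}}$, and use Ces\`aro eventual continuity at $z$ to reach a contradiction, after which Lemma~\ref{Prop T1*} supplies the invariant measure. However, the step you defer as ``the main technical content'' --- producing comparison points near $z$ at which the Ces\`aro averages provably do \emph{not} charge the escaping sets, synchronized along the same time sequence --- is the entire content of the proposition, and your proposal does not indicate how the lower bound $(\mathcal{C}_1)$ accomplishes it. In the paper this occupies Steps 1--3: one uses $(\mathcal{C}_1)$ together with the Feller property to build nested balls $B_{k+1}\subset B_k\subset B(z,r/2^p)$ on which $Q_{t_{j_k}}f_{j_k}\le\epsilon/8$, checks that the recentered balls still satisfy a lower bound so the induction can continue, extracts a common point $y_p\in\bigcap_k\overline{B_k}$, and then performs several successive diagonal extractions (Parts 1.2--1.4 and Step 2) so that the \emph{infinite sum} $g=\sum_k f_{j_k^*}$ (which lies in $L_b(\mathcal{X})$ only because the sets $K_i^{\epsilon/2}$ are disjoint) satisfies $Q_{t_{j_l}}g(y_p)\le\epsilon/2$ for all $l$, with $y_p\to z$. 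Only then does Ces\`aro eventual continuity at $z$ transfer this bound to $z$ and contradict $Q_{t_{j_k}}(z,K_{j_k})\ge\epsilon$. Without this construction the argument restates the goal rather than proving it.

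Your proposed fallback route is not viable as stated. On a Polish space that is not locally compact there is no vague compactness: a ``vague cluster point'' of $\{Q_{\tau_n}(z,\cdot)\}$ cannot be extracted, and Prokhorov's theorem requires exactly the tightness you are trying to establish --- this obstruction is the reason the lower-bound technique exists at all. The total-variation approximate invariance $\|P_rQ_t(z,\cdot)-Q_t(z,\cdot)\|_{\rm TV}\le 2r/t$ is indeed used in the paper, but only in Lemma~\ref{Prop T1*} (via \cite[Lemma 2]{KPS2010}) to identify the weak limit as invariant \emph{after} tightness is known; it plays no role in proving $z\in\mathcal{T}$.
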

    
    See the proof of Proposition \ref{Prop Exist*} in Section \ref{prf sec3.2}.
    
    \begin{remark} 
    It seems that Proposition \ref{Prop Exist*} is a plain extension of \cite[Theorem 1.6]{GL2015} to the continuous time setting. However, there are some technical difficulties such that the approach in \cite[Theorem 1.6]{GL2015} cannot be directly applied to the continuous-time setting.  Therefore, we prove it by a contradiction argument, invoking the methods similar in \cite{LS2006}.
    \end{remark}

    Combining the existence criterion of invariant measures and the formulation of $\mathcal{T},$ we are now able to provide a sufficient condition for the unique ergodicity.
	
    \begin{proposition}\label{Thm 2} 	Let  $\{P_t\}_{t\geq 0}$ be Ces\`aro eventually continuous on $\mathcal{X}$. Assume that there exists $z\in \mathcal{X}$ such that for any $x\in \mathcal{X}$ and $\epsilon>0$,
    \begin{equation}\label{eq 3.5*}\tag{$\mathcal{C}_2$}
    \limsup\limits_{t\rightarrow\infty}Q_t(x,B(z,\epsilon))>0.
    \end{equation}
    Then there exists a unique invariant measure $\mu_*.$ Moreover, $z\in\supp\mu_*$ and $\{Q_t\nu\}_{t\geq 0}$ weakly converges to $\mu_*$  as $t\rightarrow\infty$ for any $\nu\in\mathcal{P}(\mathcal{X})$ with $\supp\nu\subset\mathcal{T}.$
    \end{proposition}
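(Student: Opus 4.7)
The plan is to combine Proposition~\ref{Prop Exist*}, Theorem~\ref{Prop T3*}, and the EMDS-property (Corollary~\ref{Coro EMDS}) to extract uniqueness from condition $(\mathcal{C}_2)$. Specializing $(\mathcal{C}_2)$ to $x=z$ recovers $(\mathcal{C}_1)$ at $z$, so Proposition~\ref{Prop Exist*} immediately yields $\mathcal{T}\neq\emptyset$ and at least one invariant measure; the ergodic decomposition theorem then furnishes at least one ergodic invariant measure.

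The crux is to show that $(\mathcal{C}_2)$ forces $z$ to lie in the support of every ergodic invariant measure. Given such an ergodic $\mu$, pick any $y\in\supp\mu$. By Theorem~\ref{Prop T3*}, $Q_t(y,\cdot)$ weakly converges to $\mu$ as $t\to\infty$. Applying the Portmanteau theorem to the closed set $\overline{B(z,\epsilon)}$, together with $(\mathcal{C}_2)$, gives
\[
\mu\bigl(\overline{B(z,\epsilon)}\bigr)\;\geq\;\limsup_{t\to\infty}Q_t\bigl(y,\overline{B(z,\epsilon)}\bigr)\;\geq\;\limsup_{t\to\infty}Q_t\bigl(y,B(z,\epsilon)\bigr)\;>\;0.
\]
Since $\overline{B(z,\epsilon)}\subset B(z,2\epsilon)$, we obtain $\mu(B(z,2\epsilon))>0$ for every $\epsilon>0$, i.e., $z\in\supp\mu$. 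By Corollary~\ref{Coro EMDS}, two distinct ergodic measures must have disjoint supports, so there is exactly one ergodic invariant measure $\mu_*$. Invoking the ergodic decomposition once more (every invariant measure is an integral over ergodic ones), uniqueness transfers to the full class of invariant measures, and as a by-product $z\in\supp\mu_*$.

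The convergence statement then follows directly from Lemma~\ref{Prop T1*}: for $\nu\in\mathcal{P}(\mathcal{X})$ with $\supp\nu\subset\mathcal{T}$, $Q_t\nu$ weakly converges to $\int_{\mathcal{T}}\varepsilon_x\,\nu(\d x)$, and each $\varepsilon_x$ is itself an invariant measure by Lemma~\ref{Prop T1*}. By the uniqueness just established, $\varepsilon_x=\mu_*$ for every $x\in\mathcal{T}$, so the limit reduces to $\mu_*$.

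The main obstacle is a set-type mismatch: condition $(\mathcal{C}_2)$ is phrased on the open ball $B(z,\epsilon)$, whereas the Portmanteau inequality compatible with weak convergence naturally runs the other way for open sets. Enlarging to the closed ball $\overline{B(z,\epsilon)}\subset B(z,2\epsilon)$ bridges this gap cleanly in a metric space. A secondary delicate point is transferring uniqueness from ergodic to arbitrary invariant measures, for which the ergodic decomposition seems indispensable, since a priori non-ergodic invariant measures could in principle coexist with $\mu_*$ without being detected by the support argument alone.
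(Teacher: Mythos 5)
Your proposal is correct and follows essentially the same route as the paper: Proposition~\ref{Prop Exist*} for existence, Theorem~\ref{Prop T3*} plus \eqref{eq 3.5} to place $z$ in the support of every ergodic measure, the EMDS-property for uniqueness, and Lemma~\ref{Prop T1*} for the convergence of $Q_t\nu$. The Portmanteau detail on closed balls and the explicit appeal to ergodic decomposition merely fill in steps the paper leaves implicit.
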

	
    \begin{proof}  
    By Proposition \ref{Prop Exist*}, $\mathcal{T}\neq\emptyset$. Therefore,  there exists an ergodic measure $\mu$ for $\{P_t\}_{t\geq 0}$. By Theorem \ref{Prop T3*} and \eqref{eq 3.5}, we have  $\mu(B(z,\epsilon))>0$ for any $\epsilon>0$. Thus we obtain that $z\in\text{supp\;}\mu$ for any ergodic measure $\mu$. Hence by the EMDS-property, we conclude that the semigroup admits a unique  invariant  measure $\mu_*$. Moreover, by Lemma \ref{Prop T1*}, $\{Q_t\nu\}_{t\geq 0}$ weakly converges to $\mu_*$  as $t\rightarrow\infty$ for any $\nu\in\mathcal{P}(\mathcal{X})$ with $\supp\nu\subset\mathcal{T}$.
    \end{proof} 
	
    \begin{remark} 
    \cite[Theorem 1]{KPS2010} shows that if $\{P_t\}_{t\geq 0}$ has e-property and satisfies \eqref{eq 3.5}, then $\{P_t\}_{t\geq 0}$ admits a unique invariant measure $\mu_*$. Moreover, $\{Q_t\nu\}_{t\geq 0}$ weakly converges to $\mu_*$  as $t\rightarrow\infty$ for any $\nu\in\mathcal{P}(\mathcal{X})$ with $\supp\nu\subset\mathcal{T}$. The condition of Ces\`aro eventual continuity of $\{P_t\}_{t\geq 0}$ in Proposition \ref{Thm 2} is weaker than e-property in \cite[Theorem 1]{KPS2010}. 
    \end{remark}

    Note that \eqref{eq 3.5} is stronger than \eqref{eq 3.4} in the sense that it requires all points to have a positive probability of reaching neighborhoods of some fixed states. In fact, \eqref{eq 3.5} implies not only tightness, but also some type of irreducibility, thereby ensuring uniqueness. However, \eqref{eq 3.5} cannot guarantee that the set $\mathcal{T}$ is the whole space. Instead, we prove that $\mathcal{T}$ is the whole space under a stronger version of the lower bound condition \eqref{eq 3.7}, which correspondingly implies the weak-* mean ergodicity as follows.

    \begin{theorem}\label{Thm 3}
    The following two statements are equivalent: 
    \begin{itemize}
        \item [$(\runum{1})$] $\{P_t\}_{t\geq 0}$ is weakly-* mean ergodic with unique invariant measure $\mu$.
        \item[$(\runum{2})$] $\{P_t\}_{t\geq 0}$ is  Ces\`aro eventually continuous on $\mathcal{X}$ and there exists some $z\in\mathcal{X}$ such that for any $\epsilon>0$, 
        \begin{equation}\label{eq 3.7*}\tag{$\mathcal{C}_3$}
        \inf\limits_{x\in\mathcal{X}}\limsup\limits_{t\rightarrow\infty}Q_t(x,B(z,\epsilon))>0.
        \end{equation}
    \end{itemize}
    \end{theorem}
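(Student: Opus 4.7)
The plan is to prove the two implications separately. For $(\runum{1}) \Rightarrow (\runum{2})$, Proposition~\ref{Prop 1} immediately gives that weak-$*$ mean ergodicity implies Ces\`aro eventual continuity. To produce the lower bound $(\mathcal{C}_3)$, pick any $z\in\supp\mu$; then $\mu(B(z,\epsilon))>0$ for every $\epsilon>0$, and combining weak convergence $Q_t(x,\cdot)\to\mu$ with the Portmanteau theorem applied to the open set $B(z,\epsilon)$ yields $\liminf_{t\to\infty} Q_t(x,B(z,\epsilon))\geq \mu(B(z,\epsilon))$ uniformly in $x\in\mathcal{X}$, which gives $(\mathcal{C}_3)$.

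The nontrivial direction is $(\runum{2}) \Rightarrow (\runum{1})$. Since $(\mathcal{C}_3)$ implies $(\mathcal{C}_2)$, Proposition~\ref{Thm 2} supplies a unique invariant measure $\mu_*$ with $z\in\supp\mu_*$, and Theorem~\ref{Prop T3*} further gives $Q_tf(z)\to\langle f,\mu_*\rangle$ for every $f\in L_b(\mathcal{X})$. It suffices to show $\lim_{t\to\infty} Q_tf(x_0)=\langle f,\mu_*\rangle$ for every $x_0\in\mathcal{X}$ and $f\in L_b(\mathcal{X})$; weak convergence then follows by Portmanteau.

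The central device will be the late-time averaging identity $Q_tQ_Sf(x_0)=Q_tf(x_0)+O(S/t)$, which follows from the elementary bound $|Q_{t+r}f(x_0)-Q_tf(x_0)|\leq 2r\|f\|_\infty/t$ for $r\in[0,S]$ together with the commutation $Q_tQ_Sf(x_0)=\int Q_tf(y)\,Q_S(x_0,\d y)$. Writing $L^f(y):=\limsup_{t\to\infty} Q_tf(y)$, the reverse Fatou lemma applied to a uniformly bounded integrand yields $L^f(x_0)\leq \int L^f(y)\,Q_S(x_0,\d y)$ for every $S>0$. Fix $\eta>0$. By Ces\`aro eventual continuity at $z$ combined with $Q_tf(z)\to\langle f,\mu_*\rangle$, there is $\delta_0>0$ such that $L^f(y)\leq \langle f,\mu_*\rangle+\eta$ for all $y\in B(z,\delta_0)$. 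Splitting the integral at $B(z,\delta_0)$, using the trivial bound $L^f\leq \|f\|_\infty$ on the complement, and choosing $S$ so that $Q_S(x_0,B(z,\delta_0))$ approaches $\sup_{S'} Q_{S'}(x_0,B(z,\delta_0))$---which, by $(\mathcal{C}_3)$, is bounded below by $\alpha:=\alpha(\delta_0)>0$ uniformly in $x_0$---I obtain the uniform-in-$x_0$ bound $L^f(x_0)\leq M_1:=(\langle f,\mu_*\rangle+\eta)\alpha+\|f\|_\infty(1-\alpha)$.

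The crucial step is iteration. Feeding the new uniform bound $L^f\leq M_{n-1}$ back into the same splitting (with $L^f\leq \langle f,\mu_*\rangle+\eta$ on $B(z,\delta_0)$ and $L^f\leq M_{n-1}$ elsewhere) gives $L^f(x_0)\leq M_n:=(\langle f,\mu_*\rangle+\eta)\alpha+M_{n-1}(1-\alpha)$, which converges geometrically at rate $(1-\alpha)^n$ to the fixed point $\langle f,\mu_*\rangle+\eta$. Letting $n\to\infty$ and then $\eta\to 0$ yields $L^f(x_0)\leq \langle f,\mu_*\rangle$, and applying the symmetric argument to $-f$ delivers $\liminf_{t\to\infty} Q_tf(x_0)\geq \langle f,\mu_*\rangle$; hence $\lim_{t\to\infty} Q_tf(x_0)=\langle f,\mu_*\rangle$. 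The main obstacle is maintaining a uniform-in-$x_0$ upper bound through each iteration; this is precisely where the strengthened uniform-in-$x$ lower bound $(\mathcal{C}_3)$ (rather than the weaker $(\mathcal{C}_2)$ used in Proposition~\ref{Thm 2}) is essential, as it furnishes a common positive constant $\alpha$ bounding $\sup_S Q_S(x_0,B(z,\delta_0))$ from below across all $x_0\in\mathcal{X}$.
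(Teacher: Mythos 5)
Your proof is correct, but the hard direction $(\runum{2})\Rightarrow(\runum{1})$ follows a genuinely different route from the paper's. The paper reduces everything to showing $\mathcal{T}=\mathcal{X}$ (so that Proposition~\ref{Thm 2} and Lemma~\ref{Prop T1*} finish the job): it first upgrades \eqref{eq 3.7} to a $\liminf$ lower bound, then proves tightness of $\{Q_t(x,\cdot)\}_{t\ge 0}$ for every $x$ by a four-step bootstrap in which a supremum $\gamma$ over measure decompositions $Q_{t_1,\dots,t_N}\mu\ge\alpha\nu$ (with $\nu$ concentrated near $z$ and hence ``good'') is shown to equal $1$ by contradiction. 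You bypass tightness entirely: after using Proposition~\ref{Thm 2} and Theorem~\ref{Prop T3*} to get $Q_tf(z)\to\langle f,\mu_*\rangle$, you work with the scalar quantity $L^f(x)=\limsup_t Q_tf(x)$, derive the sub-invariance $L^f(x_0)\le\int L^f\,\d Q_S(x_0,\cdot)$ from the $O(S/t)$ commutation estimate (essentially \cite[Lemma 2]{KPS2010}) plus reverse Fatou, and then iterate the split over $B(z,\delta_0)$ versus its complement to drive the uniform upper bound $M_n$ geometrically down to $\langle f,\mu_*\rangle+\eta$; applying the result to $-f$ closes the argument, and bounded convergence extends it from $\delta_{x_0}$ to arbitrary initial laws. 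Both arguments exploit the same mechanism --- condition \eqref{eq 3.7} forces a mass fraction $\alpha$ to visit a neighbourhood of $z$ where the behaviour is already controlled, yielding a $(1-\alpha)^n$ decay --- but yours runs the iteration on real numbers rather than on measures, which avoids the compact sets, the Feller-property boundary adjustments, and the $\gamma$-supremum contradiction of the paper, at the cost of not producing the tightness statement $\mathcal{T}=\mathcal{X}$ as a by-product. Two small points worth making explicit in a write-up: $L^f$ is Borel (it is a decreasing limit of countable suprema of the continuous functions $y\mapsto Q_tf(y)$ over rational $t$), so the integral $\int L^f\,\d Q_S(x_0,\cdot)$ is well defined; and the iteration step requires $M_{n-1}\ge\langle f,\mu_*\rangle+\eta$ for the bound to be monotone in $Q_S(x_0,B(z,\delta_0))$, which holds since otherwise you are already done. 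Your $(\runum{1})\Rightarrow(\runum{2})$ via Proposition~\ref{Prop 1} and the Portmanteau theorem is also fine and replaces the paper's citation of \cite[Corollary 5.3]{SW2012} with a self-contained argument.
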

    
    We prove Theorem \ref{Thm 3} in Section \ref{prf sec3.2}.

    \begin{remark}
    The lower bound condition (\ref{eq 3.7}) seems to be a rather strong condition at first sight, while we will show that it is not able to ensure the convergence of $\{P_t\}_{t\geq0}$ even if we replace it by an even stronger condition as follows:
    \begin{equation}\label{eq lwp}
    \inf\limits_{x\in \mathcal{X}}\limsup\limits_{t\rightarrow\infty}P_t(x,B(z,\epsilon))>0,
    \end{equation}
    see the next example.
    \end{remark}
    
    \begin{example}\label{Ex 4}
    Let $\mathcal{X}=\T=\R/2\pi\Z$ and Markov semigroup $\{P_t\}_{t\geq 0}$ on $\mathcal{X}$ be given by $P_t\delta_x=\delta_{x+t\;(\text{mod}\;2\pi)},\;x\in \mathcal{X}$.  $\{P_t\}_{t\geq 0}$ is Feller, eventually continuous on $\mathcal{X}$ and (\ref{eq lwp}) holds for any $z\in\mathcal{X}$ and $\epsilon>0$. However $\{P_t\delta_x\}_{t\geq 0}$ does not converge for any $x\in \mathcal{X}.$ 
    \end{example}

     Next, it is natural to consider how to describe the long-time behavior of $\{P_t\}_{t\geq 0}$ or $\{Q_t\}_{t\geq 0}$ starting from the points outside $\mathcal{T}$. We have the following result, considering  the sweep property introduced in \cite{SSU2010}. In the context of  e-processes, the sweep property is first given in \cite[Proposition 3]{SSU2010} and then extended under a weaker lower bound condition in \cite[Proposition 3.9]{SW2012}.

    \begin{proposition}\label{Prop Sweep} 
    Let $\{P_t\}_{t\geq 0}$ be Ces\`aro eventually continuous on $\mathcal{X}$. Assume that there exists $z\in \mathcal{X}$ such that for any $x\in \mathcal{X}$ and $\epsilon>0$ such that \eqref{eq 3.5} holds. Then $\{P_t\}_{t\geq 0}$ is sweeping from compact sets disjoint from $\mathcal{T}$ in the following sense: for any compact set $K\subset\mathcal{X}$ such that $K\cap\mathcal{T}=\emptyset$, then 
    \begin{equation*}
    \lim\limits_{t\rightarrow\infty}P_t\mu(K)=0\quad\forall\,\mu\in\mathcal{P}(\mathcal{X}).
    \end{equation*}
  
    \end{proposition}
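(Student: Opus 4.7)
The plan is to argue by contradiction. Suppose there exist a compact set $K\subset\mathcal{X}$ with $K\cap\mathcal{T}=\emptyset$, a measure $\mu\in\mathcal{P}(\mathcal{X})$, and $\alpha>0$ along with a sequence $t_n\uparrow\infty$ such that $P_{t_n}\mu(K)\geq\alpha$. Since $\mathcal{T}$ is closed by Lemma~\ref{Prop T4} and $K$ is compact, there exists $r>0$ with $K^r\cap\mathcal{T}=\emptyset$. Pick $f\in L_b(\mathcal{X})$ with $\mathbf{1}_K\leq f\leq\mathbf{1}_{K^r}$. Proposition~\ref{Thm 2} provides the unique invariant measure $\mu_*$ with $z\in\supp\mu_*\subset\mathcal{T}$, so in particular $\mu_*(f)=0$ and $P_{t_n}\mu(f)\geq\alpha$.

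First I would exploit Theorem~\ref{Prop T3*} at $z$ to deduce $Q_t f(z)\to\mu_*(f)=0$. Cesàro eventual continuity at $z$ then supplies, for each $\eta>0$, a radius $\delta>0$ such that
\[
L(x):=\limsup_{t\to\infty} Q_t f(x)\leq \eta \quad\text{for all } x\in B(z,\delta).
\]

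The main technical step is to propagate the smallness of $L$ to all of $\mathcal{X}$. The near-invariance estimate $\|P_s Q_T\nu-Q_T\nu\|_{TV}\leq 2s/T$ implies $Q_t Q_T f - Q_T f = o_{T\to\infty}(1)$ for every fixed $t>0$, and the reverse Fatou lemma (applicable since $|Q_T f|\leq 1$) yields the superharmonic-type inequality
\[
L(x)\leq \int_{\mathcal{X}} L(y)\, Q_t(x,dy), \qquad \forall\, t>0,\; x\in\mathcal{X}.
\]
Splitting the integral along $B(z,\delta)$ (where $L\leq\eta$) and its complement (where $L\leq 1$), and then choosing $t$ so that $Q_t(x,B(z,\delta))$ is near its limsup, which is strictly positive by \eqref{eq 3.5}, yields a self-improving bound $L(x)\leq \eta+(1-\eta)(1-Q_t(x,B(z,\delta)))$. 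Bootstrapping this bound using the progressively enlarging set on which $L$ is small, and finally letting $\eta\downarrow 0$, drives $L$ to zero everywhere. Dominated convergence then gives $Q_t\mu(f)\to 0$.

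The principal obstacle, and what I expect to be the hardest part of the proof, is converting this Cesàro vanishing into the pointwise statement $P_t\mu(K)\to 0$, since a Cesàro limit of zero is a priori compatible with $P_{t_n}\mu(K)\geq\alpha$ along a sparse subsequence. To overcome this I would apply the preceding argument not to $\mu$ but to each measure $\nu_n:=P_{t_n}\mu$, which satisfies $\nu_n(f)\geq\alpha$. The identity
\[
Q_T\nu_n(f)=\frac{1}{T}\int_{t_n}^{t_n+T} P_s\mu(f)\,ds
\]
combined with uniform-in-$n$ Cesàro vanishing should yield the contradiction through a diagonal choice of $T$ and $n$. An alternative route would extract from $\{\nu_n|_K\}$ a weak limit $\tilde\nu$ concentrated on $K$; the quasi-invariance of $Q_T\nu_n$ (via the same $\|P_s Q_T\nu_n-Q_T\nu_n\|_{TV}\leq 2s/T$ estimate) would force $\tilde\nu$ to carry invariant-like mass inside $K$, contradicting $\supp\mu_*\cap K=\emptyset$ and the uniqueness of $\mu_*$.
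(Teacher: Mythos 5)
Your opening moves match the paper's: closedness of $\mathcal{T}$ (Lemma \ref{Prop T4}) to separate $K$ from $\mathcal{T}$ by a function $f$ with $\mathbf{1}_K\leq f\leq\mathbf{1}_{K^r}$, Proposition \ref{Thm 2} to get $\langle f,\mu_*\rangle=0$ and hence $Q_tf(z)\to 0$, and Ces\`aro eventual continuity to make $L(x):=\limsup_{t\to\infty}Q_tf(x)\leq\eta$ on a ball $B(z,\delta)$; the superharmonic inequality $L(x)\leq\int_{\mathcal{X}}L(y)\,Q_t(x,\d y)$ is also correct. But the bootstrap that is supposed to drive $L$ to zero on all of $\mathcal{X}$ does not close. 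Writing $m=\sup_{\mathcal{X}}L$, your bound gives $L(x)\leq m-(m-\eta)\sup_{t}Q_t(x,B(z,\delta))$; to conclude $m\leq\eta$ you need $\inf_{x}\limsup_{t\to\infty}Q_t(x,B(z,\delta))>0$, which is exactly the strictly stronger condition \eqref{eq 3.7}, not the hypothesis \eqref{eq 3.5}. Under \eqref{eq 3.5} alone a maximizing sequence $x_n$ for $L$ may have $\limsup_tQ_t(x_n,B(z,\delta))\to0$ with no contradiction, and ``progressively enlarging the set where $L$ is small'' requires knowing that $Q_t(x,\cdot)$ charges that set, which is the very thing to be proved. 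The paper avoids this with a Szarek-type mass-extraction argument: it introduces the convex, $Q_t$-invariant family $M$ of measures $\nu$ satisfying $\liminf_tQ_t\nu(\mathcal{T}^\xi)>1-\alpha/2$, shows $\delta_y\in M$ for $y\in B(z,\delta)$, covers the compact set $K$ by finitely many balls on which $P_{t_{x_i}}(\cdot,B(z,\delta))$ is bounded below (uniformity is needed only over $K$, where compactness supplies it), and contradicts the maximality of the supremum $\gamma$ of the mass of $Q_{t_0}\mu$ dominated by a member of $M$, since each recurrence of mass $\alpha$ in $K$ lets one extract an extra fixed amount $\rho=\Theta\alpha/(2k)$.

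The second gap is the step you yourself flag as hardest. Applying the Ces\`aro result to $\nu_n=P_{t_n}\mu$ gives $Q_T\nu_n(f)\to0$ as $T\to\infty$ for each fixed $n$, which is entirely compatible with $\nu_n(f)\geq\alpha$; without a rate uniform in $n$ (none is supplied by the hypotheses) the diagonal argument produces nothing. The weak-limit alternative also stalls: a subsequential weak limit of $\nu_n|_K$ need not carry any invariant mass, and no quasi-invariance of $\nu_n$ itself is available. For what it is worth, the paper's own proof begins by negating $\limsup_{t\to\infty}Q_t\mu(K)>\alpha$, i.e.\ it too establishes only the Ces\`aro form of sweeping; so you have correctly identified a genuine difficulty in reaching the stated $P_t$-version, but your proposal does not resolve it either.
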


    The proof of Proposition \ref{Prop Sweep} is given in Section \ref{prf sec3.2}.

    \vspace{3mm}
    In addition to Ces\`aro averages, the eventually continuous semigroup $\{P_t\}_{t\geq 0}$ itself also converges under some appropriate assumptions. Recall that some connection between eventual continuity and asymptotic stability has been presented in Proposition \ref{Prop 1}. A sufficient condition for asymptotic stability is also given in \cite[Theorem 1.15]{GL2015}, which is relatively difficult to verify and thus serves more as a theoretical tool for studying eventual continuity. 
    
    In what follows, we formulate an asymptotic stability criterion for the eventually continuous semigroups, which can be applied to a jump process connected to the place-dependent iterated function systems provided in the next section.    More precisely, we prove that the asymptotic stability can be guaranteed under the eventual continuity and a lower bound condition \eqref{eq 4.1} as follows.

    \begin{theorem}\label{Thm 4} 
    The following three statements are equivalent:
    \begin{itemize}
        \item[$(\runum{1})$] $\{P_t\}_{t\ge 0}$ is asymptotically stable with unique invariant measure $\mu$.
        \item[$(\runum{2})$] $\{P_t\}_{t\geq 0}$ is eventually continuous on $\mathcal{X}$, and there exists $z\in\mathcal{X}$ such that for any $\epsilon>0$,  
            \begin{equation}\label{eq 4.1}\tag{$\mathcal{C}_4$}
		  	   \inf\limits_{x\in \mathcal{X}}\liminf\limits_{t\rightarrow\infty}P_t(x,B(z,\epsilon))>0.
		\end{equation}
        \item[$(\runum{3})$]  There exists $z\in\mathcal{X}$ such that $\{P_t\}_{t\geq 0}$ is eventually continuous at $z$ and for any $\epsilon>0$, \eqref{eq 4.1} holds. 
    \end{itemize} 		
    \end{theorem}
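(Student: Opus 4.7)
The plan is to prove the cycle $(\runum{1}) \Rightarrow (\runum{2}) \Rightarrow (\runum{3}) \Rightarrow (\runum{1})$. For $(\runum{1}) \Rightarrow (\runum{2})$, I would invoke Proposition \ref{Prop 1} to extract eventual continuity on $\mathcal{X}$ from asymptotic stability, and obtain \eqref{eq 4.1} by choosing $z \in \supp \mu$, where $\mu$ is the unique invariant measure, and applying the Portmanteau theorem to the open ball $B(z,\epsilon)$: weak convergence $P_t(x,\cdot) \to \mu$ yields $\liminf_{t\to\infty} P_t(x, B(z,\epsilon)) \geq \mu(B(z,\epsilon)) > 0$ uniformly in $x \in \mathcal{X}$, since the right-hand side does not depend on $x$. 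The implication $(\runum{2}) \Rightarrow (\runum{3})$ is immediate by restriction of eventual continuity from $\mathcal{X}$ to the single point $z$.

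The substantive direction is $(\runum{3}) \Rightarrow (\runum{1})$. The strategy is to show that, for every $f \in L_b(\mathcal{X})$, the pointwise limit $\Lambda(f) := \lim_{t\to\infty} P_tf(x)$ exists, is independent of $x$, and is represented by integration against an invariant probability measure $\mu$; weak convergence $P_t\nu \to \mu$ for every $\nu \in \mathcal{P}(\mathcal{X})$ then follows. I would achieve this by an iterated-coupling Cauchy estimate. Fix $\epsilon > 0$ and $f \in L_b(\mathcal{X})$; eventual continuity at $z$ supplies $\delta > 0$ with $\limsup_t |P_tf(y) - P_tf(z)| \leq \epsilon$ for every $y \in B(z,\delta)$, and \eqref{eq 4.1} supplies $\alpha > 0$ with $P_s(x, B(z,\delta)) \geq \alpha$ uniformly in $x$ for all sufficiently large $s$. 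Applying a maximal coupling of $P_s(x,\cdot)$ and $P_s(x',\cdot)$ places at least mass $\alpha^2 - o(1)$ on $B(z,\delta) \times B(z,\delta)$; on this "good" event, eventual continuity controls the future evolutions to within $O(\epsilon)$, while the complementary "bad" event contributes at most $2\|f\|_\infty(1-\alpha^2 + o(1))$. Iterating over $n$ successive time-blocks shrinks the bad-event error like $(1-\alpha^2)^n$, so letting $n \to \infty$ and then $\epsilon \downarrow 0$ yields $\limsup_{t\to\infty} |P_tf(x) - P_tf(x')| = 0$ for all $x, x' \in \mathcal{X}$, which is the required Cauchy property.

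The main obstacle is the non-uniformity built into Definition \ref{EvC}: the time $T_y$ after which $|P_tf(y) - P_tf(z)| \leq \epsilon$ depends on $y \in B(z,\delta)$, so the integral control on the good set is not immediately a uniform one-shot estimate because the kernel $P_s(x, dy)$ may concentrate on $y$-values with arbitrarily large $T_y$. I would circumvent this through an Egorov-type extraction: using the pointwise convergence on $B(z,\delta)$ and the separability of $\mathcal{X}$, for every $\eta > 0$ one finds a subset $B_\eta \subset B(z,\delta)$ on which $T_y$ is bounded by a uniform $T^\ast$, and whose $P_s(x,\cdot)$-mass is at least $\alpha - \eta$ for large $s$; the residual is absorbed into the geometric-decay step of the iteration. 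Once the Cauchy property is secured, the common limit $\Lambda$ is a positive linear $\{P_t\}$-invariant functional on $L_b(\mathcal{X})$; tightness of $\{P_t(x,\cdot)\}_{t\geq 0}$, needed to realize $\Lambda$ as a probability measure $\mu$, would follow from \eqref{eq 4.1} together with the Cauchy estimate applied to Lipschitz cut-offs of indicators of complements of large compact sets, closing the cycle.
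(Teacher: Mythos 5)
Your overall architecture $(\runum{1})\Rightarrow(\runum{2})\Rightarrow(\runum{3})\Rightarrow(\runum{1})$ matches the paper's, and your core device for $(\runum{3})\Rightarrow(\runum{1})$ — repeatedly using \eqref{eq 4.1} to funnel a definite fraction of mass into $B(z,\delta)$, applying eventual continuity at $z$ to that fraction, and letting the residual decay geometrically — is essentially the paper's argument (the paper splits each $P_{t_1+\cdots+t_k}\delta_{x_j}$ into $\alpha(1-\alpha)^{i-1}$-weighted pieces supported in $B(z,\delta)$ and compares both chains to the pivot $P_tf(z)$, rather than coupling them; this yields rate $(1-\alpha)^k$ instead of your $(1-\alpha^2)^n$, and your ``maximal coupling'' should really be the product coupling, which is what gives $\alpha^2$). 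Your Egorov extraction is also unnecessary: the non-uniformity of $T_y$ in Definition \ref{EvC} is handled in one line by the reverse Fatou lemma, $\limsup_{t}\langle |P_tf-P_tf(z)|,\nu\rangle\leq\langle \limsup_t|P_tf-P_tf(z)|,\nu\rangle\leq\epsilon$, applied to each fixed splitting measure $\nu$ supported in $B(z,\delta)$; as stated, your set $B_\eta$ cannot be chosen to carry mass $\alpha-\eta$ under $P_s(x,\cdot)$ ``for large $s$'' simultaneously, but since the iteration only ever needs the bound against one fixed measure per block, this is repairable.

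The genuine gap is in the endgame. The Cauchy property $\limsup_t|P_tf(x)-P_tf(x')|=0$ does not by itself give existence of $\Lambda(f)=\lim_t P_tf(x)$: the limit in $t$ may fail to exist even though the difference across starting points vanishes, so declaring $\Lambda$ a positive invariant functional is circular. The standard (and the paper's) resolution is to first produce an invariant measure $\mu$ and then write $P_tf(x)-\langle f,\mu\rangle=\int(P_tf(x)-P_tf(y))\,\mu(\d y)\to 0$ by bounded convergence. Producing $\mu$ is itself nontrivial: your claim that tightness of $\{P_t(x,\cdot)\}_{t\geq0}$ ``follows from \eqref{eq 4.1} together with the Cauchy estimate applied to Lipschitz cut-offs of indicators of complements of large compact sets'' is not a proof — on a non-locally-compact Polish space \eqref{eq 4.1} only controls mass returning to a ball $B(z,\epsilon)$, which is not precompact, and you have no candidate compact sets to cut off against. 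Extracting tightness from a lower bound of this type plus eventual continuity at $z$ is exactly the content of Proposition \ref{Prop Exist*} (a lengthy Lasota--Szarek-type construction), and the paper closes this step by invoking Proposition \ref{Thm 2} (noting that \eqref{eq 4.1} implies \eqref{eq 3.5}). Without either citing that machinery or reproducing it, your proof of $(\runum{3})\Rightarrow(\runum{1})$ is incomplete.
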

    
    \begin{remark}
    Theorem \ref{Thm 4} slightly improves  \cite[Theorem 1]{GLLL2024}. Actually, Condition $(\runum{3})$ is straightforward to apply, as it only requires verification of the eventual continuity and relation \eqref{eq 4.1} holding at a single point $z$.  
    \end{remark}
 
    \begin{proof}[Proof of Theorem \ref{Thm 4}] 
    The implication that $(\runum{1})\Rightarrow(\runum{2})$ follows from \cite[Theorem 1]{GLLL2024}. $(\runum{2})\Rightarrow(\runum{3})$ is trivial. It remains to show that $(\runum{3})\Rightarrow(\runum{1})$. The existence of an unique invariant measure follows directly from Proposition \ref{Thm 2}. Thus it suffices to show that 
    \begin{equation*}
    \lim\limits_{t\rightarrow\infty}|P_tf(x)-P_tf(y)|=0,\quad\forall\,x,y\in \mathcal{X},\;f\in L_b(\mathcal{X}).
    \end{equation*}
        
    Assume, contrary to our claim, that there exist  some  $x_1,x_2\in \mathcal{X}$, $f\in L_b(\mathcal{X})$ and $\epsilon>0$ such that $\limsup\limits_{t\rightarrow\infty}|P_tf(x_1)-P_tf(x_2)|\geq 3\epsilon$. As $\{P_t\}_{t\geq 0}$ is eventually continuous at   $z$, there exists $\delta>0$ such that
    
    \begin{equation*}
    \limsup\limits_{t\rightarrow\infty}|P_tf(x)-P_tf(z)|<\epsilon/2,\quad\forall\,x\in B(z,\delta). 
    \end{equation*}
    For such $\delta>0$, condition (\ref{eq 4.1}) gives some $\alpha \in (0,\frac12)$ such that $\liminf\limits_{t\rightarrow\infty}P_t(x,B(z,\delta))>\alpha$ for any $x\in\mathcal{X}$.	Then Fatou's lemma gives, for any $\nu\in\mathcal{P}(\mathcal{X})$ with $\text{supp }\nu\subset B(z,\delta),$
    
    \begin{equation*}
    \liminf\limits_{t\rightarrow\infty}P_t\nu(B(z,\delta))\geq\int_{\mathcal{X}}\liminf\limits_{t\rightarrow\infty}P_t(y,B(z,\delta))\nu(\d y)>\alpha.
    \end{equation*}
    
    Let $k\geq 1$ be such that $2(1-\alpha)^k\|f\|_{\infty}<\epsilon.$ By induction we will define four sequences of measures $\{\nu_i^{x_1}\}_{i=1}^k,\{\mu_i^{x_1}\}_{i=1}^k,\{\nu_i^{x_2}\}_{i=1}^k,\{\mu_i^{x_2}\}_{i=1}^k,$ and a sequence of positive numbers $\{t_{i}\}_{i=1}^k$ in the following way: let $t_1>0$ be such that $P_{t}(x_j,B(z,\delta))>\alpha$ for any $t\geq t_1$, $j=1,2$. Set     
    \begin{center}
    $\nu_1^{x_j}(\cdot) = \dfrac{P_{t_1}\delta_{x_j}(\cdot \cap B(z,\delta))}{P_{t_1}\delta_{x_j}(B(z,\delta))},\quad$
    $\mu_1^{x_j}(\cdot) = \dfrac{1}{1-\alpha}(P_{t_1}\delta_{x_j}(\cdot)-\alpha\nu_1^{x_j}(\cdot)),\quad j=1,2.$
    \end{center}
		
    Assume that we have done it for $i = 1,\dots , l,$ for some $l < k.$ Now let $t_{l+1}\geq t_l$ be such that $P_{t}\mu_l^{x_j}(B(z,\delta))>\alpha$ for any $t\geq t_{l+1}$, $j=1,2$. Set 
    \begin{center}
    $\nu_{l+1}^{x_j}(\cdot) = \dfrac{P_{t_{l+1}}\mu_l^{x_j}(\cdot \cap B(z,\delta))}{	P_{t_{l+1}}\mu_l^{x_j}(B(z,\delta))},\quad$ $\mu_{l+1}^{x_j}(\cdot) = \dfrac{1}{1-\alpha}(P_{t_{l+1}}\mu_l^{x_j}(\cdot)-\alpha\nu_{l+1}^{x_j}(\cdot)),\quad j=1,2.$
    \end{center}
    Then it follows that
    \begin{align*}
    P_{t_1+\dots+t_k}\delta_{x_j}(\cdot)&=\alpha P_{t_2+\dots+t_k}\nu_1^{x_j}(\cdot)+\alpha(1-\alpha) P_{t_3+\dots+t_k}\nu_2^{x_j}(\cdot)+ \dots \\
    &\quad +\alpha(1-\alpha)^{k-1}\nu_k^{x_j}(\cdot)+(1-\alpha)^k \mu_k^{x_j}(\cdot),
    \end{align*}
    with $\text{supp }{{\nu}_i^{x_j}}\subset B(z,\delta),\,i=1,\dots,k,\,j=1,2$. Thus  from the Fatou's lemma and eventual continuity,  we have
    \begin{equation*}
    \begin{aligned}
    \limsup\limits_{t\rightarrow\infty}|\langle P_{t}f, \nu_i^{x_1} \rangle-\langle P_{t}f, \nu_i^{x_2} \rangle| \leq \limsup\limits_{t\rightarrow\infty}|\langle P_{t}f-P_{t}f(z), \nu_i^{x_1} \rangle| +|\langle P_{t}f-P_{t}f(z), \nu_i^{x_2} \rangle|\leq\epsilon.
    \end{aligned}
    \end{equation*}
    
    Finally, using the measure decomposition, we get
    \begin{equation*}
    \begin{aligned}
    3\epsilon&\leq\limsup\limits_{t\rightarrow\infty}|P_tf(x_1)-P_tf(x_2)|\\
    &=\limsup\limits_{t \to \infty}|\langle P_tf,P_{t_1+\cdots+t_k} \delta_{x_1}\rangle-\langle P_tf,P_{t_1+\cdots+t_k} \delta_{x_2}\rangle| \\
    &\leq \alpha\limsup\limits_{t \to \infty}|\langle P_{t}f, \nu_1^{x_1} \rangle-\langle P_{t}f, \nu_1^{x_2} \rangle|+\cdots+\alpha(1-\alpha)^{k-1}\limsup\limits_{t \to \infty}|\langle P_{t}f, \nu_k^{x_1}\rangle-\langle P_{t}f, \nu_k^{x_2} \rangle|+\epsilon\\
    &\leq (\alpha+\cdots+\alpha(1-\alpha)^{k-1})\epsilon+\epsilon< 2\epsilon,
    \end{aligned}
    \end{equation*}
    which is impossible. This completes the proof.
    \end{proof}

    \subsection{Some properties of ergodic decomposition}\label{Sec 3.3}
    The primary objective of the ergodic decomposition theory of Markov operators or semigroups is to present an integral decomposition of invariant  measures into ergodic measures.   In practice, however, it is not convenient to study the ergodic measure either through the canonical dynamical system or the equivalent characterization as extreme points in the convex set of the invariant measure. A reasonable and feasible approach is to characterize ergodic measures in terms of a measurable subset of the state space, and to perform an integral decomposition over this subset of any invariant measure in terms of the ergodic measures. 
    
    Following this perspective, Worm and Hille \cite{WH2011-0,W2010,WH2011} give a detailed survey on this topic and establish the ergodic decomposition theorem of regular Markov operators and regular jointly measurable  Markov semigroups.  Specifically, in \cite{WH2010}, for Markov operators and semigroups with the (Ces\`aro) e-property on Polish spaces, Worm and Hille give some refined depictions of the decomposition that consists of the closedness and invariance of the sets in the decomposition, and obtain a continuous surjective function from one of these sets to the ergodic invariant  measures.   In \cite{SW2012}, Szarek and Worm introduce a weak concentrating condition around a compact set for Markov semigroups with e-property, providing a bijection between the a Borel subset of the compact set and the set of ergodic measures. 
    
    In particular, as Worm and Hille state on Page 33 in \cite{WH2011} that, a Markov process generates a regular jointly measurable Markov semigroup. Consequently, the results of the ergodic decomposition in \cite{WH2011-0,W2010,WH2011} can be directly applied when considering the ergodic behaviors of SPDEs, IFS and other Markov processes. 
    
    Therefore, in this subsection,  we shall assume that $\{P_t\}_{t\ge 0}$ is a regular jointly measurable Markov--Feller semigroup.  Similar to \cite{WH2011,SW2012}, we only consider some improved  properties, such as the closedness and the compactness, of  the subset in the state space related to the ergodic decomposition of  the eventually continuous Markov--Feller semigroups. To this end, we use an approach analogous to \cite{W2010,WH2011,SW2012}. More precisely, a continuous surjective map is established from the subsets of state space in ergodic decomposition to the ergodic measures for (Ces\`aro) eventually continuous Markov--Feller semigroups by Lemma \ref{Prop Phi} below.

    \vspace{3mm}
    
    Let us begin with some notions and results from \cite{W2010,WH2010,WH2011,SW2012}.  Let us consider the weak topology on $\mathcal{P}(\mathcal{X})$, which is metricized by the dual-Lipschitz distance $\|\cdot\|^*_{L}$:
	\begin{equation*}
		\|\mu-\nu\|_{L}^*:=\sup\{|\langle f,\mu \rangle-\langle f,\nu\rangle|:f\in L_b(\mathcal{X}),\|f\|_{L}:=\|f\|_{\infty}+\|f\|_{\rm Lip}\leq 1\}.
	\end{equation*}  
    Recall that by Lemma \ref{Prop T1*}, under the assumption of Ces\`aro eventual continuity,  $\{Q_t(x,\cdot)\}_{t\geq 0}$ weakly converges to an invariant measure for any $x\in\mathcal{T}$. We denote such measure by $\varepsilon_x$. Meanwhile, the set $\mathcal{T}$ is closed according to Lemma \ref{Prop T4}. This allows to define an equivalence relation $\sim$ on $\mathcal{T}$:  $x\sim y$ if and only if $\varepsilon_x=\varepsilon_y$, and such equivalence class is denoted  by $[x]$. Let $\Phi:\mathcal{T}\rightarrow\mathcal{P}(\mathcal{X})$ be defined by $\Phi(x)=\varepsilon_x$ for any $x\in\mathcal{T}$.	We begin with the following property of the map $\Phi$.
 	\begin{lemma}\label{Prop Phi}
		Let $\{P_t\}_{t\geq 0}$ be  Ces\`aro eventually continuous on $\mathcal{X}$. Then the map $\Phi$ is continuous from $\mathcal{T}$ to $\mathcal{P}(\mathcal{X}).$   Conversely, if $\Phi$ is continuous from $\mathcal{T}$ to $\mathcal{P}(\mathcal{X})$, then $\{P_t\}_{t\geq 0}$ is Ces\`aro eventually continuous on ${\rm Int}_{\mathcal{X}}(\mathcal{T})$. 
	\end{lemma}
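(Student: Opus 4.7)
My strategy is to prove the two implications separately by leveraging Lemma \ref{Prop T1*}, which supplies the pointwise identity
\[
\langle f, \Phi(x)\rangle = \langle f, \varepsilon_x\rangle = \lim_{t\to\infty} Q_t f(x), \qquad x \in \mathcal{T},\ f \in C_b(\mathcal{X}).
\]
Because the weak topology on $\mathcal{P}(\mathcal{X})$ is metrized by the dual-Lipschitz distance $\|\cdot\|_L^*$, continuity of $\Phi$ at $x_0 \in \mathcal{T}$ reduces to verifying $\langle f, \Phi(x_n)\rangle \to \langle f, \Phi(x_0)\rangle$ for every $f \in L_b(\mathcal{X})$ whenever $x_n \to x_0$ in $\mathcal{T}$.

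For the forward direction, I would fix $x_0 \in \mathcal{T}$ and a sequence $\{x_n\} \subset \mathcal{T}$ with $x_n \to x_0$; the limit lies in $\mathcal{T}$ because $\mathcal{T}$ is closed by Lemma \ref{Prop T4}. For each $f \in L_b(\mathcal{X})$ the identity above yields
\[
|\langle f, \Phi(x_n)\rangle - \langle f, \Phi(x_0)\rangle| \leq \limsup_{t\to\infty}|Q_t f(x_n) - Q_t f(x_0)|.
\]
Ces\`aro eventual continuity of $\{P_t\}_{t \geq 0}$ at $x_0$ then supplies, for each $\epsilon > 0$, some $\delta > 0$ such that $\rho(x, x_0) < \delta$ drives the right-hand side below $\epsilon$; applied to $x = x_n$ for all large $n$, this yields $\langle f, \Phi(x_n)\rangle \to \langle f, \Phi(x_0)\rangle$ and hence $\Phi(x_n) \to \Phi(x_0)$ weakly.

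For the converse, I would assume $\Phi$ is continuous on $\mathcal{T}$, fix $x_0 \in {\rm Int}_{\mathcal{X}}(\mathcal{T})$, and choose $r > 0$ with $B(x_0, r) \subset \mathcal{T}$. For every $x \in B(x_0, r)$ and $f \in L_b(\mathcal{X})$ the limit formula applies to both $x$ and $x_0$, giving
\[
\limsup_{t\to\infty}|Q_t f(x) - Q_t f(x_0)| = |\langle f, \Phi(x) - \Phi(x_0)\rangle|.
\]
The weak (equivalently dual-Lipschitz) continuity of $\Phi$ at $x_0$ then forces the right-hand side to vanish as $x \to x_0$ inside $B(x_0, r)$, which is precisely Ces\`aro eventual continuity at $x_0$.

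The only delicate point I anticipate is keeping all computations anchored in $\mathcal{T}$ so that the pointwise limit representation $\lim_t Q_t f = \langle f, \Phi\rangle$ is legitimately available. In the forward direction this is automatic by the choice $x_n \in \mathcal{T}$; in the reverse direction the interior hypothesis is used precisely so that a full ball around $x_0$ sits inside $\mathcal{T}$, allowing weak continuity of $\Phi$ to be transferred into a $\limsup_t$ bound on the Ces\`aro averages. Without this hypothesis the converse would fail because nothing controls $Q_t f(x)$ for $x \notin \mathcal{T}$. Apart from this bookkeeping, both directions are routine consequences of Lemmas \ref{Prop T1*} and \ref{Prop T4}.
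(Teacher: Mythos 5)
Your proposal is correct and follows essentially the same route as the paper: both directions reduce to the identity $\langle f,\varepsilon_x\rangle=\lim_{t\to\infty}Q_tf(x)$ for $x\in\mathcal{T}$ from Lemma \ref{Prop T1*}, use the closedness of $\mathcal{T}$ from Lemma \ref{Prop T4} in the forward direction, and exploit the interior hypothesis in the converse exactly as the paper does. The only cosmetic difference is that the paper explicitly cites a result (Theorem 2.3.24 in the reference of Worm) to pass from convergence of $\langle f,\varepsilon_{x_n}\rangle$ for all $f\in L_b(\mathcal{X})$ to convergence in the dual-Lipschitz metric, a step you assert via the standard metrization of the weak topology.
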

   \begin{proof}  $\Rightarrow:$   Assume that $\{P_t\}_{t\geq 0}$ is Ces\`aro eventually continuous at $x\in\mathcal{T}.$ Fix $f\in L_b(\mathcal{X})$ and $\epsilon>0.$ There exists $\delta>0,$ for $y\in B(x,\delta)$
	\begin{equation*}
	\limsup\limits_{t\rightarrow\infty}|Q_tf(y)-Q_tf(x)|\leq\epsilon,
	\end{equation*}
	Let $y\in B(x,\delta)\cap\mathcal{T},$ then we have
	\begin{equation*}
	|\langle f,\varepsilon_x\rangle-\langle f,\varepsilon_y\rangle|=\lim\limits_{t\rightarrow\infty}|Q_tf(x)-Q_tf(y)|\leq\epsilon.
	\end{equation*}
	Thus $x\mapsto\langle f,\varepsilon_x\rangle$ is continuous. Now let $\{x_n\}_{n\geq 1}\subset\mathcal{T}$ converging to $x\in \mathcal{X}$. Then $x\in\mathcal{T}$ since $\mathcal{T}$ is closed, and $\langle f,\varepsilon_{x_n} \rangle\rightarrow\langle f,\varepsilon_x\rangle$ for any $f\in L_b(\mathcal{X}).$ Using \cite[Theorem 2.3.24]{W2010}, $$\lim_{n\rightarrow\infty}\|\varepsilon_{x_n}-\varepsilon_x\|_L^*=0,$$ hence $\Phi$ is continuous.
 
     \noindent  $\Leftarrow:$ Assume that $\Phi$ is continuous at $x\in\text{Int}_{\mathcal{X}}(\mathcal{T}).$ Then for any $\epsilon>0,$ there exists $\delta>0,$ for any $y\in B(x,\delta)\subset\mathcal{T}$ such that
	\begin{equation*}
	\|\varepsilon_x-\varepsilon_y\|_{L}^*<\epsilon.
	\end{equation*}
	Then for any $f\in L_b(\mathcal{X}),$ 
	\begin{equation*}
	\limsup\limits_{t\rightarrow\infty}|Q_tf(x)-Q_tf(y)|=\lim\limits_{t\rightarrow\infty}|\langle f,Q_t(x,\cdot)\rangle-\langle f,Q_t(y,\cdot)\rangle|=|\langle f,\varepsilon_x\rangle-\langle f,\varepsilon_y\rangle|\leq \epsilon\|f\|_{\infty},
	\end{equation*}
	which implies that $\{P_t\}_{t\geq 0}$ is Ces\`aro eventually continuous at $x$
       \end{proof}

   Using Lemma \ref{Prop Phi},  the following properties are satisfied, which are similar in the Ces\`aro e-property setting. Let $\mathcal{P}_{\rm erg}$ be the set of  ergodic probability measures of $\{P_t\}_{t\ge 0}$, and set
	\begin{equation*}
	 \mathcal{T}_{\rm erg}=\{x\in \mathcal{T}:\varepsilon_x\in\mathcal{P}_{\rm erg}\}.	    
        \end{equation*}

    \begin{proposition}\label{Prop decompose}
         Let  $\{P_t\}_{t\geq 0}$ be  Ces\`aro eventually continuous on $\mathcal{X}$. Then the following statements hold.
         \begin{itemize}
             \item[$(\runum{1})$] $\mathcal{P}_{{\rm erg}}$ is closed in $\mathcal{P}(\mathcal{X})$, and  $\mathcal{T}_{\rm erg}$ is closed in $\mathcal{X}$.
             \item[$(\runum{2})$]  For any $x\in\mathcal{T}_{\rm erg}$, the set $[x]$ is closed, and $\supp\varepsilon_x\subset[x]$.
             \item[$(\runum{3})$] The set  $D:=\cup_{\mu\in \mathcal{P}_{{\rm erg}}}\supp\mu$ is a $G_\delta$ set.
         \end{itemize}
    \end{proposition}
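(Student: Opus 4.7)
My plan is to leverage the continuity of the map $\Phi:\mathcal{T}\to\mathcal{P}(\mathcal{X})$, $\Phi(x)=\varepsilon_x$, provided by Lemma~\ref{Prop Phi}, together with the identification $\varepsilon_y=\mu$ for $y\in\supp\mu$ and $\mu$ ergodic furnished by Theorem~\ref{Prop T3*}. The main effort will be concentrated on proving the weak closedness of $\mathcal{P}_{\rm erg}$ in~(i); the remaining claims should then fall out quickly.

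For (i), I would take $\mu_n\in\mathcal{P}_{\rm erg}$ converging weakly to $\mu$. The Feller property ensures $\mu$ is invariant, so $\supp\mu\subset\mathcal{T}$ by Proposition~\ref{Prop T2*}. Fix $y\in\supp\mu$: since $\mu(B(y,1/k))>0$ for every $k\ge 1$, the Portmanteau theorem gives $\mu_n(B(y,1/k))>0$ for all sufficiently large $n$, so $\supp\mu_n\cap B(y,1/k)\neq\emptyset$, and a diagonal extraction produces $y_n\in\supp\mu_n$ with $y_n\to y$. Because $\mu_n$ is ergodic, Theorem~\ref{Prop T3*} yields $\varepsilon_{y_n}=\mu_n$, and continuity of $\Phi$ forces $\varepsilon_{y_n}\to\varepsilon_y$; uniqueness of weak limits then gives $\varepsilon_y=\mu$. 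Consequently, $Q_tf(y)\to\langle f,\mu\rangle$ for every $f\in C_b(\mathcal{X})$ on the full-$\mu$-measure set $\supp\mu$, which by the converse-Birkhoff characterization of ergodicity via Ces\`aro averages (the same tool behind \cite[Theorem 4.4]{WH2011} invoked in the proof of Theorem~\ref{Prop T3*}) forces $\mu\in\mathcal{P}_{\rm erg}$. Closedness of $\mathcal{T}_{\rm erg}=\Phi^{-1}(\mathcal{P}_{\rm erg})$ in $\mathcal{X}$ is then immediate from continuity of $\Phi$ and closedness of $\mathcal{T}$ (Lemma~\ref{Prop T4}).

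Part~(ii) splits into two quick steps: the singleton $\{\varepsilon_x\}$ is closed in the Hausdorff space $\mathcal{P}(\mathcal{X})$, so $[x]=\Phi^{-1}(\{\varepsilon_x\})$ is closed by continuity of $\Phi$, while the inclusion $\supp\varepsilon_x\subset[x]$ is immediate from Theorem~\ref{Prop T3*} applied to the ergodic measure $\varepsilon_x$. For part~(iii), I would first combine Proposition~\ref{Prop T2*} and Theorem~\ref{Prop T3*} to check the identity
\[
D=\{x\in\mathcal{T}_{\rm erg}:x\in\supp\varepsilon_x\}=\mathcal{T}_{\rm erg}\cap\bigcap_{k\ge 1}\bigl\{x\in\mathcal{T}:\varepsilon_x(B(x,1/k))>0\bigr\}.
\]
Then I would invoke lower semicontinuity of $(\nu,z)\mapsto\nu(B(z,r))$ on $\mathcal{P}(\mathcal{X})\times\mathcal{X}$ (an immediate Portmanteau consequence), which composed with the continuous map $x\mapsto(\varepsilon_x,x)$ makes $x\mapsto\varepsilon_x(B(x,1/k))$ lower semicontinuous on $\mathcal{T}$, so each set inside the intersection is open in $\mathcal{T}$ and therefore equals $\mathcal{T}\cap U_k$ for some $U_k$ open in $\mathcal{X}$. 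Since $\mathcal{T}_{\rm erg}$ is closed in the metric space $\mathcal{X}$ it is automatically $G_\delta$, so $D$ emerges as a countable intersection of $G_\delta$ and open sets, hence $G_\delta$.

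The hard part will be (i): ergodicity is generically not a weakly closed property, as the extreme points of the simplex of invariant measures need not be closed. Ces\`aro eventual continuity enters through the chain ``$\mu_n\to\mu$ weakly pulls the supports $\supp\mu_n$ near every point of $\supp\mu$, then continuity of $\Phi$ transports the identity $\varepsilon_{y_n}=\mu_n$ into $\varepsilon_y=\mu$ for every $y\in\supp\mu$''. This transport via the continuous selection $\Phi$ (Lemma~\ref{Prop Phi}) is precisely the step that would fail for a generic Markov--Feller semigroup without the (Ces\`aro) eventual continuity hypothesis.
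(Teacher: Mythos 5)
Your proposal is correct, and parts (ii) and (iii) essentially coincide with the paper's proof (the paper establishes the relative openness of $\{x:\varepsilon_x(B(x,1/k))>0\}$ by a direct sequential Portmanteau argument rather than by quoting joint lower semicontinuity of $(\nu,z)\mapsto\nu(B(z,r))$, but the content is identical). Where you genuinely diverge is part (i). The paper proves closedness of $\mathcal{P}_{\rm erg}$ analytically: it encodes ergodicity as the integral identity $\int_{\mathcal{T}}|f^*-\langle f,\mu\rangle|^2\,\d\mu=0$ with $f^*(x)=\langle f,\varepsilon_x\rangle$, and passes this identity from $\mu_n$ to the weak limit $\mu$ by splitting the error into a term controlled by $|\langle f,\mu\rangle-\langle f,\mu_n\rangle|$ and a term handled by extending the continuous function $|f^*-\langle f,\mu\rangle|^2$ from the closed set $\mathcal{T}$ to all of $\mathcal{X}$ via Tietze and then invoking $\mu_n\to\mu$. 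You instead argue geometrically: Portmanteau pulls points $y_n\in\supp\mu_n$ onto any $y\in\supp\mu$, Theorem \ref{Prop T3*} gives $\varepsilon_{y_n}=\mu_n$, and continuity of $\Phi$ transports this to $\varepsilon_y=\mu$ for every $y\in\supp\mu$, after which the same a.s.-Ces\`aro-average characterization of ergodicity that the paper uses closes the argument. Both routes rest on the same two pillars (continuity of $\Phi$ from Lemma \ref{Prop Phi} and the converse-Birkhoff characterization), but your version avoids the Tietze extension and the $I_1/I_2$ bookkeeping, at the cost of the small support-approximation lemma $\rho(y,\supp\mu_n)\to 0$; it also yields the slightly sharper intermediate statement that $\varepsilon_y=\mu$ on all of $\supp\mu$, which is in the spirit of part (ii). The remaining steps (invariance of the weak limit via Feller, $\mathcal{T}_{\rm erg}=\Phi^{-1}(\mathcal{P}_{\rm erg})$ closed, $[x]=\Phi^{-1}(\{\varepsilon_x\})$ closed, $\supp\varepsilon_x\subset[x]$ from Theorem \ref{Prop T3*}, and $D$ as a closed set intersected with countably many open sets, hence $G_\delta$) match the paper.
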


	Recall the weak concentration condition \eqref{eq ergodic composition} introduced in \cite{SW2012}, which is a stronger condition than the condition \eqref{eq 3.4}:
	
	\emph{
	There exists a compact set $K\subset \mathcal{X}$ such that for any $\epsilon>0$ and $x\in\mathcal{X}$}
    \begin{equation}\label{eq ergodic composition}\tag{$\mathcal{C}$}
		\limsup\limits_{t\rightarrow\infty}Q_t(x,K^\epsilon)>0.
    \end{equation}		
    This is a stronger condition than the condition \eqref{eq 3.4}, and further implies the existence of invariant measures for the eventually continuous Markov--Feller semigroups. Invoking \eqref{eq ergodic composition}, we deduce the following result.
	
	  \begin{proposition}\label{Prop 3.11}
          Let  $\{P_t\}_{t\geq 0}$ be   Ces\`aro eventually continuous on $\mathcal{X}$.  Assume that \eqref{eq ergodic composition} holds, then $\mathcal{P}_{\rm erg}$ is compact in $\mathcal{P}(\mathcal{X})$.
	\end{proposition}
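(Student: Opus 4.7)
The plan is to realize $\mathcal{P}_{\rm erg}$ as the continuous image of a compact subset of $\mathcal{X}$, leveraging the machinery already built. By Lemma~\ref{Prop Phi}, the map $\Phi\colon\mathcal{T}\to\mathcal{P}(\mathcal{X})$, $x\mapsto\varepsilon_x$, is continuous, and by Proposition~\ref{Prop decompose}(i), $\mathcal{T}_{\rm erg}$ is closed in $\mathcal{X}$. Hence, the compact set $K$ supplied by \eqref{eq ergodic composition} being closed, the intersection $K\cap\mathcal{T}_{\rm erg}$ is a compact subset of $\mathcal{T}$ on which $\Phi$ acts continuously. It therefore suffices to verify the identity $\Phi(K\cap\mathcal{T}_{\rm erg})=\mathcal{P}_{\rm erg}$; compactness of $\mathcal{P}_{\rm erg}$ will then follow since the continuous image of a compact set in the metric space $(\mathcal{P}(\mathcal{X}),\|\cdot\|_L^*)$ is compact.

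The inclusion $\Phi(K\cap\mathcal{T}_{\rm erg})\subset\mathcal{P}_{\rm erg}$ is immediate from the definition of $\mathcal{T}_{\rm erg}$, so the substance of the argument lies in showing that every ergodic $\mu\in\mathcal{P}_{\rm erg}$ is realized as $\varepsilon_{x^*}$ for some $x^*\in K$. The first step is to prove $\supp\mu\cap K\neq\emptyset$. For any initial $x_0\in\supp\mu$ (nonempty and contained in $\mathcal{T}$ by Proposition~\ref{Prop T2*}), Theorem~\ref{Prop T3*} gives $Q_t(x_0,\cdot)\to\mu$ weakly. Applying the Portmanteau theorem to the closed set $\overline{K^{\epsilon}}$ yields
\begin{equation*}
\mu(\overline{K^{\epsilon}})\;\geq\;\limsup_{t\to\infty}Q_t(x_0,\overline{K^{\epsilon}})\;\geq\;\limsup_{t\to\infty}Q_t(x_0,K^{\epsilon})\;>\;0
\end{equation*}
by \eqref{eq ergodic composition}, so $\supp\mu\cap\overline{K^{\epsilon}}\neq\emptyset$ for every $\epsilon>0$.

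The remainder is a diagonal extraction: choosing $y_n\in\supp\mu\cap\overline{K^{1/n}}$ and $x_n\in K$ with $\rho(x_n,y_n)<2/n$, the compactness of $K$ furnishes a subsequence $x_{n_k}\to x^*\in K$; the corresponding $y_{n_k}$ share this limit, and the closedness of $\supp\mu$ forces $x^*\in K\cap\supp\mu$. Because $\mu$ is ergodic, Theorem~\ref{Prop T3*} identifies $\varepsilon_{x^*}=\mu$, so $x^*\in K\cap\mathcal{T}_{\rm erg}$ and $\Phi(x^*)=\mu$, yielding the required surjectivity. The only delicate ingredient is the combined use of Portmanteau with the compactness of $K$ and closedness of $\supp\mu$ to produce a common limit point; every other step is a direct invocation of the preceding lemmas.
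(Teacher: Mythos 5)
Your proposal is correct and follows essentially the same route as the paper: both realize $\mathcal{P}_{\rm erg}$ as $\Phi(K\cap\mathcal{T}_{\rm erg})$, using the continuity of $\Phi$ (Lemma~\ref{Prop Phi}), the closedness of $\mathcal{T}_{\rm erg}$, and the key step that $\supp\mu\cap K\neq\emptyset$ for every ergodic $\mu$, derived from \eqref{eq ergodic composition} together with the Portmanteau theorem applied to the convergence $Q_t(x,\cdot)\to\mu$ from Theorem~\ref{Prop T3*}. The only difference is cosmetic: you prove the support-intersection claim directly via a compactness extraction from $\supp\mu\cap\overline{K^{1/n}}$, whereas the paper argues by contradiction using the positive distance between the compact set $K$ and the disjoint closed set $\supp\mu$.
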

        \begin{proof}[Proof.] 
            For any $\mu\in\mathcal{P}_{\rm erg}$, we claim that $\supp\mu\cap K\neq \emptyset$. Assume, on the contrary, $\supp\mu\cap K=\emptyset$. Let $x\in \mathcal{T}_{\rm erg}$ be such that $\mu=\varepsilon_x$. Since $\supp\varepsilon_x$ is closed and $K$ is compact, there exists an $\epsilon>0$ such that $K^\epsilon\cap \supp\varepsilon_x=\emptyset$. In particular, $\varepsilon_x(\overline{K^{\epsilon/2}})=0$. This leads that
            \begin{equation*}
                \limsup\limits_{t\rightarrow\infty}Q_t(y,\overline{K^{\epsilon/2}})\leq\varepsilon_x(\overline{K^{\epsilon/2}})=0,\quad\forall\,y\in\supp\varepsilon_x,            \end{equation*}            
           which contradicts \eqref{eq ergodic composition}.  Therefore, let $\hat{K}:=K\cap\mathcal{T}_{\rm erg}$, then $\hat{K}$ is compact. Moreover, $\Phi(\hat{K})=\mathcal{P}_{\rm erg}$, which completes the proof by the continuity of $\Phi$.
        \end{proof}

   The following result is a generalization of \cite[Theorem 3.8]{SW2012}.

	\begin{theorem}\label{Thm 3.3}
		  Let  $\{P_t\}_{t\geq 0}$ be   Ces\`aro eventually continuous on $\mathcal{X}$.  Assume that \eqref{eq ergodic composition} holds, then there exists a Borel set $K_0\subset K$ such that
			\begin{itemize}
				\item[$(\runum{1})$] for any ergodic measure $\mu$, there exists $x\in K_0$, such that $\mu=\varepsilon_x$.
				\item[$(\runum{2})$] if $x,y\in K_0,\,x\neq y$, then $\varepsilon_x\neq \varepsilon_y$.
				\item[$(\runum{3})$] $x\in{\rm supp }\;\varepsilon_x$ for any $x\in K_0$. In particular, $K_0\subset\mathcal{T}_{\rm erg}$.
			\end{itemize}
	\end{theorem}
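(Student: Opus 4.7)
The plan is to follow the strategy of \cite[Theorem 3.8]{SW2012} for e-processes and upgrade each ingredient so that it survives under Ces\`aro eventual continuity alone. The construction proceeds in three stages: carve out a compact piece of $K$ on which $\Phi$ is onto $\mathcal{P}_{\rm erg}$, refine it so that each chosen point $x$ lies in $\supp\varepsilon_x$, and finally extract a Borel transversal of the equivalence relation induced by $\Phi$.

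First, I would recycle the compact set $\hat K := K \cap \mathcal{T}_{\rm erg}$ from the proof of Proposition \ref{Prop 3.11}. That argument already shows $\supp\mu \cap K \neq \emptyset$ for every $\mu \in \mathcal{P}_{\rm erg}$, and Theorem \ref{Prop T3*} yields $\supp\mu \subset \mathcal{T}_{\rm erg}$ for ergodic $\mu$; hence $\supp\mu \cap \hat K \neq \emptyset$ and $\Phi(\hat K) = \mathcal{P}_{\rm erg}$, with $\Phi|_{\hat K}$ continuous by Lemma \ref{Prop Phi}. To enforce (iii), set $E := \hat K \cap D$, where $D$ is the $G_\delta$ set from Proposition \ref{Prop decompose}(iii). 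Then $E$ is a Borel (indeed $G_\delta$) subset of the compact $\hat K$, so $E$ itself is Polish in the subspace topology, and one checks the identification
\begin{equation*}
E = \{x \in \hat K : x \in \supp\varepsilon_x\},
\end{equation*}
since $x \in \supp\nu$ with $\nu \in \mathcal{P}_{\rm erg}$ forces $\varepsilon_x = \nu$ by Theorem \ref{Prop T3*}. In particular, each fiber $\Phi^{-1}(\mu)\cap E = \Phi^{-1}(\mu)\cap \hat K \cap \supp\mu$ is compact and non-empty for every $\mu \in \mathcal{P}_{\rm erg}$, and $\Phi|_E$ remains a continuous surjection onto $\mathcal{P}_{\rm erg}$.

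At this point, a Borel selection theorem applied to $\Phi|_E : E \to \mathcal{P}_{\rm erg}$ produces $K_0$. Since $E$ is a Polish Borel subset of the compact metric space $\hat K$, $\mathcal{P}_{\rm erg}$ is compact metric by Proposition \ref{Prop 3.11}, $\Phi|_E$ is continuous (hence Borel), and every fiber is compact, the Arsenin--Kunugui theorem delivers a Borel section $s : \mathcal{P}_{\rm erg} \to E$. Setting $K_0 := s(\mathcal{P}_{\rm erg})$ and invoking the Lusin--Souslin theorem on injective Borel images, $K_0$ is a Borel subset of $K$. Properties (i)--(iii) then follow immediately: (i) from surjectivity of $\Phi \circ s$, (ii) because $s$ is a section of $\Phi$, and (iii) because $K_0 \subset E$ gives $x \in \supp\varepsilon_x$, so in particular $K_0 \subset \mathcal{T}_{\rm erg}$.

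The main obstacle is the Borel selection itself: the natural images $\Phi(V \cap E)$ of open sets are only analytic a priori, so Kuratowski--Ryll-Nardzewski does not apply directly, and one is pushed toward Arsenin--Kunugui. The crucial leverage there is that intersecting with the closed sets $\supp \mu$ upgrades the fibers from merely closed to compact, which is precisely the point of refining $\hat K$ down to $E$ via $D$. If one prefers to avoid naming Arsenin--Kunugui, the same conclusion can be reached by an explicit construction along a countable basis of $\hat K$: inductively refine each fiber by intersecting with either a basis element or its complement so as to keep it non-empty, using compactness to extract a unique limit point from a nested family of compact sets of shrinking diameter---that is, by executing the proof of Arsenin--Kunugui in our particular compact-fiber setting.
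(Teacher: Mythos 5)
Your proposal is correct and follows essentially the same route as the paper: your set $E=\hat K\cap D$ coincides with the paper's $X=\bigl(\bigcup_{\mu\in\mathcal{P}_{\rm erg}}\supp\mu\bigr)\cap K$, which the paper likewise observes is a Polish $G_\delta$ subset of $K$ contained in $\mathcal{T}_{\rm erg}$ before deferring the remaining selection argument to \cite[Theorem 3.8]{SW2012}. The only difference is that you make the outsourced step explicit (compactness of the fibers $\supp\mu\cap K$, Arsenin--Kunugui for the Borel section, Lusin--Souslin for Borelness of $K_0$), which is a faithful filling-in rather than a different proof.
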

    \begin{proof}
    Let $X:=\bigcup\limits_{\mu\in \mathcal{P}_{\rm erg}}\supp\mu \cap K$, then $X$ is a $G_\delta$ set by Proposition \ref{Prop decompose}. Moreover, $X\subset \mathcal{T}_{\rm erg}$ is Polish space in its relative topology by \cite[Theorem 3.1.2]{A1976}. Now the rest of the proof can be argued in the same way as that of \cite[Theorem 3.8]{SW2012}.
    \end{proof}

     \begin{remark}  It should be noted that the results in  Sections \ref{Sec 3.1new}-\ref{Sec 3.3} can also be applied to discrete-time semigroups accordingly. Some proofs of the continuous-time setting are more difficult and complex than those of the discrete-time.
    \end{remark}

    Lastly, as the concepts in this subsection are rather involved, we provide an example which is devoted to illustrating the structure of ergodic decompositions for Ces\`aro eventually continuous semigroups.

    \begin{example}\label{Ex 2}  Let  $\mathcal{X}=\Z$ and  let $P:\mathcal{X}\times\mathcal{B}(\mathcal{X})\rightarrow[0,1]$ be the transition function deﬁned by 
		\begin{align*}  
            &P(0,0)=P(1,3)=P(3,1)=1,\\
		&P(-n,0)=1-P(-n,-n-1)=\e^{-1/n^2},\quad\text{for }n\geq 1,\\ 
            &P(2n+1,0)=1-P(2n+1,1)=1/n,\quad\text{for }n\geq 2,\\
            &P(2n,2n-2)=1\quad\text{for }n\geq 1.
		\end{align*}
	Then let the Markov operator $P:\mathcal{B}(\mathcal{X})\rightarrow\mathcal{B}(\mathcal{X})$ be defined by $Pf(n):=\sum_{m\in\mathcal{X}}f(m)P(n,m)$ and the corresponding Markov--Feller semigroup  
	\begin{equation*}
	P_tf:=\sum_{n=0}^{\infty}\e^{-t}\frac{t^n}{n!}P^nf\quad\text{for }t\geq 0,\;f\in\mathcal{B}(\mathcal{X}).
	\end{equation*}

        By construction, $\{P_t\}_{t\geq 0}$ is Ces\`aro eventually continuous on $\mathcal{X}$. We will check that $\{P_t\}_{t\geq 0}$ verifies the assumptions of Theorem \ref{Thm 3.3} and that $\mathcal{T}=\N$. It is clear that $\N\subset\mathcal{T}$ by construction. Moreover, for any $n\geq 1$ and $N\geq 1$, one has
        \begin{equation*}	\liminf\limits_{t\rightarrow\infty}P_t(-n,\{k:k\leq -N\})\geq \exp(-\sum_{j=0}^{\infty}(n+j)^{-2})>0,
	\end{equation*}
	hence $-n\notin\mathcal{T}.$ In particular, this implies that $\mathcal{T}=\N$.

        Next, we verify the weak concentration condition \eqref{eq ergodic composition}. Note that, for $K=\{0,1,3\}$, 
        \begin{equation*}
            \lim\limits_{t\rightarrow\infty}Q_t(n,K)=1,\quad\forall\,n\in\N
        \end{equation*}
        Meanwhile, it follows that 
        \begin{equation*}
	\liminf\limits_{t\rightarrow\infty}P_t(-n,0)\geq 1-\exp(-n^{-2})>0,\quad\forall\,n\geq 1.
	\end{equation*}

       Collecting these observations, Theorem \ref{Thm 3.3} is applicable to $\{P_t\}_{t\geq 0}$. In fact, we may take $K_0=\{0,1\}\subset K$ such that the ergodic measures $\mathcal{P}_{\rm erg}$  are given by
        \begin{equation*}
            \mathcal{P}_{\rm erg}=\{\varepsilon_0,\varepsilon_1\}=\{\delta_0,(\delta_1+\delta_3)/2\}.
        \end{equation*}
        In addition, we have 
          \begin{align*}
            \supp\varepsilon_0=\{0\}\subset[0]=2\N,\quad\supp\varepsilon_1=\{1,3\}=[1]\quad\text{and} \quad\mathcal{T}_{\rm erg}=[0]\cup[1].
        \end{align*}
        Meanwhile, for any $2n+1\in\mathcal{T}\setminus\mathcal{T}_{\rm erg}$ with $n\geq 2$, $\{Q_t(2n+1,\cdot)\}_{t\geq 0}$ weakly converges to $\varepsilon_{2n+1}=\tfrac{1}{n}\varepsilon_0+(1-\tfrac{1}{n})\varepsilon_1$ as $t\rightarrow\infty$. The set $\mathcal{T}$ is thereby decomposed as
        \begin{equation*}
            \mathcal{T}=\bigcup_{n\geq 2}[2n+1]\cup\mathcal{T}_{\rm erg}=\bigcup_{n\geq 2}[2n+1]\cup[0]\cup[1].
        \end{equation*}
    Let us mention that the sweep property described in Proposition \ref{Prop Sweep} is also satisfied for $\{P_t\}_{t\geq 0}$: for any $N>M\geq 1$,
	\begin{equation*}
		\lim\limits_{t\rightarrow\infty}P_t(n,\{k:-N\leq k\leq -M\})=0\quad\forall n\in \mathcal{X},
	\end{equation*}
    which suggests that condition \eqref{eq 3.5} in Proposition \ref{Prop Sweep} could be replaced by \eqref{eq ergodic composition}.
    
    In summary, the ergodic decomposition of $\{P_t\}_{t\geq 0}$ can be roughly formulated as  Figure \ref{ergodic_dcmp} below. 

    \begin{figure}[h]
    \centering
    \includegraphics[width=0.7\textwidth]{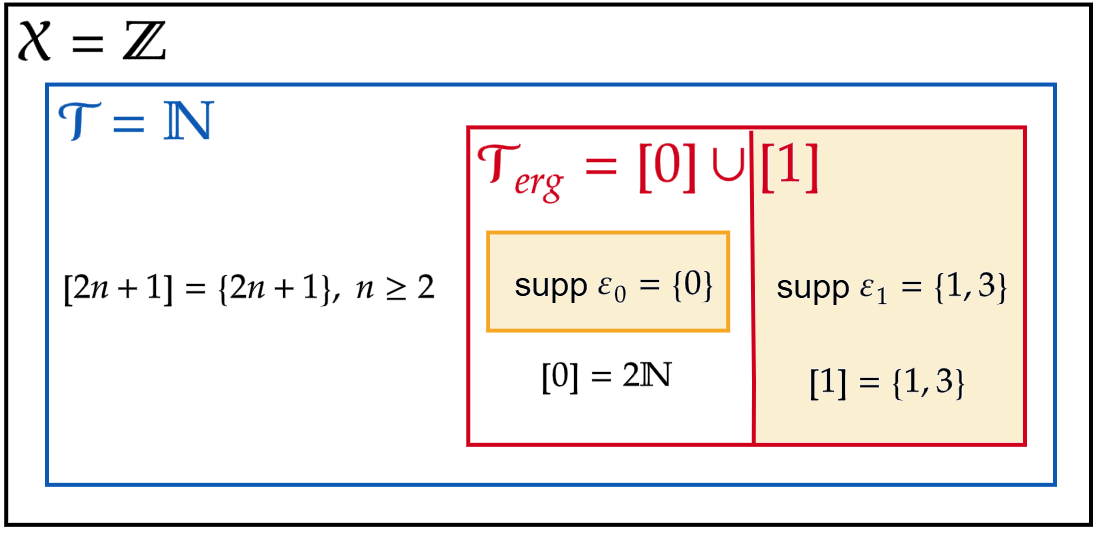}
    \caption{\small{Ergodic decomposition of Example \ref{Ex 2}}}
    \label{ergodic_dcmp}
    \end{figure}

    \end{example}

    \section{Ergodicity for iterated function systems with jumps}\label{Sec 4}
	
    In this section, we present a class of iterated function systems, for which the results established in the previous section can be applied. In particular, several specific cases are either non-equicontinuous or difficult to verify the (Ces\`aro) e-property. Before going through our models in Sections \ref{Sec 4.2} and \ref{Sec 4.3}, let us introduce some preliminaries on the IFS.  There is much literature on the asymptotic analysis of both discrete-time and continuous-time IFS with the e-property, see e.g.  \cite{HMS2005, CH2014, C2017,BKS2014}. In \cite{BKS2014}, it provides a continuous-time IFS and uses the notions of average bounded and concentrating at some point to establish the asymptotic stability. For comparison, we first introduce the continuous-time IFS provided in \cite{BKS2014} as follows. 
	
	\subsection{Preliminaries of iterated function systems}\label{Sec 4.1}
	Let us begin with the following notations:
	\begin{itemize}
	    \item $(\mathcal{X},\rho)$ is a Polish space, $I=\{1,\dots,N\}$ is a finite set;
	    \item $w_i:\mathcal{X}\rightarrow\mathcal{X},\;i\in I$ are transformations on $\mathcal{X}$;
	    \item $(p_1(x),\dots,p_N(x))$ is a probability vector $\forall\,x\in \mathcal{X},$ that is, $p_i(x)\geq 0$ for $i\in I,\;x\in\mathcal{X}$ and $\sum_{i\in I}p_i(x)=1$;
	    \item $\{\tau_n\}_{n\geq 1}$  is a sequence of random variables with $\tau_0=0,$ and $\triangle\tau_n=\tau_n-\tau_{n-1},n\geq 1$, are i.i.d. with density $\lambda \e^{-\lambda t},\;\lambda>0$;
	    \item $\{S(t)\}_{t\geq 0}$ is a continuous semigroup on $\mathcal{X}$.
	\end{itemize}\par 
	Now we define the $\mathcal{X}$-valued Markov process $\Phi=\{\Phi^x(t):x\in X\}_{t\geq 0}$ in the following way. 
	\begin{itemize}
	    \item[$\mathbf{Step\;1.}$]  Let $x\in\mathcal{X}$. Let $\Phi_0^x:=x$.
	    \item[$\mathbf{Step\;2.}$] 	 Let $\xi_1=S(\tau_1)(x).$  We randomly choose from the set $i_1\in I$ with probability $p_{i_1}(\xi_1),$ in other words,
	\begin{equation*}
	P(i_1=k)=p_{k}(\xi_1)=p_{k}(S(\tau_1)(x))\quad\text{for } k\in I.
	\end{equation*}
	Set $\Phi_1^x:=w_{i_1}(\xi_1).$
	    \item[$\mathbf{Step\;3.}$] Recursively, we assume that $\Phi_1^x,\dots,\Phi_{n-1}^x,n\geq 2$ are given. We define $\xi_n=S(\triangle\tau_n)(\Phi_{n-1}^x).$ Further, we randomly choose $i_n\in I$ with probability $p_{i_n}(\xi_n),$ i.e,
	\begin{equation*}
	P(i_n=k)=p_{k}(\xi_n)\quad\text{for } k\in I.
	\end{equation*}
	Then let $\Phi_n^x=w_{i_n}(\xi_n).$
	    \item[$\mathbf{Step\;4.}$] Finally, we set
	\begin{equation*}
	\Phi^x(t):=S(t-\tau_n)(\Phi_n^x)\quad\text{for  }\tau_n\leq t<\tau_{n+1},\;n\geq 0.
	\end{equation*}
	\end{itemize}\par 
	Now we can define semigroup $\{P_t\}_{t\geq 0}$ by
	\begin{equation*}
	P_tf(x)=\mathbb{E}f(\Phi^x(t))\quad\text{for } f\in B_b(\mathcal{X}).
	\end{equation*}\par

    For this model, in \cite{BKS2014}, they made the following assumptions:
    \begin{itemize}
        \item[$\mathbf{A_1.}$] $w_i,\,p_i$ are continuous for $i\in I$.
        \item[$\mathbf{A_2.}$]   There exists $r\in (0,1)$ such that
    \begin{equation}\label{IFS 1}
        \sum_{i\in I}p_i(x)\rho(w_i(x),w_i(y))\leq r\rho(x,y)\quad\text{for }x,y\in \mathcal{X}.
    \end{equation}
        \item[$\mathbf{A_3.}$]
    There exists a function $\omega:\;\mathbb{R}_+\rightarrow\mathbb{R}_+$ such that
	\begin{equation}\label{IFS 2}
	\sum_{i=1}^{N}|p_i(x)-p_i(y)|\leq\omega(\rho(x,y))\quad\text{for }x,y\in \mathcal{X},
	\end{equation}
	where $\omega$ is concave, non-decreasing and that
	\begin{equation*}
	    \int_{0}^{\epsilon}\frac{\omega(t)}{t}\d t<\infty\quad\text{for some }\epsilon>0.
	\end{equation*}
	 \item[$\mathbf{A_4.}$] There exists $\alpha\geq 0$ such that
    \begin{equation}\label{IFS 3}
        \rho(S(t)(x),S(t)(y))\leq \e^{\alpha t}\rho(x,y)\quad\text{for }x,y\in \mathcal{X}\text{ and } t\geq 0.
    \end{equation}
     \item[$\mathbf{A_5.}$] $\{S(t)\}_{t\geq 0}$ admits a global attractor, i.e., there exists a compact set $\mathcal{K}\subset\mathcal{X}$ such that $S(t)\mathcal{K}=\mathcal{K}$ for $t\geq 0$ and that for any bounded set $B\subset\mathcal{X}$ and open set $U,\,\mathcal{K}\subset U,$ there exists $t_*>0$ such that $S(t)B\subset U$ for $t\geq t_*$.
     \item[$\mathbf{A_6.}$]  $r$, $\alpha$ and $\lambda$ satisfy:
    \begin{equation}\label{IFS 4}
        r+\alpha/\lambda<1.
    \end{equation}
    \end{itemize}

    In \cite{BKS2014}, Bessaih, Kapica and Szarek prove that under assumptions $\mathbf{A_1}-\mathbf{A_6}$,  $\{P_t\}_{t\geq 0}$ has the e-property and  asymptotic stability.   Notice that the assumptions $\mathbf{A_1}, \mathbf{A_2}$ are essential for establishing the e-property, i.e., the continuity of  $w_i$'s and the uniform contraction together ensure the system to converge at the same rate.

	Thus, once the assumptions $\mathbf{A_1}$ and $\mathbf{A_2}$ are relaxed, $\{P_t\}_{t\geq 0}$ may not be an e-process. Based on the above intuitions, we present a specific and computable example (see in Section \ref{Sec 4.2}), where we drop the continuity assumption of $w_i$ and further allow the system not to contract uniformly. As a result, we still get an asymptotically stable semigroup, but it does not satisfy the Ces\`aro e-property. \par 
    
    Invoking these ideas further, in Section \ref{Sec 4.3} we introduce another place-dependent iterated function system, which can be seen as a generalization of some special cases in \cite{BKS2014} (see in Remark \ref{Rem IFS}). It is assumed to have a place-dependent contraction coefficient and not to be strictly contracted. It seems that the e-property is not satisfied for this model, and even if it is, it is still difficult to verify. Instead, we can directly prove the eventual continuity, and further obtain the asymptotic stability by Theorem \ref{Thm 4}.

    Let us mention that in the case of  complex systems, partial or the entire system may  exhibit periodicity or quasi-periodicity, thereby violating the asymptotic stability.  Conversely, the corresponding system  may  be stable in the sense of weak mean ergodicity defined as in Definition~\ref{Def 3}. For example, see Examples \ref{Ex 5} and \ref{Ex 7} below for illustration.

	\subsection{A non-equicontinuous example of iterated function systems}\label{Sec 4.2}
	As illustrated above, we modify the model in Section \ref{Sec 4.1} to provide a non-equicontinuous example. Actually, we assume that $w_i$ are piecewise continuous and that $p_i(\cdot)=0$ at the discontinuities of $w_i,$ for $i\in I$.
	\begin{example}\label{Ex 3}
		Let $\mathcal{X}=\mathbb{R}_+.\;$ Let $S(t)= \text{Id}_{X}$ for $t\geq 0,$ and
		\begin{equation*}
		w_1(x)=0,\quad w_2(x)=x,\quad w_3(x)=x^{-1}\mathbf{1}_{\{x\neq 0\}}.
		\end{equation*}
		Let $p_i\in C(\mathcal{X},[0,1]),i=1,2,3$ such that
		\begin{equation*}
		(p_1(x),p_2(x),p_3(x))=\begin{cases}
		(\frac{x}{2},1-x,\frac{x}{2}),&0\leq x< \frac{2}{3},\\
		(\frac{1}{3},\frac{1}{3},\frac{1}{3}),&\frac{2}{3}\leq x\leq \frac{3}{2},\\
		(\frac{1}{2x},1-x^{-1},\frac{1}{2x}),&x>\frac{3}{2}.
		\end{cases}
		\end{equation*}\par 
		This example differs from the IFS in \cite{BKS2014} in two ways. First, $\mathbf{A_1}$ is not satisfied because $w_3$ is not a continuous function.  On the other hand, we can calculate it for this example,
		\begin{equation*}
		   \sum_{i\in I}p_i(x)\rho (w_i(x),w_i(0))=(1-x)x+\tfrac{1}{2}\quad\text{for } x\in [0,\tfrac{2}{3}].
		\end{equation*}
		Hence the contraction coefficient $r(x)=1-x+\frac{1}{2x}$ for $x\in[0,\frac{2}{3}]$, and it tends to infinity as $x$ goes to zero, violating the assumption $\mathbf{A_2}$.\par 
		Notice that $0$ is the only absorbing state for this process, $\Phi^x_t$ almost surely converges to $0$ for any initial states $x\in \mathcal{X}$ as $t\rightarrow\infty.$ Therefore, $\{P_t\}_{t\geq 0}$ is asymptotically stable and eventually continuous on $\mathcal{X}$ by Proposition \ref{Prop 1}. However, it does not satisfy the Ces\`aro e-property at $0.$ Indeed, for $x_n=1/n$ and $s\geq 0$, 
		\begin{equation*}
		\begin{aligned}
		\mathbb{P}(\Phi_s^{x_n}=n)&\geq\sum_{k=1}^{\infty}\sum_{l=0}^{k-1}p_2(x_n)^lp_3(x_n)p_2(x_n^{-1})^{k-l-1}\mathbb{P}(\tau_k\leq s<\tau_{k+1})\\
		&=\frac{x_n}{2}\sum_{k=1}^{\infty}(1-x_n)^{k-1}\frac{(\lambda s)^k}{(k-1)!}\e^{-\lambda s}\\
		&=\frac{1}{2}x_n\lambda s\e^{-x_n\lambda s}.
		\end{aligned}
		\end{equation*}
		Now, choosing $f(x)=\min\{x,1\}\in C_b(\mathcal{X}),$ we have
		\begin{align*}
		Q_nf(x_n)&\geq f(n)\frac{1}{n}\int_{0}^{n}\mathbb{P}(\Phi_s^{x_n}=n)\d s\\
        &\geq\frac{\lambda}{2n^2}\int_0^ns\e^{-\lambda s/n}\d s= \frac{1}{2\lambda}(1-\lambda \e^{-\lambda }-\e^{-\lambda })>0=Q_nf(0),
		\end{align*}
		for any $\lambda>0$ and $n\geq 1$, which conflicts with the Ces\`aro e-property.
	\end{example}
	
     Below is a modification of Example \ref{Ex 3} to address the issue of periodicity, which leads to the fail of asymptotic stability.

    \begin{example}\label{Ex 5}
    Let $\mathcal{Y}=\R_+\times \T$. Define a Markov process on $\mathcal{Y}$ by
    \begin{equation*}
    F_{x,y}(t):=(\Phi^x_t,\Psi^y_t),
    \end{equation*}
    where $\Phi^x_t$ is given in Example \ref{Ex 3}, and 
    \begin{equation}\label{Ex 5-periodic}
    \Psi^y_t:=y+ t\;(\text{mod}\;2\pi),\quad\text{for }y\in\T,\;t\geq 0.
    \end{equation}

    Clearly, one can check that the associated Markov--Feller semigroup $\{P_t\}_{t\geq 0}$ satisfies the eventual continuity on $\mathcal{Y}$, while the Ces\`aro e-property fails at $0$. Moreover, it follows that the lower bound condition \eqref{eq 3.7} in Theorem \ref{Thm 3} is satisfied. Namely, for any $\epsilon>0$, one has
    \begin{equation*}
    \inf_{(x,y)\in\mathcal{Y}}\limsup_{t\rightarrow\infty}Q_t((x,y),B(0,\epsilon))=1.
    \end{equation*}
    Consequently, by Theorem \ref{Thm 3}, $\{P_t\}_{t\geq 0}$ is weakly-* mean ergodic.
    
    \end{example}

    \subsection{Ergodicity of a place-dependent iterated function system}\label{Sec 4.3}
    For brevity, we first introduce some notations. For $\mathbf{i}=(i_1,\dots,i_n)\in I^n,\, n\geq 1$, denote $\mathbf{i}[k]=(i_1,\dots,i_k)$ for $0\leq k\leq n$,
	\begin{equation*}
	\begin{aligned}
	&w_{\mathbf{i}[0]}(x)=x,\\
	&w_{\mathbf{i}[k]}(x)=w_{i_1}\circ\cdots\circ w_{i_k}(x)\quad\text{for }1\leq k\leq n.\\
	&w_{\mathbf{i}}(x)=w_{\mathbf{i}[n]}(x),\\
	&J_n(x):=\max\limits_{\mathbf{i}\in I^n}\Pi_{j=0}^{n-1}r(w_{\mathbf{i}[j]}(x)).
	\end{aligned}
	\end{equation*}

    Now we can introduce an IFS with a place-dependent contraction coefficient. The assumptions are as follows.
	\begin{itemize}
	     \item[$\mathbf{B_1.}$] $w_i,\,p_i$ are continuous for $i\in I$.
	    \item[$\mathbf{B_2.}$] There exists $z\in \mathcal{X}$ and a continuous function $r:\;\mathcal{X}\rightarrow (0,1]$ such that
	\begin{equation}\label{eq IFS 1}
		\sum_{i\in I}p_i(x)\rho(w_i(x),z)\leq r(x)\rho(x,z)\quad\text{ for }x\in \mathcal{X}.
	\end{equation}
	    \item[$\mathbf{B_3.}$] There exists a function $\omega:\;\mathbb{R}_+\rightarrow\mathbb{R}_+$ such that
	\begin{equation}\label{eq IFS 2}
	\sum_{i\in I}|p_i(x)-p_i(z)|\leq\omega(\rho(x,z))\quad\text{ for }x\in \mathcal{X},
	\end{equation}
	where $\omega$ is concave, non-decreasing and $\omega(0)=0$.
	\item[$\mathbf{B_4.}$] There exists $\alpha\geq 0$ such that
	\begin{equation}\label{eq IFS 3}
	\rho(S(t)(x),S(t)(y))\leq \e^{\alpha t}\rho(x,y)\quad\text{ for }x,y\in \mathcal{X},\;t\geq 0.
	\end{equation}
	\item[$\mathbf{B_5.}$] 	 
	 There exist $M\in\mathbb{N}$ and $\eta>0$ such that
	\begin{equation}\label{eq IFS 4}
	\sum_{n=M}^{\infty}\omega\left(J_n(x)\rho(x,z)\frac{\lambda^n}{(\lambda-\alpha)^n}\right)<1\quad\text{ for }x\in B(z,\eta).
	\end{equation}
	\end{itemize}

    We emphasize that  the contraction coefficient $r$  depends on the location and is not assumed to be strictly less than $1$. In other words, although the system may still be contracting, the rate of convergence may vary from place to place. Consequently, it seems rather complicated to verify the e-property. Instead, it is straightforward to establish the eventual continuity.

    \begin{proposition}\label{IFS Prop 1}
    Assume that $\mathbf{B_1}-\mathbf{B_5}$ hold, then $\{P_t\}_{t\geq 0}$ is eventually continuous at $z.$
    \end{proposition}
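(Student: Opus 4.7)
The plan is to establish eventual continuity at $z$ via a coupling argument between $\Phi^x$ and $\Phi^z$ for $x\in B(z,\eta)$. I would construct both processes on a common probability space sharing the Poisson jump times $\{\tau_n\}$ and the continuous flow $\{S(t)\}$, and at each jump time couple the index choices $i_n^x,i_n^z\in I$ via a maximal coupling of the probability vectors $(p_k(\xi_n^x))_{k\in I}$ and $(p_k(\xi_n^z))_{k\in I}$, where $\xi_n^\bullet:=S(\Delta\tau_n)(\Phi_{n-1}^\bullet)$. The objective is then to control both the coupling failure probability and the distance between the two trajectories after many steps.

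First, by iterating $\mathbf{B_2}$ along the skeleton $\{\Phi_n^x\}_{n\ge 0}$, combined with $\mathbf{B_4}$ and the identity $\E[\e^{\alpha\Delta\tau}]=\lambda/(\lambda-\alpha)$, I would obtain the distance-to-$z$ estimate
\begin{equation*}
\E\rho(\Phi_n^x,z)\;\le\;J_n(x)\,\rho(x,z)\Bigl(\tfrac{\lambda}{\lambda-\alpha}\Bigr)^n,
\end{equation*}
after conditioning on the realized index sequence and absorbing it into the pathwise maximum $J_n$. Setting $x=z$ in $\mathbf{B_2}$ forces $w_i(z)=z$ whenever $p_i(z)>0$; combined with a parallel recursion this shows that $\Phi^z$ stays controllably close to $z$ as well (with the same type of bound, and in fact vanishing whenever $S(t)$ fixes $z$).

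Next, the triangle inequality together with two applications of $\mathbf{B_3}$ gives $\sum_i|p_i(\xi_n^x)-p_i(\xi_n^z)|\le\omega(\rho(\xi_n^x,z))+\omega(\rho(\xi_n^z,z))$, so by maximal coupling $\P(i_n^x\ne i_n^z\,|\,\mathcal{F}_{n-1})\le\tfrac{1}{2}[\omega(\rho(\xi_n^x,z))+\omega(\rho(\xi_n^z,z))]$. Taking expectation and invoking the concavity of $\omega$ (Jensen) together with the previous distance estimate, the total coupling failure probability from step $M$ onward is bounded by
\begin{equation*}
\sum_{n\ge M}\P(i_n^x\ne i_n^z)\;\le\;\sum_{n\ge M}\omega\!\left(J_n(x)\rho(x,z)\tfrac{\lambda^n}{(\lambda-\alpha)^n}\right),
\end{equation*}
which by $\mathbf{B_5}$ is strictly less than $1$ on $B(z,\eta)$. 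Consequently the probability that the index choices agree for all $n\ge M$ is bounded below by a positive constant $q(x)>0$, and on this event $\Phi^x(t)$ and $\Phi^z(t)$ are propagated by identical dynamics from step $M$ onward.

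Finally, for any $f\in L_b(\mathcal{X})$ I would decompose
\begin{equation*}
|P_tf(x)-P_tf(z)|\;\le\;\|f\|_{\rm Lip}\,\E[\rho(\Phi^x(t),\Phi^z(t))\wedge 1]+2\|f\|_\infty\,\P(\text{coupling failure before time }t).
\end{equation*}
On the successful event both processes lie within a controllable neighbourhood of $z$ of size $O(J_n(x)\rho(x,z)(\lambda/(\lambda-\alpha))^n)$ after $n$ jumps, which is summable (and hence vanishes) by $\mathbf{B_5}$; on its complement the contribution is bounded by the tail of the same summable series. Letting $t\to\infty$ and then $x\to z$ makes both terms vanish, yielding $\limsup_{x\to z}\limsup_{t\to\infty}|P_tf(x)-P_tf(z)|=0$. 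The main obstacle is the place-dependent contraction $r(\cdot)$: because $J_n(x)$ is a pathwise maximum over index sequences rather than a deterministic power of a uniform $r<1$, the iteration in the first step requires careful conditional-expectation bookkeeping, and the whole argument only makes sense when $\alpha<\lambda$ (implicit in the finiteness of the series in $\mathbf{B_5}$).
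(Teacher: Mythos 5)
Your coupling scheme is, in substance, the paper's own argument rewritten in probabilistic language: the paper runs both chains on the shared jump times and telescopes $\sum_{\mathbf{i}}|\mathbf{p}_n(\mathbf{i};x)-\mathbf{p}_n(\mathbf{i};z)|$ into $\sum_{k\le n}\omega\bigl(J_k(x)\e^{\alpha\tau_k}\rho(x,z)\bigr)$, which is exactly the cumulative disagreement probability of your maximal coupling of the index choices; the moment bound $\E\sum_{n\ge k}\mathbf{1}_{\Omega_n(t)}\e^{\alpha\tau_k}\le\lambda^k/(\lambda-\alpha)^k$, the three-term decomposition (probability discrepancy, Lipschitz term, Poisson tail $\P(t\le\tau_M)$), and the role of $\mathbf{B_5}$ all coincide with the paper's $J_1$, $J_2$, $J_3$.

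There is, however, one step where your write-up does not close the argument. You bound the disagreement probability from step $M$ onward by $\sum_{n\ge M}\omega\bigl(J_n(x)\rho(x,z)\lambda^n/(\lambda-\alpha)^n\bigr)<1$ and conclude only that the coupling succeeds with probability $q(x)>0$. A success probability bounded away from zero is not enough: the failure event contributes $2\|f\|_\infty\,\P(\text{failure})$ to $|P_tf(x)-P_tf(z)|$, so you need this probability to tend to $0$ as $x\to z$, not merely to stay below $1$; otherwise you only obtain $\limsup_{x\to z}\limsup_{t\to\infty}|P_tf(x)-P_tf(z)|\le 2\|f\|_\infty\sup_{x}\,(1-q(x))$, which is not eventual continuity. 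The condition $\mathbf{B_5}$ by itself does not deliver the vanishing; what is needed (and what the paper's estimate of $J_1$ is implicitly doing) is that each individual term $\omega\bigl(J_k(x)\rho(x,z)\cdots\bigr)\to 0$ as $x\to z$, combined with the uniform bound from $\mathbf{B_5}$ on $B(z,\eta)$ to control the tail of the series. Relatedly, your disagreement sum starts at $n=M$: the finitely many steps $n<M$ can also decouple the indices, and their contribution $\sum_{k<M}\E\,\omega\bigl(J_k(x)\e^{\alpha\tau_k}\rho(x,z)\bigr)$ must be shown to vanish as $x\to z$ separately (this is the paper's term $\|f\|_\infty\sum_{k=1}^{M}\omega(J_k(x)\rho(x,z))$). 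Both points are repairable with the estimates you already have, but as stated the proof stops short of the conclusion.
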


    It can be checked that the Markov semigroup generated by $\{\Phi_t^x\}_{t\geq 0}$ is Feller, see in Proposition \ref{IFS Prop Feller}. While $\mathbf{B_1}-\mathbf{B_5}$ are not enough to ensure the lower bound condition (\ref{eq 4.1}), we further have the following assumptions.
    \begin{itemize}
       \item[$\mathbf{C_1.}$]  There exists $\gamma\in (0,1),\;M\in\mathbb{N}$ and $\eta>0$ such that
	\begin{equation}\label{eq IFS 5}
    \sum_{n=M}^{\infty}\omega\left(J_n(x)\rho(x,z)\frac{\lambda^n}{(\lambda-\alpha)^n}\right)\leq 1- \gamma\quad\text{ for }x\in B(z,\eta).
	\end{equation}
    	\item[$\mathbf{C_2.}$]  $S(t)(z)=z$ for $t\geq 0.$
    	\item[$\mathbf{C_3.}$] 	For any $\epsilon>0,$ there exists $\beta=\beta(\epsilon)>0,$ and that for any $x\in \mathcal{X}$ there exists some $t_{x}=t(x,\epsilon)\geq 0$ such that
	\begin{equation}\label{eq IFS 6}
	  P_{t_{x}}(x,B(z,\epsilon))>\beta.
	\end{equation}
    \end{itemize}

    \begin{theorem}\label{Thm IFS}
    Assume that $\mathbf{B_1}-\mathbf{B_4}$ and $\mathbf{C_1}-\mathbf{C_3}$ hold, then $\{P_t\}_{t\geq 0}$ is asymptotically stable.
    \end{theorem}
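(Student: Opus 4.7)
The plan is to reduce Theorem \ref{Thm IFS} to Theorem \ref{Thm 4}$(\runum{3})$, which requires two ingredients: eventual continuity of $\{P_t\}_{t\geq 0}$ at $z$, and the uniform lower bound \eqref{eq 4.1} at $z$. The first is essentially free: $\mathbf{C_1}$ is a strict strengthening of $\mathbf{B_5}$ (the bound ``$<1$'' is replaced by ``$\leq 1-\gamma$''), so Proposition \ref{IFS Prop 1} applies directly and gives eventual continuity at $z$. The whole argument therefore reduces to establishing the uniform lower bound, after which Theorem \ref{Thm 4}$(\runum{3})$ concludes.

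To obtain the lower bound I would proceed in two stages. Stage one is local and exploits \eqref{eq IFS 1} at $x=z$: this forces $p_i(z)\rho(w_i(z),z)=0$ for each $i$, so $w_i(z)=z$ whenever $p_i(z)>0$; together with $\mathbf{C_2}$, this gives $\Phi^z_t\equiv z$ almost surely and shows $\delta_z$ is invariant. To estimate $P_t(x,B(z,\epsilon))$ for $x\in B(z,\eta)$, I would construct a Markovian coupling of $\Phi^x$ with $\Phi^z\equiv z$ that shares the Poisson clock $\{\tau_n\}$ and, at each jump $n$, maximally couples the index drawn from $(p_i(\xi^x_n))_i$ with the one drawn from $(p_i(z))_i$. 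Under $\mathbf{B_4}$ one has $\E[\e^{\alpha\triangle\tau_k}]=\lambda/(\lambda-\alpha)$, and conditional on the coupling surviving through jump $n-1$, the product of these exponential factors together with the iterated-contraction bound $J_n(x)\rho(x,z)$ dominates $\rho(\xi^x_n,z)$. Applying $\mathbf{B_3}$ and Jensen's inequality for the concave $\omega$, the probability of decoupling at jump $n$ is bounded by $\omega\bigl(J_n(x)\rho(x,z)\lambda^n/(\lambda-\alpha)^n\bigr)$. Summing over $n$ and invoking $\mathbf{C_1}$ (possibly after shrinking $\eta$ so that the first $M-1$ terms are also small), the total decoupling probability is at most $1-\gamma$ uniformly on $B(z,\eta)$, so $\Phi^x_t=z$ eventually with probability $\geq\gamma$, yielding
\[
\liminf_{t\to\infty}P_t(x,B(z,\epsilon))\geq\gamma,\qquad x\in B(z,\eta),\ \epsilon>0.
\]

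Stage two extends the bound to arbitrary $x\in\mathcal{X}$. Applying $\mathbf{C_3}$ with radius $\eta$ yields $\beta>0$ and $t_x\geq 0$ with $P_{t_x}(x,B(z,\eta))>\beta$. The Markov property and Fatou's lemma then give
\[
\liminf_{t\to\infty}P_t(x,B(z,\epsilon))\geq\int_{B(z,\eta)}\liminf_{s\to\infty}P_s(y,B(z,\epsilon))\,P_{t_x}(x,\d y)\geq\beta\gamma,
\]
so \eqref{eq 4.1} holds with uniform constant $\beta\gamma$, and Theorem \ref{Thm 4}$(\runum{3})$ closes the argument. The main obstacle is stage one: setting up the maximal coupling across the full infinite sequence of jumps with the place-dependent contraction $r$, promoting the pathwise bound involving $J_n(x)$ and the random exponential factors from $S(t)$ into an expectation-level inequality that matches the summand in \eqref{eq IFS 5}, and absorbing the initial finitely many terms outside the range indexed by $\mathbf{C_1}$ into the choice of $\eta$, so that the gap $\gamma$ genuinely translates into a strictly positive lower bound on the coupling success probability.
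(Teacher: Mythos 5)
Your skeleton coincides with the paper's: both arguments reduce the theorem to Theorem \ref{Thm 4}$(\runum{3})$ (eventual continuity at $z$ comes for free from Proposition \ref{IFS Prop 1} since $\mathbf{C_1}$ strengthens $\mathbf{B_5}$), both first prove a lower bound $\liminf_{t\to\infty}P_t(x,B(z,\delta))\geq c\gamma$ uniformly on a small ball $B(z,\eta')$, and both then globalize exactly as you do, via $\mathbf{C_3}$ and the Chapman--Kolmogorov relation. The only real divergence is how the local step is executed. The paper does not build a coupling: it takes a Lipschitz bump function $f$ with $\mathbf{1}_{B(z,\delta/2)}\leq f\leq\mathbf{1}_{B(z,\delta)}$, observes $P_tf(z)=1$ (which, as you correctly note, needs $\mathbf{B_2}$ evaluated at $x=z$ in addition to $\mathbf{C_2}$, although the paper only cites $\mathbf{C_2}$), writes $1-P_t(x,B(z,\delta))\leq|P_tf(z)-P_tf(x)|$, and then simply reuses the already-established display \eqref{eq IFS J0} from the proof of Proposition \ref{IFS Prop 1} together with $\mathbf{C_1}$ to bound the right-hand side by $1-\gamma$ plus terms made $\leq\gamma/2$ by shrinking $\eta'$. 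Your maximal coupling of the jump indices is the probabilistic face of the same estimate --- the decoupling probability is precisely the total-variation quantity $\sum_{\mathbf{i}}|\mathbf{p}_n(\mathbf{i};x)-\mathbf{p}_n(\mathbf{i};z)|$ that the paper bounds by $\sum_k\omega(J_k(x)\e^{\alpha\tau_k}\rho(x,z))$ --- but the obstacle you flag is genuine: $\mathbf{B_2}$ is only an \emph{averaged}, place-dependent contraction toward $z$ with weights $p_i(x)$, not a per-map Lipschitz bound, so the pathwise domination ``$\rho(\xi^x_n,z)\leq J_n(x)\e^{\alpha\tau_n}\rho(x,z)$ on the survival event'' does not hold path by path and the conditioning under the overlap weights of the maximal coupling is delicate; the $J_n$-bound only emerges after summing over all index sequences weighted by $\mathbf{p}_n(\mathbf{i};x)$ and applying concavity of $\omega$. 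The paper's test-function packaging avoids this conditioning entirely, which is what makes its version of the local step essentially a one-line corollary of Proposition \ref{IFS Prop 1}; if you insist on the coupling formulation you would in effect have to reprove that display anyway.
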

    \begin{remark} \label{Rem IFS}
    In particular, taking $r(x)=r\in (0,1)$ for any $x\in\mathcal{X},$ then $\mathbf{A_2}$ is automatically satisfied. Thus, this model can be seen as a generalization of the IFS in \cite{BKS2014} in the sense that $\mathcal{K}=\{z\}$ (see $\mathbf{A_5}$).
    \end{remark}

    A simple and concrete example is given at the end of this subsection.  It satisfies the assumptions in Theorem \ref{Thm IFS}, but is not included in the model in \cite{BKS2014} because assumption $\mathbf{A_2}$ fails. As a result, it is difficult to check the e-property. However, we can use Theorem \ref{Thm IFS} to prove asymptotic stability immediately. 
    
    \begin{example}\label{Ex 6}
    Let $\mathcal{X}=\mathbb{R}_+.\;$ Let $S(t)= \text{Id}_{\mathcal{X}}\;$ for $t\geq 0,$ and
    \begin{equation*}
    w_1(x)=x/2,\quad w_2(x)=x.
    \end{equation*}
    Let $p_1(x)=\e^{-x}$ and $p_2(x)=1-\e^{-x}$ for $x\geq 0.$ 
    
    Then one may check that for  $z=0$,  $r(x)=1-\e^{-x}/2$,  $\omega(x)=2x$, $J_n(x)=(1-\e^{-x}/2)^n$, $M=0$,  $\eta = 1/8$,  $\gamma = 1-\e^{1/8}/4$,	all the assumptions $\mathbf{B_1}-\mathbf{B_4}$ and $\mathbf{C_1}-\mathbf{C_2}$ are satisfied. To check $\mathbf{C_3},$ note that $\forall\,x>0$
    \begin{align*}
    P_t(x,(x/2,x])&=P_t(x,\{x\})=\sum_{n=0}^{\infty}p_2(x)^n\frac{(\lambda t)^n}{n!}\e^{-\lambda t}=\exp(- \e^{-x}\lambda t )\rightarrow 0 \text{ as } t\rightarrow\infty.
    \end{align*}
    Thus there exists some $t_x^1$ such that
    \begin{equation*}
    P_{t_x^1}(x,[0,x/2])\geq 1-2^{-2}.
    \end{equation*}
    Inductively, for any $k\geq 1$ there exists some $t_x^k$ such that
    \begin{equation*}
    P_{t_x^k}(x,[0,x/2^k])\geq \Pi_{i=1}^{k}(1-(i+1)^{-2})\geq \Pi_{i=1}^{\infty}(1-(i+1)^{-2}):=\beta >0.
    \end{equation*}
    
    Thus this semigroup is eventually continuous at $0$ by Proposition \ref{IFS Prop 1}, and asymptotically stable by Theorem \ref{Thm IFS}.
    \end{example}
    
    \begin{example}\label{Ex 7} 
    Similar to Example \ref{Ex 5},  one can  construct a Markov process on $\mathcal{Y}$ by $\tilde{F}_{x,y}(t):=(\tilde{\Phi}_t^x,\Psi_t^y)$, where $\tilde{\Phi}_t^x$ is determined by Example \ref{Ex 6}, and $\Psi_t^y$ is given by \eqref{Ex 5-periodic}. It is not hard to verify that the Markov--Feller semigroup of $\tilde{F}_{x,y}(t)$ satisfies the Ces\`aro eventually continuity and weak-* mean ergodicity.
    \end{example}

    \section{Beyond  Ces\`aro eventual continuity} \label{Sec 5}
	
    In Section \ref{Sec 3}, we have established some comparatively complete results on the ergodic properties of Ces\`aro eventually continuous semigroups on Polish spaces. Nevertheless, the ergodicity of Markov semigroups is a much more complicated issue than the  previous results would suggest. To illustrate this, we present two Markov processes that do not possess Ces\`aro eventual continuity, exhibiting some interesting ergodic phenomena.  

    \subsection{Stochastic Hopf model} \label{hopf model}

    In \cite{Hopf1948} and \cite{Hopf1956}, E. Hopf investigates the  mechanism of the route to turbulence. Despite their lack of physical relevance, these two articles provide some interesting perspectives in mathematics, such as in dynamical systems and ergodic theory.  We consider Hopf's second model (see \cite{Hopf1956}).  The model is formulated on the torus $\T=\R/2\pi\Z$,  which reads
    \begin{equation}\label{hopf}
    \begin{cases}
        \partial_tw-\nu\partial_{xx}w+I(w)=L(w),\\
         w(0,x)=u(x)\in L^2(\T;\C),
    \end{cases}       
    \end{equation}
    where $\nu>0$ is the viscosity, and
    \begin{align*}
        (I(w))(t,x)&=\frac{1}{4\pi^2}\int_{\T^2}w(t,y)w(t,y')\overline{w}(t,y+y'-x)\d y \d y',\\
        (L(w))(t,x)&=\frac{1}{2\pi}\int_{\T}w(t,x+y)F(t,y)\d y.
    \end{align*}
    Here $\overline{w}$ demotes the complex conjugate of $w$ and $F\in L^2(\T;\C)$ is a given external force. Using the Fourier transform, Equation \eqref{hopf} can be written as
    \begin{equation*}
        \partial_tw_n=-\nu n^2w_n-w_n^2\overline{w}_n+F_nw_n\quad\text{for }n\in\Z,
    \end{equation*}
    with
    \begin{equation}\label{hopf0}
        w(t,x)=\sum_{n\in\Z}w_n(t)\e^{\im nx},\quad u(x)=\sum_{n\in\Z}u_n\e^{\im nx},\quad F(x)=\sum_{n\in\Z}F_n\e^{\im nx}.
    \end{equation}

    With these preparations, let us now formulate the random Hopf’s  model considered in the present paper. Instead of a deterministic external force $F$ along, we assume that Equation \eqref{hopf} is further perturbed by an additional random force, i.e., we replace $F(x)$ by $F(x)+\xi(t,x)$, where $\xi(t,x)$ is the random noise. More precisely, for simplicity, we assume that $\xi$ is the form of $\xi=\frac{\d W}{\d t}$. $W$ is a Wiener process in the conanical probability space $(\Omega,\mathcal{F},\P)$ given by 
    \begin{equation*}
        W(t)=\sum_{n\in\Z}b_n\beta_n(t) e_n.
    \end{equation*}
    Here $b_n\in\R$ are constants such that $\sum_{n\in\Z}b_n^2<\infty$, $e_n(x)=\e^{\im nx}$ and $\{\beta_n\}_{n\in\Z}$ is a sequence of independent standard real-valued Brownian motions.

    The model on $\mathcal{X}:=L^2(\T;\C)$ under consideration is therefore formulated by \eqref{hopf0} and 
    \begin{equation}\label{hopf1}
    \begin{cases}
      \d w_n=(-\nu n^2w_n-w_n^2\overline{w}_n+F_nw_n)\d t+b_nw_n\d \beta_n(t),\\
      w_n(0)=u_n\in\C,
    \end{cases} n\in \Z.
    \end{equation}

    By standard arguments, Equation \eqref{hopf1} admits a unique solution
    \begin{equation*}
        w(\,\cdot\,)= w(\,\cdot\,;u)\in L^2(\Omega;C([0,\infty);\mathcal{X}))
    \end{equation*}
    which depends continuously in $\mathcal{X}$ on $u$ for any $t\geq 0$. Moreover, Equation \eqref{hopf1} defines a Markov--Feller semigroup on $\mathcal{X}$ with transition functions
    \begin{equation*}
        P_t(u,A)=\P(w(t)\in A),\quad\text{for }A\in\mathcal{B}(\mathcal{X}),\;u\in\mathcal{X},\;t\geq 0.
    \end{equation*} 

    \begin{example}\label{Ex Hopf1}
    Given  that $F\in L^2(\T;\C)$ and $\sum_{n\in\Z}b_n^2<\infty$, without loss of generality, assume that there exist $N\geq 1$ such that
      \begin{equation*}
      a_n:=-\nu n^2+{\rm Re}\;F_n>0\quad\text{and}\quad b_n\neq 0\quad\text{ for }|n|\leq N,
    \end{equation*}
    and
    \begin{equation*}
       a_n\leq 0\quad\text{ for }|n|>N.
    \end{equation*}
   
    Let $\mathcal{X}_0\subset\mathcal{X}$ be defined by 
    \begin{equation*}
        \mathcal{X}_0:=\left\{u\in\mathcal{X}:u=\sum_{n\in\Z}u_ne_n,\;u_n\neq 0\;\text{for any }|n|\leq N\right\}.
    \end{equation*}

    \begin{proposition}
        Under the above settings, for any $z\in\mathcal{X}$, the semigroup $\{P_t\}_{t\geq 0}$ associated with Equation \eqref{hopf1} is Ces\`aro eventually continuous at $z$ if and only if $z\in\mathcal{X}_0$.
    \end{proposition}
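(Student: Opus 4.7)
My plan rests on the observation that in the Fourier basis, Equation~\eqref{hopf1} decouples completely into independent scalar complex SDEs for the modes $w_n$, which further split in polar coordinates into a one-dimensional modulus diffusion coupled to a purely deterministic phase rotation. First I would set up this decomposition via It\^o's formula applied to $\log w_n$, giving
\[
\d \log|w_n| = \bigl(a_n - \tfrac{b_n^2}{2} - |w_n|^2\bigr)\d t + b_n\,\d\beta_n, \qquad \arg w_n(t) = \arg w_n(0) + t\,\mathrm{Im}\,F_n .
\]
For $|n|\leq N$, the modulus SDE admits a unique invariant measure $\pi_n$ supported on $(0,\infty)$ (via the scale function / speed measure of the log-modulus diffusion, using $a_n>0$ and $b_n\neq 0$), with the boundary $0$ absorbing but inaccessible from the interior; for $|n|>N$, the dissipation $a_n\leq 0$ forces $|w_n(t)|\to 0$ almost surely. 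Independence of the modes then yields a product structure for the one-point transition laws and their Ces\`aro averages.

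For the sufficiency direction, suppose $z\in\mathcal{X}_0$. Since $\mathcal{X}_0$ is open, it suffices to consider $u\in\mathcal{X}_0$ in a neighborhood of $z$. For any $f\in L_b(\mathcal{X})$, mode-wise ergodicity combined with the deterministic phase dynamics yields $Q_t f(u)\to\langle f,\mu_u\rangle$ as $t\to\infty$, where $\mu_u$ is the product measure whose $n$-th factor for $|n|\leq N$ is obtained by spreading $\pi_n$ along the phase orbit (uniformly on the circle if $\mathrm{Im}\,F_n\neq 0$; concentrated at the fixed phase $\arg u_n$ if $\mathrm{Im}\,F_n=0$) and whose $n$-th factor for $|n|>N$ is $\delta_0$. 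The map $u\mapsto\mu_u$ is weakly continuous on $\mathcal{X}_0$, since its only $u$-dependence is through the phases $\arg u_n$ for the (finitely many) $n$ with $|n|\leq N$ and $\mathrm{Im}\,F_n=0$, which vary continuously away from $\{u_n=0\}$. This gives $\limsup_{u\to z}\limsup_{t}|Q_t f(u) - Q_t f(z)|=0$, that is, Ces\`aro eventual continuity at $z$.

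For the necessity direction, assume $z\notin\mathcal{X}_0$, so there exists $n_0$ with $|n_0|\leq N$ and $z_{n_0}=0$. The $n_0$-th mode has $\{w_{n_0}=0\}$ as an invariant absorbing set, so $w_{n_0}(t)\equiv 0$ under $\P_z$ and $Q_t f(z)=0$ for the bounded Lipschitz test function $f(w):=\min(|w_{n_0}|,1)$. For any sequence $u^{(k)}\to z$ with $(u^{(k)})_{n_0}\neq 0$, the same mode-wise Ces\`aro convergence yields
\[
\lim_{t\to\infty} Q_t f(u^{(k)}) = \int_0^\infty \min(\rho,1)\,\pi_{n_0}(\d\rho) =: c > 0 ,
\]
and therefore $\limsup_{t}|Q_t f(u^{(k)}) - Q_t f(z)|\geq c$ for every $k$, ruling out Ces\`aro eventual continuity at $z$.

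The two chief technical obstacles are: (i) the one-dimensional ergodic analysis of each unstable modulus SDE, requiring unique invariant measure on $(0,\infty)$ and weak convergence of the time marginals through the log-modulus diffusion with confining quadratic drift at $+\infty$ and recurring drift away from $-\infty$; and (ii) lifting the mode-wise Ces\`aro convergence to bounded Lipschitz observables on the infinite-dimensional $\mathcal{X}=L^2(\T;\C)$. The latter needs uniform tail control of $\sum_{|n|>N}|w_n(t)|^2$, which follows from an It\^o estimate on $\|w(t)\|_{L^2}^2$ together with $a_n\leq 0$ for $|n|>N$, reducing the analysis to the finitely many ergodic modes.
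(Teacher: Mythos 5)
Your overall strategy coincides with the paper's: decompose into Fourier modes, pass to modulus/phase, show $r_n(t)\to 0$ a.s.\ for $|n|>N$ and $r_n(t)$ converges in law to a nondegenerate stationary modulus for $|n|\le N$ with $r_n^u>0$ (the paper's Lemma \ref{Lemma r-convergence}), and for necessity use exactly the test function $\min(|w_{n_0}|,R)$ to separate the absorbed mode from nearby initial data -- that half of your argument matches the paper's almost verbatim and is fine. Where you genuinely diverge is the sufficiency direction: the paper never identifies the Ces\`aro limit measure there; it runs a synchronous coupling of two solutions started at $u,\hat u\in\mathcal{X}_0$ with the same noise, bounds $|P_tf(u)-P_tf(\hat u)|$ by $(\E\|w(t)-\hat w(t)\|^2\wedge 4)^{1/2}$, and closes the estimate with fourth-moment bounds $\E r_n^4(t)\le C_1\e^{-C_2t}(r_n^u)^4$ for large $|n|$ and $\E r_n^4(t)\le C_3(1+r_n^u)^4$ otherwise. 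Your route -- prove $Q_tf(u)\to\langle f,\mu_u\rangle$ and then continuity of $u\mapsto\mu_u$ -- is viable, but your description of $\mu_u$ is wrong in general: when the frequencies $\mathrm{Im}\,F_n$, $|n|\le N$, are nonzero but rationally dependent, the Ces\`aro limit of the phase flow is Haar measure on a coset of a \emph{proper} subtorus of $\T^{2N+1}$, not a product of uniform circle measures, and it \emph{does} depend on the relative initial phases $\arg u_n$ even for modes with $\mathrm{Im}\,F_n\neq 0$. So your stated justification of the continuity of $u\mapsto\mu_u$ (``its only $u$-dependence is through the phases with $\mathrm{Im}\,F_n=0$'') fails; the continuity itself survives, because the Haar average over the orbit closure $\bftheta^u+\overline{\R\bfomega}$ depends continuously on $\bftheta^u$, but you would need to argue it that way. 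The paper's coupling argument sidesteps identifying the limit altogether, which is precisely what lets it treat arbitrary $F$; identifying the limit is deferred to Corollary \ref{hopf-cor3}, where rational independence of the frequencies is assumed. Your tail-control step (ii) for $\sum_{|n|>N}|w_n(t)|^2$ is the right remaining ingredient and corresponds to the paper's moment estimates.
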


    \begin{proof}
    Making use of polar coordinates 
    \begin{equation*}
        w_n(t)=r_n(t)\exp(\im \theta_n(t)),\quad u_n= r_n^u\exp(\im \theta_n^u),
    \end{equation*}
    Equation \eqref{hopf1} can be further written as
    \begin{equation}\label{hopf2}
        \begin{cases}
            \d r_n=(a_n+\tfrac{1}{2}b_n^2)r_n\d t-r_n^3\d t+b_nr_n\d \beta_n(t),\\
            \d\theta_n={\rm Im}\,F_n\d t,\\
            (r_n(0),\theta_n(0))=(r_n^u,\theta_n^u)\in\R_+\times\T,
        \end{cases} n\in \Z.
    \end{equation}
    Performing calculations similar to those in \cite[Section 2.3.7]{A1998}, Equation \eqref{hopf2} can be solved explicitly with solutions 
    \begin{equation}\label{hopf3}
        \begin{cases}
            \displaystyle r_n(t)=\frac{r_n^u\e^{a_nt+b_n\beta_n(t)}}{\left(1+2(r_n^u)^2\int_{0}^{t}\e^{2a_ns+2b_n\beta_n(s)}\d s\right)^{1/2}},\\
            \theta_n(t)=\theta_n^u+{\rm Im}\,F_n\,t\quad({\rm mod}\;2\pi),
        \end{cases} n\in \Z.
    \end{equation}
    Moreover, by Lemma \ref{Lemma r-convergence}, it follows that 
         \begin{align}\label{hopf4}
        \lim_{t\rightarrow\infty}r_n(t)\overset{a.s.}{=}0\quad&\text{ for } |n|>N,\;r_n^u>0,\\        \lim_{t\rightarrow\infty}r_n(t)\overset{d}{=}\lambda_n\quad&\text{ for }|n|\leq N,\;r_n^u>0,\label{hopf4.1}
    \end{align}    
    where the first convergence holds almost surely, while the second holds in the sense of weak convergence. Moreover, $\{\lambda_n\}_{|n|\leq N}$ are independent random variables given by
    \begin{equation*}
        \lambda_n(\omega)=\left(2\int_{-\infty}^{0}\e^{2a_nt+2b_n\hat{\beta}_n(t,\omega)}\d t\right)^{-1/2},
    \end{equation*}
    where $\{\hat{\beta}_n\}_{|n|\leq N}$ are independent standard (two-sided time) Brownian motions.

    Now consider another solution $\hat{w}$ of Equation \eqref{hopf1} with initial condition $\hat{u}=\sum_{n\in\Z}\hat{u}_ne_n$, and the corresponding solutions are denoted by $\hat{w}_n$, $\hat{r}_n$, $\hat{\theta}_n$.

    \begin{itemize}
        \item[$(\runum{1})$] {\it $\{P_t\}_{t\geq 0}$ is Ces\`aro eventually continuous on $\mathcal{X}_0$.}  For any $f\in L_b(\mathcal{X})$ with $\|f\|_{L}\leq 1$, $u,\hat{u}\in\mathcal{X}_0$,
    \begin{equation}\label{hopf5}
    \begin{aligned}
        |P_tf(u)-P_tf(\hat{u})|
        &\leq (\E\|w(t)-\hat{w}(t)\|^2\wedge 4)^{1/2}\\
        &\leq \left(\sum_{n\in\Z}(\E|r_n-\hat{r}_n|^2\wedge 4+ \E r_n\hat{r}_n|\theta_n-\hat{\theta}_n|^2)\right)^{1/2}\\
        &\leq \left(\sum_{n\in\Z}(\E|r_n-\hat{r}_n|^2\wedge 4+ |\theta^u_n-\hat{\theta}_n^u|^2\E (r_n^2+\hat{r}_n^2))\right)^{1/2}\\
        &\leq \left(\sum_{n\in\Z}\E|r_n-\hat{r}_n|^2\wedge 4+ \|u-\hat{u}\|^4\sum_{n\in\Z}\E (r_n^4+\hat{r}_n^4)\right)^{1/2}.   
    \end{aligned}
    \end{equation}

    It remains to control the two terms on the right-hand side of Equation \eqref{hopf5}. Using \eqref{hopf2} and It\^o's lemma, we can calculate that there exist constants $C_1,C_2,C_3>0$, $N'\geq N$, independent of $r_n^u>0$ and $t\geq 0$, such that
    \begin{align}
        \E r_n^4(t)\leq &\  C_1\e^{-C_2t}(r_n^u)^4,\quad \forall\,t\geq 0,\;|n|>N',\label{hopf6}\\
        \E r_n^4(t)\leq &\ C_3(1+r_n^u)^4,\quad\forall\,t\geq 0,\;|n|\leq N'.\label{hopf7}
    \end{align}

    Therefore,  the (Ces\`aro) eventual continuity at $u\in\mathcal{X}_0$ follows by first plugging \eqref{hopf4}, \eqref{hopf4.1}, \eqref{hopf6} and \eqref{hopf7} into \eqref{hopf5}, then letting $t$ go to infinity and $\hat{u}$ converge to $u$.

    \item[$(\runum{2})$]{\it $\{P_t\}_{t\geq 0}$ is not Ces\`aro eventually continuous on $\mathcal{X}\setminus\mathcal{X}_0$.} Let $u=\sum_{n\in\Z}u_n e_n\in\mathcal{X}\setminus\mathcal{X}_0$ be fixed. There exists $k$ with $|k|\leq N$ such that $u_{k}=0$. For any $R>0$, let $f_R\in L_b(\mathcal{X})$ be given by 
    \begin{equation*}
            f_R(u)=\min\{\|u_k\|,R\}.
    \end{equation*}
    Then for $\hat{u}:=u+\alpha \e_k$ with $\alpha>0$, it follows form \eqref{hopf4.1} that 
    \begin{align*}
        P_tf(\hat{u})-P_tf(u)=\langle f_R,\mathcal{D}(\hat{r}_n(t))\rangle\rightarrow\langle f_R,\mathcal{D}(\lambda_n)\rangle=\E\min\{\lambda_n,R\}\quad \text{as }t\rightarrow\infty.
    \end{align*}
    Here the notation $\mathcal{D}(\xi)$ denotes the law of random variable $\xi$.  Thus taking $R$ be sufficiently large, there would exist a constant $c>0$ independent of $\alpha$ such that
    \begin{equation*}
        \limsup_{t\rightarrow\infty}|Q_tf(\hat{u})-Q_tf(u)|\geq c>0,
    \end{equation*}
    which implies that the Ces\`aro eventual continuity fails at $u\in\mathcal{X}\setminus\mathcal{X}_0$.
    \end{itemize}

    \end{proof}
    Notice that $\theta_n(t)=\theta_n^u + \textrm{Im}\,F_n\, t\;({\rm mod}\,2\pi)$, $|n|\le N$ determine a conditionally periodic motion  (see \cite[Section 51, A]{ArndV}) or called a quasi-periodic motion  (see \cite{Moser}) on $\mathbb{T}^{2N+1}$. If the frequencies $\textrm{Im}\,F_n$, $|n|\le N$ are independent, i.e., they are linearly independent over the field of rational numbers: if $k_n\in \Z$, $|n|\le N$ and  $\sum_{|n|\le N}k_n \textrm{Im}\, F_n=0$, then $k_n=0$, for any $|n|\le N$. By Theorem on the averages (see \cite[p286]{ArndV}), if $g$ is a Riemann integrable function on $\mathbb{T}^{2N+1}$, then 
    \begin{equation}\label{hopf8}
    \lim_{t\to \infty}\frac{1}{t}\int_0^t g(\bftheta_N^u+\bfomega_N s)\d s=(2\pi)^{-(2N+1)}\int_{\mathbb{T}^{2N+1}}g(\mathbf{x})m(\d \mathbf{x}),
    \end{equation}
    where $\bftheta_N^u=(\theta_n^u, |n|\le N)$, $\bfomega_N=(\textrm{Im}\,F_n, |n|\le N)$ and $m(\d \mathbf{x})$ denotes the Lebesgue measure on $\mathbb{T}^{2N+1}$. 

    Recall that $Q_t(u,\cdot)=\frac{1}{t}\int_0^tP_s\delta_u \d s$, let $\{\eta_n\}_{|n|\leq N}$ be independent uniform random variables on $\T$. Then, combining \eqref{hopf4}, \eqref{hopf4.1} and \eqref{hopf8}, we obtain

    \begin{corollary}\label{hopf-cor3} 
    Assume that the ${\rm Im}\,F_n, |n|\le N$ are independent, then for any $u\in \mathcal{X}_0$, $Q_t(u,\cdot)$ weakly converges to $\mathcal{D}\left(\sum_{|n|\le N}\lambda_n\e^{\im \eta_n}e_n\right)$ and $\mathcal{D}\left(\sum_{|n|\le N}\lambda_n\e^{\im \eta_n}e_n\right)$ is an ergodic measure. Moreover, $\supp\mathcal{D}\,\left(\sum_{|n|\le N}\lambda_n\e^{\im \eta_n}e_n\right)= {\rm span}\,\{e_n:|n|\leq N\}$.
    \end{corollary}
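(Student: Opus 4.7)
The plan is to carry out three stages: establish the weak convergence of $Q_t(u,\cdot)$, deduce ergodicity from that convergence, and determine the support of the limit.

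For the weak convergence, fix $u\in\mathcal{X}_0$ and $f\in L_b(\mathcal{X})$. The first move is to truncate to finitely many Fourier modes: by \eqref{hopf4} and the moment bounds \eqref{hopf6}--\eqref{hopf7}, for any $\varepsilon>0$ one can choose $M\ge N'$ so that $\E\sum_{|n|>M}|w_n(s)|^2<\varepsilon$ uniformly in $s\ge 0$, and hence $f(w(s))$ is approximated in $L^1(\P)$, uniformly in $s$, by a cylinder functional $f_M$ depending only on the low modes $|n|\le M$. Passing to polar coordinates $w_n=r_n\e^{\im\theta_n}$ and using that the phase $\theta_n(s)=\theta_n^u+{\rm Im}\,F_n\cdot s$ is deterministic, Fubini's theorem yields
\begin{equation*}
Q_tf(u)\approx\E\Big[\tfrac{1}{t}\int_0^t g_M\bigl((r_n(s))_{|n|\le M};(\theta_n^u+{\rm Im}\,F_n\cdot s)_{|n|\le M}\bigr)\,\d s\Big]
\end{equation*}
for a suitable $g_M\in L_b$. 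Combining \eqref{hopf4} and \eqref{hopf4.1} with a Skorokhod representation for the joint weak convergence $(r_n(s))_{|n|\le N}\Rightarrow(\lambda_n)_{|n|\le N}$ (independent components), one then replaces $r_n(s)$ by $\lambda_n$ for $|n|\le N$ and by $0$ for $N<|n|\le M$, after splitting the integral at a large initial time $T_0$.

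With the moduli frozen at their limiting values, the integrand depends on $s$ only through the deterministic quasi-periodic flow $(\theta_n^u+{\rm Im}\,F_n\cdot s)_{|n|\le N}$ on $\T^{2N+1}$. Since $\{{\rm Im}\,F_n\}_{|n|\le N}$ are rationally independent, \eqref{hopf8} applies pathwise, and bounded convergence inside the outer expectation gives $\lim_{t\to\infty}Q_tf(u)=\E\,g_M((\lambda_n)_{|n|\le N},0;(\eta_n)_{|n|\le N})+O(\varepsilon)$, with $(\eta_n)_{|n|\le N}$ i.i.d.\ uniform on $\T$ and independent of $(\lambda_n)_{|n|\le N}$. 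Letting $M\to\infty$ and $\varepsilon\to 0$ recovers $\langle f,\mu^*\rangle$ with $\mu^*:=\mathcal{D}\bigl(\sum_{|n|\le N}\lambda_n\e^{\im\eta_n}e_n\bigr)$.

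For ergodicity, $\mu^*$ is invariant as a weak limit of $Q_t\delta_u$ under the Feller property, and $\mu^*(\mathcal{X}_0)=1$ since $\lambda_n>0$ almost surely. The ergodic decomposition of regular jointly measurable Markov--Feller semigroups (see Section \ref{Sec 3.3}) combined with Theorem \ref{Prop T3*} implies that $Q_t(x,\cdot)$ weakly converges to some ergodic measure $\varepsilon_x$ for $\mu^*$-a.e.\ $x$. Since the first stage gives $Q_t(x,\cdot)\Rightarrow\mu^*$ for every $x\in\mathcal{X}_0$, we conclude $\varepsilon_x=\mu^*$ for $\mu^*$-a.e.\ $x$, so $\mu^*$ itself is ergodic. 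For the support, the representation of $\lambda_n$ as $\bigl(2\int_{-\infty}^0\e^{2a_nt+2b_n\hat{\beta}_n(t)}\,\d t\bigr)^{-1/2}$ gives $\supp\mathcal{D}(\lambda_n)=(0,\infty)$; together with $\supp\mathcal{D}(\eta_n)=\T$ and the independence of the pairs $(\lambda_n,\eta_n)$ across $|n|\le N$, the joint support equals $\prod_{|n|\le N}\C$, which corresponds exactly to $\operatorname{span}\{e_n:|n|\le N\}$ in $\mathcal{X}$.

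The principal obstacle is the interchange in the first stage: one must simultaneously replace the time-varying law of $(r_n(s))_{|n|\le N}$ by its weak limit and apply Weyl equidistribution to the angles inside a Ces\`aro average. The cleanest way is a triangle-inequality split at an auxiliary time $T_0=T_0(t,\varepsilon)$: on $[0,T_0]$ the $t^{-1}$ factor kills the contribution against the $L^\infty$-bound on $g_M$, while on $[T_0,t]$ a Skorokhod coupling for the moduli together with the uniform-in-$s$ moment control \eqref{hopf6}--\eqref{hopf7} turns distributional convergence into an almost-sure statement, allowing Weyl's theorem to be applied pathwise; the rest is dominated convergence.
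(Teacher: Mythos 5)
Your proposal is correct and follows essentially the route the paper itself indicates: the paper's "proof" of this corollary is a one-line combination of \eqref{hopf4}, \eqref{hopf4.1} and the averaging identity \eqref{hopf8}, and your three stages simply fill in that combination (truncation to finitely many modes via \eqref{hopf6}--\eqref{hopf7}, replacement of the moduli by their limits, Weyl equidistribution of the deterministic phases, and then extremality of $\mu^*$ and the product structure of the support). Two small repairs: first, the coupling that turns \eqref{hopf4.1} into an almost-sure statement simultaneously for all $s$ should not be attributed to the Skorokhod representation theorem (which applies to sequences and does not by itself give a jointly measurable version over $s\in[0,t]$); the correct device is the explicit time-reversal coupling already constructed in Step 4 of the proof of Lemma \ref{Lemma r-convergence}, which gives $r_n(s)\overset{d}{=}\tilde r_n(s)$ for each fixed $s$ with $\tilde r_n(s)\to\lambda_n$ a.s. -- and since $\frac1t\int_0^t\E f(w(s))\,\d s$ only involves the time-$s$ marginals, this suffices. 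Second, you cannot invoke Theorem \ref{Prop T3*} in the ergodicity step, since its hypothesis (Ces\`aro eventual continuity on all of $\mathcal{X}$) is exactly what fails for this example; but this detour is unnecessary: since $Q_t(x,\cdot)\Rightarrow\mu^*$ for every $x\in\mathcal{X}_0$ and $\mu^*(\mathcal{X}_0)=1$, any invariant $\nu$ with $\nu\le c\,\mu^*$ satisfies $\langle f,\nu\rangle=\lim_t\langle Q_tf,\nu\rangle=\langle f,\mu^*\rangle$ by bounded convergence, so $\mu^*$ is an extreme point of the invariant measures, hence ergodic.
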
    

    Note that if the initial data of Equation \eqref{hopf2} $u=0$, then for any $t\ge 0$ and $n\in \Z$, $( r_n(t), \theta_n(t))=0$, i.e., $\delta_0$ is also an ergodic measure of $\{P_t\}_{t\ge 0}$. In particular,  this fact violates the conclusion of Theorem \ref{Prop T3*} and the EMDS-property, since the Ces\`aro eventual continuity fails at $0$. Furthermore, we have   

    \begin{proposition}
    Assume that  ${\rm Im}\,F_n, |n|\le N$ are linearly independent, then the semigroup $\{P_t\}_{t\geq 0}$ associated with Equation \eqref{hopf1} does not satisfy the EMDS-property on $\mathcal{X}$.
    \end{proposition}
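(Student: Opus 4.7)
The plan is to exhibit two distinct ergodic invariant measures of $\{P_t\}_{t\ge 0}$ whose supports share the point $0$, which directly contradicts the EMDS-property. The two candidates are the Dirac mass $\delta_0$ and the measure $\mu:=\mathcal{D}\bigl(\sum_{|n|\le N}\lambda_n \e^{\im \eta_n}e_n\bigr)$ produced by Corollary \ref{hopf-cor3}.

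First I would verify that $\delta_0$ is an ergodic invariant measure. Since the noise in \eqref{hopf1} is multiplicative of the form $b_n w_n\,\d\beta_n$, the constant process $w_n\equiv 0$ solves the SDE with initial datum $u=0$, so $P_t\delta_0=\delta_0$ for every $t\ge 0$. To see that $\delta_0$ is ergodic I would invoke the extreme-point characterization: any invariant probability measure absolutely continuous with respect to $\delta_0$ must be concentrated at $\{0\}$ and thus coincide with $\delta_0$ itself, so $\delta_0$ admits no non-trivial convex decomposition into invariant probability measures.

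Next, by Corollary \ref{hopf-cor3}, $\mu$ is an ergodic invariant measure with $\supp\mu=\mathrm{span}\{e_n:|n|\le N\}$, which contains $0$. The two measures are distinct: the explicit formula $\lambda_n=\bigl(2\int_{-\infty}^{0}\e^{2a_n t+2b_n\hat\beta_n(t)}\,\d t\bigr)^{-1/2}$ shows that each $\lambda_n$ is strictly positive almost surely, so the random vector $\sum_{|n|\le N}\lambda_n \e^{\im\eta_n}e_n$ is almost surely nonzero in its finite-dimensional ambient subspace; hence $\mu(\{0\})=0$ while $\delta_0(\{0\})=1$. Combining these facts,
\[
\supp\delta_0\cap\supp\mu=\{0\}\neq\emptyset,
\]
and the EMDS-property on $\mathcal{X}$ fails.

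The argument involves no technical obstacle, as all ingredients are already in place. What matters conceptually is that Corollary \ref{Coro EMDS} enforces disjoint supports precisely when Ces\`aro eventual continuity holds on the whole space; here the obstruction is that, as shown in Example \ref{Ex Hopf1}, Ces\`aro eventual continuity of $\{P_t\}_{t\ge 0}$ fails exactly at the origin, which is the common support point of the two ergodic measures. In other words, the failure of EMDS and the failure of Ces\`aro eventual continuity are localised at the same point, illustrating that the sufficient condition in Corollary \ref{Coro EMDS} is essentially sharp.
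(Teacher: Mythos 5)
Your overall strategy is the same as the paper's: exhibit $\delta_0$ and a non-degenerate ergodic measure whose supports both contain $0$, contradicting EMDS. However, there is a genuine gap at the one step that carries all the technical content. You justify $0\in\supp\mu$ by citing the support identity in Corollary \ref{hopf-cor3}, but that identity is only asserted there, and the claim $0\in\supp\mathcal{D}\bigl(\sum_{|n|\le N}\lambda_n\e^{\im\eta_n}e_n\bigr)$ is precisely what requires proof. The fact you do establish, namely that each $\lambda_n$ is strictly positive almost surely, gives you $\mu\neq\delta_0$, but it points in the \emph{opposite} direction from what you need: to place $0$ in $\supp\mu$ you must show $\P(\lambda_n<\epsilon)>0$ for every $\epsilon>0$, i.e.\ that the integral $\int_{-\infty}^{0}\e^{2a_nt+2b_n\hat\beta_n(t)}\,\d t$ can be made arbitrarily large with positive probability. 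Since $a_n>0$, this is an atypical event for the Brownian path and does not follow from the formula for $\lambda_n$ by inspection.

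The paper's proof is devoted almost entirely to this verification. It writes $\P(\lambda_n<\epsilon)=\nu_n((0,\epsilon))=p_t^n\nu_n((0,\epsilon))\geq\nu_n((0,R))\inf_{0<x\le R}p_t^n(x,(0,\epsilon))$ using the invariance of $\nu_n=\mathcal{D}(\lambda_n)$, and then bounds the infimum from below by a control argument: it picks the deterministic control $f_{t,R}(s)=-\tfrac{a_n}{b_n}s$, checks that driving the one-dimensional equation by $f_{t,R}$ sends any initial point $x\in(0,R]$ below $\epsilon/2$ for $t$ large, and invokes continuity plus the positivity of Wiener measure on tubes $\{\|\beta_n-f_{t,R}\|_{C([0,t];\R)}\le\delta\}$. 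You should either reproduce an argument of this kind or explicitly prove the support statement of Corollary \ref{hopf-cor3}; as written, your proof defers the only non-trivial point to an unproved assertion. The remaining parts of your proposal (invariance and ergodicity of $\delta_0$, distinctness of the two measures) are correct and match the paper.
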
 
    \begin{proof}
        Using \eqref{hopf3}, \eqref{hopf4}, \eqref{hopf4.1} and Theorem on the averages in \cite{ArndV}, we derive that the ergodic measures for $\{P_t\}_{t\geq 0}$ are
        \begin{equation*}
            \mathcal{P}_{\rm erg}=\left\{\mu_l=\mathcal{D}\left(\sum_{|n|\leq N}l_n\lambda_n\e^{\im \eta_n}e_n\right):l=(l_{-N},\cdots,l_N)\in\{0,1\}^{2N+1}\right\},
        \end{equation*}
        In particular, one has
         \begin{equation}\label{EMDS fail}
            \supp\mu_0\cap  \supp\mu_l=\{0\}\neq \emptyset\quad\forall\;l\in\{0,1\}^{2N+1},
        \end{equation}
        which conflicts the EMDS-property. Indeed, to prove \eqref{EMDS fail}, it suffices to check that for any $\varepsilon>0$ and $|n|\leq N$,
        \begin{equation*}
            \P(\lambda_n<\epsilon)>0.
        \end{equation*}    
    Note that $\mathcal{D}(\lambda_n):=\nu_n\in\mathcal{P}(\R_+)$ is an invariant measure of $\{r_n(t)\}_{t\ge 0}$ in Equation \eqref{hopf3} and denote the corresponding Markov semigroup by $p_t^{n}$. For any $R>0$ and $t>0$, we have 
    
    \begin{align*}
    \P(\lambda_n<\epsilon)&=\nu_n((0,\epsilon))=p_t^{n}\nu_n((0,\epsilon))=\int_{\R_+}p_t^{n}(x,(0,\epsilon))\nu_n(\d x)\\
            &\geq \nu_n((0,R))\inf_{0<x\leq R}p_t^{n}(x,(0,\epsilon)).
    \end{align*}    
    Fixing $R>0$ sufficiently large such that $ \nu_n((0,R))\geq 1/2$, it remains to estimate the infimum of the above inequality. Recall that Equation \eqref{hopf3} has solution \eqref{hopf4}. Moreover, for $f_{t,R}(s)=-\tfrac{a_n}{b_n}s\in C^1([0,t];\R)$ with $t>4\epsilon^{-2}$, we have
    \begin{equation*}
        \frac{x\e^{a_nt+b_nf_{t,R}(t)}}{\left(1+2x^2\int_{0}^{t}\e^{2a_ns+2b_nf_{t,R}(s)}\d s\right)^{1/2}}<\frac{1}{2}\epsilon\quad\forall\, x\in(0,R].
    \end{equation*}
    In particular, in view of the continuity, it implies that there exists $\delta>0$ such that for any $f\in C([0,t];\R)$ with $f(0)=0$ and $\|f-f_{t,R}\|_{C([0,t];\R)}\leq \delta$, 
    \begin{equation*}
        \frac{x\e^{a_nt+b_nf(t)}}{\left(1+2x^2\int_{0}^{t}\e^{2a_ns+2b_nf(s)}\d s\right)^{1/2}}<\epsilon\quad\forall\, x\in(0,R].
    \end{equation*}
    Therefore, the proof is completed by noticing that
    \begin{equation*}
        \inf_{0<x\leq R}p_t^{n}(x,(0,\epsilon))\geq \P(\|\beta_n-f_{t,R}\|_{C([0,t];\R)}\leq \delta)>0.
    \end{equation*}
    \end{proof}

    Now, we consider Hopf's original model \eqref{hopf}, i.e., $b_n=0, n\in\Z$ in Equation \eqref{hopf1} and \eqref{hopf2}, 
        \begin{equation}\label{hopf9}
        \begin{cases}
            \frac{\d r_n}{\d t}=a_nr_n-r_n^3,\\
            \frac{\d\theta_n}{\d t}={\rm Im}\,F_n,\\
            (r_n(0),\theta_n(0))=(r_n^u,\theta_n^u)\in\R_+\times\T,
        \end{cases} n\in \Z,
    \end{equation}
    and assume that $a_n:=-\nu n^2+{\rm Re}F_n>0$, for $|n|\le N$ and $a_n\le 0$ for $|n|>N$, they by \cite[Theorem in p52]{Hopf1956}, one has the following.     
    
    \begin{proposition} \label{hopf-prp5} Assume that  ${\rm Im}\,F_n, |n|\le N$ are independent. Let  $\{P_t\}_{t\ge 0}$ be the semigruop associated with Equation \eqref{hopf9}.       
    \begin{itemize}
    \item[$(\runum{1})$] If $N=0$, i.e., ${\rm Re}\, F_0>0$, $a_n<0$, $|n|\neq 0$, then  $\{P_t\}_{t\ge 0}$  is not (Ces\`aro) eventually continuous at $0\in \mathcal{X}$, but (Ces\`aro) eventually continuous at $\mathcal{X}\setminus \{0\}$.  $\{P_t\}_{t\ge 0}$ has only two ergodic measures, $\delta_0$ and $\mathcal{D}(\sqrt{{\rm Re}F_0}\e^{\im \eta_0}e_0)$. Moreover, $\{P_t\}_{t\ge 0}$ satisfies the EMDS-property.
    
    \item[$(\runum{2})$] If $N>0$, then $\{P_t\}_{t\ge0}$ is Ces\`aro eventually continuous at $z$ if and only if $z\in\mathcal{X}_0$. The ergodic measures for  $\{P_t\}_{t\ge0}$ are 
    
    \begin{equation*}
        \mathcal{P}_{\rm erg}=\left\{\mu_l=\mathcal{D}\left(\sum_{|n|\leq N}l_n\sqrt{a_n}\e^{\im \eta_n}e_n\right):l=(l_{-N},\cdots,l_N)\in\{0,1\}^{2N+1}\right\}.
    \end{equation*} 
    Moreover, set ${\bf 1}:=l$, if for any $|n|\le N$, $l_n=1$ and ${\bf 0}:=l$, if for any $|n|\le N$, $l_n=0$.  If $l\neq {\bf 1}$ and $l\neq {\bf 0}$, then ${\rm supp}\,\mu_l\subsetneq {\rm supp}\, \mu_{\bf 1}$, and ${\rm supp}\,\delta_0\cap {\rm supp}\, \mu_{\bf 1}=\emptyset$.
    \end{itemize}
    \end{proposition}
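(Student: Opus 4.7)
The plan is to exploit the explicit decoupled structure of Equation \eqref{hopf9}. Writing $w_n=r_n\e^{\im\theta_n}$, each Fourier mode satisfies the logistic radial flow $\dot r_n=a_nr_n-r_n^3$ and the pure rotation $\dot\theta_n={\rm Im}\,F_n$, with solutions obtained as the $b_n=0$ specialization of \eqref{hopf3}. From these one reads off: $r_n(t)\to 0$ whenever $a_n\le 0$ or $r_n^u=0$; while for $a_n>0$ and $r_n^u>0$ the convergence $r_n(t)\to\sqrt{a_n}$ is exponential at a rate controlled by $r_n^u$ and uniform on compact sets bounded away from the singular set $\{r_n^u=0\}$. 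Joint equidistribution of the active angles on $\mathbb{T}^{2N+1}$ is supplied by \eqref{hopf8} under the rational independence hypothesis. Combining these ingredients, the limit of $Q_t(u,\cdot)$ can be computed explicitly as a product measure determined entirely by the finite pattern $(\mathbf{1}_{u_n\neq 0})_{|n|\le N}$, and this is what drives the ergodic picture.

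For the identification of the ergodic measures in both (i) and (ii), an invariant measure must be supported where each mode $|n|\le N$ is either locked at $|w_n|=\sqrt{a_n}$ or identically zero, the high modes vanish, and the angular flow on the active sub-torus is uniquely ergodic under uniform measure by rational independence. This forces the ergodic measures to be exactly $\delta_0$ and $\mathcal{D}(\sqrt{a_0}\e^{\im\eta_0}e_0)$ in (i), and the full family $\{\mu_l:l\in\{0,1\}^{2N+1}\}$ in (ii). For Ces\`aro eventual continuity at a point $z\in\mathcal{X}_0$, I would carry out a mode-by-mode comparison analogous to \eqref{hopf5} with the noise terms deleted: the radial components of $u$ and $z$ both approach $\sqrt{a_n}$ exponentially fast at a rate controlled by $\min(|u_n|,|z_n|)$, the high modes decay, and the angular discrepancy $|\theta_n^u-\theta_n^z|\to 0$ is absorbed by Ces\`aro averaging against an equidistributed rotation. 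Failure of Ces\`aro eventual continuity at any $z$ with a dormant mode $|n_0|\le N$, $z_{n_0}=0$ (in particular at $0$ in (i)) is witnessed by the Lipschitz function $f(u)=\min(|u_{n_0}|,1)$: $Q_tf(z)$ vanishes on the $n_0$-mode component while any arbitrarily small perturbation with $u_{n_0}\neq 0$ produces $Q_tf(u)\to\min(\sqrt{a_{n_0}},1)>0$, violating \eqref{def2}.

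The support statements follow from direct computation of the pushforward ${\rm supp}\,\mu_l=\{w\in\mathcal{X}:|w_n|=\sqrt{a_n}\text{ if }l_n=1,\;w_n=0\text{ otherwise}\}$, which immediately gives ${\rm supp}\,\delta_0\cap{\rm supp}\,\mu_{\mathbf{1}}=\emptyset$ and, in (i), the EMDS-property; the comparative placement of ${\rm supp}\,\mu_l$ for intermediate $l$ is read off from the same formula. The main obstacle is the careful handling of the singular boundary where some $u_n=0$ for $|n|\le N$: the radial fixed point $r_n=0$ coexists with the attractor $r_n=\sqrt{a_n}$ in a highly non-uniform way, so any arbitrarily small perturbation activating a dormant mode produces an order-one change in the long-time Ces\`aro statistics. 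This bifurcation is exactly what forces the positive Ces\`aro eventual continuity statement to be restricted to $\mathcal{X}_0$, and it is the same mechanism responsible for the proliferation of ergodic measures in (ii) as compared with the non-degenerate stochastic case treated in Example \ref{Ex Hopf1}.
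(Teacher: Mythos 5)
Your strategy is the right one and is essentially the route the paper itself takes: the paper proves nothing new for \eqref{hopf9} but specializes the explicit formulas \eqref{hopf3}--\eqref{hopf4.1} to $b_n=0$, invokes the averaging identity \eqref{hopf8}, and cites Hopf's theorem; your mode-by-mode analysis of the radial pitchfork $\dot r_n=a_nr_n-r_n^3$ together with unique ergodicity of the rotation on the active sub-torus is exactly the content behind that citation. Two of your steps are asserted rather than argued and should be spelled out: (a) the claim that every invariant measure concentrates on $\prod_{|n|\le N}\{0,\sqrt{a_n}\}$ in the radial coordinates needs the remark that the marginal of an invariant measure on each radial coordinate is invariant for the scalar flow, whose only non-wandering points are $0$ and $\sqrt{a_n}$ (Poincar\'e recurrence); and (b) in the positive continuity argument on $\mathcal{X}_0$ the angular discrepancy $|\theta_n^u-\theta_n^z|$ is constant in $t$, not decaying, so what you actually use is continuity of $u\mapsto\theta_n^u$ wherever $u_n\neq 0$ together with $\limsup_{t\to\infty}\sum_{|n|>N}r_n(t)^2=0$ for each fixed initial datum; phrased correctly this in fact gives eventual (not only Ces\`aro) continuity on $\mathcal{X}_0$.

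The genuine problems are two places where your own (correct) computations do not deliver the literal statement and you pass over the mismatch. First, your witness $f(u)=\min(|u_{n_0}|,1)$ destroys Ces\`aro eventual continuity at \emph{every} $z$ with $z_{n_0}=0$ for some $|n_0|\le N$; for $N=0$ this is the whole hyperplane $\{u:u_0=0\}$, which is strictly larger than $\{0\}$, so the claim in $(\runum{1})$ that the semigroup is eventually continuous on all of $\mathcal{X}\setminus\{0\}$ cannot be obtained from your argument --- what you prove is the $\mathcal{X}_0$ dichotomy uniformly in $N\ge 0$, consistent with $(\runum{2})$ but not with the wording of $(\runum{1})$. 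Second, your support formula ${\rm supp}\,\mu_l=\{w:|w_n|=\sqrt{a_n}\text{ if }l_n=1,\ w_n=0\text{ otherwise, }|n|\leq N;\ w_n=0,\ |n|>N\}$ makes the supports pairwise \emph{disjoint} tori (two distinct patterns differ in some mode $m$, and $|w_m|$ equals $\sqrt{a_m}>0$ on one support and $0$ on the other); this does prove the EMDS claim in $(\runum{1})$ and ${\rm supp}\,\delta_0\cap{\rm supp}\,\mu_{\bf 1}=\emptyset$, but it refutes rather than ``reads off'' the asserted inclusion ${\rm supp}\,\mu_l\subsetneq{\rm supp}\,\mu_{\bf 1}$ in $(\runum{2})$: that inclusion is a feature of the stochastic case (Corollary \ref{hopf-cor3}), where $0$ lies in the support of $\mathcal{D}(\lambda_n)$, and it does not survive the replacement of $\lambda_n$ by the constant $\sqrt{a_n}$. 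You must either supply a different argument for these two claims (I do not see one) or explicitly record them as discrepancies; as written, the proposal silently asserts statements that its own computations contradict.
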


    \end{example}

    \begin{remark}
    Indeed, more refined estimates imply that $\{P_t\}_{t\ge 0}$ associated with Equation \eqref{hopf2} and \eqref{hopf9} satisfies the Ces\`aro e-property at $z\in \mathcal{X}_0$. Nevertheless, this enhancement is not necessary for the purposes of this example, which concerns non-Ces\`aro eventual continuity. 

    By now we have considered the case where ${\rm Im}\,F_n, |n|\le N$ are  independent. If ${\rm Im}\,F_n, |n|\le N$ are not independent, the ergodic properties for the semigroup $\{P_t\}_{t\ge 0}$ are similar to Equation \eqref{hopf2} and \eqref{hopf9} respectively, while slightly more complex analysis and techniques are required.       
    \end{remark}

    \begin{example}\label{Ex Hopf2}
    In this example, we are interested in phase transition phenomena in the ergodicity for   $\{P_t^{\nu}\}_{t\ge 0}$ of  Equation \eqref{hopf1} induced by viscosity $\nu$.  To illustrate the essence of the relation between the Ces\`aro eventual continuity/non-Ces\`aro eventual continuity and $\nu$, we simplify the assumption of the parameters as follows. Given that $F\in L^2(\T;\C)$, there exist an integer  $\tilde{N}\ge 1$ such that 
    \begin{equation*}
      {\rm Re}\;F_n=0, \ \text{for any}\  |n|>\tilde{N}
       \ \text{and} \ n=0,
    \end{equation*}
    \begin{equation*}
      b_n\neq 0,\ {\rm Re}\; F_n=c>0,\  \text{for any}\ 0<|n|\leq\tilde{N}, \text{where} \ c \ \text{is a constant},
    \end{equation*}    
    and  $\{{\rm Im}\; F_n, 0<|n|\leq \tilde{N}\}$ are independent, then $a_n<0$ for any $|n|>\tilde{N}$ and $a_0=0$. Thus, Lemma \ref{Lemma r-convergence} implies that 
    \begin{itemize}
        \item[$(\runum{1})$] $\{P_t^{\nu}\}_{t\ge 0}$ is asymptotic stability and Ces\`aro eventually continuous if $\nu\ge c$,
        \item[$(\runum{2})$]  $\{P_t^{\nu}\}_{t\ge 0}$ admits two ergodic measures and does not satisfy  non-Ces\`aro eventual continuity  if $0<\nu<c$.
    \end{itemize}
    Furthermore, by Corollary \ref{hopf-cor3}, if $\frac{c}{(\tilde{N}-i)^2}\le \nu< \frac{c}{(\tilde{N}-(i+1))^2}$,  then besides $\delta_0$, the other ergodic measures of  $\{P_t^{\nu}\}_{t\ge 0}$ are $\mathcal{D}(\sum_{1\le |n|\le \tilde{N}-(i+1)}\sqrt{-\nu n^2+c}\,\e^{\im \eta_n}e_n)$ for $i=0,\dots,\tilde{N}-2$;  and if $0<\nu<\frac{c}{\tilde{N}^2}$,  the other ergodic measure is $\mathcal{D}(\sum_{1\le |n|\le \tilde{N}}\sqrt{-\nu n^2+c}\,\e^{\im \eta_n}e_n)$.  
    
    \end{example}

    \subsection{Stochastic Lorenz system} \label{lorenz system}

    In \cite{CotiHairer}, Coti Zetali and Hairer consider some ergodic behaviour of a stochastic perturbation of the classical  Lorenz system as follows.
    \begin{equation}\label{eq lorenz}
       \left\{\begin{aligned} 
       \d X_t=&\,\sigma (Y_t-X_t)\d t,\\
       \d Y_t=&\,X_t(\varrho-Z_t)\d t-Y_t\d t,\\
       \d Z_t=&\,-(\beta Z_t+X_tY_t)\d t+ \hat{\alpha}\d W_t,       
       \end{aligned}
       \right.
    \end{equation}
    where $\sigma,\beta,\hat{\alpha}>0$ and $\varrho<1$ are constants, $W$ denotes a  standard Brownian motion.

    Let $\nu_0:=\delta_0\times \delta_0\times \mathcal{N}(0,\tfrac{\hat{\alpha}^2}{2\beta})\in\mathcal{P}(\R^3)$, where $\delta_0$ denotes the dirac measure at $0$ and $\mathcal{N}(0,\tfrac{\hat{\alpha}^2}{2\beta})$  denotes the zero-mean normal distribution  with variance $\tfrac{\hat{\alpha}^2}{2\beta}$. By definition, $\supp\nu_0=H:=\{(0,0,z):z\in\R\}$.

    \begin{theorem}[Theorem 1.1 in \cite{CotiHairer}]
    For any $\sigma,\beta>0$ and $\rho<1$, there exist  constants $0<\alpha_*\leq \alpha^*<\infty$ such that
    \begin{itemize}
    \item [$(\runum{1})$] For $0\leq \hat{\alpha}<\alpha_*$, Equation \eqref{eq lorenz} admits $\nu_0$ as its unique invariant measure.
    \item [$(\runum{2})$] For $\hat{\alpha}>\alpha^*$, Equation \eqref{eq lorenz} admits exactly two ergodic invariant measures: $\nu_0$ and another measure $\nu_*\in\mathcal{P}(\R^3)$. 
    \end{itemize}

    \end{theorem}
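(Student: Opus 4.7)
The plan is to treat the two regimes separately. First note that $H := \{(x,y,z) : x = y = 0\}$ is an invariant set on which \eqref{eq lorenz} reduces to $\d Z = -\beta Z \d t + \hat\alpha \d W$, an Ornstein--Uhlenbeck equation with unique invariant measure $\mathcal{N}(0, \hat\alpha^2/(2\beta))$. Hence $\nu_0$ is an invariant, indeed ergodic, measure of \eqref{eq lorenz} for every $\hat\alpha \geq 0$; the theorem is then entirely about whether $H$ attracts all trajectories or whether a second ergodic measure is born off $H$.

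For part (i), I would use a transverse Lyapunov function of Lorenz type, for instance $V(x,y,z) = x^2 + \sigma y^2$, whose generator along \eqref{eq lorenz} computes to
\[
\mathcal{L}V = -2\sigma x^2 - 2\sigma y^2 + 2\sigma xy(1+\varrho - z).
\]
Young's inequality on the cross term, together with an a priori bound $\E Z_t^2 \leq C(1 + \hat\alpha^2/\beta)$ (obtained from an auxiliary $\sigma(z-c)^2$-Lyapunov coupled to the $(X,Y)$-dissipation), yields
\[
\frac{\d}{\d t}\E V(t) \leq -\bigl(c_1 - c_2(\hat\alpha)\bigr)\E V(t),
\]
with $c_1 > 0$ produced by $\varrho < 1$ and $c_2(\hat\alpha) \to 0$ as $\hat\alpha \to 0$. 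Choosing $\alpha_*$ so that $c_2(\alpha_*) < c_1$ forces $\E V(t) \to 0$ exponentially for every initial condition. By Fatou, every invariant measure is then supported on $H$, and unique ergodicity of the OU flow there identifies it as $\nu_0$.

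For part (ii), the natural object is the top Lyapunov exponent $\lambda(\hat\alpha)$ of the linearised $(X,Y)$-dynamics along a stationary trajectory $Z_t$ of the OU process on $H$,
\[
\frac{\d}{\d t}\begin{pmatrix}X\\ Y\end{pmatrix} = \begin{pmatrix}-\sigma & \sigma\\ \varrho - Z_t & -1\end{pmatrix}\begin{pmatrix}X\\ Y\end{pmatrix}.
\]
By continuity, $\lambda(\hat\alpha) < 0$ for small $\hat\alpha$ because $\varrho < 1$; for large $\hat\alpha$, excursions of $Z_t$ to large negative values make $\varrho - Z_t$ strongly positive, driving exponential expansion of $(X,Y)$ that the damping $-\sigma, -1$ cannot absorb, whence $\lambda(\hat\alpha) > 0$. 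Once $\lambda(\hat\alpha^*) > 0$, the linear instability of $\nu_0$ combined with a coercive global Lyapunov function (e.g.\ $\varrho x^2 + \sigma y^2 + \sigma(z-\varrho)^2$, giving tightness of $\{Q_t\delta_{x_0}\}_{t\geq 0}$ for any $x_0 \notin H$) prevents any weak limit point of $Q_t\delta_{x_0}$ from charging a neighbourhood of $H$. Krylov--Bogoliubov extraction together with Proposition~\ref{Prop decompose} then yields a second ergodic measure $\nu_*$ disjoint from $\nu_0$.

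The main obstacle is proving $\lambda(\hat\alpha^*) > 0$ in a quantitative way: since $\lambda$ is defined through $\E\log\|\Phi_t\|$ rather than $\log\E\|\Phi_t\|$, straightforward moment bounds go the wrong way by Jensen, and one needs a genuine large-deviation estimate on the negative excursions of the OU process $Z_t$ combined with a Furstenberg-type random-matrix argument to transfer the sign. A secondary difficulty is showing that $\nu_*$ is in fact \emph{unique}, which requires some form of asymptotic strong Feller property for the hypoelliptic diffusion on $\R^3 \setminus H$; under the single-component noise $\hat\alpha \d W$ this is far from automatic and is precisely the delicate content of \cite{CotiHairer}.
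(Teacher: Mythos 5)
The paper does not prove this statement: it is imported verbatim as Theorem 1.1 of \cite{CotiHairer}, so there is no internal proof to compare against. Judged on its own, your proposal correctly identifies the invariant set $H$, the OU dynamics on it (hence $\nu_0$ ergodic for every $\hat\alpha\geq 0$), and the right organizing quantity for the transition, namely the top Lyapunov exponent $\lambda(\hat\alpha)$ of the linearised $(X,Y)$-dynamics over the stationary OU process. But both halves contain genuine gaps beyond the two you flag yourself.

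For part $(\runum{1})$, the inequality $\frac{\d}{\d t}\E V\le -(c_1-c_2(\hat\alpha))\E V$ does not follow from Young's inequality plus a bound on $\E Z_t^2$. After Young, the cross term leaves you with $\E\big[|1+\varrho-Z_t|\,(X_t^2+ Y_t^2)\big]$; since $Z_t$ is unbounded and correlated with $(X_t,Y_t)$, this is not dominated by $c_2(\hat\alpha)\,\E V$ for any deterministic $c_2(\hat\alpha)\to 0$, and Cauchy--Schwarz pushes you to fourth moments of $(X,Y)$, so the Gronwall estimate never closes. More fundamentally, for every $\hat\alpha>0$ the OU process makes excursions on which $\varrho-Z_t$ is arbitrarily large and the linearisation at $H$ is expanding, so there is no uniform pathwise contraction rate; smallness of $\hat\alpha$ has to enter through the sign of the logarithmic growth rate $\lambda(\hat\alpha)$ (continuous in $\hat\alpha$, negative at $\hat\alpha=0$ because $\varrho<1$ makes $\bigl(\begin{smallmatrix}-\sigma&\sigma\\ \varrho&-1\end{smallmatrix}\bigr)$ Hurwitz), not through a second-moment Gronwall bound, and one then still needs to pass from linear to nonlinear stability. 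For part $(\runum{2})$, the step ``linear instability of $\nu_0$ \dots prevents any weak limit point of $Q_t\delta_{x_0}$ from charging a neighbourhood of $H$'' is precisely the missing content: $\lambda(\hat\alpha)>0$ for the linearisation does not by itself yield a non-concentration estimate near $H$ for the nonlinear Ces\`aro averages. One needs a quantitative repulsion functional (in \cite{CotiHairer}, a Lyapunov function behaving like a negative power of $x^2+y^2$, constructed from the projective process) whose expectation stays bounded; only then does Krylov--Bogoliubov produce an invariant measure assigning zero mass to $H$. Together with the two difficulties you explicitly defer --- proving $\lambda(\hat\alpha)>0$ for large $\hat\alpha$ (a log-scale statement that moment bounds cannot reach, by Jensen) and the uniqueness of $\nu_*$ --- the proposal is a reasonable roadmap consistent with the strategy of \cite{CotiHairer}, but each of its load-bearing steps is either incorrect as written (the moment Lyapunov estimate) or left unproved.
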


    \begin{proposition}[Proposition 3.3 in \cite{CotiHairer}]\label{Prop lorenz}
    For any $\boldsymbol{x}_0,\hat{\boldsymbol{x}}_0\in\R^3\setminus H$ and $\epsilon>0$, there exists $T=T(\|\boldsymbol{x}_0\|,\|\hat{\boldsymbol{x}}_0\|,\epsilon)>0$ and a function $h\in C^1([0,T];\R)$ such that      the equation 
    \begin{equation}  \Dot{x}=\sigma(x-y),\quad   \Dot{y}=x(\varrho-z)-y, \quad  \Dot{z}=-(\beta z+xy)+ h,\label{eq lorenz2}
     \end{equation}
    with initial condition $(x(0),y(0),z(0))=\boldsymbol{x}_0$ is well-posed, and satisfies that
    \begin{equation*}
    \|(x(T),y(T),z(T))-\hat{\boldsymbol{x}}_0\|<\epsilon.
    \end{equation*}
    \end{proposition}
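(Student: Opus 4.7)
The plan is to exploit the observation that the control $h$ appears only in the $\dot z$-equation. Consequently, once a $C^1$-trajectory $(x(\,\cdot\,),y(\,\cdot\,),z(\,\cdot\,))$ satisfying the first two equations of \eqref{eq lorenz2} is constructed, the control is recovered from the third equation as $h(t)=\dot z(t)+\beta z(t)+x(t)y(t)$. I would therefore regard $z\in C^2([0,T];\R)$ as the primary input and work with the reduced time-varying linear system
\begin{equation*}
\dot x=\sigma(y-x),\qquad \dot y=(\varrho-z(t))\,x-y,
\end{equation*}
with the goal of showing that its endpoint map $z(\,\cdot\,)\mapsto(x(T),y(T))$ can be made to land within $\epsilon/2$ of $(\hat x_0,\hat y_0)$, subject to the boundary conditions $z(0)=z_0$ and $z(T)=\hat z_0$.

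The first step is a Lie-bracket/accessibility computation for the two-dimensional control system with drift $X_0(x,y)=(\sigma(y-x),\varrho x-y)$ and control direction $X_1(x,y)=(0,-x)$. A direct calculation yields
\begin{equation*}
[X_0,X_1](x,y)=\bigl(\sigma x,\,(\sigma-1)x-\sigma y\bigr),
\end{equation*}
which is linearly independent of $X_1$ whenever $x\neq 0$, and since the $y$-axis is not drift-invariant, a short flow of the drift carries any point with $x=0,y\neq 0$ into the region $x\neq 0$. Hence the bracket-generating condition holds on all of $\R^2\setminus\{0\}$, giving local accessibility of the reduced system off the origin, which is precisely the projection of $\R^3\setminus H$.

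I would upgrade this local accessibility into the required finite-time reachability by a three-phase concatenation of $z(\,\cdot\,)$: (i) a short initial segment steering $(x,y)$ into a convenient canonical configuration in $\R^2\setminus\{0\}$; (ii) a longer intermediate segment, handled most transparently in polar coordinates $(r,\theta)$ on the $(x,y)$-plane, in which the angular equation
\begin{equation*}
\dot\theta=(\varrho-z(t))\cos^2\theta+(\sigma-1)\cos\theta\sin\theta-\sigma\sin^2\theta
\end{equation*}
is directly controllable in $\theta$ via $z$ whenever $\cos\theta\neq 0$ (and the uncontrolled flow pushes off the set $\cos\theta=0$), while the radial equation $\dot r/r=(\sigma+\varrho-z)\cos\theta\sin\theta-\sigma\cos^2\theta-\sin^2\theta$ is steerable to any prescribed positive $r(T)$ by tuning the time-integral of its $z$-dependent coefficient; (iii) a final short segment driving $z$ to $\hat z_0$ and perturbing $(x,y)$ by at most $\epsilon/2$, which is possible by continuous dependence and the smallness of this segment. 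Mollification at the joining times makes $z\in C^2$, so that $h=\dot z+\beta z+xy\in C^1$.

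The main obstacle is the middle phase: reaching any target in $\R^2\setminus\{0\}$ while respecting the boundary constraint $z(T)=\hat z_0$ and while the trajectory never crosses $H$. The polar-coordinate formulation is what makes this tractable, since the endpoint value of $z$ constrains only the end behaviour and can be absorbed into the small final segment; the bulk of the path in $(r,\theta)$ is engineered by prescribing $\theta(t)$ as any smooth curve avoiding $\{\cos\theta=0\}$ and inverting the $\dot\theta$-equation algebraically to obtain $z(t)$, after which the integral of the radial coefficient is adjusted to hit the prescribed $r(T)$. A conceptually cleaner but less explicit alternative is an implicit-function argument for the endpoint map, whose Fréchet derivative at a generic $z$ is the Cauchy operator of the linearization acting on perturbations $\delta z\cdot(0,-x)$, of rank two on $\R^2\setminus\{0\}$ by the bracket computation; the codimension-one restriction $z(T)=\hat z_0$ still leaves enough directions to cover a neighbourhood of $(\hat x_0,\hat y_0)$, yielding the desired $T=T(\|\boldsymbol{x}_0\|,\|\hat{\boldsymbol{x}}_0\|,\epsilon)$.
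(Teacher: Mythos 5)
First, a structural remark: this paper does not prove Proposition \ref{Prop lorenz} at all --- it is imported verbatim as Proposition 3.3 of \cite{CotiHairer}, so there is no in-paper argument to compare against. Your opening reduction is nevertheless exactly the right one and is the same one underlying the cited source: since $h$ enters only through $\dot z$, any $C^2$ profile $z(\cdot)$ with $z(0)=z_0$ is realizable by setting $h=\dot z+\beta z+xy$, and the problem becomes approximate reachability for the planar bilinear system $\dot x=\sigma(y-x)$, $\dot y=(\varrho-z)x-y$ on $\R^2\setminus\{0\}$ with $z$ as the input. Your bracket computation is correct, but, as you acknowledge, with a drift it yields only accessibility, so it does no real work in the final argument; everything rests on the polar-coordinate phase.

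That middle phase has two gaps as written. (i) You propose to prescribe $\theta(\cdot)$ ``avoiding $\{\cos\theta=0\}$'' and invert the $\dot\theta$-equation for $z$. But if $\boldsymbol{x}_0$ and $\hat{\boldsymbol{x}}_0$ project into different components of $\{x\neq 0\}$, every admissible angular path must cross $\{\cos\theta=0\}$; there $\dot\theta=-\sigma\sin^2\theta=-\sigma$ independently of $z$, so the crossing can only occur in the decreasing direction, and the algebraic inversion for $z$ degenerates (the solved-for $z$ blows up unless the prescribed $\dot\theta$ matches $-\sigma$ to second order in $\cos\theta$). You must either let the uncontrolled dynamics carry the state across these lines on short matching segments, or wind around in the decreasing direction only; either works, but it has to be built into the construction. (ii) The claim that $r(T)$ can then be tuned ``by adjusting the integral of the $z$-dependent coefficient'' overlooks that the single scalar $z$ has already been spent on realizing $\theta(\cdot)$. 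Eliminating $z$ between the two equations gives $\frac{d}{dt}\ln r=-\frac{d}{dt}\ln|\cos\theta|+g(\theta)$ for an explicit function $g$ of $\theta$ alone, so the only remaining freedom is $\int_0^T g(\theta(t))\,dt$, controlled through the shape and dwell times of the angular path. This does suffice --- $g(0)=-\sigma<0$ while $g(\theta)\to+\infty$ as $\theta\to\pi/2^-$ because of the term $\sigma\sin^3\theta/\cos\theta$, so dwelling near suitable angles tunes the integral over all of $\R$ --- but this sign change is precisely what must be verified and is absent from your write-up. With these two points repaired (plus the routine uniformity argument making $T$ depend only on $\|\boldsymbol{x}_0\|$, $\|\hat{\boldsymbol{x}}_0\|$ and $\epsilon$), your proof goes through.
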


    Invoking Proposition \ref{Prop lorenz},  we conclude that the EMDS-property also fails for the Lorenz system \eqref{eq lorenz} as the following corollary, whose proof is given  in Section  \ref{pr of beyond}. 

    \begin{corollary}\label{Coro lorenz}
     For $\hat{\alpha}>\alpha^*$, the ergodic measure $\nu_*$ for Equation \eqref{eq lorenz} satisfies  $\supp\nu_*=\R^3$. In particular, $\supp \nu_0\cap \supp \nu_*=H$, and the Ces\`aro eventual continuity fails. 
     \end{corollary}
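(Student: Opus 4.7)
My plan is to establish the three conclusions in order: first, $\nu_*(\R^3\setminus H)>0$; second, $\supp\nu_*=\R^3$; and third, the failure of Ces\`aro eventual continuity via the EMDS-property. For the first step, I would use that $H=\{(0,0,z):z\in\R\}$ is closed and invariant under \eqref{eq lorenz}: starting from $(0,0,z)\in H$ the first two equations force $X_t\equiv Y_t\equiv 0$, and the third reduces to the scalar Ornstein--Uhlenbeck SDE $\d Z_t=-\beta Z_t\d t+\hat{\alpha}\d W_t$, whose unique invariant distribution is $\mathcal{N}(0,\hat{\alpha}^2/(2\beta))$. Thus the only invariant probability measure carried by $H$ is $\nu_0$; if we had $\nu_*(H)=1$ then invariance of $H$ would make $\nu_*$ an invariant measure for the OU restriction, forcing $\nu_*=\nu_0$, contradicting $\nu_*\neq \nu_0$. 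Hence $\nu_*(\R^3\setminus H)>0$.

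For the second step, I would fix arbitrary $\boldsymbol{y}\in\R^3$ and $\epsilon>0$ and show $\nu_*(B(\boldsymbol{y},\epsilon))>0$. Because $\R^3\setminus H$ is open and dense, choose $\hat{\boldsymbol{y}}\in B(\boldsymbol{y},\epsilon/2)\setminus H$. Proposition \ref{Prop lorenz} then supplies, for any $\boldsymbol{x}_0\in\R^3\setminus H$, a time $T>0$ and a $C^1$ control $h$ driving the deterministic system \eqref{eq lorenz2} from $\boldsymbol{x}_0$ into $B(\hat{\boldsymbol{y}},\epsilon/2)$. Since the noise in \eqref{eq lorenz} is additive in the $Z$-component, the Stroock--Varadhan support theorem identifies the topological support of the law of the solution path in $C([0,T];\R^3)$ with the closure of all such controlled trajectories; in particular $P_T(\boldsymbol{x}_0,B(\boldsymbol{y},\epsilon))>0$. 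The Feller property of $\{P_t\}_{t\ge 0}$ makes $\boldsymbol{x}_0\mapsto P_T(\boldsymbol{x}_0,B(\boldsymbol{y},\epsilon))$ lower semicontinuous, so there is an open neighborhood $U$ of $\boldsymbol{x}_0$ on which the probability stays positive. Selecting $\boldsymbol{x}_0\in(\supp\nu_*)\setminus H$, which is nonempty by the first step, we have $\nu_*(U)>0$, and invariance yields
\[
\nu_*(B(\boldsymbol{y},\epsilon))=\int_{\R^3}P_T(\boldsymbol{x}_0',B(\boldsymbol{y},\epsilon))\,\nu_*(\d\boldsymbol{x}_0')\geq \int_U P_T(\boldsymbol{x}_0',B(\boldsymbol{y},\epsilon))\,\nu_*(\d\boldsymbol{x}_0')>0.
\]

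Once $\supp\nu_*=\R^3$ is in hand, the intersection $\supp\nu_0\cap\supp\nu_*=H\cap\R^3=H$ is immediate. If $\{P_t\}_{t\geq 0}$ were Ces\`aro eventually continuous on $\R^3$, Corollary \ref{Coro EMDS} would force the two distinct ergodic measures $\nu_0$ and $\nu_*$ to have disjoint supports, contradicting $H\neq\emptyset$; hence Ces\`aro eventual continuity must fail (in fact the argument localises the failure at every point of $H$). The main obstacle is the second step: one must translate the purely deterministic controllability of Proposition \ref{Prop lorenz} into strict positivity of the stochastic transition probability. Executing this cleanly requires invoking the Stroock--Varadhan support theorem for SDEs with additive noise, checking that the admissible controls $h$ provided by Proposition \ref{Prop lorenz} enter the Lorenz system as a Cameron--Martin-type perturbation of the Brownian driver, and exploiting continuous dependence of the Lorenz flow on its driving path to push the constructed controlled trajectory into an open ball around $\hat{\boldsymbol{y}}$.
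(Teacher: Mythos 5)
Your proposal is correct and rests on the same core mechanism as the paper's proof: deterministic controllability from Proposition \ref{Prop lorenz}, continuous dependence of the Lorenz flow on its driving path, and positivity of Wiener measure on sup-norm tubes around the control (whether you phrase this as the Stroock--Varadhan support theorem for additive noise or, as the paper does, as a direct perturbative tube estimate, it is the same fact). The organizational differences are worth noting, and mostly favor your version. First, you make explicit the preliminary step that $\nu_*(\R^3\setminus H)>0$ (via invariance of $H$, reduction to the Ornstein--Uhlenbeck equation on $H$, and uniqueness of its invariant law), whereas the paper silently integrates over $\R^3\setminus H$ against $\nu_*$, which presupposes $\nu_*(H)=0$; your Step 1 closes that gap. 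Second, you exploit invariance by fixing a single time $T$ and a single point $\boldsymbol{x}_0\in(\supp\nu_*)\setminus H$, using lower semicontinuity of $\boldsymbol{x}\mapsto P_T(\boldsymbol{x},B(\boldsymbol{y},\epsilon))$ (from the Feller property and openness of the ball) to get a neighborhood $U$ with $\nu_*(U)>0$; the paper instead integrates the resolvent $\int_0^\infty \e^{-t}P_t\,\d t$ over all starting points and all $t\geq T_{\boldsymbol{x}_0}$, which requires the map $\boldsymbol{x}_0\mapsto T_{\boldsymbol{x}_0}$ and the associated controls to behave measurably — a point your localization sidesteps entirely. The concluding EMDS argument for the failure of Ces\`aro eventual continuity via Corollary \ref{Coro EMDS} is exactly the intended one.
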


     As previously noted in \cite[Remark 1.2]{CotiHairer}, one would naturally expect  $\alpha_*=\alpha^*$. Inspired by Example \ref{Ex Hopf2}, we suspect that there is a critical point $\tilde{\alpha}$ at which the Ces\`aro eventual continuity/non-Ces\`aro eventual continuity phase transition occurs and furthermore $\tilde{\alpha}=\alpha^*=\alpha_*$.

    \vspace{3mm}

    Another interesting problem is what new ergodic phenomena occur when the state space is a metric space, but not necessarily complete. For instance, in  \cite{WW2018}, the authors  formulate a criterion for the existence of invariant measures on a metric space for Feller semigroups that satisfy the e-property for bounded continuous functions, and further use it to prove the asymptotic stability via lower bound conditions. 

    The  examples above illustrate some different ergodic behaviours of non-Ces\`aro eventually continuous semigroups.  It is postulated that analogous phenomena may frequently occur in chaotic or turbulent models. These represent promising avenues for further investigation.

    \section{Proofs}\label{Sec 6}

    We in this section collect the technical proofs used in the main text.

    \subsection{Proofs of  Section \ref{Sec 2} }\label{prf sec2}

    \paragraph{Proof of Lemma \ref{Prop T4}:}
    
    \begin{proof}

    Assume, contrary to our claim, that $\mathcal{T}$ is not closed. Then there exists a sequence $\{x_n\}_{n\geq 1}\subset\mathcal{T},\;x_n\rightarrow x$  converging to $x$ such that $\{Q_t(x,\cdot)\}_{t\geq 0}$ is not tight. Thus there exists a strictly increasing sequence of positive numbers $t_i$ going to infinity,   $\epsilon>0$ and a sequence of compact sets $\{K_i\}_{i\geq 1}$ such that
		\begin{equation*}\label{close1}
		Q_{t_i}(x,K_i)\geq\epsilon,\quad\forall\,i\geq 1,
		\end{equation*}
		and
		\begin{equation*}\label{close2}
		\min\{\rho(x,y):x\in K_i,y\in K_j\}\geq\epsilon,\quad\forall\,i\neq j.
		\end{equation*} 
		We will derive the assertion from the claim that there exists a sequence of Lipschitz functions $\{\bar{f}_n\}_{n\geq 1}\subset L_b(\mathcal{X})$ and an increasing sequence of integers $\{m_n\}_{n\geq 1}$ such that 
		\begin{equation}\label{close3}
		\mathbf{1}_{K_{m_n}}\leq\bar{f}_n\leq\mathbf{1}_{K_{m_n}^{\epsilon/4}}\quad\text{and}\quad \|\bar{f}_n\|_{\rm Lip}\leq 4/\epsilon\quad \forall\,n\geq 1.
		\end{equation}
		Moreover,
		\begin{equation}\label{close4}
		Q_t(x_n,\cup_{i=n}^{\infty}K_{m_i}^{\epsilon/4})\leq\epsilon/4\quad\forall\,t\geq 0,
		\end{equation}
		and
		\begin{equation}\label{close5}
		\limsup\limits_{t \to \infty}|Q_{t}f_n(x)- Q_{t}f_n(x_n)|\leq\epsilon/4,
		\end{equation}
		where $f_1:=0,f_n:=\sum_{i=1}^{n-1}\bar{f}_i,n\geq 2.$ It can be easily checked that $f:=\sum_{i=1}^{\infty}\bar{f}_i\in L_b(\mathcal{X})$ with $\|f\|_{\infty}=1.$ Then it follows that
		\begin{equation}\label{close6}
		\begin{aligned}
		Q_tf(x)-Q_tf(x_n)&\geq Q_{t}(x,\cup_{i=n}^\infty K
		_{m_i})-|Q_{t}f_n(x)-Q_{t}f_n(x_n)|-Q_{t}(x_n,\cup_{i=n}^{\infty}K_{m_i}^{\epsilon/4}).
		\end{aligned}
		\end{equation}
		From (\ref{close4})-(\ref{close6})  it follows that
		\begin{equation*}
		\begin{aligned}
		\limsup\limits_{t \to \infty}[Q_tf(x)-Q_tf(x_n)]&\geq\limsup\limits_{i\to \infty}[Q_{t_{m_i}}f(x)-Q_{t_{m_i}}f(x_n)]\\		
		&\geq\limsup\limits_{i\to \infty}Q_{t_{m_i}}(x,\cup_{i=n}^\infty K
		_{m_i})-\liminf\limits_{i\to \infty}|Q_{t_{m_i}}f_n(x)-Q_{t_{m_i}}f_n(x_n)|-\tfrac{\epsilon}{4}\\
		&\geq\limsup\limits_{i\to \infty}Q_{t_{m_i}}(x,\cup_{i=n}^\infty K
		_{m_i})-\limsup\limits_{t\to \infty}|Q_{t}f_n(x)-Q_{t}f_n(x_n)|-\tfrac{\epsilon}{4}\\
		&\geq\tfrac{\epsilon}{2}
		\end{aligned}
		\end{equation*}
		for any $n\geq 1.$ This clearly contradicts the Ces\`aro eventual continuity of $\{P_t\}_{t\geq 0}$ at $x.$\par 
		
	\noindent 	$Proof\;of\;the\;claim.$ We accomplish this by induction on $n$. Let $n = 1$. Given $x_1\in\mathcal{T},$ there exists some compact $K$ such that $Q_t(x_1,K)\geq 1-\epsilon/4,$ for $t\geq 0.$ $\mu(B(x,\delta))>0$ for any $\delta>0$. Consequently, there exists an integer $m_1$ such that
		\begin{equation*}
		Q_t(x_1,\cup_{i=1}^{\infty}K_{m_1}^{\epsilon/4})\leq\epsilon/4\quad\forall\,t\geq 0,
		\end{equation*}
		Let $\bar{f}_1$ be an arbitrary Lipschitz function satisfying
		\begin{equation*}
		\mathbf{1}_{K_{m_1}}\leq\bar{f}_1\leq\mathbf{1}_{K_{m_1}^{\epsilon/4}}\quad\text{and}\quad \|\bar{f}_1\|_{\rm Lip}\leq 4/\epsilon.
		\end{equation*}
		Assume, now, that for a given $n\geq 1$, we have already constructed $\bar{f}_1,\dots,\bar{f}_n,m_1,\dots,m_n$ satisfying the claim. Since $\{Q_tf_{n+1}\}_{t\geq 0}$ is eventually continuous, we can choose $x_k\in\{x_n\}_{n\geq 1}$ such that 
		\begin{equation*}
		\limsup\limits_{t \to \infty}|Q_tf_{n+1}(x)-Q_tf_{n+1}(x_k)|<\epsilon/4.
		\end{equation*}
		Without loss of generality, assume that $k=n+1.$ Finally, we let $\bar{f}_{n+1}$ be an arbitrary continuous function satisfying (\ref{close3}).
		
	\end{proof}
        
	\paragraph{Proof of Lemma \ref{Prop T1*}:}
	
	\begin{proof} It only needs to show that this theorem holds for $\delta_x$ with $x\in \mathcal{T}.$ Since if there exist invariant measures $\{\varepsilon_x\}_{x\in\mathcal{T}}$ such that $Q_t(x,\cdot)$ weakly converges  to $\varepsilon_x$ as $t\rightarrow\infty$ for any $x\in\mathcal{T},$ then for $\mu\in\mathcal{P}(\mathcal{X})$ with $\supp\mu\subset\mathcal{T}, $ it implies that
		\begin{equation*} 
		\lim\limits_{t\rightarrow\infty}Q_t\mu(\cdot)=\int_\mathcal{X}\varepsilon_x(\cdot)\mu(\d x),
		\end{equation*}
		and in particular, $\int_\mathcal{T}\varepsilon_x(\cdot)\mu(\d x)$ is an invariant measure for $\{P_t\}_{t\geq 0}.$\par 
		
		Fix $x\in\mathcal{T}$ and assume, contrary to our claim, that the sequence $\{Q_t(x,\cdot)\}_{t\geq 0}$ does not converge.  Then by tightness of $\{Q_t(x,\cdot)\}_{t\geq 0}$ and  Prokhorov theorem, we can find at least two different probability measures $\mu_1,\mu_2$ and two sequences of positive numbers $\{s_n\}_{n\geq 1}$ and $\{t_n\}_{n\geq 1}$ increasing to infinity such that $\{Q_{s_n}(x,\cdot)\}_{n\geq 1},\;\{Q_{t_n}(x,\cdot)\}_{n\geq 1}$ weakly converges to $\mu_1,\mu_2$ as $n\rightarrow\infty$, respectively.\par 
		Choose $f\in L_b(\mathcal{X})$ and $\epsilon>0$ such that $|\langle f,\mu_1\rangle-\langle f,\mu_2\rangle|>\epsilon.$  Let $D=\{x_k\}_{k\geq 1}$ be a countable dense set of $\mathcal{X}$. Passing to a subsequence if necessary, we may assume that $\lim\limits_{n\rightarrow\infty}Q_{s_n}f(x_k)$ exists for any $k\geq 1.$ Now let $\overline{g}(x):=\limsup\limits_{n\rightarrow\infty}Q_{s_n}f(x)$ and $\underline{g}(x):=\liminf\limits_{n\rightarrow\infty}Q_{s_n}f(x)$ for $x\in \mathcal{X}.$ We claim that $\overline{g}=\underline{g}\in C_b(\mathcal{X}),$ and we shall denote $g=\overline{g}=\underline{g}.$ Indeed, using the Ces\`aro eventual continuity of $\{P_t\}_{t\geq 0},$ for any $x\in \mathcal{X}$ and $\eta>0,$ there exists some $x_k$ such that
		\begin{equation*}
		\limsup\limits_{n\rightarrow\infty}|Q_{s_n}f(x)-Q_{s_n}f(x_k)|\leq\eta/2.
		\end{equation*}
		Then one has
		\begin{equation*}
		|\overline{g}(x)-\underline{g}(x_k)|\leq\eta/2\quad\text{and }\quad|\underline{g}(x)-\overline{g}(x_k)|\leq\eta/2,
		\end{equation*}
		hence
		\begin{equation*}
		|\overline{g}(x)-\underline{g}(x)|\leq\eta\quad\forall\,x\in \mathcal{X},\;\eta>0.
		\end{equation*}
		Thus we conclude that $\{Q_{s_n}f\}_{n\geq 1}$ converges to $g\in C_b(\mathcal{X})$ pointwisely. \par 
		
		By the bounded convergence theorem and invariance, we have
		\begin{equation*}
		\langle f,\mu_2\rangle =\lim\limits_{n\rightarrow\infty}\langle Q_{s_n}f,\mu_2 \rangle=\langle g,\mu_2\rangle.
		\end{equation*}
		Since $\{Q_{t_n}(x,\cdot)\}_{n\geq 1}$ weakly converges to $\mu_{2}$  as $n\rightarrow\infty$, we can fix $N\in\mathbb{N}$ so that
		\begin{equation}\label{eq 5.1}
		|\langle g,\mu_{2}\rangle-\langle g,Q_{t_N}(x,\cdot)\rangle|\leq \epsilon/5.
		\end{equation}
		For such $N,$ we can choose $n$ sufficiently large such that
		\begin{equation}\label{eq 5.2}
		|\langle g,Q_{t_N}(x,\cdot)\rangle-\langle Q_{s_n}f,Q_{t_N}(x,\cdot)\rangle|\leq \epsilon/5
		\end{equation}
		Furthermore, by \cite[Lemma 2]{KPS2010}, we have
		\begin{equation*}
		\lim\limits_{n\rightarrow\infty}\|Q_{s_n,t_N}(x,\cdot)-Q_{s_n}(x,\cdot)\|_{\rm TV}=0.
		\end{equation*}
 Thus we fix $n$ sufficiently large such that
		\begin{equation}\label{eq 5.3}
		|\langle f,Q_{s_n,t_N}(x,\cdot)\rangle-\langle f,Q_{s_n}(x,\cdot)\rangle|\leq \epsilon/5
		\end{equation}
		Finally, noting that $\{Q_{s_n}(x,\cdot)\}_{n\geq 1}$ weakly converges to $\mu_{1}$ as $n\rightarrow\infty$, there exists $n$ sufficiently large such that
		\begin{equation}\label{eq 5.4}
		|\langle f,Q_{s_n}(x,\cdot)\rangle-\langle f,\mu_{1}\rangle|\leq \epsilon/5
		\end{equation}\par 
		Collecting (\ref{eq 5.1})-(\ref{eq 5.4}) we arrive at $|\langle f,\mu_1\rangle-\langle f,\mu_2\rangle|\leq\frac{4}{5}\epsilon,$ contrary to the definition of $\epsilon$, which completes the proof.
	\end{proof}

	\subsection{Proofs of  Section \ref{Sec 3.1new} }\label{prf sec3.1}

    \paragraph{Proof of Proposition \ref{Prop T2*}:}
    \begin{proof}
		Assume, contrary to our claim, that $\{Q_t(x,\cdot)\}_{t\geq 0}$ is not tight for some $x\in\text{supp\;}\mu$. Then by \cite[Lemma 1]{KPS2010}, there exists a strictly increasing sequence of positive numbers $t_i$ going to infinity, a positive number $\epsilon$ and a sequence of compact sets $\{K_i\}$ such that
		\begin{align}
		Q_{t_i}(x,K_i)&\geq\epsilon,\quad\forall\,i\geq 1,\label{eq 5.5}\\ 
		\min\{\rho(x,y):x\in K_i,y\in K_j\}&\geq\epsilon,\quad\forall\, i\neq j. \label{eq 5.6}
		\end{align} 
		We will derive the assertion from the claim that there exist sequences $\{\bar{f}_n\}_{n\geq 1}\subset L_b(\mathcal{X})$, $\{\nu_n\}_{n\geq 1}\subset\mathcal{P}(\mathcal{X})$ and an increasing sequence of integers $\{m_n\}_{n\geq 1}$ such that supp $\nu_n\subset B(x,1/n)$, 
		\begin{align}
		\mathbf{1}_{K_{m_n}}\leq\bar{f}_n\leq\mathbf{1}_{K_{m_n}^{\epsilon/4}}\quad\text{and}\quad \|\bar{f}_n\|_{\rm Lip}&\leq 4/\epsilon\quad \forall\,n\geq 1,\label{eq 5.7}\\
        Q_t\nu_n(\cup_{i=n}^{\infty}K_{m_i}^{\epsilon/4})&\leq\epsilon/4\quad\forall\,t\geq 0,\label{eq 5.8}\\
        \limsup\limits_{t \to \infty}|\langle f_n,Q_t(x,\cdot)\rangle-\langle f_n,Q_t\nu_n\rangle|&\leq\epsilon/4,\label{eq 5.9}
		\end{align}
	where $f_1:=0,f_n:=\sum_{i=1}^{n-1}\bar{f}_i,n\geq 2.$  Let $f:=\sum_{i=1}^{\infty}\bar{f}_i,$ then by (\ref{eq 5.6}) and (\ref{eq 5.7}) $f$ is uniformly bounded with $\|f\|_{\infty}=1.$ Further, note that for any $x,y\in \mathcal{X}$ with $\rho(x,y)<\epsilon/8,$ we have $\bar{f}_i(x)\neq0,$ or $\bar{f}_i(y)\neq0$ for at most one $i.$  Thus
		\begin{equation*}
		|f(x)-f(y)|\leq 16\epsilon^{-1} \rho(x,y)
		\end{equation*} 
		and $f\in L_b(\mathcal{X}).$ Then it follows that
		\begin{equation}\label{eq 5.10}
		\begin{aligned}
		\langle f,Q_{t}(x,\cdot)\rangle -\langle f,Q_{t}\nu_n\rangle\geq Q_{t}(x,\cup_{i=n}^{\infty}K_{m_i})+\langle f_n,Q_{t}(x,\cdot)\rangle
		-\langle f_n,Q_{t}\nu_n\rangle-Q_{t}\nu_n(\cup_{i=n}^{\infty}K_{m_i}^{\epsilon/4}).
		\end{aligned}
		\end{equation}
		By (\ref{eq 5.5})
		\begin{equation}\label{eq 5.11}
		\limsup\limits_{t \to \infty}Q_t(x,\cup_{i=n}^{\infty}K_{m_i})\geq	\limsup\limits_{k \to \infty}Q_{t_{m_k}}(x,\cup_{i=n}^{\infty}K_{m_i})\geq\limsup\limits_{k \to \infty}Q_{t_{m_k}}(x,K_{m_k})\geq\epsilon.
		\end{equation}
		From (\ref{eq 5.8})-(\ref{eq 5.11}),  it follows that
		\begin{equation*}
		\limsup\limits_{t \to \infty}[\langle f,Q_{t}(x,\cdot)\rangle -\langle f,Q_{t}\nu_n\rangle]\geq\epsilon-\epsilon/4-\epsilon/4=\epsilon/2.
		\end{equation*}
		Hence there must be a sequence $y_n\in\text{supp\;}\nu_n$ such that
		\begin{equation*}
		\limsup\limits_{t \to \infty}|Q_tf(x)-Q_tf(y_n)|\geq\epsilon/2,
		\end{equation*}
		which contradicts the Ces\`aro eventual continuity of $\{P_t\}_{t\geq 0}$ at $x.$ This completes the proof.\par 
		\noindent $Proof\;of\;the\;claim.$ We accomplish this by induction on $n$. Let $n = 1$. Given $x\in\text{supp\;}\mu$, we have $\mu(B(x,\delta))>0$ for any $\delta>0$. Let $\nu_1\in\mathcal{P}(\mathcal{X})$ be defined by the formula
		\begin{equation*}
		\nu_1(B)=\mu(B|B(x,1)):=\frac{\mu(B\cap B(x,1))}{\mu(B(x,1))},\quad B\in\mathcal{B}(\mathcal{X}).
		\end{equation*}
		Since $\nu_1\leq\mu^{-1}(B(x,1))\mu$, from the fact that $\mu$ is invariant, it follows that the family $\{Q_t\nu_1\}_{t\geq 0}$ is tight. Then there exists some compact set $K$ such that 
		\begin{equation*}
		Q_t\nu_1(\mathcal{X}\setminus K)\leq\epsilon/4\quad\forall\,t\geq 0.
		\end{equation*} 
		Note, however, that $K\cap K_{i}^{\epsilon/4}\neq\emptyset$ for only finitely many $i'$s. As a result, there exists an integer $m_1$ such that
		\begin{equation*}
		Q_t\nu_1(\cup_{i=1}^{\infty}K_{m_1}^{\epsilon/4})\leq\epsilon/4\quad\forall\,t\geq 0,
		\end{equation*}
		Let $\bar{f}_1$ be an arbitrary Lipschitz function satisfying
		\begin{equation*}
		\mathbf{1}_{K_{m_1}}\leq\bar{f}_1\leq\mathbf{1}_{K_{m_1}^{\epsilon/4}}\quad\text{and}\quad \|\bar{f}_1\|_{\rm Lip}\leq 4/\epsilon.
		\end{equation*}
		Assume, now, that for a given $n\geq 1$, we have already constructed $\bar{f}_1,\dots,\bar{f}_n$, $\nu_1,\dots,\nu_n$ and $m_1,\dots,m_n$ satisfying the claim. In view of the Ces\`aro eventual continuity of $\{P_t\}_{t\geq 0}$, we can choose $\delta\in(0,1/(n+1))$ such that 
		\begin{equation*}
		\sup\limits_{y\in B(x,\delta)}\limsup\limits_{t \to \infty}|Q_tf_{n+1}(x)-Q_tf_{n+1}(y)|<\epsilon/4.
		\end{equation*}
		Further, let $\nu_{n+1}(\cdot):=\mu(\cdot|B(x,\delta)).$ Therefore, by the dominate convergence theorem,
		\begin{equation*}
		\limsup\limits_{t \to \infty}|\langle f_{n+1},Q_t(x,\cdot)\rangle-\langle f_{n+1},Q_t\nu_{n+1}\rangle|\leq\epsilon/4.
		\end{equation*}
		Finally, we let $\bar{f}_{n+1}$ be an arbitrary bounded, globally Lipschitz function satisfying (\ref{eq 5.7}).
    \end{proof}

    \paragraph{Proof of Theorem \ref{Thm 1}:}

    \begin{proof}  
    The proof is divided into four steps.

    \noindent $\mathbf{Step\;1.}$   Recall that $\{P_t\}_{t\geq 0}$ satisfies the  Ces\`aro e-property at  $x_0\in\mathcal{X}$ if for any $f\in L_b(\mathcal{X})$,
    \begin{equation}\label{Q-e}
        \limsup_{x\rightarrow z}\sup_{t\geq 0}|Q_tf(x)-Q_tf(x_0)|=0.
    \end{equation}
    We claim that \eqref{Q-e} holds if and only if 
      \begin{equation}\label{Q-E-e}
        \limsup_{x\rightarrow z,\,t\rightarrow\infty}|Q_tf(x)-Q_tf(x_0)|=0.
    \end{equation}
    Clearly, \eqref{Q-e} implies \eqref{Q-E-e}. For the opposite direction, we argue by contradiction. Otherwise, there exists $f\in L_b(\mathcal{X})$, $\epsilon>0$, a nonnegative sequence $\{t_n\}_{n\geq 1}$ bounded by some $T\geq 0$, and $\{x_n\}_{n\geq 1}$ converging to $x_0$ such that
    \begin{equation*}
        \limsup_{n\rightarrow\infty}|Q_{t_n}f(x_n)-Q_{t_n}f(x_0)|\geq \epsilon.
    \end{equation*}
    Taking the stochastic continuity into account, by \cite[Lemma 4]{KW2024}, it follows that
    \begin{align*}
        0<\epsilon\leq \limsup_{n\rightarrow\infty}|Q_{t_n}f(x_n)-Q_{t_n}f(x_0)|\leq\limsup_{n\rightarrow\infty}\sup_{t\in[0,T]}|P_{t}f(x_n)-P_{t}f(x_0)|=0,
    \end{align*}
    which is impossible.

    \noindent $\mathbf{Step\;2.}$ It remains to verify \eqref{Q-E-e} holds for any $x\in{\rm Int}_{\mathcal{X}}(\supp\mu)$ and $f\in L_b(\mathcal{X})$.  Assume that, contrary to our claim, $\{P_t\}_{t\geq 0}$ does not satisfy the Ces\`aro e-property on $\mathcal{X}$. Then there would exist $x_0\in\mathcal{X}$, $\epsilon>0$, $f\in L_b(\mathcal{X})$, and a sequence $\{x_n\}_{n\geq 1}$ converging to $x_0$ such that
		\begin{equation*}
		\limsup\limits_{n\rightarrow\infty}|Q_{t_n}f(x_n)-Q_{t_n}f(x_0)|>3\epsilon>0.
		\end{equation*}		

    Meanwhile, by the Ces\`aro eventual continuity on $\mathcal{X}$, there exists $\delta>0$ such that
    \begin{equation*}
        \limsup\limits_{t\rightarrow\infty}|Q_tf(x)-Q_tf(x_0)\rangle|\leq \epsilon/2\quad\forall\,x\in B(x_0,\delta).
    \end{equation*}
    In addition, $B(x_0,\delta)\subset\supp\mu=\mathcal{X}$.  Let $Y:=\overline{B(x_0,\delta)}$ and set 
    \begin{equation*}
        Y_n:=\{x\in Y:|Q_tf(x)-Q_tf(x_0)|\leq\epsilon/2\quad\forall\,t\geq n\}\quad\text{for }n\in\N.
    \end{equation*}
    Then $Y=\cup_{n\geq 1}Y_n$. By the Baire category theorem, there exists $T_*\in\N$ such that $\text{Int}_{\mathcal{X}}(Y_{T_*})\neq \emptyset$. Thus there exits an open ball $B(z,2r)\subset Y_{T_*}$ such that
    \begin{equation}\label{T*}
       |Q_tf(x)-Q_tf(x_0)|\leq\epsilon/2\quad\forall\,x\in B(z,2r),\;t\geq T_*.
    \end{equation}

	\noindent $\mathbf{Step\;3.}$ By Theorem \ref{Prop T3*}, it follows that there exists $\alpha>0$ such that for any $\nu\in\mathcal{P}(\mathcal{X})$ satisfying $\supp\nu\subset \supp\mu$,

        \begin{equation}\label{alpha}
            \liminf\limits_{t\rightarrow\infty}(Q_t\nu)(B(z,r))>\alpha.
        \end{equation} 
 
        Let $k\geq 1$ be such that $2(1-\alpha)^k\|f\|_{\infty}<\epsilon.$ By induction we are going to define two sequences of measures $\{\nu_i^{x_0}\}_{i=1}^k,\{\mu_i^{x_0}\}_{i=1}^k,$ and a sequence of positive numbers $\{s_{i}\}_{i=1}^k$ in the following way: using \eqref{alpha}, let
		$s_1>0$ be such that	
		\begin{equation*}
		P_{s_1}\delta_{x_0}(B(z,r))>\alpha.
		\end{equation*}
        
        By the Feller property, take $r_1<r$ such that $P_{s_1}\delta_{x_0}(B(z,r_1))>\alpha$ and $P_{s_1}\delta_{x_0}(\partial B(z,r_1))=0$ and set
		\begin{center}
			$\nu_1^{x_0}(\cdot) = \dfrac{P_{s_1}\delta_{x_0}(\cdot \cap B(z,r_1))}{P_{s_1}\delta_{x_0}(B(z,r_1))},\quad$
			$\mu_1^{x_0}(\cdot) = \dfrac{1}{1-\alpha}(P_{s_1}\delta_{x_0}(\cdot)-\alpha\nu_1^{x_0}(\cdot)).$
		\end{center}
        Notice that $\supp\nu_1^{x_0}\subset B(z,r)\subset\supp\mu$, and $\supp\mu_1^{x_0}\subset\supp\mu$ by \cite[Lemma 3.7]{LL2024}.
  
		Assume that we have done it for $i = 1,\dots , l,$ for some $l < k.$   Again using \eqref{alpha}, now let $s_{l+1}$ be such that
		$P_{s_{l+1}}\mu_l^{x_0}(B(z,r))>\alpha$. 	Choose $r_{l+1}<r$ such that $P_{s_{l+1}}\mu_l^{x_0}(B(z,r_{l+1}))>\alpha$, $P_{s_{l+1}}\mu_l^{x_0}(\partial B(z,r_{l+1}))=0$ and set
		
		\begin{center}
			$\nu_{l+1}^{x_0}(\cdot) = \dfrac{P_{s_{l+1}}\mu_l^{x_0}(\cdot \cap B(z,r_{l+1}))}{	P_{s_{l+1}}\mu_l^{x_0}(B(z,r_{l+1}))},\quad$
			$\mu_{l+1}^{x_0}(\cdot) = \dfrac{1}{1-\alpha}(P_{s_{l+1}}\mu_l^{x_0}(\cdot)-\alpha\nu_{l+1}^{x_0}(\cdot)).$
		\end{center}
		Then this implies that
		\begin{equation*}
		\begin{aligned}
		P_{s_1+\dots+s_k}\delta_{x_0}(\cdot) =&\,\alpha P_{s_2+\dots+s_k}\nu_1^{x_0}(\cdot)+\alpha(1-\alpha) P_{s_3+\dots+s_k}\nu_2^{x_0}(\cdot)+ \cdots \\
		&\,+\alpha(1-\alpha)^{k-1}\nu_k^{x_0}(\cdot)+(1-\alpha)^k \mu_k^{x_0}(\cdot),
		\end{aligned}
		\end{equation*}
        and that $\supp\nu_{i}^{x_0}\subset B(z,r)$, $\supp\mu_{i}^{x_0}\subset \supp\mu$ for any $1\leq i\leq k$ by construction. In addition, thanks to the Feller property, there exists $N_1$ sufficiently large such that 
       \begin{equation*}
		P_{s_1}\delta_{x_n}(B(z,r))>\alpha \quad\forall\,n\geq N_1. 
	\end{equation*}
      Therefore, choose $r_{1,n}\in(0,r)$ such that $P_{s_1}\delta_{x_n}(B(z,r_{1,n}))>\alpha$ and $P_{s_1}\delta_{x_n}(\partial B(z,r_{1,n}))=0$ and set $\nu_1^{x_n},\,\mu_1^{x_n}$  as
      \begin{center}
			$\nu_1^{x_n}(\cdot) = \dfrac{P_{s_1}\delta_{x_n}(\cdot \cap B(z,r_{1,n}))}{P_{s_1}\delta_{x_n}(B(z,r_{1,n}))},\quad$
			$\mu_1^{x_n}(\cdot) = \dfrac{1}{1-\alpha}(P_{s_1}\delta_{x_n}(\cdot)-\alpha\nu_1^{x_n}(\cdot))$  for $n\geq N_1$.
		\end{center}
  
    Furthermore, we continue this procedure to construct the sequence $\{\nu_i^{x_n}\}_{i=1}^{k},\{\mu_i^{x_n}\}_{i=1}^{k}$ for $n$ sufficiently large.   	Assume that we have done it for $i = 1,\dots , l,$ for some $l < k.$ By the Feller property, there exists $N_l$ sufficiently large such that
		$P_{s_{l+1}}\mu_l^{x_n}(B(z,r))>\alpha$ for $n\geq N_l$. 	Choose $r_{l+1,n}<r$ such that $P_{s_{l+1}}\mu_l^{x_n}(B(z,r_{l+1,n}))>\alpha$ and $P_{s_{l+1}}\mu_l^{x_n}(\partial B(z,r_{l+1,n}))=0$ and set
		\begin{center}
			$\nu_{l+1}^{x_n}(\cdot) = \dfrac{P_{s_{l+1}}\mu_l^{x_n}(\cdot \cap B(z,r_{l+1,n}))}{	P_{s_{l+1}}\mu_l^{x_n}(B(z,r_{l+1,n}))},\quad$
			$\mu_{l+1}^{x_n}(\cdot) = \dfrac{1}{1-\alpha}(P_{s_{l+1}}\mu_l^{x_n}(\cdot)-\alpha\nu_{l+1}^{x_n}(\cdot))$  $\forall\,n\geq N_l$.
		\end{center} 
     
    In summary, for any $n\geq N_*:=\max\{N_l:1\leq l\leq k\}$ and  $t\geq s_1+\cdots+s_k:=S_*$, we have
		\begin{equation}\label{P-decompose}
		\begin{aligned}
		P_t\delta_{x_n}(\cdot)&= \alpha P_{t-s_1}\nu_1^{x_n}(\cdot)+\alpha(1-\alpha) P_{t-s_1-s_2}\nu_2^{x_n}(\cdot)+ \cdots \\
		&\quad+\alpha(1-\alpha)^{k-1}P_{t-s_1-\cdots-s_k}\nu_k^{x_n}(\cdot)+(1-\alpha)^k P_{t-s_1-\cdots-s_k}\mu_k^{x_n}(\cdot),
		\end{aligned}
		\end{equation}
		where  $\text{supp }{{\nu}_i^{x_n}}\subset B(z,r),\,i=1,\dots,k.$\par

	\noindent $\mathbf{Step\;4.}$	Consequently, by \eqref{P-decompose}, for any $n\geq N_*$ and  $t\geq S_*$, $Q_t\delta_{x_n}$ can be decomposed as 
     \begin{equation}\label{Q-decompose}
    \begin{aligned}
     tQ_t\delta_{x_n}(\cdot)&=\int_{0}^{S_*}P_s\delta_{x_n}\d s+{\alpha}\int_{S_*}^{t} P_{s-s_1}\nu_1^{x_n}(\cdot)\d s+ \cdots\\
     &\quad+\alpha(1-\alpha)^{k-1}\int_{S_*}^{t}P_{s}\nu_k^{x_n}(\cdot)\d s+ {(1-\alpha)^k}\int_{S_*}^{t}P_{s}\mu_k^{x_n}(\cdot)\\
     &={S_*}Q_{S_*}\delta_{x_n}(\cdot)+{\alpha}((t-s_1)Q_{t-s_1}\nu_1^{x_n}-(S_*-s_1)Q_{S_*-s_1}\nu_1^{x_n})+ \cdots\\
     &\quad+\alpha(1-\alpha)^{k-1}(tQ_t\nu_k^{x_n}-S_*Q_{S_*}\nu_k^{x_n})+(1-\alpha)^{k}(tQ_t\mu_k^{x_n}-S_*Q_{S_*}\mu_k^{x_n}).
     \end{aligned}
    \end{equation}
    The same decomposition also holds for $Q_t\delta_{x_0}$.

    Meanwhile, using \eqref{T*}, it follows that
		\begin{equation*}
		|\langle f,Q_t\nu_i^{x_n}\rangle-Q_tf(x_0)|\leq\int_{\mathcal{X}} |Q_tf(y)-Q_tf(x_0)|\nu_i^{x_n}(\d y)\leq \epsilon/2\quad\forall\,n\geq N_*,\,i=1,\dots,k,\,t\geq T_*.
		\end{equation*}
		The same inequality also holds for $\nu_i^{x_0},i=1,\dots,k$. Thus it follows that
		\begin{equation*}
		|\langle f,Q_t\nu_i^{x_n}\rangle-\langle f,Q_t\nu_i^{x_0}\rangle|=|\langle Q_tf-Q_tf(x_0),\nu_i^{x_n}\rangle-\langle Q_tf-Q_tf(x_0),\nu_i^{x_0}\rangle|\leq\epsilon,
		\end{equation*}
	for any $n\geq N_*$, $i=1,\dots,k$, $t\geq T_*$. Thus we obtain that
		\begin{equation}\label{Q-decompose2}
		\limsup\limits_{n\rightarrow\infty}|\langle f,Q_{t_n}\nu_i^{x_n}\rangle-\langle f,Q_{t_n}\nu_i^{x_0}\rangle|\leq\epsilon\quad\forall\,i=1,\dots,k.
		\end{equation}
  
	Collecting \eqref{Q-decompose}, \eqref{Q-decompose2}, we conclude that
		\begin{equation*}
		\begin{aligned}
		3\epsilon&<\limsup\limits_{n\rightarrow\infty}|Q_{t_n}f(x_n)-Q_{t_n}f(x_0)|=\limsup\limits_{n\rightarrow\infty}|\langle f,Q_{t_n} \delta_{x_n}\rangle-\langle f,Q_{t_n} \delta_{x_0}\rangle| \\
		&\leq \alpha\limsup\limits_{n\rightarrow\infty}|\langle f, Q_{t_n}\nu_1^{x_n} \rangle-\langle f, Q_{t_n}\nu_1^{x_0} \rangle|+\cdots\\
        &\quad+\alpha(1-\alpha)^{k-1}\limsup\limits_{n\rightarrow\infty}|\langle f, Q_{t_n}\nu_{k-1}^{x_n}\rangle-\langle f, Q_{t_n}\nu_{k-1}^{x_0} \rangle| +2(1-\alpha)^k\|f\|_{\infty}\\
		&\leq (\alpha+\cdots+\alpha(1-\alpha)^l)\epsilon+\epsilon\\
		&\leq 2\epsilon,
		\end{aligned}
		\end{equation*}
		which is impossible. This completes the proof.

    \end{proof}

    \subsection{Proofs of Section \ref{Sec 3.2}}\label{prf sec3.2}

    Before we prove Proposition \ref{Prop Exist*}, let us first quote several lemmas from \cite{GL2015}, to which we will refer in the main proof.
	\begin{lemma}\label{Lemma I1}
		Let $\{A_n^\epsilon\}_{n\geq 1}$ be a sequence of mutually disjoint $\epsilon$-neighborhoods of $A_n.$ Then, for any
		compact set $C$, there exists $N>0$ such that
        \begin{equation*}
            C\cap A_n^{\epsilon/2}=\emptyset\quad\forall\,n\geq N.
        \end{equation*}
	
	\end{lemma}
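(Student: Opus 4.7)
The plan is to argue by contradiction using sequential compactness. Suppose the conclusion fails. Then there exists a strictly increasing sequence of indices $\{n_k\}_{k\geq 1}$ and points $c_k \in C \cap A_{n_k}^{\epsilon/2}$ for every $k$. By definition of the $\epsilon/2$-neighborhood, for each $k$ we may choose $a_k \in A_{n_k}$ with $\rho(c_k, a_k) < \epsilon/2$. Since $C$ is compact and $\mathcal{X}$ is metric, $\{c_k\}_{k\geq 1}$ admits a convergent subsequence, which after relabeling we still denote by $\{c_k\}$; in particular, $\{c_k\}$ is Cauchy.

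Next I would choose indices $k \neq l$ large enough that $\rho(c_k, c_l) < \epsilon/2$. Combining this with $\rho(c_l, a_l) < \epsilon/2$ via the triangle inequality yields $\rho(c_k, a_l) < \epsilon$, so $c_k \in A_{n_l}^{\epsilon}$. On the other hand, $c_k \in A_{n_k}^{\epsilon/2} \subset A_{n_k}^{\epsilon}$. Since $n_k \neq n_l$, this contradicts the assumption that the $\epsilon$-neighborhoods $\{A_n^{\epsilon}\}_{n\geq 1}$ are mutually disjoint, and the lemma follows.

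The argument is essentially straightforward, with no real obstacle; the only subtlety is to use the slack between the $\epsilon/2$-neighborhood (where $C$ meets things) and the full $\epsilon$-neighborhood (where disjointness is assumed), which is exactly what allows the triangle inequality to transfer a nearby point from one neighborhood into another and force the overlap.
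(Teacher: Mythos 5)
Your proof is correct. The paper itself does not prove this lemma --- it is quoted from \cite{GL2015} without proof --- so there is no in-paper argument to compare against, but your reasoning is the standard one and is sound: extracting a convergent (hence Cauchy) subsequence of the points $c_k\in C\cap A_{n_k}^{\epsilon/2}$ and using the triangle inequality to place a single point inside two distinct sets $A_{n_k}^{\epsilon}$ and $A_{n_l}^{\epsilon}$ does contradict their mutual disjointness, and you correctly exploit the $\epsilon/2$ slack to make the triangle inequality close. One small point worth making explicit: after relabeling the subsequence, the indices $n_k$ remain pairwise distinct, which is what guarantees $A_{n_k}^{\epsilon}\cap A_{n_l}^{\epsilon}=\emptyset$ for $k\neq l$; you use this implicitly and it is fine.
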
 
	\begin{lemma}\label{Lemma I2}
		Let $\{A_n^\epsilon\}_{n\geq 1}$ be a sequence of mutually disjoint $\epsilon$-neighborhoods of compact sets $A_n.$ Let $x\in \mathcal{X}$ and $t_i\geq 0,\;t_i\rightarrow\infty.$ Then for any $\eta>0$, there exists $N>0$ such that 
		\begin{equation*}
		\liminf\limits_{i\rightarrow\infty}Q_{t_i}(x,A_n^{\epsilon/4})\leq\eta\quad\forall\,n\geq N.
		\end{equation*}\par
	\end{lemma}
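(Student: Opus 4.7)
The plan is to exploit the fact that mutually disjoint measurable sets can collectively carry at most unit mass under any probability measure, combined with Fatou's lemma applied to counting measure on the index $n$. Since $A_n^{\epsilon/4}\subset A_n^{\epsilon}$ and the neighborhoods $\{A_n^{\epsilon}\}_{n\geq 1}$ are mutually disjoint by hypothesis, the smaller sets $\{A_n^{\epsilon/4}\}_{n\geq 1}$ are also mutually disjoint. This disjointness is the only structural feature of the sequence that I will need; in particular, the compactness of $A_n$ is irrelevant for this lemma (it was used decisively in Lemma~\ref{Lemma I1}).

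The argument runs as follows. For each fixed $i$, since the sets $A_n^{\epsilon/4}$ are pairwise disjoint Borel subsets of $\mathcal{X}$ and $Q_{t_i}(x,\cdot)$ is a probability measure,
\begin{equation*}
\sum_{n=1}^{\infty} Q_{t_i}(x,A_n^{\epsilon/4}) \;=\; Q_{t_i}\Bigl(x,\,\bigsqcup_{n=1}^{\infty} A_n^{\epsilon/4}\Bigr) \;\leq\; 1.
\end{equation*}
Applying Fatou's lemma on the counting measure space $(\mathbb{N},2^{\mathbb{N}},\#)$ to the nonnegative functions $n\mapsto Q_{t_i}(x,A_n^{\epsilon/4})$, one obtains
\begin{equation*}
\sum_{n=1}^{\infty}\liminf_{i\rightarrow\infty} Q_{t_i}(x,A_n^{\epsilon/4}) \;\leq\; \liminf_{i\rightarrow\infty}\sum_{n=1}^{\infty} Q_{t_i}(x,A_n^{\epsilon/4}) \;\leq\; 1.
\end{equation*}
In particular, the series $\sum_{n\geq 1}\liminf_{i\rightarrow\infty}Q_{t_i}(x,A_n^{\epsilon/4})$ converges, so its general term tends to zero. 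Hence, for any $\eta>0$ there exists $N>0$ such that $\liminf_{i\rightarrow\infty}Q_{t_i}(x,A_n^{\epsilon/4})\leq \eta$ for every $n\geq N$, which is the desired conclusion.

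There is essentially no technical obstacle here: the only point worth a moment's care is verifying that one is entitled to interchange $\liminf$ with an infinite sum, which is precisely the content of Fatou's lemma for counting measure and requires only nonnegativity of the summands. The lemma thus reduces to the elementary observation that a convergent nonnegative series has vanishing tail.
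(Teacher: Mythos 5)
Your proof is correct: the disjointness of the $A_n^{\epsilon/4}$ (inherited from that of the $A_n^{\epsilon}$), the bound $\sum_n Q_{t_i}(x,A_n^{\epsilon/4})\leq 1$, and Fatou's lemma for the counting measure together give a convergent nonnegative series whose general term must vanish, which is exactly the claim. The paper itself does not prove this lemma but quotes it from \cite{GL2015}, where the discrete-time version rests on the same observation that pairwise disjoint sets carry total mass at most one; your remark that compactness of the $A_n$ plays no role here is also accurate.
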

	
	\begin{lemma}\label{Lemma I3}
		Let $A_n\in\mathcal{B}(\mathcal{X})$ be a sequence of mutually disjoint sets. Let $x\in \mathcal{X}$ and $t_i\geq 0,\;t_i\rightarrow\infty.$ Then for any $\epsilon>0$, there exists $N>0$ such that
		\begin{equation*}
		\liminf\limits_{i\rightarrow\infty}Q_{t_i}(x,A_N)\leq\epsilon.
		\end{equation*}
		Hence, there exists a subsequence $\{t_{i_k}\}_{k\geq 0}$ with
		\begin{equation*}
		\limsup\limits_{k\rightarrow\infty}Q_{t_{i_k}}(x,A_N)\leq\epsilon.
		\end{equation*}	\par
	\end{lemma}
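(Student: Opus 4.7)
}

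The plan is a pigeonhole argument exploiting the fact that each $Q_{t_i}(x,\cdot)$ is a probability measure and the $A_n$ are pairwise disjoint: the total mass distributed among the $A_n$ is bounded by $1$, so at most finitely many of them can simultaneously carry mass exceeding a fixed positive threshold. This is the only ingredient; in particular, neither the (Ces\`aro) eventual continuity of $\{P_t\}$, the Feller property, compactness assumptions, nor the condition $t_i \to \infty$ is actually needed for this lemma (which contrasts with Lemmas \ref{Lemma I1} and \ref{Lemma I2}).

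Concretely, I would argue by contradiction: suppose that for every $N \geq 1$ we have $\liminf_{i \to \infty} Q_{t_i}(x, A_N) > \epsilon$. Unpacking the definition of $\liminf$, for each such $N$ there exists an index $I_N$ with $Q_{t_i}(x, A_N) > \epsilon$ for all $i \geq I_N$. Now choose an integer $M$ with $M\epsilon > 1$ and set $I := \max\{I_1, \ldots, I_M\}$. Then for every $i \geq I$, disjointness of $A_1, \ldots, A_M$ yields
\begin{equation*}
1 \;\geq\; Q_{t_i}\Bigl(x, \bigcup_{N=1}^{M} A_N\Bigr) \;=\; \sum_{N=1}^{M} Q_{t_i}(x, A_N) \;>\; M\epsilon \;>\; 1,
\end{equation*}
a contradiction. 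Hence some index $N$ must satisfy $\liminf_{i \to \infty} Q_{t_i}(x, A_N) \leq \epsilon$, which is the first assertion.

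For the ``hence'' clause I would simply invoke the characterization of $\liminf$ as an accumulation point: having fixed $N$ with $\liminf_{i \to \infty} Q_{t_i}(x, A_N) \leq \epsilon$, extract a subsequence $\{t_{i_k}\}_{k \geq 1}$ along which $Q_{t_{i_k}}(x, A_N)$ converges to this liminf; then $\limsup_{k \to \infty} Q_{t_{i_k}}(x, A_N) \leq \epsilon$ is automatic. There is essentially no obstacle here: the whole argument is countable pigeonhole combined with elementary properties of $\liminf$. The only care to take is that the uniform lower bounds $Q_{t_i}(x, A_N) > \epsilon$ for $i \geq I_N$ must be extracted simultaneously for all of the $M$ chosen indices, so that the disjointness estimate forces $M\epsilon \leq 1$ and yields the contradiction.
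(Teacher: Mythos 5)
Your argument is correct: the pigeonhole/contradiction step (disjointness plus the fact that each $Q_{t_i}(x,\cdot)$ is a probability measure forces $M\epsilon\leq 1$ if $M$ indices all had $\liminf>\epsilon$) and the extraction of a subsequence realizing the $\liminf$ are exactly what is needed, and you are right that neither eventual continuity nor $t_i\to\infty$ plays any role. The paper itself states this lemma without proof, quoting it from \cite{GL2015}, and your proof is the standard one given there, so there is nothing to add.
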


	\hspace*{\fill} \par
	
	We are now in a position to prove Proposition \ref{Prop Exist*}. The basic idea is mainly from \cite[Theorem 1.8.]{GL2015}, however, the delicate construction is unable to be applied directly to continuous setting, so here we take contradiction argument to finish the proof.
	
	\paragraph{Proof of Proposition \ref{Prop Exist*}:}
	\begin{proof}  We claim that $z\in\mathcal{T}.$  Assume, contrary to our claim that $\{Q_t(z,\cdot)\}_{t\geq 0}$ is not tight, then there exists some $\epsilon>0,$ a sequence of compact sets $\{K_i\}_{i\geq 1}$ and an increasing sequence of real numbers $\{t_i\}_{i\geq 1}$, $t_i\rightarrow\infty,$ such that
		\begin{align}
		Q_{t_i}(z,K_i)&\geq \epsilon,\quad\forall\,i\geq 1,\label{eq 5.12}\\	
		K_i^{\epsilon/2}\cap K_j^{\epsilon/2}&=\emptyset,\quad \forall\,i,j\geq 1,\;i\neq j.\label{eq 5.13}
		\end{align}\par
		
	\noindent 	$\mathbf{Step\;1}.$ We shall select a subsequence from the above data such that it is so sparse that contradicts (\ref{eq 5.12}). This step will be cut into three parts.\par
		Define
		\begin{equation*}
		f_i(y)=\rho(y,\mathcal{X}\setminus K_i^{\epsilon/4})/(\rho(y,\mathcal{X}\setminus K_i^{\epsilon/4})+\rho(y,K_i)),
		\end{equation*}
		which fulfills that $\|f_i\|_{\rm Lip}\leq 8/\epsilon,$ and $\mathbf{1}_{K_i}\leq f_i\leq\mathbf{1}_{K_i^{\epsilon/4}}.$\par
	\noindent 	$\mathbf{Part\;1.1}.$ Denote $z_0=z,\;s_0=r$ and $B_0=B(z_0,s_0).$ By (\ref{eq 3.4}), there exists some sequence $\{s_{i,0}\}_{i\geq 1}$ such that
		\begin{equation*}
		\alpha_0:=\lim\limits_{i\rightarrow\infty} Q_{s_{i,0}}(z,B_0)>0.
		\end{equation*}\par
		Lemma \ref{Lemma I2} yields that there exists $j_0$ sufficiently large such that
		\begin{equation}\label{eq 5.14}
		\begin{aligned}
		\epsilon\alpha_0/16&\geq\liminf\limits_{i\rightarrow\infty}Q_{s_{i,0}}(z,K_{j_0}^{\epsilon/4})=\liminf\limits_{i\rightarrow\infty}\int_{\mathcal{X}} Q_{t_{j_0}}(y,K_{j_0}^{\epsilon/4})Q_{s_{i,0}}(z,\d y)\\
		&\geq\liminf\limits_{i\rightarrow\infty}\int_{B_0} Q_{t_{j_0}}f_{j_0}(y)Q_{s_{i,0}}(z,\d y).
		\end{aligned}
		\end{equation}
		Due to the Feller property, define an open subset in $B_0$ as
		\begin{center}
			$A_0=\{y\in B_0:Q_{t_{j_0}}f_{j_0}(y)<\epsilon/8\}.$
		\end{center}
		It follows from (\ref{eq 5.14}) that
		\begin{equation*}
		\epsilon\alpha_0/16\geq\liminf\limits_{i\rightarrow\infty}\int_{B_0-A_0} Q_{t_{j_0}}f_{j_0}(y)Q_{s_{i,0}}(z,\d y),
		\end{equation*}
		which implies
		\begin{equation*}
		\liminf\limits_{i\rightarrow\infty}Q_{s_{i,0}}(z,B_0-A_0)\leq\alpha_0/2,
		\end{equation*}
		and thus
		\begin{equation}\label{eq 5.15}
		\limsup\limits_{i\rightarrow\infty}Q_{s_{i,0}}(z,A_0)\geq\alpha_0/2.
		\end{equation}
		So $A_0$ is nonempty, which contains a ball $B_1$ of radius less than $r/2,$
		\begin{equation*}
		Q_{t_{j_0}}f_{j_0}(y)\leq\epsilon/8,\quad\forall\,y\in \overline{B_1}.
		\end{equation*}

		Furthermore, by (\ref{eq 5.15}), we may fix some  $s>0 $ such that
		\begin{equation*}
		P_s(z,B_1)>0.
		\end{equation*} 
		By the Feller property, there exists $\delta>0$ such that for any $y\in B(z,\delta),$
		\begin{equation*}
		P_s(y,B_1)\geq\frac{1}{2}P_s(z,B_1)>0.
		\end{equation*} 
		From condition \eqref{eq 3.4}, we derive that
		\begin{equation*}
		\limsup\limits_{t\rightarrow\infty}Q_t(z,B_1)\geq\frac{1}{2}P_s(z,B_1)\cdot\limsup\limits_{t\rightarrow\infty}Q_t(z,B(z,\delta))>0.
		\end{equation*} 
		Inductively, $\forall\,k\geq 0,$ we can determine  $B_{k+1}\subset B_k$ of radius less than $r/2^{k+1}$ and $j_k\in\mathbb{N}$ such that 
		\begin{equation*}
		Q_{t_{j_k}}f_{j_k}(y)\leq\epsilon/8,\quad\forall\,y\in \overline{B_{k+1}}.
		\end{equation*} 
		Therefore, we can find a common $y_0\in\cap\overline{B_{k}}$ such that
		\begin{equation}\label{eq 5.16}
		Q_{t_{j_k}}f_{j_k}(y_0)\leq\epsilon/8,\quad\forall\,k\geq 0.
		\end{equation}
		\par
	\noindent 	$\mathbf{Part\;1.2}.$ By Lemma \ref{Lemma I2}, there exists a  subsequence $\{j_{k,1}\}_{k\geq 0 }\subset\{j_{k}\}_{k\geq 0 }$ such that
		\begin{equation*}
		\limsup\limits_{k\rightarrow\infty}Q_{t_{j_{k,1}}}f_{j_0}(y_0)\leq\epsilon/8.
		\end{equation*}
		For the same reason, there exists $\{j_{k,2}\}_{k\geq 0 }\subset\{j_{k,1}\}_{k\geq 0 }$ such that
		\begin{equation*}
		\limsup\limits_{k\rightarrow\infty}Q_{t_{j_{k,2}}}f_{j_{0,1}}(y_0)\leq\epsilon/(8\cdot 2).
		\end{equation*}
		By induction, we have $\{j_{k,l+1}\}_{k\geq 0 }\subset\{j_{k,l}\}_{k\geq 0 }$ satisfying
		\begin{equation*}
		\limsup\limits_{k\rightarrow\infty}Q_{t_{j_{k,l+1}}}f_{j_{0,l}}(y_0)\leq\epsilon/(8\cdot 2^l).
		\end{equation*}
		For simplicity of notation, still use $j_l$ instead of $j_{0,l}.$ Recall (\ref{eq 5.16}), we obtain 
		\begin{equation}\label{eq 5.17}
		Q_{t_{j_l}}f_{j_l}(y_0)\leq\epsilon/8,\;	\limsup\limits_{k\rightarrow\infty}Q_{t_{j_{k}}}f_{j_{l}}(y_0)\leq\epsilon/(8\cdot 2^l),\quad\forall\,l\geq 0.
		\end{equation}
		\par
	\noindent 	$\mathbf{Part\;1.3}.$ Noting that $K_j^{\epsilon/4}$ are disjoint mutually, there exists a big $u$ such that
		\begin{center}
			$Q_{t_{j_0}}\sum_{k\geq u}f_{j_k}(y_0)\leq\epsilon/8,$
		\end{center}
		Combining with  \eqref{eq 5.16} yields
		\begin{equation*}
		Q_{t_{j_0}}(f_{t_{j_0}}+\sum_{k\geq u}f({j_k}))(y_0)\leq\epsilon/4.
		\end{equation*}
		Let $\hat{j}_0 = j_0$ and $\hat{j}_1 = j_u.$ For the same reason, there exists $v>u$ such that
		\begin{equation*}
		Q_{t_{\hat{j}_1}}(f_{t_{\hat{j}_1}}+\sum_{k\geq v}f({j_k}))(y_0)\leq\epsilon/4.
		\end{equation*}
		Let $\hat{j}_2=j_v.$ By induction, we have subsequnce $\{\hat{j}_l\}_{l\geq 0}\subset\{j_l\}_{l\geq 0}$ such that for any $l\geq 0,$
		\begin{equation*}
		Q_{t_{\hat{j}_l}}\sum_{k\geq l}f_{\hat{j}_k}(y_0)\leq\epsilon/4.
		\end{equation*}
		For simplicity we still use $j_l$ instead of $\hat{j}_l.$ Combining with the second inequality in (\ref{eq 5.17}), we obtain
		\begin{equation}\label{eq 5.18}
		Q_{t_{j_l}}\sum_{k\geq l}f_{j_k}(y_0)\leq\epsilon/4,\;	\limsup\limits_{k\rightarrow\infty}Q_{t_{j_{k}}}f_{j_{l}}(y_0)\leq\epsilon/(8\cdot 2^l),\quad\forall\,l\geq 0.
		\end{equation}
		\par
	\noindent 	$\mathbf{Part\;1.4}.$ Based on the second inequality in (\ref{eq 5.18}), there exists a big $u$ such that
		\begin{equation*}
		Q_{t_{j_u}}f_{j_0}(y_0)\leq\epsilon/4,\;	\limsup\limits_{k\rightarrow\infty}Q_{t_{j_{k}}}(f_{j_{0}}+f_{j_{u}})(y_0)\leq(1+2^{-1})\epsilon/8.
		\end{equation*}
		For the same reason, there exists a sufficiently large $v>u$ such that
		\begin{equation*}
		Q_{t_{j_u}}(f_{j_0}+f_{j_u})(y_0)\leq\epsilon/4,\;	\limsup\limits_{k\rightarrow\infty}Q_{t_{j_{k}}}(f_{j_0}+f_{j_u}+f_{j_v})(y_0)\leq(1+2^{-1}+2^{-2})\epsilon/8.
		\end{equation*}
		Let $\check{j}_0=j_0,\check{j}_1=j_u,\check{j}_2=j_v.$ By induction, we have $\{\check{j}_l\}_{l\geq 0}\subset\{j_l\}_{l\geq 0}$ satisfying
		\begin{equation*}
		Q_{t_{\check{j}_l}}\sum_{0\leq k<l}f_{\check{j}_k}(y_0)\leq\epsilon/4.
		\end{equation*}
		Combining with the first inequality in (\ref{eq 5.18}) yields
		\begin{equation*}
		Q_{t_{\check{j}_l}}\sum_{k\geq 0}f_{\check{j}_k}(y_0)\leq\epsilon/2\quad\forall\,l\geq 0.
		\end{equation*}	
		For simplicity of notation, still use $j_l$ instead of $\check{j}_l,$ we have
		\begin{equation}\label{eq 5.19}
		Q_{t_{j_l}}\sum_{k\geq 0}f_{j_k}(y_0)\leq\epsilon/2\quad\forall\,l\geq 0.
		\end{equation}	
		\par
	\noindent 	$\mathbf{Step\;2}.$ Write $j_{k,0} = j_k.$ Let us repeat Step 1 by substituting $s_0$ for $s_1 = r/2,$ then obtain some
		$\{j_{k,0}\}_{k\geq 0}\subset\{j_k\}_{k\geq 0}$ and $y_1\in B(z, s_1)$ such that (similar to (\ref{eq 5.19}))
		\begin{equation*}
		Q_{t_{j_{l,1}}}\sum_{k\geq 		0}f_{j_{k,1}}(y_1)\leq\epsilon/2\quad\forall\,l\geq 0.
		\end{equation*}	
		By induction, we obtain the $p$-th subsequence $\{j_{k,p}\}_{k\geq 0}\subset\{j_{k,p-1}\}_{k\geq 0}$ and some $y_p\in\overline{B(z,s_p)}$ for $s_o=r/2^p$ such that
		\begin{equation*}
		Q_{t_{j_{l,p}}}\sum_{k\geq 		0}f_{j_{k,p}}(y_p)\leq\epsilon/2\quad\forall\,l\geq 0.
		\end{equation*}	
		Denote $\tilde{j}_p=j_{0,p},$ it follows that
		\begin{equation}\label{eq 5.20}
		Q_{t_{\tilde{j}_{l}}}\sum_{k\geq 		p}f_{\tilde{j}_{k}}(y_p)\leq\epsilon/2\quad\forall\,l\geq p.
		\end{equation}
		Note that $y_p\rightarrow z.$
		\par
	\noindent $\mathbf{Step\;3}.$ Let $j_0^*=\tilde{j}_0.$ The eventual continuity yields some $r_0^*$ such that for any $y\in B(z,r_0^*)$
		\begin{equation*}
		\limsup\limits_{t\rightarrow\infty}|Q_tf_{j_0^*}(z)-Q_tf_{j_0^*}(y)|\leq\epsilon/8.
		\end{equation*}
		Due to (\ref{eq 5.20}), choose $y_p\in B(z,r^*_0),$ denoted by $y^*_0.$ And for this $p,$ choose some $j^*_1\in\{\tilde{j}_k\}_{k\geq 0}$ with $j_1^*\geq\tilde{j}_p.$\par
		By induction, if we have $j_0^*,j_1^*,\dots,j_u^*,$ there exists $r_u^*$ such that for any $y\in B(z,r_u^*),$
		\begin{equation}\label{eq 5.21}
		\limsup\limits_{t\rightarrow\infty}|Q_t\sum_{l\leq u}f_{j_l^*}(z)-Q_t\sum_{l\leq u}f_{j_l^*}(y)|\leq\epsilon/8.
		\end{equation}	
		Choose $y_q\in B(z,r_u^*),$ denoted by $y_u^*,$ and then $j_{u+1}^*\in\{\tilde{j}_k\}$ with $j_{u+1}^*\geq\tilde{j}_q.$\par
		Consider the subsequence $\{j_k^*\}_{k\geq 0}\subset\{\tilde{j}_k\}_{k\geq 0}.$ Define $g=\sum_{k\geq 0} f_{j_k^*}\in L_b(\mathcal{X})$. Again, the eventual continuity yields some $r^*$ such that for any $y\in B(z,r^*),$
		\begin{equation}\label{eq 5.22}
		\limsup\limits_{t\rightarrow\infty}|Q_tg(z)-Q_tg(y)|\leq\epsilon/8.
		\end{equation}
		Fix $y_u^*\in B(z,r^*)$ for some big $u,$ denote $g_0=\sum_{0\leq l\leq u}f_{j_l^*}$ and $g_1=g-g_0.$ Then we have
		\begin{equation*}
		\begin{aligned}
		&\limsup\limits_{k\rightarrow\infty}|Q_{t_{j_k^*}}g_1(z)-Q_{t_{j_k^*}}g_1(y)|\\
		&\leq	
		\limsup\limits_{k\rightarrow\infty}|Q_{t_{j_k^*}}g(z)-Q_{t_{j_k^*}}g(y_u)|+	
		\limsup\limits_{k\rightarrow\infty}|Q_{t_{j_k^*}}g_0(z)-Q_{t_{j_k^*}}g_0(y_u)|,
		\end{aligned}
		\end{equation*}
		Owing to (\ref{eq 5.20})- (\ref{eq 5.22}), we have 
		\begin{equation}\label{eq 5.23}
		\begin{aligned}
		&\limsup\limits_{k\rightarrow\infty}Q_{t_{j_k^*}}g_1(z)\\
		&\leq\limsup\limits_{k\rightarrow\infty}Q_{t_{j_k^*}}g_1(y_u)+
		\limsup\limits_{k\rightarrow\infty}|Q_{t_{j_k^*}}g(z)-Q_{t_{j_k^*}}g(y_u)|+	
		\limsup\limits_{k\rightarrow\infty}|Q_{t_{j_k^*}}g_0(z)-Q_{t_{j_k^*}}g_0(y_u)|\\
		&\leq\epsilon/2+\epsilon/8+\epsilon/8\leq 3\epsilon/4.
		\end{aligned}
		\end{equation}
		On the other hand, recall (\ref{eq 5.12}) , we have for any $k>u$ 
		\begin{equation*}
		\epsilon\leq Q_{t_{j_k^*}}(z,K_{j_k^*})\leq Q_{t_{j_k^*}}f_{j_k^*}(z)\leq Q_{t_{j_k^*}}g_1(z),
		\end{equation*}
		which clearly contradicts (\ref{eq 5.23}). This completes the proof. 
    \end{proof}

    \paragraph{Proof of Theorem \ref{Thm 3}:}
    
    \begin{proof}  
    The implication that $(\runum{1})\Rightarrow(\runum{2})$ follows from \cite[Corollary 5.3]{SW2012}.  It remains to show that $(\runum{2})\Rightarrow(\runum{1})$. Taking Theorem \ref{Thm 2} into consideration, we only need to show $\mathcal{T}=\mathcal{X}$. It suffices to show that for any $x\in \mathcal{X}$ and $\epsilon>0,$ there exists some compact set $K\subset \mathcal{X}$ such that
		\begin{equation*}
		\liminf\limits_{t\rightarrow\infty}Q_t(x,K^\epsilon)\geq 1-\epsilon.
		\end{equation*}\par 
		The proof is divided into four steps:
  
      \noindent $\mathbf{Step\;1.}$ We first show that we can replace condition \eqref{eq 3.7} by a much stronger lower bound condition:
		\begin{equation}\label{eq 3.8}
		\inf\limits_{x\in \mathcal{X}}\liminf\limits_{t\rightarrow\infty}Q_t(x,B(z,\epsilon))>0\quad\forall\,\epsilon>0.
		\end{equation}\par 
		Fix $\epsilon>0,$ let $f(x)=\max\{1-\epsilon\rho(x,z),0\},$ then $\|f\|_{\infty}=1,\;\|f\|_{\rm Lip}\leq\epsilon^{-1},$ so $f\in L_b(\mathcal{X}).$  Further, $f$ satisfies that $\mathbf{1}_{\{B(z,\epsilon/2)\}}/2\leq f\leq \mathbf{1}_{\{B(z,\epsilon)\}}.$ \par 
		Let $\alpha=\mu_*(B(z,\epsilon/2))>0.$ Since $\{Q_(z,\cdot)\}_{t\geq 0}$ weakly converges to $\mu_*$ as $t\rightarrow\infty$, we have 
		\begin{equation}\label{eq 3.9}
		\liminf\limits_{t\rightarrow\infty}Q_t(z,B(z,\epsilon/2))\geq\alpha>0.
		\end{equation}
		Furthermore, $\{P_t\}_{t\geq 0}$ is Ces\`aro eventually continuous at $z,$ we may choose $\delta>0$ such that $\forall\,x\in B(z,\delta),$
		\begin{equation}\label{eq 3.10}
		\limsup\limits_{t\rightarrow\infty}|Q_tf(x)-Q_tf(z)|<\alpha/4.
		\end{equation}
		Combine (\ref{eq 3.9}) and (\ref{eq 3.10}), $\forall\,x\in B(z,\delta)$,
		\begin{equation}\label{eq 3.11}
		\begin{aligned}
		\liminf\limits_{t\rightarrow\infty}Q_t(x,B(z,\epsilon))\geq\frac{1}{2} \liminf\limits_{t\rightarrow\infty}Q_t(z,B(z,\epsilon/2))-\limsup\limits_{t\rightarrow\infty}|Q_tf(x)-Q_tf(z)|\geq \alpha/4.
		\end{aligned}
		\end{equation}
  
		By (\ref{eq 3.7}), there exists $\beta>0$ such that 
		\begin{equation*}
		\inf\limits_{x\in \mathcal{X}}\limsup\limits_{t\rightarrow\infty}Q_t(x,B(z,\delta))\geq\beta.
		\end{equation*}\par 
		Fix $x\in \mathcal{X}.$ Let $T>0$ be such that $Q_T(x,B(z,\delta))\geq\beta$. Define
		\begin{equation*}
		\nu:=\frac{Q_T\delta_{x}(\cdot\cap B(z,\delta))}{Q_T(x,B(z,\delta))},
		\end{equation*}
		then $\nu\in\mathcal{P}(\mathcal{X})$ with $\text{supp }\nu\subset B(z,\delta)$ and $Q_T(x,\cdot)\geq\beta\nu.$\par 
		Fatou's lemma implies that
		\begin{equation*}
		\liminf\limits_{t\rightarrow\infty}Q_t\nu(B(z,\epsilon))\geq\int_{X}	\liminf\limits_{t\rightarrow\infty}Q_t(y,B(x,\epsilon))\nu(\d y)\geq\alpha/4.
		\end{equation*}\par 
		Therefore, using \cite[Lemma 2]{KPS2010}, we have
		\begin{equation*}
		\liminf\limits_{t\rightarrow\infty}Q_t(x,B(x,\epsilon))=\liminf\limits_{t\rightarrow\infty}Q_tQ_T(x,B(x,\epsilon))\geq\alpha\beta/4>0\quad\forall\,x\in \mathcal{X}.
		\end{equation*}
  
        \noindent $\mathbf{Step\;2.}$  For any $\epsilon>0,$ there exists $\delta>0$ such that for any $\mu\in\mathcal{P}(\mathcal{X})$ with $\mu(B(z,\delta))=1$ there exists a compact set $K$ for which
		\begin{equation}\label{eq 3.12}
		\liminf\limits_{t\rightarrow\infty}Q_t\mu(K^\epsilon)\geq 1-\epsilon.
		\end{equation}\par 
		Fix $\epsilon>0.$ Noting that  $\{Q_t(z,\cdot)\}_{t\geq 0}$ is tight, we can find a compact set $K$ such that 
		\begin{equation*}
		\liminf\limits_{t\rightarrow\infty}Q_t(z,K^{\epsilon/2})\geq 1-\epsilon/2.
		\end{equation*}
		Now let $f(x)=\rho(x,\mathcal{X}\setminus K^\epsilon)/(\rho(x,K^{\epsilon/2})+\rho(x,\mathcal{X}\setminus K^\epsilon))\in L_b(\mathcal{X})$ be such that $\mathbf{1}_{K^{\epsilon/2}}\leq f\leq \mathbf{1}_{K^\epsilon}.$ By the Ces\`aro eventual continuity, we can find $\delta>0$ such that 
		\begin{equation*}
		\limsup\limits_{t\rightarrow\infty}|Q_tf(x)-Q_tf(z)|<\epsilon/2\quad\forall\,x\in B(z,\delta).
		\end{equation*}
		Then
		\begin{equation*}
		\begin{aligned}
		\liminf\limits_{t\rightarrow\infty}Q_t(x,K^{\epsilon})&\geq \liminf\limits_{t\rightarrow\infty}Q_tf(x)\geq\liminf\limits_{t\rightarrow\infty}Q_t(z,K^{\epsilon/2})-\limsup\limits_{t\rightarrow\infty}|Q_tf(x)-Q_tf(z)|\\
		&\geq 1-\epsilon
		\end{aligned}
		\end{equation*}
		for any $x\in B(z,\delta).$ Then by Fatou's lemma, for  $\mu\in\mathcal{P}(\mathcal{X})$ with $\mu(B(z,\delta))=1,$ (\ref{eq 3.12}) holds.
  
        \noindent $\mathbf{Step\;3.}$ For any $\epsilon>0$ and  $\mu\in\mathcal{P}(\mathcal{X}),$ there exists a compact set $K$ such that
		\begin{equation}\label{eq 3.13}
		\liminf\limits_{t\rightarrow\infty}Q_t\mu(K^\epsilon)\geq 1-\epsilon.
		\end{equation}\par 
		Fix $\epsilon>0$ and $\mu\in\mathcal{P}(\mathcal{X}).$ Let $\delta>0$ and compact set $K$ be given by Step 2. Define
		\begin{equation*}
		\begin{aligned}
		\gamma=\sup\{\alpha\geq 0:\exists\,N,\;t_1\dots,t_N,\;Q_{t_1,\dots,t_N}\mu\geq\alpha\nu,\;\nu\in\mathcal{P}(\mathcal{X}),\;\liminf\limits_{t\rightarrow\infty}Q_t\nu(K^\epsilon)\geq 1-\epsilon\},
		\end{aligned}
		\end{equation*}
		where $Q_{t_1,\dots,t_N}=Q_{t_1}Q_{t_2}\cdots Q_{t_N}.$ \par 
		We claim that $\gamma=1.$ Then for any $\alpha<1$ we can find $N$ and $t_1,\dots,t_N>0$ such that
		\begin{equation*}
		\begin{aligned}
		\liminf\limits_{t\rightarrow\infty}Q_t\mu(K^\epsilon)&=\liminf\limits_{t\rightarrow\infty}Q_{t,t_1,\dots,t_N}\mu(K^\epsilon)\\
		&\geq \alpha\liminf\limits_{t\rightarrow\infty}Q_t\nu(K^\epsilon)\geq\alpha(1-\epsilon).
		\end{aligned}
		\end{equation*}
		Hence (\ref{eq 3.13}) holds, finishing the proof. 
  
        \noindent $\mathbf{Step\;4.}$	Finally, we prove the claim. Assume, contrary to our claim, that $\gamma<1.$\par 
		Let $0<\eta<\delta.$ Let $\beta\in (0,1)$ be such that condition \eqref{eq 3.8} holds with $\epsilon$ replaced with $\eta.$ If $\gamma>0,$ choose $\alpha\in((\gamma-\beta/2)(1-\beta/2)^{-1},\gamma)\cap[0,1)$ and else choose  $\alpha=0.$ Then there exists $N\in\mathbb{N},\;t_1,\dots,t_N\geq 0$, and $\nu\in\mathcal{P}(\mathcal{X})$ such that
		\begin{equation*}
		Q_{t_1,\dots,t_N}\mu\geq\alpha\nu
		\end{equation*}
		and
		\begin{equation*}
		\liminf\limits_{t\rightarrow\infty}Q_t\nu(K^\epsilon)\geq 1-\epsilon.
		\end{equation*}\par 
		Set 
		\begin{equation*}
		\bar{\mu}=(1-\alpha)^{-1}(Q_{t_1,\dots,t_N}\mu-\alpha\nu)\in\mathcal{P}(\mathcal{X}).
		\end{equation*}
		Condition (\ref{eq 3.8}) and Fatou's lemma give that
		\begin{equation*}
		\liminf\limits_{t\rightarrow\infty}Q_t\bar{\mu}(B(z,\delta))\geq\beta,
		\end{equation*}
		so there exists a $t>0$ such that
		\begin{equation*}
		Q_t\bar{\mu}(B(z,\delta))\geq\beta/2.
		\end{equation*}
		Define
		\begin{equation*}
		\nu_1=\frac{Q_t\bar{\mu}(\cdot\cap B(z,\delta))}{	Q_t\bar{\mu}(B(z,\delta))}.
		\end{equation*}
		Then
		\begin{equation*}
		Q_t\bar{\mu}=(1-\alpha)^{-1}(Q_{t,t_1,\dots,t_N}\mu-\alpha Q_t\nu)\geq(\beta/2)\nu_1
		\end{equation*}
		and hence
		\begin{equation*}
		Q_{t,t_1,\dots,t_N}\mu\geq\alpha Q_t\nu+\beta/2(1-\alpha)\nu_1=[\alpha+\beta/2(1-\alpha)]\nu_2,
		\end{equation*}
		where $\nu_2=(\alpha+\beta/2(1-\alpha))^{-1}(\alpha Q_t\nu+\beta/2(1-\alpha)\nu_1)\in\mathcal{P}(\mathcal{X}).$ By the definition of $\nu$ and Step 2,
		\begin{equation*}
		\begin{aligned}
		\liminf\limits_{t\rightarrow\infty}Q_t\nu_2(K^\epsilon)&\geq(\alpha+\beta/2(1-\alpha))^{-1}[\alpha\liminf\limits_{t\rightarrow\infty}Q_t\nu(K^\epsilon)+\beta/2(1-\alpha)\liminf\limits_{t\rightarrow\infty}Q_t\nu_1(K^\epsilon)]\\
		&\geq 1-\epsilon.
		\end{aligned}
		\end{equation*}
		Observation that $\alpha+\beta/2(1-\alpha)>\gamma$ leads to contradiction.
    \end{proof}

    \paragraph{Proof of Proposition  \ref{Prop Sweep}:}
    
    \begin{proof}
    Assume, contrary to our claim, that there exists a compact set $K$ disjoint from $\mathcal{T},$  $\alpha>0$ and $\mu\in\mathcal{P}(\mathcal{X})$ such that
		\begin{equation*}\label{eq Sweep1}
		\limsup\limits_{t\rightarrow\infty}Q_t\mu(K)>\alpha.
		\end{equation*}
		As $\mathcal{T}$ is closed by Lemma \ref{Prop T4}, there exists $\eta>0$ such that $\mathcal{T}^\eta\cap K=\emptyset.$\par
		Set
		\begin{equation*}
		M:=\{\nu\in\mathcal{P}(\mathcal{X}):\text{ there exists }\xi<\eta\text{ such that }\liminf\limits_{t\rightarrow\infty}Q_t\nu(\mathcal{T}^\xi)>1-\alpha/2\}.
		\end{equation*}
		Note that $\delta_z\in M$ by Proposition \ref{Thm 2}, hence $M\neq\emptyset.$ Further, $M$ is convex and $Q_t(M)\subset M$ for any $t>0$ by in \cite[Lemma 2]{KPS2010}.\par
		Since $\delta_z\in M,$ there exists $\xi\in (0,\eta)$ and $\epsilon>0$ such that
		\begin{equation*}
		\liminf\limits_{t\rightarrow\infty}Q_t(z,\mathcal{T}^{\xi})\geq 1-\alpha/2+\epsilon>1-\alpha/2.
		\end{equation*}\par 
		Fix $\zeta>0$ such that $\xi+\zeta<\eta,$ and let $f\in L_b(\mathcal{X})$ such that $\mathbf{1}_{\mathcal{T}^{\xi}}\leq f\leq \mathbf{1}_{\mathcal{T}^{\xi+\zeta}}.$ Then by the Ces\`aro eventual continuity, there exists some $\delta>0$ such that
		\begin{equation*}
		\limsup\limits_{t\rightarrow\infty}|Q_tf(y)-Q_tf(z)|\leq\epsilon/2
		\end{equation*}
		for any $y\in B(z,\delta).$\par 
		Further, for $y\in B(z,\delta)$ we obtain
		\begin{equation*}
		\begin{aligned}
		\liminf\limits_{t\rightarrow\infty}Q_t(y,\mathcal{T}^{\xi+\zeta})&\geq \liminf\limits_{t\rightarrow\infty}Q_tf(y)\\
		&\geq \liminf\limits_{t\rightarrow\infty}Q_tf(z)-\limsup\limits_{t\rightarrow\infty}|Q_tf(y)-Q_tf(z)|\\
		&\geq 1-\alpha/2+\epsilon-\epsilon/2\\
		&>1-\alpha/2.
		\end{aligned}
		\end{equation*}
		Hence $\delta_y\in M$ for any $y\in B(z,\delta)$ and Fatou's lemma implies that $\nu\in M,$ provided that $\text{supp }\mu\subset B(z,\delta).$\par 
		Let $x\in K.$ (\ref{eq 3.5}) implies that there exists $t_x>0$ such that
		\begin{equation*}
		\alpha_x:=P_{t_x}(x,B(z,\delta/2))>0.
		\end{equation*}
		Further, using the Feller property, there exists an $r_x>0$ such that $P_{t_x}(y,B(z,\delta/2))>\alpha_x/2$ for any $y\in B(x,r_x).$ By compactness of $K,$ there exists $x_1,\dots,x_k\in K$ such that 
		\begin{equation*}
		K\subset\bigcup_{i=1}^{k}B(x_i,r_{x_i}).
		\end{equation*}\par 
		Define $\Theta:=\min_{1\leq i\leq k}\alpha_{x_i}/2$ and
		\begin{equation*}
		\gamma:=\sup\;\{\beta\geq 0:Q_{t_0}\mu\geq\beta\nu\text{ for some }\nu\in\mathcal{P}(\mathcal{X})\text{ and }t_0>0\}.
		\end{equation*} 
		Now let $\rho:=\Theta\alpha/(2k)>0,$ and choose $\nu\in M$ and $t_0>0$ such that $Q_{t_0}\mu\geq\beta\nu$ holds with $\beta>\gamma-\rho.$ Then, for any $t\geq 0,$ $Q_tQ_{t_0}\mu\geq\beta  Q_t\nu$ and $Q_t\nu\in M$ for any $t>0.$ Thus, we can choose $\nu\in M$ and $t_0$ in such a way that $Q_{t_0}\mu(K)>\alpha$ and $\nu(K)<\alpha/2.$ Consequently,
		\begin{equation*}
		(Q_{t_0}\mu-\beta\nu)(K)\geq\alpha-\alpha/2=\alpha/2.
		\end{equation*}
		Hence there exists some $j\in\{1,\dots,k\}$ such that
		\begin{equation*}
		(Q_{t_0}\mu-\beta\nu)(B(x_j,r_{x_j}))\geq\alpha/(2k).
		\end{equation*}
		Now
		\begin{equation*}
		\begin{aligned}
		P_{t_{x_j}}(Q_{t_0}\mu-\beta\nu)(B(z,\delta))&=\int_X P_{t_{x_j}}(x,B(z,\delta))(Q_{t_0}\mu-\beta\nu)(\d x)\\
		&\geq \int_{B(x_j,r_{x_j})} P_{t_{x_j}}(x,B(z,\delta))(Q_{t_0}\mu-\beta\nu)(\d x)\\
		&\geq \Theta\alpha/(2k)=\rho.
		\end{aligned}
		\end{equation*}\par
		Let 
		\begin{equation*}
		\tilde{\nu}:=\frac{(P_{t_{x_j}}Q_{t_0}\mu-\beta P_{t_{x_j}}\nu)(\cdot\cap B(z,\delta))}{(P_{t_{x_j}}Q_{t_0}\mu-\beta P_{t_{x_j}}\nu)(B(z,\delta))},
		\end{equation*}
		and it follows from $\text{supp }\tilde{\nu}\subset B(z,\delta)$ that $\tilde{\nu}\in M.$   Define
		\begin{equation*}
		\hat{\nu}:=\frac{\beta}{\beta+\rho}P_{t_{x_j}}\nu+\frac{\rho}{\beta+\rho}\tilde{\nu}.
		\end{equation*}
		As $P_{t_{x_j}}\nu,\;\tilde{\nu}\in M$ and $M$ is convex, thus $\hat{\nu}\in M.$ Furthermore,
		\begin{equation*}
		P_{t_{x_j+}t_0}\mu\geq (\beta+\rho)\hat{\nu},
		\end{equation*}
		which contradicts the fact that $\gamma<\beta+\rho$. This completes the proof.
    \end{proof}

    \subsection{Proofs of Section \ref{Sec 3.3}}
	
    \paragraph{Proof of Proposition \ref{Prop decompose}:}

    \begin{proof}
    
        $(\runum{1})$: Noting that $\mathcal{P}_{\rm erg}=\{\Phi(x):x\in\mathcal{T}_{\rm erg}\}$ by Theorem \ref{Prop T3*}, and that $\Phi$ is continuous, therefore the closedness of  $\mathcal{T}_{\rm erg}=\Phi^{-1}(\mathcal{P}_{\rm erg})$ in $\mathcal{X}$ follows form the closedness of $\mathcal{P}_{\rm erg}$ in $\mathcal{P}(\mathcal{X})$. It remains to show $\mathcal{P}_{\rm erg}$ is closed.  For $f\in C_b(\mathcal{X})$, let us define
	\begin{equation*}
	f^*(x)=\begin{cases}
	\langle f,\varepsilon_x\rangle,&\text{for }x\in\mathcal{T},\\
	0,&\text{for }x\notin\mathcal{T}. 
	\end{cases}
	\end{equation*}
        By Lemma \ref{Prop T1*}, for any $x\in\mathcal{T}$ and $f\in C_b(\mathcal{X})$, $\lim_{t\rightarrow\infty} Q_tf(x)=\langle f,\varepsilon_x\rangle=f^*(x)$.  By Proposition \ref{Prop T2*}, one has $\mu(\mathcal{T})=1$. Thus $\mu$ is ergodic if and only if $f^*=\langle f,\mu\rangle$ for $\mu$-almost surely for any $f\in C_b(\mathcal{X})$, or equivalently, 
        \begin{equation}\label{mu ergodic}
            \int_{\mathcal{T}}|f^*(x)-\langle f,\mu\rangle|^2\mu(\d x)=0.
        \end{equation}

        Let $\{\mu_n\}_{n\geq 1}\subset\mathcal{P}_{\rm erg}$ be such that         
        \begin{equation}\label{mu ergodic1}            
        \lim_{n\rightarrow\infty}\|\mu_n-\mu\|_L^*=0.
        \end{equation}
        It suffices to prove that $\mu\in\mathcal{P}_{\rm erg}$. Thanks to Lemma \ref{Prop T1*}, $\mu$ is an invariant measure for $\{P_t\}_{t\geq 0}$. In what follows, we show that \eqref{mu ergodic} holds for any $f\in C_b(\mathcal{X})$, which implies that $\mu\in\mathcal{P}_{\rm erg}$. Let $f\in C_b(\mathcal{X})$ be fixed. Making use of the ergodicity of $\mu_n$,
        \begin{equation}\label{mu ergodic2}
        \begin{aligned}
             \int_{\mathcal{T}}|f^*(x)-\langle f,\mu\rangle|^2\mu(\d x)&=\int_{\mathcal{T}}|f^*(x)-\langle f,\mu\rangle|^2\mu(\d x)-\int_{\mathcal{T}}|f^*(x)-\langle f,\mu_n\rangle|^2\mu_n(\d x)\\
             &=\int_{\mathcal{T}}(|f^*(x)-\langle f,\mu\rangle|^2-|f^*(x)-\langle f,\mu_n\rangle|^2)\mu_n(\d x)\\
             &\quad+\int_{\mathcal{T}}|f^*(x)-\langle f,\mu\rangle|^2(\mu(\d x)-\mu_n(\d x))\\
             &:=I_1+I_2.
        \end{aligned}           
        \end{equation}

        To deal with the first term $I_1$ on the right hand side of \eqref{mu ergodic2}, one has
        \begin{equation*}
            \begin{aligned}
                ||f^*(x)-\langle f,\mu\rangle|^2-|f^*(x)-\langle f,\mu_n\rangle|^2|&\leq 2|f^*(x)||\langle f,\mu\rangle-\langle f,\mu_n\rangle|+|\langle f,\mu\rangle^2-\langle f,\mu_n\rangle^2|\\
                &\leq 4\|f\|_{\infty}|\langle f,\mu\rangle-\langle f,\mu_n\rangle|.
            \end{aligned}
        \end{equation*}
        Thus \eqref{mu ergodic1} implies that
        \begin{equation}\label{mu ergodic3}
            \lim_{n\rightarrow\infty}|I_1|=0.
        \end{equation}

         For the second term $I_2$, as $\mathcal{T}$ is closed and $|f^*-\langle f,\mu\rangle|^2$ is a bounded and continuous function on $\mathcal{T}$,  the Tietze extension theorem implies that there exists a $g\in C_b(\mathcal{X})$ such that $g=|f^*-\langle f,\mu\rangle|^2$ on $\mathcal{T}$. Combining \eqref{mu ergodic1}, we deduce that
         \begin{equation}\label{mu ergodic4}
             \lim_{n\rightarrow\infty}|I_2|=\lim_{n\rightarrow\infty}|\langle f,\mu\rangle-\langle f,\mu_n\rangle|=0.
         \end{equation}

        In summary, plugging \eqref{mu ergodic3} and \eqref{mu ergodic4} into \eqref{mu ergodic2}, we conclude that \eqref{mu ergodic1} holds for any $f\in C_b(\mathcal{X})$, finishing the proof.

         $(\runum{2})$: Using the continuity of $\Phi$, it follows that $[x]=\Phi^{-1}(\{\varepsilon_x\})$ is closed. Moreover, by Theorem \ref{Prop T3*}, for any $y\in\supp\varepsilon_x$, $\{Q_t(y,\cdot)\}$ weakly converges to $\varepsilon_x$ as $t\rightarrow\infty$, and $\supp\varepsilon_x$ is closed. Thus $\supp\varepsilon_x\subset[x]$.

         $(\runum{3})$: This proof follows the ideas of \cite[Theorem 3.4]{SW2012}. By $(\runum{2})$, for any $\mu\in\mathcal{P}_{\rm erg}$,  $\mu=\varepsilon_x$ for any $x\in\supp\mu$. Thus we can write
         \begin{equation*}
             D=\{x\in\mathcal{T}_{\rm erg}:x\in\supp\varepsilon_x\}=\bigcap_{k\in\N}D_k,
         \end{equation*}
         where 
         \begin{equation*}
             D_k=\{x\in\mathcal{T}_{\rm erg}:\varepsilon_x(B(x,1/k))>0\}.
         \end{equation*}
         Next we show that $D_k':=\mathcal{T}_{\rm erg}\setminus D_k$ is closed for any $k\in\N$. Let $\{x_n\}_{n\geq 1}\subset D_k'$ converging to $x\in\mathcal{X}$. It suffices to show $x\in D_k'$. Noting that $\mathcal{T}_{\rm erg}$ is closed by $(\runum{1})$, $x\in\mathcal{T}_{\rm erg}$ and $\varepsilon_{x_n}\rightarrow\varepsilon_{x}$ as $n\rightarrow\infty$. For $N\in\N$, let
         \begin{equation*}
             A_N:=B(x,1/k)\cap(\bigcap_{n\geq N}B(x_n,1/k)).
         \end{equation*}
         Then one can check that $A_N$ is open, and $A_N$ increases to $B(x,1/k)$ as $N\rightarrow\infty$. In the mean time, it follows that
         \begin{equation*}
            \varepsilon_x(A_N)\leq \liminf\limits_{n\rightarrow\infty}\varepsilon_{x_n}(A_N)\leq \liminf\limits_{n\rightarrow\infty}\varepsilon_{x_n}(B(x_n,1/k))=0.
         \end{equation*}
         We therefore derive $\varepsilon_x(B(x,1/k))=0$, implying $x\in D_k'$. 
         Consequently, taking the closedness of $\mathcal{T}_{\rm erg}$ into account and writing $D$ as 
         \begin{equation*}
             D=\mathcal{T}_{\rm erg}\cap \bigcap_{k\in\N}(\mathcal{X}\setminus D_k'),
         \end{equation*}
         we show that $D$ is a $G_\delta$ set.
    \end{proof}

    \subsection{Proofs of  Section \ref{Sec 4}}
	
    \paragraph{Proof of Proposition \ref{IFS Prop 1}:}
    \begin{proof}
	    Denote $\{\Omega_n(t):\omega:\tau_n(\omega)\leq t<\tau_{n+1}(\omega)\},$ for $n\geq 1,\; \mathbf{i}=(i_1,\dots,i_n)\in I^n,$
	\begin{equation*}
	\begin{aligned}
	&\mathbf{x}_0(\mathbf{i}[0];x)=x,\\
	&\mathbf{x}_k(\mathbf{i}[k];x)=S(\triangle\tau_{k+1})(w_{i_n}(\mathbf{x}_{k-1}(\mathbf{i}[k-1];x))),\;1\leq k\leq n.\\
	\end{aligned}
	\end{equation*}
	Moreover, we set $\mathbf{p}_n(\mathbf{i};x)=\prod_{k=1}^{n}p_{i_k}(w_{\mathbf{i}[k]}(x)).$
	We have
	\begin{equation*}
	\begin{aligned}
	&\sum_{\mathbf{i}\in I^n}|\mathbf{p}_n(\mathbf{i};x)-\mathbf{p}_n(\mathbf{i};z)|\\
	&\quad\leq \sum_{\mathbf{i}\in I^n}\mathbf{p}_{n-1}(\mathbf{i}[n-1];x)|p_{i_n}(w_{\mathbf{i}}(x))-p_{i_n}(w_{\mathbf{i}}(z))|\\
     &\quad\quad+\sum_{\mathbf{i}\in I^{n}}|\mathbf{p}_{n-1}(\mathbf{i}[n-1];x)-\mathbf{p}_{n-1}(\mathbf{i}[n-1];z)|p_{i_n}(w_{\mathbf{i}}(x))\\
	&\quad\leq\sum_{\mathbf{i}\in I^{n-1}}\mathbf{p}_{n-1}(\mathbf{i}[n-1];x)\omega(\rho(\mathbf{x}_{n-1}(\mathbf{i};x),\mathbf{x}_{n-1}(\mathbf{i};z)))+\sum_{\mathbf{i}\in I^{n-1}}|\mathbf{p}_{n-1}(\mathbf{i};x)-\mathbf{p}_{n-1}(\mathbf{i};z)|\\
	&\quad\leq \omega(\sum_{\mathbf{i}\in I^{n-1}} \mathbf{p}_{n-1}(\mathbf{i}[n-1];x)\rho(\mathbf{x}_{n-1}(\mathbf{i};x),\mathbf{x}_{n-1}(\mathbf{i};z)))+\sum_{\mathbf{i}\in I^{n-1}}|\mathbf{p}_{n-1}(\mathbf{i};x)-\mathbf{p}_{n-1}(\mathbf{i};z)|\\
	&\quad\leq \sum_{k=1}^{n}\omega(J_{k}(x)\e^{\alpha\tau_k}\rho(x,z)).
	\end{aligned}
	\end{equation*}
	Then for $f\in L_b(\mathcal{X})$
	\begin{equation*}
	\begin{aligned}
	&\sum_{\mathbf{i}\in I^n}|f(S(t-\tau_n)(\mathbf{x}(\mathbf{i};x))\mathbf{p}_n(\mathbf{i};x)-f(S(t-\tau_n)(\mathbf{x}(\mathbf{i};z))\mathbf{p}_n(\mathbf{i};z)|\\
	&\quad\leq\|f\|_{\infty}\sum_{\mathbf{i}\in I^{n}}|\mathbf{p}_n(\mathbf{i};x)-\mathbf{p}_n(\mathbf{i};z)|+\|f\|_{\rm Lip}\sum_{\mathbf{i}\in I^{n}}\mathbf{p}_n(\mathbf{i};x)\rho(S(t-\tau_n)(\mathbf{x}(\mathbf{i};x),S(t-\tau_n)(\mathbf{x}(\mathbf{i};z))\\
	&\quad\leq\|f\|_\infty \sum_{k=1}^{n}\omega((J_{k}(x)\e^{\alpha\tau_k}\rho(x,z))+\|f\|_{\rm Lip}J_{n}(x)\e^{\alpha t}\rho(x,z).
	\end{aligned}
	\end{equation*}
	
	Thus we obtain that for $x\in B(z,\eta),$
	\begin{equation}\label{eq IFS J0}
	\begin{aligned}
	|P_tf(x)-&P_tf(z)|=|\mathbb{E}f(\Phi^x(t))-\mathbb{E}f(\Phi^z(t))|\\
	&\leq \sum_{n=M}^{\infty}\mathbb{E}(\mathbf{1}_{\Omega_n(t)}|f(S(t-\tau_n)(\Phi_n^x)-f(S(t-\tau_n)(\Phi_n^z)|)+2\|f\|_{\infty}P(t\leq\tau_{M})\\
	&\leq \sum_{n=M}^{\infty}\mathbb{E}(\mathbf{1}_{\Omega_n(t)}\sum_{\mathbf{i}\in I^n}|f(S(t-\tau_n)(\mathbf{x}(\mathbf{i};x))\mathbf{p}_n(\mathbf{i};x)-f(S(t-\tau_n)(\mathbf{x}(\mathbf{i};z))\mathbf{p}_n(\mathbf{i};z)|)\\
        &\quad+2\|f\|_{\infty}P(t\leq\tau_{N})\\
	&\leq \|f\|_\infty\sum_{n=M}^{\infty}\sum_{k=1}^{n}\mathbb{E}(\mathbf{1}_{\Omega_n(t)}\omega(J_{k}(x)\e^{\alpha\tau_k}\rho(x,z))+\|f\|_{\rm Lip}\sum_{n=M}^{\infty}J_n(x)\e^{\alpha t}\rho(x,z)\frac{(\lambda t)^n}{n!}\e^{-\lambda t}\\
	&\quad+2\|f\|_{\infty}P(t\leq\tau_{N})\\
    &:= J_1+J_2+J_3.
	\end{aligned}
	\end{equation}
    
	To estimate $J_1$, we first recall an useful inequality in \cite[Proposition 3]{BKS2014} as follows.
	\begin{equation}\label{eq IFS J4}
	    \mathbb{E}\sum_{n=k}^{\infty}\mathbf{1}_{\Omega_n(t)}\e^{\alpha\tau_k}\leq\frac{\lambda^k}{(\lambda-\alpha)^k},\quad k\geq 1.
	\end{equation} 
    By $\mathbf{B_2}$ and $(\ref{eq IFS J4}),$ we have that
	\begin{equation*}
	\begin{aligned}
	J_1&=\|f\|_\infty\sum_{k=1}^{M}\sum_{n=M}^{\infty}\mathbb{E}(\mathbf{1}_{\Omega_n(t)}\omega(J_{k}(x)\e^{\alpha\tau_k}\rho(x,z))+\|f\|_\infty\sum_{k=M+1}^{\infty}\sum_{n=k}^{\infty}\mathbb{E}(\mathbf{1}_{\Omega_n(t)}\omega(J_{k}(x)\e^{\alpha\tau_k}\rho(x,z))\\
	&\leq \|f\|_\infty\sum_{k=1}^{M}\omega(J_k(x)\rho(x,z))+\|f\|_\infty\sum_{k=M}^{\infty}\omega(J_k(x)\rho(x,z)\frac{\lambda^k}{(\lambda-\alpha)^k}).
	\end{aligned}
	\end{equation*}
	Then by $\mathbf{B_4},$ the right hand side is uniformly bounded, and hence 
	\begin{equation}\label{eq IFS J1}
	    \limsup\limits_{x\rightarrow z}J_1=0.
	\end{equation} 
	
	To estimate $J_2$, by (\ref{eq IFS 4}) it is easy to know that 
	\begin{equation*}
	    \limsup\limits_{n\rightarrow \infty}J_n(x)\frac{\lambda^n}{(\lambda-\alpha)^n}=0\quad\text{for }x\in B(z,\eta).
	\end{equation*} 
    Hence $\forall\,x\in B(z,\eta)$, there exists $K_x\geq M$ such that
	\begin{equation*}
	    J_n(x)\frac{\lambda^n}{(\lambda-\alpha)^n}\leq 1\quad\text{for }n\geq K_x.
	\end{equation*}\par 
	Further, we obtain that
	\begin{equation*}
	    \begin{aligned}
	    \sum_{n=M}^{\infty}J_n(x)\e^{\alpha t}\frac{(\lambda t)^n}{n!}\e^{-\lambda t}&= \sum_{n=M}^{K_x}J_n(x)\e^{\alpha t}\frac{(\lambda t)^n}{n!}\e^{-\lambda t}+ \sum_{n>K_x}J_n(x)\e^{\alpha t}\frac{(\lambda t)^n}{n!}\e^{-\lambda t}\\
	    &\leq \sum_{n=M}^{K_x}\frac{(\lambda t)^n}{n!}\e^{-(\lambda-\alpha) t}+\sum_{n>K_x}\frac{((\lambda-\alpha)t)^n}{n!}\e^{-(\lambda-\alpha)t}\\
	    &\leq \sum_{n=M}^{K_x}\frac{(\lambda t)^n}{n!}\e^{-(\lambda-\alpha) t}+1.
	    \end{aligned}
	\end{equation*}
	Hence
	\begin{equation*}
	    J_2\leq \|f\|_{\rm Lip}\rho(x,z)[\sum_{n=M}^{K_x}\frac{(\lambda t)^n}{n!}\e^{-(\lambda-\alpha) t}+1],
	\end{equation*}
	we have that
	\begin{equation}\label{eq IFS J2}
	    \limsup\limits_{x\rightarrow z}\limsup\limits_{t\rightarrow \infty} J_2=0.
	\end{equation}\par 
	Finally, it is clearly that 
	\begin{equation}\label{eq IFS J3}
	    \limsup\limits_{t\rightarrow \infty} J_3=0.
	\end{equation}

    Combining (\ref{eq IFS J0})-(\ref{eq IFS J3}),	we prove that $\{P_t\}_{t\geq 0}$ is eventually continuous at $z$ by (\ref{eq IFS 4}). 
    \end{proof}

    \paragraph{Proof of Theorem \ref{Thm IFS}: }
    
    \begin{proof}
    In view of Theorem \ref{Thm 4}, it suffices to check condition (\ref{eq 4.1}). Fixing any $\delta>0,$ we can take $f\in L_b(\mathcal{X})$ such that $\mathbf{1}_{B(z,\delta/2)}\leq f\leq \mathbf{1}_{B(z,\delta)}$ with $\|f\|_{\infty}\leq 1 $ and $\|f\|_{\rm Lip}\leq 4/\delta.$ Note that $P_tf(z)=1$ by assumption $\mathbf{C_2}$.\par 
	 By $\mathbf{C_1},$ for $x\in B(z,\delta),$
	\begin{equation*}
	\begin{aligned}
	1-P_t(x,B(z,\delta))
	&\leq |P_tf(z)-P_tf(x)|\\
	&\leq  \|f\|_\infty\sum_{k=1}^{M}\omega(J_k(x)\rho(x,z))+\|f\|_\infty\sum_{k=M}^{\infty}\omega\left(J_k(x)\rho(x,z)\frac{\lambda^k}{(\lambda-\alpha)^k}\right)\\
	&\quad+\|f\|_{\rm Lip}\rho(x,z)\left[\sum_{n=M}^{K_x}\frac{(\lambda t)^n}{n!}\e^{-(\lambda-\alpha) t}+1\right]\\
	&\quad+2\|f\|_{\infty}P(t\leq\tau_{M})\\
	&\leq 1-\gamma +\sum_{k=1}^{M}\omega(J_k(x)\rho(x,z))+4\delta^{-1}\rho(x,z)\left[\sum_{n=M}^{K_x}\frac{(\lambda t)^n}{n!}\e^{-(\lambda-\alpha) t}+1\right]\\
        &\quad +2P(t\leq\tau_{M}).
	\end{aligned}
	\end{equation*} 
    Let $\eta'=\eta'(\delta)\in (0,\eta)$ be sufficiently small such that
    \begin{equation*}
        \sum_{k=1}^{M}\omega(J_k(x)\rho(x,z))+4\delta^{-1}\eta'\leq \gamma/2\quad\text{for }x\in B(z,\eta').
    \end{equation*} 
    Then 
    \begin{equation*}
        \liminf\limits_{t\rightarrow\infty} P_t(x,B(z,\delta))\geq \gamma/2>0 \quad\text{for }x\in B(z,\eta').
    \end{equation*}\par 
    Finally, by (\ref{eq IFS 6}), taking $\epsilon=\eta',$ $\forall\,x\in \mathcal{X}$ there exists $\beta>0$ and $t_x$ such that 
    \begin{equation*}
        P_{t_x}(x,B(z,\eta'))\geq\beta>0.
    \end{equation*}\par 
    Finally, for $t$ sufficiently large, we obtain that
    \begin{equation*}
    \begin{aligned}
        P_t(x,B(z,\delta))&=\int_{y\in\mathcal{X}}P_{t-t_x}(y,B(z,\delta))P_{t_x}(x,\d y)\\
        &\geq P_{t_x}(x,B(z,\eta'))\inf\limits_{y\in B(z,\eta')}P_{t-t_x}(y,B(z,\delta))\\
        &\geq \beta\gamma/4>0.
    \end{aligned}
    \end{equation*}

	Therefore, condition (\ref{eq 4.1}) is satisfied for such $\{P_t\}_{t\geq 0}$, and Theorem \ref{Thm 4} implies that $\{P_t\}_{t\geq 0}$ is asymptotically stable.
	\end{proof}
	
	\paragraph{Proof of Feller property of IFS:}
	
    \begin{proposition}\label{IFS Prop Feller} The Markov semigroup $\{P_t\}_{t\geq 0}$ generated by $\{\Phi_t^x\}_{t\geq 0}$ is Feller.
	\end{proposition}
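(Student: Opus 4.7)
The plan is to decompose $P_tf(x)=\E f(\Phi_t^x)$ according to the number of jumps of the underlying Poisson clock in $[0,t]$ and verify continuity of each term separately. Since $\{\triangle\tau_n\}_{n\ge 1}$ are i.i.d.\ exponential with parameter $\lambda$, the number of jumps $N(t):=\#\{n\ge 1:\tau_n\le t\}$ is Poisson$(\lambda t)$-distributed, so in particular $\P(\Omega_n(t))=\e^{-\lambda t}(\lambda t)^n/n!$. Conditionally on $N(t)=n$ and on the jump times $0<\tau_1<\dots<\tau_n\le t<\tau_{n+1}$, the selection of indices $i_1,\dots,i_n\in I$ is a finite Markov chain whose transition from step $k-1$ to step $k$ assigns probability $p_{i_k}(\mathbf{x}_k(\mathbf{i}[k];x))$ to the value $i_k$, so the joint law of a trajectory $\mathbf{i}$ equals $\mathbf{p}_n(\mathbf{i};x)$. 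Hence one obtains the explicit representation
\begin{equation*}
P_tf(x)=\sum_{n=0}^{\infty}\E\!\left[\mathbf{1}_{\Omega_n(t)}\sum_{\mathbf{i}\in I^n}\mathbf{p}_n(\mathbf{i};x)\,f\bigl(S(t-\tau_n)(\mathbf{x}_n(\mathbf{i};x))\bigr)\right],
\end{equation*}
where the outer expectation is with respect to the joint law of the jump times alone.

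For fixed $n$, fixed $\mathbf{i}\in I^n$, and fixed jump times $0<\tau_1<\dots<\tau_n\le t<\tau_{n+1}$, the map $x\mapsto\mathbf{x}_n(\mathbf{i};x)$ is a finite alternating composition of the maps $S(\cdot)$ and $w_{i_k}$; it is therefore continuous by assumption $\mathbf{B_1}$ (continuity of each $w_i$) together with the $\e^{\alpha t}$-Lipschitz continuity of $S(t)$ encoded in $\mathbf{B_4}$. The weight $\mathbf{p}_n(\mathbf{i};x)=\prod_{k=1}^{n}p_{i_k}(\mathbf{x}_k(\mathbf{i}[k];x))$ is then continuous in $x$ as a product of continuous functions. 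Since $f\in C_b(\mathcal{X})$, the integrand is a continuous function of $x$ for each realization of the jump times and is uniformly bounded by $\|f\|_{\infty}\mathbf{1}_{\Omega_n(t)}$. Bounded convergence with respect to the joint law of $(\tau_1,\dots,\tau_{n+1})$ thus shows that, for every $n\ge 0$, the $n$-th term on the right-hand side is continuous in $x$.

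Finally, the $n$-th term is bounded in absolute value by $\|f\|_{\infty}\,\P(\Omega_n(t))=\|f\|_{\infty}\e^{-\lambda t}(\lambda t)^n/n!$, which is summable in $n$. A second application of dominated convergence, applied to the series over $n$, permits passing any limit $x_k\to x$ through the sum, yielding continuity of $P_tf$ on $\mathcal{X}$ and hence the Feller property of $\{P_t\}_{t\ge 0}$. The only delicate point is bookkeeping: verifying that the joint distribution of $(\tau_1,\dots,\tau_{n+1})$ indeed admits the claimed explicit density on $\Omega_n(t)$ and that the integrand above is jointly measurable in the time and index variables. Both are routine once one observes that $\Omega_n(t)=\{\tau_n\le t<\tau_{n+1}\}$ depends only on the jump-time variables, so no subtle regularity beyond $\mathbf{B_1}$ and $\mathbf{B_4}$ is required.
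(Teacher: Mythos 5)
Your proposal is correct and follows essentially the same route as the paper: decompose $P_tf$ over the number of jumps, observe that each fixed-$n$ term is continuous in $x$ by continuity of the $w_i$'s, the $p_i$'s and $S(t)$ together with bounded convergence over the jump times, and control the tail of the series uniformly via the Poisson bound $\|f\|_\infty\,\e^{-\lambda t}(\lambda t)^n/n!$. The paper phrases the tail control as an explicit $\epsilon$-truncation at a large $M$ rather than a second application of dominated convergence to the series, but this is the same argument.
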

	\begin{proof}
	    Fix $f\in C_b(\mathcal{X}),\,x\in\mathcal{X},\,\epsilon>0.$ Noting that
	    \begin{equation*}
	        \lim\limits_{M\rightarrow\infty}\sum_{n=M}^{\infty}\frac{(\lambda t)^n}{n!}\e^{-\lambda t}=0,
	    \end{equation*}
	    we can choose $M$ sufficiently large such that
	    \begin{equation*}
	        2\|f\|_{\infty}\sum_{n=M}^{\infty}\frac{(\lambda t)^n}{n!}\e^{-\lambda t}\leq \epsilon.
	    \end{equation*}
	    Then we obatin
	    \begin{equation*}
	        \begin{aligned}
	        |P_tf(y)-P_tf(x)|&=|\mathbb{E}[f(\Phi_t^y)-f(\Phi_t^x)]|=\left|\sum_{n=0}^{\infty} \mathbb{E}\mathbf{1}_{\Omega_n(t)}[f(\Phi_t^x)-f(\Phi_t^y)]\right|\\
	        &\leq \sum_{n=0}^{M-1}\mathbb{E}\mathbf{1}_{\Omega_n(t)}|f(\Phi_t^x)-f(\Phi_t^y)|+\epsilon.
	        \end{aligned}
	    \end{equation*}
	       Hence by the dominated convergence theorem,
    \begin{equation*}
        \lim\limits_{y\rightarrow x}|P_tf(y)-P_tf(x)|\leq \epsilon,
    \end{equation*}
    which implies the Feller property, since $\epsilon$ is arbitrary.
	\end{proof}

    \subsection{Proofs of Section \ref{Sec 5}}\label{pr of beyond} 

    Let us consider a SDE on $\R_+$ given by
    \begin{equation}\label{eq r sde}
         \d r(t)=(a+\tfrac{1}{2}b^2)r(t)\d t-r^3(t)\d t+br(t)\d \beta_t,\quad r(0)=r_0\in \R_+,
    \end{equation}
    where $a\in\R$, $b\neq 0$ are constants, and $\{\beta_t\}_{t\in\R}$ is a standard (two-sided time) Brownian motion on a canonical space $(\Omega,\mathcal{F},\P,\{\theta_t\}_{t\in\R})$.  By definition, $\beta_t(\omega)=\omega(t)$ and $\theta_t\omega(\cdot)=\omega(\cdot-t)-\omega(t)$ for $\in\R$. Clearly $r\equiv0$ is an stationary solution of Equation \eqref{eq r sde}. Let $\{\hat{\beta}_t\}_{t\in\R}$ be another standard Brownian motion that is independent of $\beta$. In particular, one has
    \begin{equation*}
        \mathcal{D}(\{\beta_s-\beta_t:0\leq s\leq t\})=\mathcal{D}(\{\hat{\beta}_{s-t}:0\leq s\leq t\})\quad \forall\,0\leq t\leq \infty.
    \end{equation*}
    For $a\geq 0$, let 
    \begin{equation*}
        \lambda(\omega)=\left(2\int_{-\infty}^{0}\e^{2at+2b\hat{\beta}_t(\omega)}\d t\right)^{-1/2}.
    \end{equation*}
    
	\begin{lemma}\label{Lemma r-convergence}
	    In the case of $a<0$, it follows that
     \begin{equation*}
         \lim\limits_{t\rightarrow\infty}r(t)\overset{a.s.}{=}0\quad\forall\, r_0>0.
     \end{equation*}
      In the case of $a\geq 0$, $\mathcal{D}(\lambda)$ is an  invariant measure of Equation \eqref{eq r sde}. Moreover, 
      \begin{equation*}
         \lim\limits_{t\rightarrow\infty}r(t)\overset{d}{=}\lambda\quad\forall\,r_0>0,
     \end{equation*}
     and 
     \begin{equation*}
         \lambda\overset{a.s.}{=}0\quad\text{ for }a=0.
     \end{equation*}
	\end{lemma}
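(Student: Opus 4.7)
The proof strategy is to exploit the explicit solution \eqref{hopf3},
\begin{equation*}
r(t) = \frac{r_0 \e^{at + b\beta(t)}}{\left(1 + 2r_0^2 \int_0^t \e^{2as + 2b\beta(s)}\d s\right)^{1/2}},
\end{equation*}
which applies verbatim to Equation \eqref{eq r sde} (suppressing the index $n$), and to analyze its long-time behavior via time-reversal. When $a<0$, the strong law of large numbers yields $\beta(t)/t \to 0$ a.s., so $\e^{at+b\beta(t)} \to 0$ a.s.; since the denominator is bounded below by $1$, this immediately gives $r(t) \to 0$ a.s.

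For $a > 0$, I would first prove convergence in distribution. Factoring $\e^{2at + 2b\beta(t)}$ out of the denominator and changing variables $u = t - s$ produces
\begin{equation*}
r(t)^2 = \frac{r_0^2}{\e^{-2at - 2b\beta(t)} + 2r_0^2 \int_0^t \e^{-2au - 2b\gamma_t(u)}\d u},
\end{equation*}
where $\gamma_t(u) := \beta(t) - \beta(t-u)$ is a Brownian motion on $[0,t]$, distributionally equal to a standard BM $\hat\beta$ on $[0,t]$. Since $a > 0$, SLLN gives $\e^{-2at - 2b\beta(t)} \to 0$ a.s.\ and makes the integrand $\e^{-2au - 2b\hat\beta(u)}$ decay a.s.\ exponentially, so $\int_0^t \e^{-2au - 2b\hat\beta(u)}\d u$ converges a.s.\ to a finite random variable $J$. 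Combining via the distributional identity above, $r(t)^2 \overset{d}{\to} 1/(2J)$, which by the symmetry $\hat\beta \overset{d}{=} -\hat\beta$ and the substitution $u \mapsto -u$ equals $\lambda^2$ in law.

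Invariance of $\mathcal{D}(\lambda)$ is then proved by coupling: let $\hat\beta$ be an independent two-sided BM on $(-\infty, 0]$ and set $r_0 := (2\int_{-\infty}^0 \e^{2as + 2b\hat\beta_s}\d s)^{-1/2}$, so $r_0 \sim \mathcal{D}(\lambda)$. Gluing $\hat\beta$ with the driving BM $\beta$ produces a two-sided BM $\tilde\beta$, and the explicit formula simplifies, using $r_0^{-2} = 2\int_{-\infty}^0 \e^{2as + 2b\tilde\beta(s)}\d s$, to
\begin{equation*}
r(t) = \left(2\int_{-\infty}^0 \e^{2au + 2b(\tilde\beta(t+u) - \tilde\beta(t))}\d u\right)^{-1/2}.
\end{equation*}
Stationarity of Brownian increments then yields $r(t) \overset{d}{=} \lambda$ for every $t \geq 0$, proving invariance.

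The main technical obstacle is the case $a = 0$, where $\lambda = 0$ a.s.\ amounts to showing $\int_{-\infty}^0 \e^{2b\hat\beta_s}\d s = \infty$ a.s., and Fubini only delivers infinite expectation. The plan is to reduce (without loss of generality $b > 0$) to $\int_0^\infty \e^{-2b\hat\beta(u)}\d u$ and exploit recurrence of Brownian motion: the hitting times $\tau_n := \inf\{u \geq 0 : \hat\beta(u) = -n/b\}$ are a.s.\ finite, and by the strong Markov property the events $A_n := \{\sup_{u \in [\tau_n, \tau_n + 1]} \hat\beta(u) \leq -(n-1)/b\}$ are independent with common probability bounded below by some $p > 0$. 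The second Borel--Cantelli lemma then forces $A_n$ to occur infinitely often, giving $\int_{\tau_n}^{\tau_n + 1}\e^{-2b\hat\beta(u)}\d u \geq \e^{2(n-1)}$ infinitely often and hence divergence. Since $r \equiv 0$ is a stationary solution, $\delta_0 = \mathcal{D}(\lambda)$ is trivially invariant, and the convergence $r(t) \overset{d}{\to} 0$ follows from the same time-reversal representation used in the $a>0$ analysis.
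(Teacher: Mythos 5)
Your argument follows the same route as the paper for every part except the case $a=0$: the $a<0$ case via the lower bound $1$ on the denominator and $\beta(t)/t\to 0$, the weak convergence via time reversal of the increments, and the invariance via the pullback identity $r(t)^{-2}=2\int_{-\infty}^{0}\e^{2au+2b(\tilde\beta(t+u)-\tilde\beta(t))}\d u$ together with stationarity of increments are all exactly the paper's Steps 1, 4 and 3 (the paper phrases the last one through the shift $\theta_t$ on the canonical space, which is the same computation). For $a=0$ the paper instead bounds $\int_{-\infty}^{0}\e^{2b\hat\beta_t}\d t$ below by $\leb\{t\le 0: b\hat\beta_t\le 0\}$ and invokes the arcsine law to conclude this occupation time is a.s.\ infinite; your route through hitting times and Borel--Cantelli is genuinely different and, if repaired, somewhat more elementary (it avoids the arcsine law), at the cost of being longer.

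The one genuine gap is in that $a=0$ step: the events $A_n=\{\sup_{u\in[\tau_n,\tau_n+1]}\hat\beta(u)\le -(n-1)/b\}$ are \emph{not} independent. Consecutive hitting times $\tau_n,\tau_{n+1}$ can be arbitrarily close, so the windows $[\tau_n,\tau_n+1]$ and $[\tau_{n+1},\tau_{n+1}+1]$ overlap with positive probability, and $A_n$ is not measurable with respect to the pre-$\tau_{n+1}$ $\sigma$-field; the strong Markov property gives you $\P(A_n\mid\mathcal F_{\tau_n})=p$ but not joint independence, so the second Borel--Cantelli lemma in its independent form does not apply as written. This is repairable: either pass to the random subsequence $m_1=1$, $m_{k+1}=\inf\{n:\tau_n>\tau_{m_k}+1\}$, for which $A_{m_k}\in\mathcal F_{\tau_{m_k}+1}\subset\mathcal F_{\tau_{m_{k+1}}}$ and the strong Markov property yields $\P\bigl(\bigcap_{k\le K}A_{m_k}^c\bigr)=(1-p)^K\to 0$ for every starting index, hence $A_{m_k}$ occurs infinitely often a.s.; or, more simply, replace the whole construction by the paper's bound $\int_0^\infty\e^{-2b\hat\beta(u)}\d u\ge\leb\{u\ge 0:\hat\beta(u)\le 0\}=\infty$ a.s. Everything else in your proposal is correct, including the identification $1/(2J)\overset{d}{=}\lambda^2$ and the observation that the same time-reversal representation handles the degenerate convergence $r(t)\overset{d}{\to}0$ when $a=0$.
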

 
    \begin{proof} 
        Note that Equation \eqref{eq r sde} has solution in the form of 
        \begin{equation*}
            r(t)=\frac{r_0\e^{at+b\beta_t}}{\left(1+2(r_0)^2\int_{0}^{t}\e^{2as+2b\beta_s}\d s\right)^{1/2}}.
        \end{equation*}
        
       \noindent $\mathbf{Step\;1.}$ When $a<0$, for any $r_0>0$, one has
        \begin{equation*}
            r(t,\omega)\leq r_0\exp\left(a+b\tfrac{\beta_t(\omega)}{t}\right)\rightarrow 0\quad \text{ a.s. as } t\rightarrow\infty,
        \end{equation*}
        where we use the fact that $\lim\limits_{t\rightarrow\infty}\beta_t/t\overset{a.s.}{=}0$. 

        \noindent $\mathbf{Step\;2.}$ When $a\geq 0$, we first verify that $\lambda$ is well-defined.   When $a=0$, let $N(\omega):=\{t\leq 0:b\hat{\beta}_t(\omega)\leq 0\}$. Then 
        \begin{equation*}
            \int_{-\infty}^{0}e^{2b\hat{\beta}_t(\omega)}dt\geq \int_{-\infty}^{0}\mathbf{1}_{N(\omega)}(t)dt=\text{Leb }N(\omega).
        \end{equation*}
        By the arc sine law,
        \begin{equation*}
            \limsup\limits_{t\rightarrow-\infty}\frac{\text{Leb }\{s\geq t:b\hat{\beta}_t(\omega)\leq 0\}}{-t}\overset{a.s.}{=}1,
        \end{equation*}
        which implies that $\lambda\overset{a.s.}{=}0$ for $a=0$.

        For $a>0$, using the law of the iterated logarithm, we know that
        \begin{align*}
            \limsup\limits_{t\rightarrow-\infty}\frac{\hat{\beta}_t(\omega)}{\sqrt{2|t|\ln\ln |t|}}\overset{a.s.}{=}1,\quad \liminf\limits_{t\rightarrow-\infty}\frac{\hat{\beta}_t(\omega)}{\sqrt{2|t|\ln\ln |t|}}\overset{a.s.}{=}-1,
        \end{align*}
        which implies that $0<\lambda<\infty$ almost surely for $a>0$.

        \noindent $\mathbf{Step\;3.}$ Next is to check the invariance of  $\mathcal{D}(\lambda)$. It suffices to consider the case when $a>0$. Let 
        \begin{equation*}
            r^*(\omega)=\left(2\int_{-\infty}^{0}\e^{2at+2b\beta_t(\omega)}\d t\right)^{-1/2}.
        \end{equation*}
        Then one has
        \begin{align*}
            r^*(\theta_t\omega)&=\left(2\int_{-\infty}^{0}\e^{2as+2b\beta_s(\theta_t\omega)}\d t\right)^{-1/2}=\left(2\int_{-\infty}^{0}\e^{2as+2b(\beta_{s+t}(\omega)-\beta_{t}(\omega))}\d t\right)^{-1/2}\\
            &=\left(2\int_{-\infty}^{t}\e^{2a(s-t)+2b(\beta_{s}(\omega)-\beta_{t}(\omega))}\d t\right)^{-1/2}\\
            &=\e^{at+b\beta_t(\omega)}\left(2\int_{-\infty}^{0}\e^{2as+2b\beta_{s}(\omega)}\d t+2\int_{0}^{t}\e^{2as+2b\beta_{s}(\omega)}\d t\right)^{-1/2}\\
            &=\frac{r^*(\omega)\e^{at+b\beta_t(\omega)}}{\left(1+2(r^*(\omega))^2\int_{0}^{t}\e^{2as+2b\beta_{s}(\omega)}\d t\right)^{1/2}},
        \end{align*}
        which is exactly the solution of Equation \eqref{eq r sde} with initial condition $r(0)=r^*(\omega)$ at time $t$. Meanwhile, since $\mathcal{D}(\theta_t\omega)=\mathcal{D}(\omega)$ for any $t\geq 0$, it follows that
        \begin{equation*}
            \mathcal{D}(\lambda)=\mathcal{D}(r^*(\cdot))=\mathcal{D}(r^*(\theta_t\cdot)),
        \end{equation*}
        ensuring the invariance.

        \noindent $\mathbf{Step\;4.}$ It remains to derive the weak convergence. Indeed, it follows that for $a\geq 0$ and $r_0>0$,
        
        \begin{align*}
            r(t)&=\left((r_0)^{-2}\e^{-2at-2b\beta_t}+2\int_{0}^{t}\e^{2a(s-t)+2b(\beta_s-\beta_t)}\d s\right)^{-1/2}\\
            &\overset{d}{=}\left((r_0)^{-2}\e^{-2at+2b\hat{\beta}_{-t}}+2\int_{-t}^{0}\e^{2as+2b\hat{\beta}_s}\d s\right)^{-1/2}\overset{a.s.}{\longrightarrow} \lambda\quad\text{ as }t\rightarrow\infty,
        \end{align*}
        which implies the desired convergence property.
    \end{proof}

    \paragraph{Proof of Corollary \ref{Coro lorenz}:}
    \begin{proof}
     
        As $\supp\nu_*$ is a closed set in $\R^3$, it suffices to show that $\R^3\setminus H\subset \supp\nu_*$. To this end, let us first fix any $\hat{\boldsymbol{x}}_0\in\R^3\setminus H$ and $\epsilon>0$. Form the proof of \cite[Proposition 3.3]{CotiHairer}, one can deduce that for any $\boldsymbol{x}_0\in\R^3\setminus H$, there exists $T_{\boldsymbol{x}_0}$ such that for any $t\geq T_{\boldsymbol{x}_0}$, there exists $h_{\boldsymbol{x_0},t}\in C^1([0,t];\R)$ with $h_{\boldsymbol{x_0},t}(0)=0$ satisfying
        \begin{equation*}
            \|(x(t),y(t),z(t))-\hat{\boldsymbol{x}}_0\|<\tfrac{\epsilon}{2}.
        \end{equation*}
        Here $(x(\cdot),y(\cdot),z(\cdot))$ denoted the solution of Equation \eqref{eq lorenz2} with $h=h_{\boldsymbol{x_0},t}$. 

        Moreover, a standard perturbative argument shows that there exists $\delta>0$ such that for any $f\in C([0,t];\R)$ with $f(0)=0$, $\|f(\cdot)-h_{\boldsymbol{x}_0,t}(\cdot)\|_{C([0,t];\R)}\leq \delta$, the solution of Equation \eqref{eq lorenz2} with $h=f$ satisfying
        \begin{equation}\label{eq lorenz3}
            \|(x(t),y(t),z(t))-\hat{\boldsymbol{x}}_0\|<\epsilon.
        \end{equation}

       Consequently, by \eqref{eq lorenz3} and the invariance of $\nu_*$, we conclude that 
        \begin{align*}
            \nu_*(B(\hat{\boldsymbol{x}}_0,\epsilon))&=\int_{0}^{\infty}e^{-t}\int_{\R^3\setminus H}P_t(\boldsymbol{x}_0,B(\hat{\boldsymbol{x}}_0,\epsilon))\nu_*(\d \boldsymbol{x}_0)\d t\\
            &\geq\int_{\R^3\setminus H}\nu_*(\d \boldsymbol{x}_0) \int_{T_{\boldsymbol{x}_0}}^{\infty}e^{-t}\P(\|W_{\cdot}-h_{\boldsymbol{x}_0,t}(\cdot)\|_{C([0,t];\R)}\leq \delta)\d t\\
            &>0,
        \end{align*}
         completing the proof.
    \end{proof}

    \vspace{5mm}
    \noindent {\small {\bf Acknowledgments:} Fuzhou Gong and Yuan Liu are supported by National Key R$\&$D Program of China (No. 2020YFA0712700) and Key Laboratory of Random Complex Structures and Data Sciences, Mathematics and Systems Science, Chinese Academy of Sciences (No. 2008DP173182); Yong Liu is supported by CNNSF (No. 11731009, No. 12231002) and Center for Statistical Science, PKU; Yuan Liu is supported by (CNNSF 11688101). The authors thank Prof. Yiqian Wang, Mr. Jiehao Wan and Mr. Sixian Lai for discussing \cite{CotiHairer}.   }

    \bibliographystyle{abbrv}
    \bibliography{reference}

\begin{thebibliography}{10}

\bibitem{A1998}
L.~Arnold.
\newblock {\em Random dynamical systems}.
\newblock Springer, Berlin, 1998.

\bibitem{ArndV}
V.~I. Arnold.
\newblock {\em Mathematical methods of classical mechanics}, volume~60 of {\em Graduate Texts in Mathematics}.
\newblock Springer-Verlag, New York, second edition, 1989.
\newblock Translated from the Russian by K. Vogtmann and A. Weinstein.

\bibitem{A1976}
W.~Arveson.
\newblock {\em An invitation to $C^*$-algebras}.
\newblock Springer, New York, 1976.

\bibitem{BKS2014}
H.~Bessaih, R.~Kapica, and T.~Szarek.
\newblock Criterion on stability for {M}arkov processes applied to a model with jumps.
\newblock {\em Semigroup Forum}, 88(1):76--92, 2014.

\bibitem{ChenMF2005}
M.~Chen.
\newblock {\em Eigenvalues, inequalities, and ergodic theory}.
\newblock Probability and its Applications (New York). Springer-Verlag London, Ltd., London, 2005.

\bibitem{Chenmf1989}
M.~F. Chen and S.~F. Li.
\newblock Coupling methods for multidimensional diffusion processes.
\newblock {\em Ann. Probab.}, 17(1):151--177, 1989.

\bibitem{CotiHairer}
M.~Coti~Zelati and M.~Hairer.
\newblock A noise-induced transition in the {L}orenz system.
\newblock {\em Comm. Math. Phys.}, 383(3):2243--2274, 2021.

\bibitem{C2017}
D.~Czapla.
\newblock A criterion on asymptotic stability for partially equicontinuous {M}arkov operators.
\newblock {\em Stoch. Proc. Appl.}, 128(11):3656--3678, 2017.

\bibitem{CH2014}
D.~Czapla and K.~Horbacz.
\newblock Equicontinuity and stability properties of {M}arkov chains arising from iterated function systems on {P}olish spaces.
\newblock {\em Stoch. Anal. Appl.}, 32(1):1--29, 2014.

\bibitem{CHW-S2020}
D.~Czapla, K.~Horbacz, and H.~Wojew\'{o}dka-\'{S}ci\c{a}\.{z}ko.
\newblock Ergodic properties of some piecewise-deterministic markov process with application to gene expression modelling.
\newblock {\em Stochastic Process. Appl.}, 130:2851--2885, 2020.

\bibitem{DPZ1996}
G.~Da~Prato and J.~Zabczyk.
\newblock {\em {E}rgodicity for infinite dimensional systems}.
\newblock Cambridge University Press, Cambridge, 1996.

\bibitem{DongYL2016}
Y.~Dong.
\newblock Ergodicity of stochastic differential equations driven by {L\'evy} noise with local {Lipschitz} coefficient.
\newblock {\em Adv. Math. (China)}, 47(1):11--30, 2018.

\bibitem{DMPS2018}
R.~Douc, E.~Moulines, P.~Priouret, and P.~Soulier.
\newblock {\em Markov chains}.
\newblock Springer Series in Operations Research and Financial Engineering. Springer, Cham, 2018.

\bibitem{GHM2017}
N.~E. Glatt-Holtz, J.~C. Mattingly, and G.~Richards.
\newblock On unique ergodicity in nonlinear stochastic partial differential equations.
\newblock {\em J. Stat. Phys.}, 166(3):618--649, 2017.

\bibitem{GL2015}
F.~Gong and Y.~Liu.
\newblock Ergodicity and asymptotic stability of {F}eller semigroups on {P}olish metric spaces.
\newblock {\em Sci. China Math.}, 58(6):1235--1250, 2015.

\bibitem{GLLL2024}
F.~Gong, Y.~Liu, Y.~Liu, and Z.~Liu.
\newblock Asymtotic stability for non-equicontinuous {Markov} semigroups.
\newblock {\em Commun. Math. Stat.}, to appear.

\bibitem{HM2006}
M.~Hairer and J.~C. Mattingly.
\newblock Ergodicity of the 2{D} {N}avier-{S}tokes equations with degenerate stochastic forcing.
\newblock {\em Ann. of Math. (2)}, 164(3):993--1032, 2006.

\bibitem{HM2011}
M.~Hairer and J.~C. Mattingly.
\newblock A theory of hypoellipticity and unique ergodicity for semilinear stochastic {PDE}s.
\newblock {\em Electron. J. Probab.}, 16:658--738, 2011.

\bibitem{HSZ2017}
S.~C. Hille, T.~Szarek, and M.~A. Ziemlańska.
\newblock Equicontinuous families of {M}arkov operators in view of asymptotic stability.
\newblock {\em C. R. Math. Acad. Sci. Paris}, 355(12):1247--1251, 2017.

\bibitem{Hopf1948}
E.~Hopf.
\newblock A mathemetical example displaying features of turbulence.
\newblock {\em Communications on Pure and Appl. Math.}, 1, 1948.
\newblock See also in {\it Selected works of Eberhard Hopf with commentaries}. Edited by Cathleen S. Morawetz, James B. Serrin and Yakov G. Sinai. American Mathematical Society, Providence, RI, 2002.

\bibitem{Hopf1956}
E.~Hopf.
\newblock Repeated branching through loss of stability: an example.
\newblock In {\em Proceedings of the conference on differential equations (dedicated to A. Weinstein)}, pages 49--56. University of Maryland Book Store, College Park, Md., 1956.
\newblock See also in {\it Selected works of Eberhard Hopf with commentaries}. Edited by Cathleen S. Morawetz, James B. Serrin and Yakov G. Sinai. American Mathematical Society, Providence, RI, 2002.

\bibitem{H2002}
K.~Horbacz.
\newblock Randomly connected differential equations with {P}oisson type perturbations.
\newblock {\em Nonlinear Stud.}, 9(1):81--98, 2002.

\bibitem{HMS2005}
K.~Horbacz, J.~Myjak, and T.~Szarek.
\newblock On stability of some general random dynamical system.
\newblock {\em J. Stat. Phys.}, 119(1-2):35--60, 2005.

\bibitem{HMS2006}
K.~Horbacz, J.~Myjak, and T.~Szarek.
\newblock Stability of random dynamical system on banach spaces.
\newblock {\em Positivity}, 10(3):517--538, 2006.

\bibitem{J1964}
B.~Jamison.
\newblock Asymptotic behavior of successive iterates of continuous functions under a {M}arkov operator.
\newblock {\em J. Math. Anal. Appl.}, 9:203–214, 1964.

\bibitem{J1965}
B.~Jamison.
\newblock Ergodic decompositions induced by certain {M}arkov operators.
\newblock {\em Trans. Amer. Math. Soc.}, 117:451--468, 1965.

\bibitem{J2013+}
J.~Jaroszewska.
\newblock The asymptotic strong {Feller property does not imply the e-property of Markov-Feller} semigroups., 2013.
\newblock Preprint at \url{https://arxiv.org/pdf/1308.4967v1}.

\bibitem{J2013}
J.~Jaroszewska.
\newblock On asymptotic equicontinuity of {M}arkov transition functions.
\newblock {\em Stat. Probabil. Lett.}, 83(3):943--951, 2013.

\bibitem{KSS2012}
R.~Kapica, T.~Szarek, and M.~\'{S}leczka.
\newblock On a unique ergodicity of some {M}arkov processes.
\newblock {\em Potential Anal.}, 36(4):589--606, 2012.

\bibitem{KPS2010}
T.~Komorowski, S.~Peszat, and T.~Szarek.
\newblock On ergodicty of some {M}arkov processes.
\newblock {\em Ann. Probab.}, 38(4):1401--1443, 2010.

\bibitem{KW2021}
R.~Kukulski and H.~Wojew\'{o}dka-\'{S}ci\c{a}\.{z}ko.
\newblock The e-property of asymptotically stable {Markov} operators.
\newblock {\em Colloq. Math.}, 165(2):269--283, 2021.

\bibitem{KW2024}
R.~Kukulski and H.~Wojew\'{o}dka-\'{S}ci\c{a}\.{z}ko.
\newblock The e-property of asymptotically stable {Markov} semigroups.
\newblock {\em Results Math.}, 79(3):No. 112, 22 pp, 2024.

\bibitem{Kulik2018}
A.~Kulik.
\newblock {\em Ergodic behavior of {M}arkov processes}, volume~67 of {\em De Gruyter Studies in Mathematics}.
\newblock De Gruyter, Berlin, 2018.

\bibitem{KS2018}
A.~Kulik and M.~Scheutzow.
\newblock Generalized couplings and convergence of transition probabilities.
\newblock {\em Probab. Theory Related Fields}, 171(1):333--376, 2018.

\bibitem{Kup2011}
A.~Kupiainen.
\newblock Ergodicity of two dimensional turbulence (after {Hairer and Mattingly}).
\newblock In {\em Séminaire Bourbaki. Vol. 2009/2010. Exposés 1012–1026}. Astérisque 339, Exp. No. 1016, vii, 137-156, 2011.

\bibitem{Lasota1995}
A.~Lasata.
\newblock From fractals to stochastic differential equations.
\newblock In {\em Chaos---the interplay between stochastic and deterministic behaviour ({K}arpacz, 1995)}, volume 457 of {\em Lecture Notes in Phys.}, pages 235--255. Springer, Berlin, 1995.

\bibitem{LS2006}
A.~Lasota and T.~Szarek.
\newblock Lower bound technique in the theory of a stochastic differential equation.
\newblock {\em J. Differential Equations}, 231(2):513--533, 2006.

\bibitem{LT2003}
A.~Lasota and J.~Traple.
\newblock Invariant measures related with {P}oisson driven stochastic differential equation.
\newblock {\em Stochastic Process. Appl.}, 106(1):81--93, 2003.

\bibitem{LY1994}
A.~Lasota and J.~A. Yorke.
\newblock Lower bound technique for {M}arkov operators and iterated function systems.
\newblock {\em Random Comput. Dynam}, 2(1):41--77, 1994.

\bibitem{LL2024}
Y.~Liu and Z.~Liu.
\newblock Relation between the eventual continuity and the e-property.
\newblock {\em Acta Math. Appl. Sin. Engl. Ser.}, 40(1):1--16, 2024.

\bibitem{MT2009}
S.~P. Meyn and R.~L. Tweedie.
\newblock {\em Markov chains and stochastic stability}.
\newblock Cambridge University Press, Cambridge, {S}econd edition, 2009.

\bibitem{Moser}
J.~Moser.
\newblock On the theory of quasiperodic motions.
\newblock {\em SIAM Review}, 8(2):145--172, 1966.

\bibitem{PZZ2024}
X.~Peng, J.~Zhai, and T.~Zhang.
\newblock Ergodicity for {2D Navier-Stokes} equations with a degenerate pure jump noise, 2024.
\newblock Preprint at \url{https://arxiv.org/pdf/2405.00414}.

\bibitem{R1964}
M.~Rosenblatt.
\newblock Equicontinuous {M}arkov operators.
\newblock {\em Teor. Verojatnost. i Primenen.}, 9:205--222, 1964.

\bibitem{S2000+}
T.~Szarek.
\newblock Generic properties of learning systems.
\newblock {\em Ann. Polon. Math.}, 73(2):93--103, 2000.

\bibitem{S2000}
T.~Szarek.
\newblock The stability of {M}arkov operators on {P}olish spaces.
\newblock {\em Studia Math.}, 143(2):145--152, 2000.

\bibitem{S2003}
T.~Szarek.
\newblock Invariant measures for {M}arkov operators with application to function systems.
\newblock {\em Studia Math.}, 154(3):207 -- 222, 2003.

\bibitem{S2003+}
T.~Szarek.
\newblock Invariant measures for nonexpansive {M}arkov operators on {P}olish spaces.
\newblock {\em Diss. Math.}, 415:1--62, 2003.

\bibitem{S2006}
T.~Szarek.
\newblock {F}eller processes on nonlocally compact spaces.
\newblock {\em Ann. Probab.}, 34(5):1849 -- 1863, 2006.

\bibitem{SSU2010}
T.~Szarek, M.~Sleczka, and M.~Urba\'nski.
\newblock On stability of velocity vectors for some passive tracer models.
\newblock {\em Bull. Lond. Math. Soc.}, 42(5):923--936, 2010.

\bibitem{SW2012}
T.~Szarek and D.~T.~H. Worm.
\newblock Ergodic measures of {M}arkov semigroups with the e-property.
\newblock {\em Ergodic Theory Dynam. Systems}, 32(3):1117--1135, 2012.

\bibitem{Wangfy2014}
F.-Y. Wang.
\newblock {\em Analysis for diffusion processes on {R}iemannian manifolds}, volume~18 of {\em Advanced Series on Statistical Science \& Applied Probability}.
\newblock World Scientific Publishing Co. Pte. Ltd., Hackensack, NJ, 2014.

\bibitem{Zhai2024+}
J.~Wang, H.~Yang, and J.~Zhai.
\newblock Irreducibility of stochastic complex {G}inzburg-{L}andau equations driven by pure jump noise and its applications.
\newblock {\em Appl. Math. Optim.}, 89(2):Paper No. 47, 21, 2024.

\bibitem{Zhai2024}
J.~Wang, H.~Yang, J.~Zhai, and T.~Zhang.
\newblock Accessibility of {SPDE}s driven by pure jump noise and its applications.
\newblock {\em Proc. Amer. Math. Soc.}, 152(4):1755--1767, 2024.

\bibitem{WW2018}
S.~Wedrychowicz and A.~Wi\'{s}nicki.
\newblock On some results on the stability of {M}arkov operators.
\newblock {\em Studia Math.}, 241:41--55, 2018.

\bibitem{W2010}
D.~T.~H. Worm.
\newblock {\em Semigroups on spaces of measures}.
\newblock PhD thesis, Leiden University, 2010.

\bibitem{WH2010}
D.~T.~H. Worm and S.~C. Hille.
\newblock Equicontinuous families of {M}arkov operators on complete separable metric spaces with applications to ergodic decompositions and existence, uniqueness and stability of invariant measures.
\newblock {\em unpublished}, 2010.

\bibitem{WH2011}
D.~T.~H. Worm and S.~C. Hille.
\newblock An ergodic decomposition defined by regular jointly measurable {M}arkov semigroups on {P}olish spaces.
\newblock {\em Acta Appl. Math.}, 116(1):27--53, 2011.

\bibitem{WH2011-0}
D.~T.~H. Worm and S.~C. Hille.
\newblock Ergodic decompositions associated with regular {M}arkov operators on {P}olish spaces.
\newblock {\em Ergodic Theory Dynam. Systems}, 31(2):571--597, 2011.

\bibitem{Z2005}
R.~Zaharopol.
\newblock {\em Invariant probabilities of {M}arkov-{F}eller operators and their supports}.
\newblock Birkh\"{a}user Verlag, Basel, 2005.

\bibitem{Z2014}
R.~Zaharopol.
\newblock {\em Invariant probabilities of transition functions}.
\newblock Springer, Cham, 2014.

\end{thebibliography}

    \end{document}